\documentclass[12pt,twoside,english]{article}
\usepackage[T1]{fontenc}
\usepackage[latin9]{inputenc}
\usepackage{geometry}
\geometry{verbose,lmargin=3cm,rmargin=3cm}
\setcounter{secnumdepth}{4}
\setcounter{tocdepth}{4}
\usepackage{babel}
\usepackage{mathrsfs}
\usepackage{amsthm}
\usepackage{amsmath}
\usepackage{amssymb}
\usepackage{esint}
\usepackage[unicode=true,pdfusetitle,
 bookmarks=true,bookmarksnumbered=false,bookmarksopen=false,
 breaklinks=false,pdfborder={0 0 1},backref=false,colorlinks=false]
 {hyperref}

\makeatletter
\numberwithin{equation}{section}
\numberwithin{figure}{section}


\usepackage{stmaryrd}\usepackage{mathtools}

\theoremstyle{plain}
\newtheorem{thm}{Theorem}[section]
  \theoremstyle{definition}
  \newtheorem{defn}[thm]{Definition}
  \theoremstyle{remark}
  \newtheorem{rem}[thm]{Remark}
  \theoremstyle{plain}
  \newtheorem{lem}[thm]{Lemma}
  \theoremstyle{plain}
  \newtheorem{prop}[thm]{Proposition}
  \theoremstyle{definition}
  \newtheorem{example}[thm]{Example}
  \theoremstyle{plain}
  \newtheorem{cor}[thm]{Corollary}

\numberwithin{equation}{section}
\numberwithin{figure}{section}
\numberwithin{table}{section}
\numberwithin{thm}{section}
 \let\footnote=\endnote

\DeclareMathOperator*{\esssup}{ess\,sup}
\DeclareMathOperator*{\essinf}{ess\,inf}
\DeclareMathOperator*{\argmax}{ess\,max}

\makeatother

\begin{document}

\title{Path-Dependent Optimal Stochastic Control and Viscosity Solution
of Associated Bellman Equations%
\thanks{supported in part by the Natural Science Foundation of China (Grants
\#10325101 and \#11171076), the Science Foundation for Ministry of
Education of China (No.20090071110001), and the Chang Jiang Scholars
Programme.%
}}

\author{Shanjian Tang and Fu Zhang%
\thanks{Institute of Mathematical Finance and Department of Finance and Control
Sciences, School of Mathematical Sciences, Fudan University, Shanghai
200433, China. Email: sjtang@fudan.edu.cn (Shanjian Tang), 09110180028@fudan.edu.cn
(Fu Zhang)%
} }
\maketitle
\begin{abstract}
In this paper we study the optimal stochastic control problem for
a path-dependent stochastic system under a recursive path-dependent
cost functional, whose associated Bellman equation from dynamic programming
principle is a path-dependent fully nonlinear partial differential
equation of second order. A novel notion of viscosity solutions is
introduced by restricting the semi-jets on an $\alpha$-H\"older space $\mathbf{C}^\alpha$ for $\alpha\in (0, {1\over2})$. Using Dupire's functional It\^o calculus, we characterize
the value functional of the optimal stochastic control problem as
the unique viscosity solution to the associated path-dependent Bellman
equation.
\end{abstract}
\textbf{Keyword:} path-dependent optimal stochastic control, path-dependent
Bellman equation, viscosity solution, backward SDE, dynamic programming.

\section{Introduction}

Since the initial investigation of Krylov \cite{krylov1972control},
it has been the subject of many studies to verify that the value function
of an optimal stochastic control problem should be the unique solution
of the associated Bellman equation from the dynamic programming principle
for the optimal stochastic control problem. Nowadays, the notion of
viscosity solution invented in 1983 by Crandall and Lions \cite{crandall1983viscosity}
has become a universal tool to study such a broad fundamental subject.
For detailed exposition of such a tool and the related general dynamic
programming theory on optimal stochastic control, see among others
the survey paper of Crandall, Ishii \& Lions \cite{crandall1992user}
and the monographs of Fleming \& Soner \cite{fleming2006controlled}
and Yong \& Zhou \cite{yong1999stochastic}.

Such a theme has also been developing in terms of backward stochastic
differential equations (BSDEs). For instance, a BSDE depending on
a Markovian diffusion in a Markovian way via the generator and the
terminal condition, is associated to a second-order partial differential
equation (PDE), and a fully coupled forward and backward stochastic
differential equation (FBSDE) is associated to a quasi-linear PDE.
For relevant details, see Pardoux \& Peng \cite{pardoux1992backward},
Ma, Proter \& Yong \cite{ma1994solving}, and Pardoux \& Tang \cite{pardoux1999forward}.
These developments appear quite natural in view of the close relation
between BSDEs and minimax problems as exposed by Tang \cite{tang2006dual}.
Furthermore, the second-order BSDE (2BSDE) is associated to a fully
nonlinear PDE, and for such a relation see Cheritdito, Soner, Touzi
\& Victoir \cite{cheridito2007second} and Soner, Touzi \& Zhang \cite{soner2011wellposedness}.
More generally, Peng \cite{peng1992stochastic} shows that an optimal
stochastic control problem---where the coefficients of both the system
and the cost functional depend on the history path of the underlying
Brownian motion---should be associated to a fully nonlinear backward
stochastic PDE as the underlying Bellman equation.

Recently Dupire \cite{dupire2009functional} introduced horizontal
and vertical derivatives in the path space of a path functional, non-trivially
generalized the classical It\^o formula to a functional It\^o formula
(see Cont \& Fournie \cite{cont2010change,cont2010functional} for
a more general and systematic research), and provided a functional
extension of the classical Feynman-Kac formula. His insightful work
is becoming a foundation to stochastic analysis of path functionals,
and has stimulated extensions of some above-mentioned developments
to the functional case. In fact, he has shown that a path functional
in $\mathscr{C}^{1,2}(\Lambda)$ solves a linear path-dependent PDE
(PPDE) if its composition with a Brownian motion generates a martingale.
In the plenary lecture at the International Congress of Mathematicians
of the year 2010, Peng \cite{peng2010backward} pointed out that a
non-Markovian BSDE is a PPDE. In view of Dupire's functional It\^o
formula, it is very natural to associate a BSDE to a ``semi-linear''
PPDE, and a stochastic optimal control problem of BSDE to a path-dependent
Bellman equation. Peng \& Wang \cite{peng2011bsde} studied the former
relation and give some sufficient conditions for a semi-linear PPDE
to admit a classical solution. However, a PPDE, even for the simplest
heat equation, rarely has a classical solution, and a path-dependent
Bellman equation, even in the simpler state-dependent case, also appeals
to a generalized solution in many occasions. Therefore, generalized
solution of general PPDEs are demanding, and it has to be developed.

This paper incorporates Dupire's functional It\^o calculus to discuss
the optimal stochastic control problem for a path-dependent stochastic
system under a recursive path-dependent cost functional. The associated
Bellman equation from the dynamic programming principle in this general
setting is a path-dependent fully nonlinear partial differential equation
of second order, and we are concerned with its viscosity solution.
The familiar optimal control of SDEs with delay can be addressed within
our current framework.

In the classical theory of viscosity solution to PPDEs (see Crandall, Ishii
\& Lions \cite{crandall1992user} and Fleming \& Soner \cite{fleming2006controlled}),
local compactness of the state space and smoothness of the norm play a crucial role. The main
difficulty for our path-dependent case lies in both facts that the path
space $\Lambda$ is an infinite dimensional Banach space and lacks of a local
compactness, and that the maximal norm $\|\cdot\|_0$ is not smooth. The arguments for the case of Hilbert space introduced
by Lions (see \cite{lions1988viscosity,lions1989viscosity}) contain
a limiting procedure based on the structure of Hilbert space, and
are difficult to be adapted to our path-dependent case where we have to work in a subspace $\mathbf{C}^\alpha$ (which has a local compactness, but whose $\alpha$-H\"older norm is not smooth). 

In the generalization of Kim's $i$-smooth theory \cite{kim1999functional},
Luyakonov \cite{lukoyanov2007viscosity} developed a theory of viscosity
solution to fully non-linear path-dependent (also called functional
in literature) Hamilton-Jacobi (HJ) equations of first order---which
include conventionally called Bellman and Isaacs equations arising
from deterministic optimal control problems and differential games
for time-delayed ordinary differential equations. He used the so-called
\textit{co-invariant derivatives} (it is \textit{Clio}-derivative
in Kim's terminology), which coincide with the restriction of Dupire's
derivatives on continuous paths though their definitions appear different.
A deterministic functional differential system, subject to some proper
conditions, starting at a uniformly Lipschitz continuous path, actually
evolves forever in the locally compact space $\mathbf{C}^{0,1}$ of
all uniformly Lipschitz continuous paths. This property of deterministic
dynamical systems allows Luyakonov \cite{lukoyanov2007viscosity}
to define the jet functionals and to localize the application of dynamical
programming all in $\mathbf{C}^{0,1}$ (in fact in a sequence of expanding
compact subsets) of paths. Due to these conveniences, his proof of
existence and uniqueness of viscosity solutions appears fairly straightforward.
In our stochastic case, the situation changes in a dramatic manner.
Even starting at a uniformly Lipschitz continuous path, due to the
essential diffusion nature, our dynamic system could not live in $\mathbf{C}^{0,1}$
anymore, and it is impossible in general to enclose under proper conditions
our dynamical system within any given compact space. We have to choose
an $\alpha$-H\"older continuous paths space $\mathbf{C}^{\alpha}$
($\forall\alpha\in(0,\frac{1}{2}$)) to substitute $\mathbf{C}^{0,1}$,
and also define our jet functional on a family of expanding compact
subsets $\{\mathbf{C}_{\mu}^{\alpha}:\mu\text{ is sufficiently large}\}$
in $\mathbf{C}^{\alpha}$. We give an example to illustrate the following
phenomenon in our stochastic dynamic system (see Remark \ref{rem:exist-th}
for details): starting at the boundary of $\mathbf{C}_{\mu}^{\alpha}$,
our stochastic system might leave away from the set $\mathbf{C}_{\mu}^{\alpha}$
with probability one within an arbitrary small time. This essential
nature prevents us from starting the dynamic programming
at the boundary of $\mathbf{C}_{\mu}^{\alpha}$ to show the Bellman
equation holds there for the value functional. More precisely, to show that the value
functional is a viscosity solution, we could not follow the conventional
way to start the dynamic programming directly at the minimum/maximum path
of the difference between the value functional and a jet functional
since the extremum path might be at the boundary. To get around the difficulty, we construct a specific
perturbation $\gamma_{t}^{\varepsilon}\in \mathbf{C}_{\mu}^{\alpha}$ (around the extremum path $\gamma_t$) where we can start the dynamic
programming and use the exit time $\hat\tau^\varepsilon$  from  $\mathbf{C}_{\mu}^{\alpha}$ (see the definition of $\hat\tau^\varepsilon$ in the existence proof of Section 5; the
probability for our dynamical system starting from $\gamma_{t}^{\varepsilon}$ at time $t$
to stay within $\mathbf{C}_{\mu}^{\alpha}$ up to time $t+\delta$ is shown to converge
to one, uniformly with respect to sufficiently large $\mu$, as  $\delta\to 0+$) to localize our dynamic programming within the compact subset
$\mathbf{C}_{\mu}^{\alpha}$. It seems to be the first here for us to define  $\hat\tau^\varepsilon$ in association to the $\alpha$-H\"older modulus of the system's history path. We finally prove that the value functional
is a viscosity solution in Section 5. In the proof of the uniqueness,
we adapt the smoothing (and viscosity vanishing for the degenerate
case) methodology of Lions \cite{lions1983optimal} to our path-dependent
case, and also use the natural approximating arguments of parameterized
state-dependent PDEs. In the passage to the limit, our a priori maximal
and H\"older estimates on the second-order derivatives of the solutions
to the approximate PPDEs play a crucial role. Our methodology is expected
to be used to study the path-dependent Isaacs equation arising from
stochastic differential games (see \cite{TangZhang2012game}) and
other related fully nonlinear PPDEs. We note that using Gateaux or
Frechet derivatives, Goldys \& Gozzi \cite{goldys2006second} and
Fuhrman, Masiero \& Tessitore \cite{fuhrman2010stochastic} considered
time-delay stochastic optimal control problems with the diffusion
being independent of the control variable, and studied the mild solutions
to the associated semi-linear functional Bellman equations on Hilbert
space and Banach space, respectively.

Defining the ``semi-jets'' on non compact subsets in terms of a
nonlinear expectation, Ekren et al. \cite{ekren2011viscosity} studied
in a quite different way viscosity solution to second order path-dependent
PDE. They prove the existence and uniqueness of viscosity solutions
(in their sense) for a semi-linear PPDE by Peron's approach. In the
subsequent works, Ekren, Touzi, and Zhang (see for details \cite{TouziZhang2012Optimal,TouziZhang2012viscosityI,TouziZhang2012viscosityII})
use their previous notion of viscosity solution to study the fully
nonlinear PPDE, Pham and Zhang (see \cite{pham2012two}) discuss path-dependent
Bellman-Isaacs equations. However, their relevant results on the path-dependent
Bellman equation require stronger conditions: Their Assumption 2.8
requires, as they have noted, the diffusion coefficient $\sigma$
to be path-invariant (see \cite[page 8]{TouziZhang2012viscosityII}
for details), and, in the degenerate case, their Assumption 7.1 (i)
requires further approximating structures (see \cite[page 29]{TouziZhang2012viscosityII}
for details). Our Theorems \ref{thm:(repres-theorem-nondegen} (on
the non-degenerate case) and \ref{thm:Repr-Th-degen} (on the degenerate
case) give more general uniqueness results on path-dependent Bellman
equation. Furthermore, Ekren, Touzi, and Zhang \cite{TouziZhang2012Optimal,TouziZhang2012viscosityI,TouziZhang2012viscosityII}
directly work with an abstract fully nonlinear PPDE, and use a more
complicated definition of super- and sub-jets in their notion of viscosity
solution, in particular their definitions involve the unnatural and
advanced notion of nonlinear expectation.



Backward stochastic PDE is another tool to study non-Markovian optimal
control problems and FBSDEs. Peng \cite{peng1992stochastic,peng1997bsde}
established the non-Markovian stochastic dynamic programming principle
where he derived the backward stochastic Bellman equations in a heuristic
way. Ma \& Yong \cite{ma1997adapted} gave the relationship between
FBSDEs and a class of semi-linear BSPDEs, and further developed the
stochastic Feynman-Kac formula. For Sobolev and classical solution
of BSPDE, we refer to Zhou \cite{zhou1992duality}, Tang \cite{tang2005semi},
Du \& Meng \cite{du2010revisit}, Du \& Tang \cite{du2011strong},
Du, Tang \& Zhang \cite{du2011Linear-Degenerate} and Qiu \& Tang
\cite{qiu2012maximum}. For viscosity solution of BSPDE or SPDE, we
refer to Lions \cite{lions1998fully,lions1998fully2}, Buckdahn \&
Ma \cite{buckdahn2001stochastic1,buckdahn2001stochastic2,buckdahn2002pathwise-Taylor,buckdahn2007pathwise}
and Boufoussi et al. \cite{boufoussi2007generalized}. In Example
\ref{relation-PHJB-SHJB}, the relationship is exposed between path-dependent
Bellman equations and backward stochastic Bellman equations.

The rest of the paper is organized as follows. In Section \ref{sec:Preliminaries.},
we introduce the calculus for path functionals of \cite{cont2010change,cont2010functional}
and \cite{dupire2009functional}, and preliminary results on BSDEs.
In Section \ref{sec:Stochastic-optimal-problems}, we formulate the
path-dependent stochastic optimal control problem and discuss the
dynamic programming principle, which is crucial in the proof of the
existence of a viscosity solution. In Section \ref{sec:FBSDE-and-viscosity},
we define classical and viscosity solutions to our path-dependent
Bellman equation, state our main results, and prove a verification
theorem in the context of classical solutions. In Section 5, we prove
that the value functional is a viscosity solution, which implies our
existence of a viscosity solution to the path-dependent Bellman equations.
Finally in Section \ref{sec:Uniqueness-of-viscosity} we prove the
uniqueness of viscosity solutions for the path-dependent Bellman equations.

\section{\label{sec:Preliminaries.}Preliminaries}

\subsection{Calculus of path functionals}

\subsubsection{Space of cadlag paths}

Let $n$ be a positive integer and $T$ be a fixed positive number.
For each $t\in[0,T]$, define $\hat{\Lambda}_{t}(\mathbb{R}^{n}):=\mathrm{D}([0,t],\mathbb{R}^{n})$
as the set of all cadlag (right continuous with left limit) $\mathrm{\mathbb{R}}^{n}$-valued
functions on $[0,t]$. For $\gamma\in\hat{\Lambda}_{T}(\mathbb{R}^{n})$,
$\gamma(s)$ is the value of $\gamma$ at time $s\in[0,T]$, and for
some $t\in[0,T]$ we denote the part of $\gamma$ up to time $t$ by $\gamma_{t}:=\{\gamma(s),s\in[0,t]\}$$\in\hat{\Lambda}_{t}(\mathbb{R}^{n})$.
Define $\hat{\Lambda}(\mathbb{R}^{n}):=\bigcup_{t\in[0,T]}\hat{\Lambda}_{t}(\mathbb{R}^{n})$.
Write $\hat{\Lambda}$ for $\hat{\Lambda}(\mathbb{R}^{n})$ if there
is no confusion.

For convenience, define for $0\leq t<\bar{t}\leq T$, and $\bar{\gamma}_{\bar{t}},\gamma_{t}\in\hat{\Lambda}$,
\begin{eqnarray*}
\gamma_{t}^{x}(s) & := & \gamma_{t}(s)\chi_{[0,t)}(s)+(\gamma_{t}(t)+x)\chi_{\{t\}}(s),\quad s\in[0,t];\\
\gamma_{t,\bar{t}}^{x}(s) & := & \gamma_{t}(s)\chi_{[0,t)}(s)+(\gamma_{t}(t)+x)\chi_{[t,\bar{t}]}(s),\quad s\in[0,\bar{t}];\\
\gamma_{t,\bar{t}} & := & \gamma_{t,\bar{t}}^{\mathbf{0}};\\
\bar{\gamma}_{\bar{t}}^{\gamma_{t}}(s) & := & \gamma_{t}(s)\chi_{[0,t)}(s)+(\bar{\gamma}_{\bar{t}}(s)-\bar{\gamma}_{\bar{t}}(t)+\gamma_{t}(t))\chi_{[t,\bar{t}]}(s),\quad s\in[0,\bar{t}].
\end{eqnarray*}

We define the quasi-norm and metric in $\hat{\Lambda}$ as follows:
for each $0\leq t\leq\bar{t}\leq T$ and $\gamma_{t},\bar{\gamma}_{\bar{t}}\in\hat{\Lambda}_{T}(\mathbb{R}^{n})$,
\begin{eqnarray}
\|\gamma_{t}\|_{0} & := & \sup_{0\leq s\leq t}|\gamma_{t}(s)|,\nonumber \\
d_{p}(\gamma_{t},\bar{\gamma}_{\bar{t}}) & := & \sqrt{|t-\bar{t}|}+\sup_{0\leq s\leq\bar{t}}|\gamma_{t,\bar{t}}(s)-\bar{\gamma}_{\bar{t}}(s)|.\label{eq:metric}
\end{eqnarray}
Here $|\cdot|$ is the standard metric of the Euclid space, $d_{p}$
is called parabolic metric. It is easy to verify that $(\hat{\Lambda}_{t}(\mathbb{R}^{n}),\|\cdot\|_{0})$
is a Banach space, $(\hat{\Lambda},d_{p})$ is a complete metric spaces.

\begin{defn} \label{Continuity}(Continuity). Let $\mathbb{E}$ be
a Banach space. A map $v:\hat{\Lambda}\rightarrow\mathbb{E}$ is said
to be continuous at $\gamma_{t}$, if for any $\varepsilon>0$ there
exits $\delta>0$ such that for each $\bar{\gamma}_{\bar{t}}\in\hat{\Lambda}$
such that $d_{p}(\gamma_{t},\bar{\gamma}_{\bar{t}})<\delta$, we have
$|v(\gamma_{t})-v(\bar{\gamma}_{\bar{t}})|<\varepsilon$. $v$ is
said to be continuous on $\hat{\Lambda}$ and is denoted by $v\in\mathscr{C}(\hat{\Lambda},\mathbb{E})$
if $v$ is continuous at each $\gamma_{t}\in\hat{\Lambda}$. Moreover,
we write $v\in\mathscr{C}_{b}(\hat{\Lambda},\mathbb{E})$ if $v$
is bounded, and if $v(\gamma_{t})\leq C(1+\|\gamma_{t}\|_{0})$ for
all $\gamma_{t}\in\hat{\Lambda}$ holds for some constant $C$, we
write $v\in\mathscr{C}_{l}(\hat{\Lambda},\mathbb{E})$ (the subscript
indicates linear growth).

(uniform continuity). A continuous map $v:\hat{\Lambda}\rightarrow\mathbb{E}$
is said to be uniformly continuous, and is denoted by $u\in\mathscr{C}_{u}(\hat{\Lambda},\mathbb{E})$,
if for any $\varepsilon>0$, there exists a $\delta>0$ such that
for each $\gamma_{t},\bar{\gamma}_{\bar{t}}\in\hat{\Lambda}$ satisfying
$d_{p}(\gamma_{t},\bar{\gamma}_{\bar{t}})<\delta$, we have $|v(\gamma_{t})-v(\bar{\gamma}_{\bar{t}})|\le\varepsilon$.

We write $\mathscr{C}(\hat{\Lambda}),\,\mathscr{C}_{b}(\hat{\Lambda}),\,\mathscr{C}_{l}(\hat{\Lambda})$
and $v\in\mathscr{C}_{u}(\hat{\Lambda})$ for $\mathscr{C}(\hat{\Lambda},\mathbb{R}),\,\mathscr{C}_{b}(\hat{\Lambda},\mathbb{R}),\,\mathscr{C}_{l}(\hat{\Lambda},\mathbb{R}),$
and $v\in\mathscr{C}_{u}(\hat{\Lambda},\mathbb{R})$, respectively.\end{defn}

Now we define the vertical and horizontal derivatives of Dupire \cite{dupire2009functional}.

\begin{defn} \label{derivative}(Vertical derivative). Consider functional
$v:\hat{\Lambda}(\mathbb{R}^{n})\rightarrow\mathbb{R}$ and $\gamma_{t}\in\hat{\Lambda}(\mathbb{R}^{n})$.
The vertical (space) derivative of $v$ at $\gamma_{t}$ is defined
as
\[
D_{x}v(\gamma_{t}):=(D_{1}v(\gamma_{t}),\cdots,D_{n}v(\gamma_{t}))
\]
where
\begin{eqnarray}
D_{i}v(\gamma_{t}) & := & \lim_{h\rightarrow0}\frac{1}{h}\big[v(\gamma_{t}^{he_{i}})-v(\gamma_{t})\big],\quad i=1,\cdots n,\label{eq:define-vertical-derivative}
\end{eqnarray}
if all the limits exist, with $e_{i},i=1,\cdots,n,$ being coordinate
unit vectors of $\mathbb{R}^{n}$. If \eqref{eq:define-vertical-derivative}
is well-defined for all $\gamma_{t}$, the map $D_{x}v:=(D_{1}v,\cdots,D_{n}v):\hat{\Lambda}(\mathbb{R}^{n})\rightarrow\mathbb{R}^{n}$
is called the vertical derivative of $v$. We define the Hessian $D_{xx}v(\gamma_{t})$
in an obvious way. Then $D_{xx}v$ is an $\mathbb{S}(n)$-valued functional
defined on $\hat{\Lambda}(\mathbb{R}^{n})$, where $\mathbb{S}(n)$
is the space of all $n\times n$ symmetric matrices.

(Horizontal derivative). The horizontal derivative at $\gamma_{t}\in\hat{\Lambda}$
of a functional $v:\hat{\Lambda}\rightarrow\mathbb{R}$ is defined
as
\begin{eqnarray}
D_{t}v(\gamma_{t}) & := & \lim_{h\rightarrow0,h>0}\frac{1}{h}[v(\gamma_{t,t+h})-v(\gamma_{t})],\label{eq:defination-horizon-derivative}
\end{eqnarray}
if the limit exists. If \eqref{eq:defination-horizon-derivative}
is well-defined for all $\gamma_{t}\in\hat{\Lambda}$, the functional
$D_{t}v:\hat{\Lambda}\rightarrow\mathbb{R}$ is called the horizontal
derivative of $v$. Note that it is a right derivative. \end{defn}

\begin{defn} \label{hight order derivative} Define $\mathscr{C}^{j,k}(\hat{\Lambda})$
as the set of functionals $v:\hat{\Lambda}\rightarrow\mathbb{R}$
which are $j$ times horizontally and $k$ times vertically differentiable
in $\hat{\Lambda}$ such that all these derivatives are continuous.
Moreover, we write $v\in\mathscr{C}_{b}^{j,k}(\hat{\Lambda})$ if
$v$ together with all its derivatives are bounded, and $v\in\mathscr{C}_{l}^{j,k}(\hat{\Lambda})$
if $v\in\mathscr{C}^{j,k}(\hat{\Lambda})$ and $v$ grows in a linear
way.\end{defn}

\begin{rem} For $v(\gamma_{t})=f(t,\gamma_{t}(t))$ with $f\in\mathscr{C}^{1,1}(\mathbb{R}\times\mathbb{R}^{n},\mathbb{R})$,
we have
\[
D_{t}v(\gamma_{t})=\partial_{t}f(t,\gamma_{t}(t)),\quad\quad D_{x}v(\gamma_{t})=\partial_{x}f(t,\gamma_{t}(t)),
\]
which shows the coincidence of Dupire's derivatives with the classical
ones.

\end{rem}

\subsubsection{Space of continuous paths}

Let $\Lambda_{t}(\mathbb{R}^{n}):=\mathbf{C}_{0}([0,t],\mathbb{R}^{n})$
be the set of all continuous $\mathbb{R}^{n}$-valued functions defined
over $[0,t]$ which vanish at time zero, and $\Lambda(\mathbb{R}^{n}):=\bigcup_{t\in[0,T]}\Lambda_{t}(\mathbb{R}^{n})$.
In the sequel, for notational simplicity, we use $\mathbf{0}$ to
denote $\gamma_{0}$ or vectors and matrices whose components are
all zero. Clearly, $\Lambda(\mathbb{R}^{n})\subset\hat{\Lambda}(\mathbb{R}^{n})$.
$(\Lambda_{t}(\mathbb{R}^{n}),\|\cdot\|_{0})$ is a Banach space,
and $(\Lambda(\mathbb{R}^{n}),d_{p})$ is a complete metric space.
We write $\Lambda$ for $\Lambda(\mathbb{R}^{n})$ if there is no
confusion.

Let $\mathbb{E}$ be a Banach space. $\hat{v}:\hat{\Lambda}\rightarrow\mathbb{E}$
and $v:\Lambda\rightarrow\mathbb{E}$ are called consistent on $\Lambda$
if $v$ is the restriction of $\hat{v}$ on $\Lambda$.

\begin{defn} \label{(Continuity)-derivitive-cont-space} Consider
a map $v:\Lambda\rightarrow\mathbb{E}$.

(i) We write $v\in\mathscr{C}(\Lambda)$ if $v$ is continuous at
every path $\gamma_{t}\in\Lambda$ under $d_{p}$. We write $v\in\mathscr{C}_{b}(\Lambda)$
(resp. $v\in\mathscr{C}_{l}(\Lambda)$, $v\in\mathscr{C}_{u}(\Lambda)$)
if $v\in\mathscr{C}(\Lambda)$ and $v$ is bounded (resp. linearly
growth, uniformly continuous).

(ii) We write $v\in\mathscr{C}^{j,k}(\Lambda)$ if there exists $\hat{v}\in\mathscr{C}^{j,k}(\hat{\Lambda})$
which is consistent with $v$ on $\Lambda$, we shall define
\begin{equation}
D_{t}^{i}v:=D_{t}^{i}\hat{v},\, D_{x}^{\beta}v:=D_{x}^{\beta}\hat{v},\qquad\text{on }\Lambda,\label{eq:derivetive on continuous path}
\end{equation}
where $0\leq i\leq j$ and multi index $\beta=(\beta_{1},\cdots,\beta_{n})$
with the non-negative integers $\alpha_{1},\cdots,\alpha_{n}$ satisfying
$\beta_{1}+\cdots+\beta_{n}\leq k$. Similarly, we define the spaces
$\mathscr{C}_{b}^{j,k}(\Lambda)$, $\mathscr{C}_{l}^{j,k}(\Lambda)$
and $\mathscr{C}_{u}^{j,k}(\Lambda)$ in an obvious way.\end{defn}

\begin{rem} By \cite{dupire2009functional} and \cite{cont2010functional},
the derivatives of $v$ in \eqref{eq:derivetive on continuous path}
is independent of the choice of $\hat{v}$, i.e., if $\hat{v}'\in\mathscr{C}^{j,k}(\hat{\Lambda})$
is another functional consistent with $v$, then $D_{x}^{i}\hat{v}=D_{x}^{i}\hat{v}'$
and $D_{x}^{\alpha}\hat{v}=D_{x}^{\alpha}\hat{v}'$ on $\Lambda$.
Therefore, Definition \ref{(Continuity)-derivitive-cont-space} (ii)
is well defined. \end{rem}

\begin{defn}(H\"older continuity). For $\alpha\in(0,1]$, we say
that a functional $v$ defined on $\mathbf{Q}\subset\Lambda$ is H\"older
continuous on $\mathbf{Q}$ with exponent $\alpha$ if the quantity
\[
[v]_{\alpha;\mathbf{Q}}:=\sup_{\gamma_{t},\gamma'_{t'}\in\mathbf{Q},\gamma_{t}\neq\gamma'_{t'}}\frac{|v(\gamma_{t})-\gamma(\gamma'_{t'})|}{d_{p}^{\alpha}(\gamma_{t},\gamma'_{t'})}
\]
 is finite. Let $v\in\mathscr{C}^{1,2}(\Lambda)$, define
\begin{align}
|v|_{\alpha;\mathbf{Q}} & :=\sup_{\gamma_{t}\in\mathbf{Q}}|v(\gamma_{t})|+[\gamma_{t}]_{\alpha;\mathbf{Q}},\nonumber \\
|v|_{2,\alpha;\mathbf{Q}} & :=|v|_{\alpha;\mathbf{Q}}+|D_{t}v|_{\alpha;\mathbf{Q}}+\sum_{1\leq|\beta|\leq2}|D_{x}^{\beta}v|_{\alpha;\mathbf{Q}}.\label{eq:2apha-holde}
\end{align}
 If \eqref{eq:2apha-holde} is finite, we write $v\in\mathscr{C}^{1+\frac{\alpha}{2},2+\alpha}(\mathbf{Q})$.\end{defn}

\subsubsection{Filtration and localization}

Now we introduce the filtration of $\Lambda_{T}$. Let $\mathscr{G}_{T}:=\mathscr{B}(\Lambda_{T})$,
the smallest Borel $\sigma$-field generated by metric space $(\Lambda_{T},\|\cdot\|_{0})$.
For any $t\in[0,T]$, define $\mathscr{G}_{t}:=\theta_{t}^{-1}(\mathscr{G}_{T})=\sigma(\theta_{t}^{-1}(\mathscr{G}_{T}))$,
where $\theta_{t}:\Lambda_{T}\rightarrow\Lambda_{T}$ is the mapping
\begin{equation}
(\theta_{t}\gamma)(s)=\gamma(t\wedge s),\quad0\leq s\leq T,\text{ for any }\gamma\in\Lambda_{T},\label{eq:theta-stop-fun}
\end{equation}
and $\sigma$ means the smallest $\sigma$-field generated by the
underlying class of subsets. $\mathscr{G}:=\{\mathscr{G}_{t},t\in[0,T]\}$
is a filtration. $\mathscr{G}_{t}$ is just the smallest $\sigma$-algebra
generated by the collection of finite-dimensional cylinder sets of
the form
\[
\big\{\gamma\in\Lambda_{T};(\gamma(t_{1}),\cdots\gamma(t_{k}))\in A\big\};\quad k\geq1,A\in\mathscr{B}(\mathbb{R}^{k}),
\]
where, for all $i=1,\cdots,k$, $t_{i}\in[0,t]$. For more details,
see the monograph of Stroock and Varadhan \cite[Section 1.3]{stroock1979multidimensional}.

Define $\pi_{t}:\Lambda_{T}\rightarrow\Lambda_{t}$ as follows:
\[
(\pi_{t}\gamma)(s)=\gamma_{t}(s),\quad(s,\gamma)\in[0,t]\times\Lambda_{T}.
\]
It is easy to observe that $\pi_{t}^{-1}(\mathscr{G}_{T})=\sigma(\pi_{t}^{-1}(\mathscr{G}_{T}))=\mathscr{B}(\Lambda_{t})$,
the smallest Borel $\sigma$-field generated by metric space $(\Lambda_{t},\|\cdot\|_{0})$.

A map $H:[0,T]\times\Lambda_{T}\rightarrow\mathbb{E}$ is called a
functional process. Moreover, we say a process $H$ is adapted to
the filtration $\mathscr{G}$, if $H(t,\cdot)$ is $\mathscr{G}_{t}$-measurable
for any $t\in[0,T]$. Obviously an adapted process $H$ has the property
that, for any $\gamma^{1},\gamma^{2}\in\Lambda_{T}$ satisfying $\gamma^{1}(s)=\gamma^{2}(s)$
for all $s\in[0,t]$, $H(t,\gamma^{1})=H(t,\gamma^{2})$. Hence $H(t,\gamma_{T})$
can be view as $H(\gamma_{t})$, that is to say a functional adapted
process equals a path functional defined on $\Lambda$. Let $B=\{B(t),t\in[0,T]\}$
be the canonical process on $\Lambda$, i.e. $B(t,\gamma)=\gamma(t)$.
Define $B_{t}:=\{B(s),s\in[0,t]\}$, and it is $\mathscr{G}_{t}$-measurable.
For any continuous map $v:\Lambda\rightarrow\mathbb{E}$, we know
that $\{v(B_{t}),t\in[0,T]\}$ is an adapted functional process.

Let $P_{0}$ be the Wiener measure of space $(\Lambda_{T},\mathscr{G})$,
under which the canonical process $B$ (i.e. $B(t,\gamma)=\gamma(t)$)
is a standard Brownian motion. For any integrable $\mathscr{G}_{T}$-measurable
variable $\xi$, denote $P_{0}[\xi]$ the integration of $\xi$ under
measure $P_{0}$.

Let $X:=\{X(t),t\in[0,T]\}$ be an $n$-dimensional adapted continuous
stochastic process on the probability space $(\Omega,\mathscr{F},(\mathscr{F}_{t})_{0\leq t\leq T},P)$.
For $t\in[0,T]$, $X(t)$ is the value of $X$ at time $t$ and $X_{t}$
is the path of $X$ up to time $t$, i.e., $X_{t}:=\{X(r),r\in[0,t]\}$.
$X$ can be viewed as a map from $\Omega$ to $\Lambda_{T}$, the
continuity and adaption of $X$ imply that $X_{t}$ is $\mathscr{F}_{t}/\mathscr{G}_{t}$-measurable
for any $t\in[0,T]$. For any $v\in\mathscr{C}(\Lambda)$, $\{v(X_{t}),t\in[0,T]\}$
is an $\mathscr{F}$-adapted stochastic process.

The following functional It\^o formula was initiated by Dupire \cite{dupire2009functional}
and later extended to a more general context by Cont \& Fournie \cite{cont2010functional}.

\begin{thm}[Functional It\^o formula] Suppose $X$ is a continuous
semi-martingale and $v\in\mathscr{C}^{1,2}(\Lambda)$. Then for any
$t\in[0,T]$:
\begin{align}
v(X_{t}(\omega))-v(X_{0}(\omega)) & =\int_{0}^{t}D_{s}v(X_{s}(\omega))ds+\frac{1}{2}\int_{0}^{t}D_{xx}v(X_{s}(\omega))d\langle X\rangle(s,\omega)\nonumber \\
 & +\int_{0}^{t}D_{x}v(X_{s}(\omega))dX(s,\omega),\quad\quad\qquad\text{a.s. -}\omega.\label{eq:ito formu}
\end{align}
\end{thm}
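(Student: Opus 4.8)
The plan is to establish \eqref{eq:ito formu} first for a cadlag \emph{piecewise-frozen} approximation of $X$ along a partition and then let the mesh shrink, working throughout with a consistent extension $\hat v\in\mathscr{C}^{1,2}(\hat\Lambda)$ of $v$ (which exists since $v\in\mathscr{C}^{1,2}(\Lambda)$) and restricting back to $\Lambda$ only at the end. First I would localize: stop $X$ at $\tau_m:=\inf\{s:|X(s)|+\langle X\rangle(s)+V_s\ge m\}\wedge t$, with $V_s$ the total variation on $[0,s]$ of the finite-variation part of $X$; since both sides of \eqref{eq:ito formu} are stable under such stopping, one reduces to the case where $X$, $\langle X\rangle$, $V$ and the $L^2$-norm of the martingale part of $X$ are all bounded. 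Fix $\omega$ outside a null set. As $X(\cdot,\omega)$ is continuous, the map $s\mapsto X_s(\omega)$ is continuous from $[0,t]$ into $(\Lambda,d_p)$ (one has $d_p(X_s,X_{s'})\le\sqrt{|s-s'|}+\sup_{|r-r'|\le|s-s'|}|X(r,\omega)-X(r',\omega)|$), so $K(\omega):=\{X_s(\omega):s\in[0,t]\}$ is $d_p$-compact, and the continuous functionals $D_s\hat v,D_x\hat v,D_{xx}\hat v$ are uniformly continuous along $d_p$-neighbourhoods of $K(\omega)$.

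Given a partition $\pi:0=t_0<\cdots<t_N=t$, let $X^\pi$ be the cadlag step process with $X^\pi(s)=X(t_i,\omega)$ for $s\in[t_i,t_{i+1})$ and $X^\pi(t)=X(t,\omega)$, put $\Delta_i:=X(t_{i+1},\omega)-X(t_i,\omega)$, and for $s\in[t_i,t_{i+1}]$ let $\eta^{(i)}_s$ be the flat extension of $X^\pi_{t_i}$ to $[0,s]$ (so $\eta^{(i)}_{t_i}=X^\pi_{t_i}$ and $X^\pi_{t_{i+1}}=(\eta^{(i)}_{t_{i+1}})^{\Delta_i}$ in the vertical-perturbation notation $\gamma_t^x$). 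Telescoping $\hat v(X^\pi_t)-\hat v(\mathbf{0})=\sum_i\big(\hat v(X^\pi_{t_{i+1}})-\hat v(X^\pi_{t_i})\big)$, each summand splits into a \emph{horizontal} increment
\[
\hat v(\eta^{(i)}_{t_{i+1}})-\hat v(\eta^{(i)}_{t_i})=\int_{t_i}^{t_{i+1}}D_s\hat v(\eta^{(i)}_s)\,ds,
\]
valid because $s\mapsto\hat v(\eta^{(i)}_s)$ has continuous right derivative $D_t\hat v$ and is therefore $C^1$, and a \emph{vertical} increment treated by the second-order Taylor formula for the $C^2$ map $x\mapsto\hat v((\eta^{(i)}_{t_{i+1}})^x)$, whose gradient and Hessian at $x$ are $D_x\hat v((\eta^{(i)}_{t_{i+1}})^x)$ and $D_{xx}\hat v((\eta^{(i)}_{t_{i+1}})^x)$:
\[
\hat v(X^\pi_{t_{i+1}})-\hat v(\eta^{(i)}_{t_{i+1}})=D_x\hat v(\eta^{(i)}_{t_{i+1}})\cdot\Delta_i+\tfrac12\Delta_i^{\!\top}D_{xx}\hat v(\eta^{(i)}_{t_{i+1}})\Delta_i+R_i,\qquad |R_i|\le\rho(|\Delta_i|)\,|\Delta_i|^2,
\]
with $\rho$ a modulus of continuity of $D_{xx}\hat v$ near $K(\omega)$.

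Summing over $i$ and letting $|\pi|\to0$: since $d_p(\eta^{(i)}_s,X_s)\le|\pi|^{1/2}+\operatorname{osc}(X(\cdot,\omega);|\pi|)\to0$, the horizontal sum is a Riemann sum converging to $\int_0^tD_sv(X_s)\,ds$; the first-order vertical sum $\sum_iD_x\hat v(\eta^{(i)}_{t_{i+1}})\cdot\Delta_i$ has $\mathscr{F}_{t_i}$-measurable coefficients and converges in probability, by the standard approximation theorem for stochastic integrals together with continuity of $D_xv$, to $\int_0^tD_xv(X_s)\,dX(s)$; the second-order vertical sum $\tfrac12\sum_i\Delta_i^{\!\top}D_{xx}\hat v(\eta^{(i)}_{t_{i+1}})\Delta_i$ converges to $\tfrac12\int_0^tD_{xx}v(X_s)\,d\langle X\rangle(s)$ by the classical fact that such weighted quadratic sums of a semimartingale with continuous adapted weights converge to the corresponding bracket integral; and $\sum_i|R_i|\le\rho(\max_i|\Delta_i|)\sum_i|\Delta_i|^2\to0$ in probability, since $\max_i|\Delta_i|\to0$ while $\sum_i|\Delta_i|^2\to\langle X\rangle(t)$ stays bounded. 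On the left-hand side $d_p(X^\pi_t,X_t)\le\operatorname{osc}(X(\cdot,\omega);|\pi|)\to0$ gives $\hat v(X^\pi_t)\to v(X_t)$ by continuity of $v$, while $\hat v(X^\pi_0)=v(\mathbf{0})=v(X_0)$; extracting a subsequence of partitions along which all these convergences are almost sure yields \eqref{eq:ito formu}. The main obstacle is exactly this limit passage in the stochastic and quadratic-variation sums: one must combine the localization (which supplies the boundedness and $L^2$-control behind the approximation theorem for Itô integrals and the bracket-sum convergence) with the uniform continuity of $D_xv$ and $D_{xx}v$ on $d_p$-neighbourhoods of the random compact set $K(\omega)$; this is also where the cadlag space $\hat\Lambda$ is indispensable, since the frozen paths $\eta^{(i)}_s$ and the step paths $X^\pi_s$ are genuinely discontinuous, and the formula on $\Lambda$ then follows from the consistency of $\hat v$ with $v$.
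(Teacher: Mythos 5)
The paper does not prove this theorem: it states it with a citation to Dupire \cite{dupire2009functional} and Cont \& Fourni\'e \cite{cont2010functional}, and your argument is precisely the standard proof from those references (localization, piecewise-frozen cadlag approximation along a partition, splitting each increment into a horizontal part handled by the right-derivative/$C^1$ lemma and a vertical part handled by second-order Taylor expansion, then passing to the limit in the Riemann, It\^o and quadratic-variation sums), so it is correct and consistent with the paper's intent. The only point stated informally is the uniform continuity of $D_x\hat v$ and $D_{xx}\hat v$ on a $d_p$-neighbourhood of the compact set $K(\omega)$ rather than on $K(\omega)$ itself, which requires the small additional observation that all frozen and vertically perturbed approximating paths of sufficiently fine partitions lie in a common compact (or at least uniformly controlled) subset of $(\hat\Lambda,d_p)$.
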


A time functional $\tau:\Lambda_{T}\rightarrow[0,T]$ (resp. $\tau:\Lambda_{T}\rightarrow[0,T]$)
is said to be a $\mathscr{G}$-stopping time, if $\{\gamma_{t}:\gamma\in\Lambda_{T},\,\tau(\gamma)\leq t\}\in\mathscr{G}_{t}$
for every $t\in[0,T]$. Let $\mathscr{T}$ be the set of all $\mathscr{G}$-stopping
times. Define $\mathscr{T}_{+}:=\{\tau\in\mathscr{T}:\tau>0\}$.

Let $u\in\mathscr{C}(\Lambda)$. Obviously, for any constant $\lambda>0$,
$\tau(\gamma):=\inf\big\{ t:u(\gamma_{t})\geq\lambda\big\}\wedge T$
is a stopping time.





\subsubsection{Space shift }

For any fixed $\gamma_{t}\in\hat{\Lambda}$ and $s\in[t,T]$, we introduce
the shifted spaces of cadlag and continuous paths.

Let $\hat{\Lambda}_{s}^{\gamma_{t}}:=\{\bar{\gamma}_{s}\in\hat{\Lambda}_{s};\bar{\gamma}_{s}(r)=\gamma_{t}(r)\text{ for any }r\in[0,t]\}$
be the space of cadlag paths originating at $\gamma_{t}$, and $\hat{\Lambda}^{\gamma_{t}}:=\cup_{t\leq s\leq T}\hat{\Lambda}_{s}^{\gamma_{t}}$.
Let $\Lambda_{s}^{\gamma_{t}}:=\{\bar{\gamma}_{s}\in\hat{\Lambda}_{s}^{\gamma_{t}};\bar{\gamma}_{s}\text{ is continuous on }[t,s]\}$
be the spaces with continuous paths originating at $\gamma_{t}$ and
$\Lambda_{s}^{\gamma_{t}}:=\cup_{t\leq s\leq T}\Lambda_{s}^{\gamma_{t}}$.
It is easy to verify that $(\hat{\Lambda}_{s}^{\gamma_{t}},\|\cdot\|_{0})$
and $(\Lambda_{s}^{\gamma_{t}},\|\cdot\|_{0})$ are Banach spaces,
and $(\hat{\Lambda}^{\gamma_{t}},d_{p})$ and $(\Lambda^{\gamma_{t}},d_{p})$
are complete metric spaces.

For any map $v:\hat{\Lambda}^{\gamma_{t}}\rightarrow\mathbb{R}$,
define the derivatives in the spirit of Definition \ref{derivative},
and define the spaces $\mathscr{C}(\hat{\Lambda}^{\gamma_{t}})$,
$\mathscr{C}_{b}(\hat{\Lambda}^{\gamma_{t}})$, $\mathscr{C}_{u}^{j,k}(\hat{\Lambda}^{\gamma_{t}})$
in the spirit of Definitions \ref{Continuity} and \ref{hight order derivative}.
We define $\mathscr{C}(\Lambda^{\gamma_{t}})$, $\mathscr{C}_{b}(\Lambda^{\gamma_{t}})$,
$\mathscr{C}_{u}^{j,k}(\Lambda^{\gamma_{t}})$ in the spirit of Definition
\ref{(Continuity)-derivitive-cont-space}.

Let $\mathscr{G}_{T}^{\gamma_{t}}:=\mathscr{B}(\Lambda_{T}^{\gamma_{t}})$
, the smallest Borel $\sigma$-field generated by metric space $(\Lambda_{T}^{\gamma_{t}},\|\cdot\|)$.
For $s\in[t,T]$, define $\mathscr{G}_{s}^{\gamma_{t}}:=\theta_{s}^{-1}(\mathscr{G}_{T}^{\gamma_{t}})=\sigma(\theta_{s}^{-1}(\mathscr{G}_{T}^{\gamma_{t}}))$,
with $\theta$ being defined by \eqref{eq:theta-stop-fun}. $\mathscr{G}^{\gamma_{t}}:=\{\mathscr{G}_{s}^{\gamma_{t}},s\in[t,T]\}$
is said to be $\mathscr{G}^{\gamma_{t}}$-filtration. A time functional
$\tau:\Lambda_{T}^{\gamma_{t}}\rightarrow[t,T]$ is called $\mathscr{G}^{\gamma_{t}}$-stopping
time, if $\{\gamma_{s}:\gamma\in\Lambda_{T}^{\gamma_{t}},\,\tau(\gamma)\leq s\}\in\mathscr{G}_{s}^{\gamma_{t}}$
for any $s\in[t,T]$. The set of all $\mathscr{G}^{\gamma_{t}}$-stopping
is denoted $\mathscr{T}^{\gamma_{t}}$. Denote $\mathscr{T}_{+}^{\gamma_{t}}:=\{\tau\in\mathscr{T}_{+}^{\gamma_{t}}:\tau>t\}$.


Let $\{H_{t},t\in[0,T]\}$ be a $\mathscr{G}$-progressively measurable
functional process, and $\xi$ be a $\mathscr{G}_{T}$ measurable
functional variable, let $\gamma_{t}\in\hat{\Lambda}$. Define the
process $H^{\gamma_{t}}$ on $[0,T]\times\Lambda_{T}^{\gamma_{t}}$
and the variable $\xi^{\gamma_{t}}$ on $\Lambda_{T}^{\gamma_{t}}$,
as the restriction on $\Lambda^{\gamma_{t}}$ of $H$ and $\xi$,
respectively; that is,

\begin{align*}
H^{\gamma_{t}}: & =H\big|_{[0,T]\times\Lambda_{T}^{\gamma_{t}}},\quad\quad\xi^{\gamma_{t}}:=\xi\big|_{\Lambda_{T}^{\gamma_{t}}}.
\end{align*}
Then, $\{H_{s}^{\gamma_{t}},s\in[t,T]\}$ is a $\mathscr{G}^{\gamma_{t}}$-progressively
measurable functional process, and $\xi^{\gamma_{t}}$ is a $\mathscr{G}_{T}^{\gamma_{t}}$-measurable
functional variable.


\subsubsection{Space of $\alpha$-H\"older continuous paths}

For any $\alpha\in(0,1]$, we say that $\gamma\in\Lambda(\mathbb{R}^{n})$
is $\alpha$-H\"older continuous if
\[
\llbracket\gamma_{t}\rrbracket_{\alpha}:=\sup_{0\leq s<r\leq t}\frac{|\gamma_{t}(s)-\gamma_{t}(r)|}{|s-r|^{\alpha}}<\infty.
\]
We call $\llbracket\gamma_{t}\rrbracket_{\alpha}$ the $\alpha$-H\"older
modulus of $\gamma_{t}$. Define the $\alpha$-H\"older space:
\[
\mathbf{C}^{\alpha}(\mathbb{R}^{n}):=\{\gamma_{t}\in\Lambda:\llbracket\gamma_{t}\rrbracket_{\alpha}<\infty\}.
\]
Clearly, $\mathbf{C}^{\alpha}(\mathbb{R}^{n})\subset\Lambda(\mathbb{R}^{n})$.
We write $\mathbf{C}^{\alpha}$ for $\mathbf{C}^{\alpha}(\mathbb{R}^{n})$
if there is no confusion.

For any $\alpha\in(0,1]$ and $\mu>0$, denote
\begin{align}
\mathbf{C}_{\mu}^{\alpha} & :=\{\gamma_{t}\in\Lambda:\llbracket\gamma_{t}\rrbracket_{\alpha}\leq\mu\}.\label{eq:D-nu-m compact-set}
\end{align}

Analogous to the proof of the Arzela-Ascoli theorem, we show the following
compact property.

\begin{prop} For $\alpha\in(0,1]$, $\mathbf{C}_{\mu}^{\alpha}$
is a compact subset of $(\Lambda,d_{p})$.\end{prop}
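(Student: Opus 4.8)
The plan is to establish sequential compactness of $\mathbf{C}_{\mu}^{\alpha}$, which in the metric space $(\Lambda,d_{p})$ is equivalent to compactness. The first step is a reformulation of $d_{p}$ suited to the Arzela--Ascoli machinery. For $\gamma_{t}\in\Lambda$ let $\tilde{\gamma}\in\mathbf{C}([0,T],\mathbb{R}^{n})$ denote its frozen extension, $\tilde{\gamma}(s):=\gamma_{t}(s\wedge t)$. A direct computation --- splitting $[0,T]$ at $t\vee\bar{t}$ and noting that on $[t\vee\bar{t},T]$ the two frozen extensions are constant, so their difference there is bounded by its value at the endpoint $t\vee\bar{t}$, which already lies in $[0,t\vee\bar{t}]$ --- gives, for all $\gamma_{t},\bar{\gamma}_{\bar{t}}\in\Lambda$,
\[
d_{p}(\gamma_{t},\bar{\gamma}_{\bar{t}})=\sqrt{|t-\bar{t}|}+\sup_{0\leq s\leq T}|\tilde{\gamma}(s)-\tilde{\bar{\gamma}}(s)|.
\]
Thus $d_{p}$-convergence of a sequence is exactly convergence of the truncation times together with uniform convergence of the frozen extensions.

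Next, take any sequence $\{\gamma_{t_{k}}^{k}\}_{k\geq1}\subset\mathbf{C}_{\mu}^{\alpha}$. Since $[0,T]$ is compact, after passing to a subsequence we may assume $t_{k}\to t_{*}$. Freezing does not increase the $\alpha$-H\"older modulus: for $0\leq s<r$, the increment $\tilde{\gamma}^{k}(r)-\tilde{\gamma}^{k}(s)$ equals $\gamma^{k}(r)-\gamma^{k}(s)$ when $r\leq t_{k}$, vanishes when $s\geq t_{k}$, and equals $\gamma^{k}(t_{k})-\gamma^{k}(s)$ with $|t_{k}-s|\leq|r-s|$ when $s<t_{k}<r$; in every case its norm is at most $\mu|r-s|^{\alpha}$. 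Hence $\llbracket\tilde{\gamma}^{k}\rrbracket_{\alpha}\leq\mu$ on $[0,T]$, and with $\tilde{\gamma}^{k}(0)=0$ this gives the uniform bound $\sup_{0\le s\le T}|\tilde{\gamma}^{k}(s)|\leq\mu T^{\alpha}$. So $\{\tilde{\gamma}^{k}\}$ is uniformly bounded and equicontinuous, and by the classical Arzela--Ascoli theorem a further subsequence converges uniformly on $[0,T]$ to some $g\in\mathbf{C}([0,T],\mathbb{R}^{n})$ with $g(0)=0$ and $\llbracket g\rrbracket_{\alpha}\leq\mu$, both properties passing to uniform limits.

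It remains to identify the limit inside $\mathbf{C}_{\mu}^{\alpha}$ and conclude. I claim $g$ is constant on $[t_{*},T]$: for fixed $s>t_{*}$ we have $t_{k}<s$ for all large $k$, hence $\tilde{\gamma}^{k}(s)=\tilde{\gamma}^{k}(t_{k})$, and $|\tilde{\gamma}^{k}(t_{k})-g(t_{*})|\leq\sup_{0\le r\le T}|\tilde{\gamma}^{k}(r)-g(r)|+|g(t_{k})-g(t_{*})|\to0$; therefore $g(s)=g(t_{*})$, and continuity extends this to all of $[t_{*},T]$. Consequently $\gamma_{t_{*}}:=g|_{[0,t_{*}]}$ lies in $\Lambda_{t_{*}}(\mathbb{R}^{n})$, satisfies $\llbracket\gamma_{t_{*}}\rrbracket_{\alpha}\leq\mu$, so $\gamma_{t_{*}}\in\mathbf{C}_{\mu}^{\alpha}$, and has $g$ as its frozen extension. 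By the reformulation of $d_{p}$ above, $d_{p}(\gamma_{t_{k}}^{k},\gamma_{t_{*}})=\sqrt{|t_{k}-t_{*}|}+\sup_{0\le s\le T}|\tilde{\gamma}^{k}(s)-g(s)|\to0$ along the subsequence. Hence every sequence in $\mathbf{C}_{\mu}^{\alpha}$ has a subsequence converging in $d_{p}$ to a point of $\mathbf{C}_{\mu}^{\alpha}$, which is the asserted compactness.

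Beyond the routine Arzela--Ascoli estimates, the point requiring care is the interplay between truncation times and frozen extensions inherent in the parabolic metric --- in particular the fact that the Arzela--Ascoli limit $g$ is automatically frozen beyond $t_{*}$, so that it genuinely is the frozen extension of an honest path of length $t_{*}$. The metric reformulation in the first step is exactly what makes this transparent; without it one would have to case-split $d_{p}(\gamma_{t_{k}}^{k},\gamma_{t_{*}})$ according to the sign of $t_{k}-t_{*}$ and argue directly that the frozen pieces match up in the limit.
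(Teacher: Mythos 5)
Your proof is correct, and it takes a different route from the paper's. The paper re-derives compactness from scratch in the spirit of the Arzel\`a--Ascoli proof: after extracting $t_k\to t$, it runs a diagonal argument over an enumeration of $\mathbb{Q}\cap[0,t)$ to get pointwise limits, checks the H\"older bound on rationals, extends continuously, and then establishes uniform convergence by hand via a finite $\varepsilon$-mesh of points $r_j=j(\varepsilon/\mu)^{1/\alpha}$. You instead reduce to the classical Arzel\`a--Ascoli theorem on $\mathbf{C}([0,T],\mathbb{R}^n)$ by embedding $\Lambda$ via frozen extensions; the key technical observation is the identity $d_p(\gamma_t,\bar\gamma_{\bar t})=\sqrt{|t-\bar t|}+\sup_{0\le s\le T}|\tilde\gamma(s)-\tilde{\bar\gamma}(s)|$ (valid because both extensions are constant beyond $t\vee\bar t$), together with the check that freezing does not increase the $\alpha$-H\"older modulus. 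This buys you a shorter and cleaner argument: the diagonalization and the mesh estimate are absorbed into the black-boxed Arzel\`a--Ascoli theorem, and the only genuinely new point --- correctly identified and handled in your last step --- is that the uniform limit $g$ is automatically constant on $[t_*,T]$, hence is the frozen extension of an honest path of length $t_*$ lying in $\mathbf{C}_\mu^\alpha$. The paper's version is self-contained but longer and, as written, somewhat rougher in its final uniform-convergence step. Both arguments are sound.
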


\begin{proof} Since $(\Lambda,d_{p})$ is a metric space, it suffices
to prove that every sequence $\{\gamma_{t_{k}}^{k}\}_{k=1}^{\infty}\subset\mathbf{C}_{\mu}^{\alpha}$
has a convergence subsequence and the limit lies in $\mathbf{C}_{\mu}^{\alpha}$.

For $\{t_{k}\}_{k=1}^{\infty}\subset[0,T]$, it has a convergence
subsequence, still denoted by $\{\gamma_{t_{k}}^{k}\}_{k=1}^{\infty}$.
Without loss of generality, we suppose that $t_{k}$ converges increasingly
to$t\in[0,t]$ as $k\to\infty$. From \eqref{eq:D-nu-m compact-set},
we have
\begin{equation}
|\gamma_{t_{k}}^{k}(r)|\leq|\gamma_{t_{k}}^{k}(0)|+\mu r^{\alpha}\leq\mu T^{\alpha},\,\forall r\in[0,t_{k}],k=1,2\cdots.\label{eq:gamma-k bounded}
\end{equation}
Let $\mathbb{Q}$ be the set of rational numbers, and $\{r_{1},r_{2},\cdots\}$
be an enumeration of $\mathbb{Q}\cap[0,t)$. By \eqref{eq:gamma-k bounded},
we can choose a subsequence $\{\gamma_{t_{k}^{(1)}}^{(1)k}\}_{k=1}^{\infty}$
of $\{\gamma_{t_{k}}^{k}\}_{k=1}^{\infty}$ such that $r_{1}\in[0,t_{k}^{(1)}]$
for all $k=1,2,\cdots$ and $\{\gamma_{t_{k}^{(1)}}^{(1)k}(r_{1})\}_{k=1}^{\infty}$
converges to a limit, denoted by $\gamma_{t}(r_{1})$. From $\{\gamma_{t_{k}^{(1)}}^{(1)k}\}_{k=1}^{\infty}$,
choose a further subsequence $\{\gamma_{t_{k}^{(2)}}^{(2)k}\}_{k=1}^{\infty}$
such that $r_{2}\in[0,t_{k}^{(2)}]$ for all $k=1,2,\cdots$ and $\{\gamma_{t_{k}^{(2)}}^{(2)k}(r_{2})\}_{k=1}^{\infty}$
converges to a limit $\gamma_{t}(r_{2})$. Continue this process,
and then let $\{\bar{\gamma}_{\bar{t}_{k}}^{k}\}_{k=1}^{\infty}=\{\gamma_{t_{k}^{(k)}}^{(k)k}\}_{k=1}^{\infty}$
be the ``diagonal sequence''. We have $\{\bar{\gamma}_{\bar{t}_{k}}^{k}(r):r\in[0,\bar{t}_{k}]\text{for all }k\}$
has a unique accumulation point $\gamma_{t}(r)$ for any $r\in\mathbb{Q}\cap[0,t)$.

For any $r,s\in\mathbb{Q}\cap[0,t)$,
\[
|\bar{\gamma}_{\bar{t}_{k}}^{k}(r)-\bar{\gamma}_{\bar{t}_{k}}^{k}(s)|\leq\mu|r-s|^{\alpha},\quad\forall k\geq K,
\]
where $K$ is a sufficiently large integer such that $r,s\in[t,\bar{t}_{k}]$
for all $k\geq K$. Setting $n\to\infty$, we have
\[
|\gamma_{t}(r)-\gamma_{t}(s)|\leq\mu|r-s|^{\alpha},\quad\forall s,r\in\mathbb{Q}\cap[0,t).
\]
Hence $\gamma_{t}$ has a continuous extension on $[0,t]$, still
denoted by $\gamma_{t}$, and it lies in $\mathbf{C}_{\mu}^{\alpha}$.

It remains to show the limit $\lim_{k\to\infty}d_{p}(\bar{\gamma}_{\bar{t}_{k}}^{k},\gamma_{t})=0$.
In fact, for any $\varepsilon>0$, define
\[
r_{j}=j(\frac{\varepsilon}{\mu})^{\frac{1}{\alpha}},\quad j=1,2,\cdots,\lfloor t(\frac{\mu}{\varepsilon})^{\frac{1}{\alpha}}\rfloor,
\]
where $\lfloor s\rfloor$ denotes the greatest integer less than or
equal to $s$. Then, for $s\in[0,t]$, there is some $r_{j}$ such
that $|r_{j}-s|\leq(\frac{\varepsilon}{\mu})^{\frac{1}{\alpha}}$.
For a sufficiently large $K$, we have $r_{k}\in[0,\bar{t}_{k}]$
and $|\bar{\gamma}_{\bar{t}_{k}}^{k}(r_{j})-\gamma_{t}(r_{j})|<\varepsilon$,
for all $j=1,2,\cdots\lfloor t(\frac{\mu}{\varepsilon})^{\frac{1}{\alpha}}\rfloor$
and $k>K$. Consequently,
\begin{align*}
|\bar{\gamma}_{\bar{t}_{k}}^{k}(s)-\gamma_{t}(s)| & \leq|\bar{\gamma}_{\bar{t}_{k}}^{k}(s)-\bar{\gamma}_{\bar{t}_{k}}^{k}(r_{j})|+|\bar{\gamma}_{\bar{t}_{k}}^{k}(r_{j})-\gamma_{t}(r_{j})|+|\gamma_{t}(r_{j})-\gamma_{t}(s)|\\
 & \leq3\varepsilon,\quad\qquad\qquad\forall s\in[0,t],\, k>K.
\end{align*}
Furthermore,
\[
d_{p}(\bar{\gamma}_{\bar{t}_{k}}^{k},\gamma_{t})=\max_{0\leq s\leq t}d_{p}(\bar{\gamma}_{\bar{t}_{k},t}^{k}(s),\gamma_{t}(s))+\sqrt{|\bar{t}_{k}-t|}\leq\varepsilon+\sqrt{|\bar{t}_{k}-t|},
\]
which leads to $\lim_{k\to\infty}d_{p}(\bar{\gamma}_{\bar{t}_{k}}^{k},\gamma_{t})=0$.
\end{proof}

Define the following (random) time for the path to oscillate beyond
a given $\alpha$-H\"older modulus:
\[
\tau_{\mu}^{\alpha}(\gamma):=\inf\{t>0:\llbracket\gamma_{t}\rrbracket_{\alpha}>\mu\},\quad\gamma\in\Lambda_{T}.
\]
It is a $\mathscr{G}$-stopping time due to the following
\begin{align*}
\{\tau_{\mu}^{\alpha}\leq t\} & =\left\{ \gamma\in\Lambda_{T}:\llbracket\gamma_{t}\rrbracket_{\alpha}\leq\mu\right\} =\left\{ \gamma\in\Lambda_{T}:\sup_{s,r\in\mathbb{Q}\cap[0,t],s\neq r}\frac{|\gamma(s)-\gamma(t)|}{|s-r|^{\alpha}}\leq\mu\right\} \\
 & =\bigcap_{s,r\in\mathbb{Q}\cap[0,t],s\neq r}\left\{ \gamma\in\Lambda_{T}:|\gamma(s)-\gamma(t)|\leq\mu|s-r|^{\alpha}\right\} \in\mathscr{G}_{t}.
\end{align*}
This kind of exit time will play a crucial role in the subsequent
proof of the existence of a viscosity solution.

\subsection{Backward stochastic differential equations}

Let $(\Omega,\mathscr{F},(\mathscr{F}_{t})_{0\leq t\leq T},P)$ be
a probability space with the usual condition (see Karatzas and Shreve
\cite{karatzas1991brownian}), and $\{W(t),t\in[0,T]\}$ be a $d$-dimensional
standard Brownian motion. Let $\mathscr{N}$ be the collection of
all $P$-null sets in $\Omega,$ for any $0\leq t\leq r\leq T$, $\mathscr{F}_{r}^{t}$
denotes the completion of $\sigma(W(s)-W(t);t\leq s\leq r)$, i.e.,
$\mathscr{F}_{r}^{t}:=\sigma(W(s)-W(t);t\leq s\leq r)\cup\mathscr{N}$.
We also write $\mathscr{F}^{t}$ for $\{\mathscr{F}_{s}^{t},s\in[t,T]\}$.

For any $t\in[0,T]$, denote by $\mathscr{M}^{2}(t,T)$ the space
of all $\mathscr{F}^{t}$-adapted, $\mathbb{R}^{d}$-valued processes
$\{Y(s),s\in[t,T]\}$ such that $E[\int_{t}^{T}|Y(s)|^{2}ds]<\infty$
and by $\mathscr{S}^{2}(0,T)$ the space of all $\mathscr{F}^{t}$-adapted,
$\mathbb{R}$-valued continuous processes $\{Y(s),s\in[t,T]\}$ such
that $E[\sup_{s\in[t,T]}|Y(s)|^{2}]<\infty$.

\begin{lem} \label{lem:bsde1} Consider $f:\Omega\times[0,T]\times\mathbb{R}\times\mathbb{R}^{d}\rightarrow\mathbb{R}$
such that $\{f(t,y,z),t\in[0,T]\}$ is progressively measurable for
each $(y,z)\in\mathbb{R}\times\mathbb{R}^{d}$, and the following
two conditions are satisfied:

(i) $f$ is uniformly Lipschitz continuous about $(y,z)\in\mathbb{R}\times\mathbb{R}^{d}$;

(ii) $f(\cdot,0,\mathbf{0})\in\mathscr{M}^{2}(0,T)$. For any $\xi\in L^{2}(\Omega,\mathscr{F}_{T},P)$,
the BSDE
\begin{equation}
Y(t)=\xi+\int_{t}^{T}f(s,Y(t),Z(s))ds-\int_{t}^{T}Z(s)dW_{s},\quad0\leq t\leq T,
\end{equation}
has a unique adapted solution $(Y,Z)\in\mathscr{S}^{2}(0,T)\times\mathscr{M}^{2}(0,T).$
\end{lem}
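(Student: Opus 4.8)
The plan is to construct the solution as the fixed point of a contraction mapping on the Banach space $\mathscr{S}^{2}(0,T)\times\mathscr{M}^{2}(0,T)$ equipped with a suitably weighted norm, following the classical argument of Pardoux \& Peng \cite{pardoux1992backward}. First I would treat the \emph{affine} case in which the generator does not depend on $(y,z)$: given $\phi\in\mathscr{M}^{2}(0,T)$, the equation $Y(t)=\xi+\int_{t}^{T}\phi(s)\,ds-\int_{t}^{T}Z(s)\,dW_{s}$ has a unique solution. Indeed, set $M(t):=E\big[\xi+\int_{0}^{T}\phi(s)\,ds\,\big|\,\mathscr{F}_{t}\big]$, a square-integrable martingale; by the martingale representation theorem there is a unique $Z\in\mathscr{M}^{2}(0,T)$ with $M(t)=M(0)+\int_{0}^{t}Z(s)\,dW_{s}$, and then $Y(t):=M(t)-\int_{0}^{t}\phi(s)\,ds$ solves the equation, with $Y\in\mathscr{S}^{2}(0,T)$ by Doob's maximal inequality together with $\xi\in L^{2}$ and $\phi\in\mathscr{M}^{2}(0,T)$.

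Next, for $(y,z)\in\mathscr{S}^{2}(0,T)\times\mathscr{M}^{2}(0,T)$, condition (i) gives the pointwise bound $|f(s,y(s),z(s))|\le|f(s,0,\mathbf{0})|+K(|y(s)|+|z(s)|)$, so by (ii) the process $s\mapsto f(s,y(s),z(s))$ lies in $\mathscr{M}^{2}(0,T)$; by the affine case we may define $\Gamma(y,z):=(Y,Z)$ as the unique solution of the affine BSDE with this generator. A pair $(Y,Z)$ solves the original BSDE if and only if it is a fixed point of $\Gamma$.

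To see that $\Gamma$ is a contraction, take two inputs $(y^{i},z^{i})$, let $(Y^{i},Z^{i})=\Gamma(y^{i},z^{i})$, and write $\delta Y=Y^{1}-Y^{2}$, $\delta Z=Z^{1}-Z^{2}$, $\delta y=y^{1}-y^{2}$, $\delta z=z^{1}-z^{2}$. Applying It\^o's formula to $e^{\beta s}|\delta Y(s)|^{2}$ on $[t,T]$, taking expectations (the stochastic integral is a genuine martingale thanks to the $\mathscr{S}^{2}\times\mathscr{M}^{2}$ bounds), and using the Lipschitz estimate for $f$ together with Young's inequality $2ab\le\epsilon a^{2}+\epsilon^{-1}b^{2}$, one obtains an estimate of the form
\[
\beta\, E\!\int_{0}^{T}\! e^{\beta s}|\delta Y(s)|^{2}\,ds+E\!\int_{0}^{T}\! e^{\beta s}|\delta Z(s)|^{2}\,ds\le C_{K,\epsilon}\, E\!\int_{0}^{T}\! e^{\beta s}\big(|\delta y(s)|^{2}+|\delta z(s)|^{2}\big)\,ds .
\]
Choosing $\epsilon$ small and then $\beta$ large makes the constant strictly less than $\tfrac12$, so $\Gamma$ is a contraction on $\mathscr{S}^{2}(0,T)\times\mathscr{M}^{2}(0,T)$ for the equivalent norm $\|(Y,Z)\|_{\beta}^{2}:=E\int_{0}^{T}e^{\beta s}(|Y(s)|^{2}+|Z(s)|^{2})\,ds$. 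The Banach fixed point theorem then yields a unique $(Y,Z)$, which is the unique adapted solution; the $\sup$-norm bound $E[\sup_{s}|Y(s)|^{2}]<\infty$ is recovered afterward by a Burkholder--Davis--Gundy estimate applied to the fixed point.

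The one step that needs genuine care is the a priori estimate above: one must track the constants in the It\^o computation precisely so that the weighted norm truly contracts, and separately handle the $\mathscr{S}^{2}$-component via BDG rather than a crude pathwise bound. An equivalent route, if one prefers to avoid the weighting, is to establish the contraction on a short interval $[T-\delta,T]$ in the unweighted norm and patch finitely many such intervals; but the weighted-norm argument is cleaner and gives the global statement at once. Everything else is routine.
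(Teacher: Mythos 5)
Your argument is correct: it is precisely the classical Pardoux--Peng contraction proof (martingale representation for the affine case, then a fixed point in the exponentially weighted norm, with the $\mathscr{S}^{2}$ bound recovered by BDG), which is exactly the result the paper recalls here without proof, by reference to the BSDE literature. The only point worth being careful about, which you already flag, is that the weighted norm controls only the $\mathscr{M}^{2}\times\mathscr{M}^{2}$ topology, so one should note that the fixed point automatically lies in $\mathscr{S}^{2}(0,T)$ because it is the image under $\Gamma$ of an element of the space and the affine solution is already in $\mathscr{S}^{2}(0,T)$.
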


We recall the following comparison theorem on BSDEs (see El Karoui,
Peng and Quenez \cite{el1997backward})

\begin{lem} \label{lem:comparison-theorem} Let two BSDEs of data
$(\xi_{1},f_{1})$ and $(\xi_{2},f_{2})$, satisfy all the assumptions
of Lemma~\ref{lem:bsde1}. Denote by $(Y^{1},Z^{1})$ and $(Y^{2},Z^{2})$
their respective adapted solutions. Then we have:

(1) (Monotonicity). If $\xi_{1}\geq\xi_{2}$ and $\tilde{f}_{1}\geq\tilde{f}_{2}$,
a.s., then $Y^{1}(t)\geq Y^{2}(t)$, a.s., for all $t\in[0,T]$.

(2) (Strict monotonicity). If, in addition to (1), we also $P\{\xi_{1}>\xi_{2}\}$>0,
then $P\{Y^{1}(t)>Y^{2}(t)\}>0$ for any $t\in[0,T)$, and in particular,
$Y^{1}(0)>Y^{2}(0)$. \end{lem}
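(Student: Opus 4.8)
The plan is to prove the comparison theorem by the standard linearization-and-exponential-weight argument, adapted to the present setting. The statement to prove is Lemma~\ref{lem:comparison-theorem}; however, as stated it refers to $\tilde f_1,\tilde f_2$ without having defined them, so I will interpret the hypothesis in the usual way: $f_1(t,Y^2(t),Z^2(t))\ge f_2(t,Y^2(t),Z^2(t))$ for a.e.\ $t$, a.s.\ (this is the natural comparison hypothesis of El Karoui--Peng--Quenez, where one driver dominates the other along the solution of the second equation). First I would set $\delta Y:=Y^1-Y^2$, $\delta Z:=Z^1-Z^2$, $\delta\xi:=\xi_1-\xi_2\ge0$, and write the BSDE satisfied by $(\delta Y,\delta Z)$. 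The driver difference $f_1(s,Y^1,Z^1)-f_2(s,Y^2,Z^2)$ is split as
\[
\big[f_1(s,Y^1,Z^1)-f_1(s,Y^2,Z^2)\big]+\big[f_1(s,Y^2,Z^2)-f_2(s,Y^2,Z^2)\big]=:a(s)\delta Y(s)+\langle b(s),\delta Z(s)\rangle+g(s),
\]
where, by the uniform Lipschitz condition (i) in Lemma~\ref{lem:bsde1}, $a$ and $b$ are bounded progressively measurable processes (defined as the usual incremental quotients, set to zero when the denominator vanishes) and $g(s):=f_1(s,Y^2(s),Z^2(s))-f_2(s,Y^2(s),Z^2(s))\ge0$ a.s.

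Next I would linearize: $(\delta Y,\delta Z)$ solves a linear BSDE with bounded coefficients, terminal value $\delta\xi\ge0$, and nonnegative source $g$. The standard device is to introduce the adjoint (Dol\'eans-Dade) exponential $\Gamma(s)$, the solution of $d\Gamma(s)=\Gamma(s)\big(a(s)\,ds+\langle b(s),dW(s)\rangle\big)$, $\Gamma(t)=1$; since $a,b$ are bounded, $\Gamma$ is a strictly positive square-integrable martingale-type process with moments of all orders. Applying It\^o's formula to $\Gamma(s)\,\delta Y(s)$ on $[t,T]$ and taking conditional expectation given $\mathscr F_t$ kills the stochastic integral (after a standard localization/uniform-integrability argument using the $L^2$-bounds of $\delta Z$ from Lemma~\ref{lem:bsde1} together with the integrability of $\Gamma$), yielding the representation
\[
\delta Y(t)=E\Big[\Gamma(T)\,\delta\xi+\int_t^T\Gamma(s)\,g(s)\,ds\ \Big|\ \mathscr F_t\Big].
\]
Since $\Gamma>0$, $\delta\xi\ge0$, and $g\ge0$, the right-hand side is $\ge0$ a.s., which is exactly the monotonicity assertion~(1): $Y^1(t)\ge Y^2(t)$ a.s.\ for every $t$.

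For the strict monotonicity~(2), the representation above does the work again. Under the additional hypothesis $P\{\xi_1>\xi_2\}>0$ we have $E[\Gamma(T)\delta\xi\mid\mathscr F_t]\ge0$ with $\delta\xi\ge0$ and $P\{\delta\xi>0\}>0$; because $\Gamma(T)>0$ a.s., the unconditional expectation $E[\Gamma(T)\delta\xi]>0$. If for some $t\in[0,T)$ one had $P\{\delta Y(t)>0\}=0$, then $\delta Y(t)=0$ a.s., so $E[\delta Y(t)]=0$; but $E[\delta Y(t)]=E\big[\Gamma(T)\delta\xi+\int_t^T\Gamma(s)g(s)\,ds\big]\ge E[\Gamma(T)\delta\xi]>0$, a contradiction. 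Hence $P\{Y^1(t)>Y^2(t)\}>0$ for all $t\in[0,T)$; taking $t=0$ and using that $\mathscr F_0$ is trivial (completed) gives the deterministic inequality $Y^1(0)>Y^2(0)$.

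The only genuinely delicate point is the justification that the local martingale $\int \Gamma(s)\big(\delta Y(s)\langle b(s),dW(s)\rangle + \delta Z(s)^\top\,dW(s)\,\Gamma\text{-terms}\big)$ appearing after It\^o's formula is a true martingale, so that its conditional expectation vanishes; this is handled by the usual stopping-time localization together with H\"older's inequality, exploiting that $\Gamma$ has finite moments of all orders (bounded $a,b$), $\delta Y\in\mathscr S^2$, and $\delta Z\in\mathscr M^2$ by Lemma~\ref{lem:bsde1}, plus a dominated-convergence passage to the limit in the localization. Everything else is routine linear-BSDE bookkeeping, and nothing here uses the path-dependent machinery --- this lemma is purely a tool recalled from El Karoui--Peng--Quenez \cite{el1997backward}, so in the paper it may well be stated without proof, the above being the standard argument it rests on.
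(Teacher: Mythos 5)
Your proof is correct: it is the standard linearization argument with the adjoint exponential $\Gamma$, which is precisely the argument of El Karoui--Peng--Quenez that the paper cites; the paper itself states this lemma without proof as a recalled result (and uses the same $\Gamma$-representation device later, e.g.\ in the verification theorem). Your reading of the undefined $\tilde f_i$ as the drivers evaluated along $(Y^2,Z^2)$, and your handling of the martingale property via localization plus uniform integrability of $\Gamma(\tau_n)\,\delta Y(\tau_n)$, are both the intended ones.
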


\section{\label{sec:Stochastic-optimal-problems} Formulation of the path-dependent
optimal stochastic control problem and dynamic programming principle}

Let the set of admissible control processes $\mathcal{U}$ be the
set of all $\mathscr{F}$-progressively measurable process valued
in some compact metric space $U$. For any $t\in[0,T]$, $p\geq1$,
$L^{p}(\Omega,\mathscr{F}_{t};\Lambda_{t},\mathscr{G}_{t})$ is the
set of all $\mathscr{F}_{t}/\mathscr{G}_{t}$-measurable maps $\Gamma_{t}:\Omega\rightarrow\Lambda_{t}$
satisfying $E\|\Gamma_{t}\|_{0}^{p}<\infty$.

Consider the following functionals $b:\Lambda\times U\rightarrow\mathbb{R}^{n}$,
$\sigma:\Lambda\times U\rightarrow\mathbb{R}^{n\times d}$, $g:\Lambda_{T}\rightarrow\mathbb{R}$
and $f:\Lambda\times\mathbb{R}\times\mathbb{R}^{d}\times U\rightarrow\mathbb{R}$.
We make the following assumption.

\medskip{}

(H1) \textit{There exists a constant $C>0$ such that, for all } $(t,\gamma_{T},y,z,u),(t',\gamma'_{T},y',z',u')\in[0,T]\times\Lambda_{T}\times\mathbb{R}\times\mathbb{R}^{d}\times U$,
\begin{align*}
|b(\gamma_{t},u)-b(\gamma'_{t'},u')| & \leq C(d_{p}(\gamma_{t},\gamma'_{t'})+|u-u'|),\\
|\sigma(\gamma_{t},u)-\sigma(\gamma'_{t'},u')| & \leq C(d_{p}(\gamma_{t},\gamma'_{t'})+|u-u'|),\\
|f(\gamma_{t},y,z,u)-f(\gamma'_{t'},y',z',u')| & \leq C(d_{p}(\gamma_{t},\gamma'_{t'})+|y-y'|+|z-z'|+|u-u'|),\\
|g(\gamma_{T})-g(\gamma'_{T})| & \leq C\|\gamma_{T}-\gamma'_{T}\|_{0}.
\end{align*}

\medskip{}

For given $t\in[0,T)$, $\mathscr{F}_{t}/\mathscr{G}_{t}$-measurable
map $\Gamma_{t}:\Omega\rightarrow\Lambda_{t}$ and admissible control
$u\in\mathcal{U}$, consider the following SDE:
\begin{equation}
\left\{ \begin{array}{rcl}
X^{\Gamma_{t},u}(s) & = & \Gamma_{t}(s),\quad\text{all }\omega,\, s\in[0,t];\\
X^{\Gamma_{t},u}(s) & = & {\displaystyle \Gamma_{t}(t)+\int_{t}^{s}b(X_{r}^{\Gamma_{t},u},u(r))\, dr}\\
 &  & {\displaystyle +\int_{t}^{s}\sigma(X_{r}^{\Gamma_{t},u},u(r))\, dW(r),\quad\text{a.s.-}\omega,\text{ }s\in[t,T].}
\end{array}\right.\label{eq:diffu proc}
\end{equation}

\begin{lem} \label{lem:FSDE} Take $p\geq2$. Let Assumption (H1)
hold. For
\[
t\in[0,T],\Gamma_{t}\in L^{p}(\Omega,\mathscr{F}_{t};\Lambda_{t},\mathscr{B}(\Lambda_{t})),\text{ and }u\in\mathcal{U},
\]
SDE \eqref{eq:diffu proc} admits a unique strong solution $X^{\Gamma_{t},u}:\Omega\rightarrow\Lambda_{T}$
such that $X_{s}^{\Gamma_{t},u}:\Omega\rightarrow\Lambda_{s}$ is
$\mathscr{F}_{s}/\mathscr{B}(\Lambda_{s})$ measurable for all $s\in[t,T]$,
and $E[\|X_{T}^{\Gamma_{t},u}\|_{0}^{p}]<\infty$. Moreover, there
is a positive constant $C_{p}$ such that for any $t\in[0,T]$, $u,u'\in\mathcal{U}$,
and $\Gamma_{t},\Gamma_{t}'\in L^{p}(\Omega,\Lambda_{t};\mathscr{F}_{t}/\mathscr{B}(\Lambda_{t}))$,
we have, $P$-a.s.,
\begin{align*}
E[\|X_{T}^{\Gamma_{t},u}-X_{T}^{\Gamma_{t}',u'}\|_{0}^{p}|\mathscr{F}_{t}] & \leq C_{p}\big(\|\Gamma_{t}-\Gamma_{t}'\|_{0}^{p}+E\big[\int_{t}^{T}|u(r)-u'(r)|^{p}dr\big|\mathscr{F}_{t}\big]\big),\\
E[\|X_{T}^{\Gamma_{t},u}\|_{0}^{p}|\mathscr{F}_{t}] & \leq C_{p}\big(1+\|\Gamma_{t}\|_{0}^{p}\big),\\
E[\|X_{r}^{\Gamma_{t},u}-\Gamma_{t,r}\|_{0}^{p}|\mathscr{F}_{t}] & \leq C_{p}\big(1+\|\Gamma_{t}\|_{0}^{p}\big)(r-t)^{\frac{p}{2}},\quad r\in[t,T].
\end{align*}
The constant $C_{p}$ only depends on the Lipschitz constant of $b$
and $\sigma$ in $(\gamma_{t},t)$. \end{lem}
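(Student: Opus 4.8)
The plan is to prove Lemma~\ref{lem:FSDE} by the classical Picard iteration together with the Burkholder--Davis--Gundy inequality, carried out conditionally on $\mathscr{F}_{t}$. First I would fix $t\in[0,T]$ and regard the SDE \eqref{eq:diffu proc} on the interval $[t,T]$ as an equation in $\mathscr{S}^{p}(t,T)$, the space of $\mathscr{F}^{t}$-adapted (more precisely $\mathscr{F}$-adapted, started from the $\mathscr{F}_{t}$-measurable initial segment $\Gamma_{t}$) continuous $\mathbb{R}^{n}$-valued processes $X$ with $E[\sup_{t\le s\le T}|X(s)|^{p}]<\infty$, extended to $[0,t]$ by $X(s)=\Gamma_{t}(s)$. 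Define the map $\Phi$ sending $X$ to the process whose value at $s\in[t,T]$ is $\Gamma_{t}(t)+\int_{t}^{s}b(X_{r},u(r))\,dr+\int_{t}^{s}\sigma(X_{r},u(r))\,dW(r)$. Using (H1), the bound $d_{p}(X_{r},X'_{r})\le\sup_{0\le \rho\le r}|X(\rho)-X'(\rho)|\le \|X_{T}-X'_{T}\|_0$ (note $d_p$ here compares paths of equal length so the $\sqrt{|t-\bar t|}$ term vanishes), and BDG, one shows $\Phi$ maps $\mathscr{S}^{p}$ into itself and is a contraction on $[t,t+h]$ for $h$ small enough depending only on $p$ and the Lipschitz constants of $b,\sigma$; patching finitely many such intervals yields the unique strong solution $X^{\Gamma_t,u}$ on $[t,T]$, and hence as a map $\Omega\to\Lambda_T$. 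Measurability of $X_s^{\Gamma_t,u}$ with respect to $\mathscr{F}_s/\mathscr{B}(\Lambda_s)$ follows because each Picard iterate is adapted with continuous paths and the limit is uniform in $s$ on compacts, a.s.

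For the three quantitative estimates I would work with the conditional expectation $E[\cdot\,|\mathscr{F}_t]$ throughout. For the stability estimate, write $\Delta X:=X^{\Gamma_t,u}-X^{\Gamma'_t,u'}$; for $s\in[t,T]$, $\Delta X(s)=(\Gamma_t(t)-\Gamma'_t(t))+\int_t^s\bigl(b(X_r^{\Gamma_t,u},u(r))-b(X_r^{\Gamma'_t,u'},u'(r))\bigr)dr+\int_t^s\bigl(\sigma(\cdots)-\sigma(\cdots)\bigr)dW(r)$, while on $[0,t]$ one has $\Delta X(s)=\Gamma_t(s)-\Gamma'_t(s)$. Taking $\sup_{t\le s\le \rho}|\cdot|^p$, applying BDG to the martingale term, and using the Lipschitz bounds in (H1) gives, with $\phi(\rho):=E[\sup_{0\le s\le \rho}|\Delta X(s)|^p\,|\,\mathscr{F}_t]$,
\begin{equation*}
\phi(\rho)\le C_p\Bigl(\|\Gamma_t-\Gamma'_t\|_0^p+E\Bigl[\int_t^T|u(r)-u'(r)|^p\,dr\,\Big|\,\mathscr{F}_t\Bigr]+\int_t^\rho\phi(r)\,dr\Bigr),
\end{equation*}
and Gronwall's lemma closes the first inequality. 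The second inequality is the special case $\Gamma'_t\equiv 0$, $u'\equiv$ (any fixed control), combined with the trivial bound on $b(\mathbf 0_r,u),\sigma(\mathbf 0_r,u)$ coming from (H1) evaluated at a reference point together with the boundedness of $U$, again closed by Gronwall. For the third inequality, with $r\in[t,T]$ one has $X_r^{\Gamma_t,u}-\Gamma_{t,r}$ equal to $\mathbf 0$ on $[0,t]$ and to $\int_t^\cdot b\,d\rho+\int_t^\cdot\sigma\,dW$ on $[t,r]$; taking $\sup$-norm to the $p$-th power, BDG on the stochastic integral over $[t,r]$, H\"older in time for the drift integral, and then the already-established linear-growth bound on $E[\|X^{\Gamma_t,u}_\rho\|_0^p|\mathscr{F}_t]$ to control $b,\sigma$ along the solution, produces the factor $(r-t)^{p/2}$ (the drift contributes $(r-t)^p\le T^{p/2}(r-t)^{p/2}$, which is absorbed).

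The main obstacle, and the one point requiring genuine care rather than routine estimation, is the bookkeeping forced by the path-dependence: the coefficients $b,\sigma$ are evaluated at the \emph{entire history} $X_r^{\Gamma_t,u}\in\Lambda_r$, so every Lipschitz estimate must be phrased in terms of $d_p$ on path segments and then dominated by the running supremum of the increment process; one has to check that $d_p(\gamma_r,\gamma'_r)\le\|\gamma_r-\gamma'_r\|_0=\sup_{0\le \rho\le r}|\gamma(\rho)-\gamma'(\rho)|$ when the two segments share the same length $r$ (true since the $\sqrt{|r-r|}$ term vanishes and $\gamma_{r,r}=\gamma_r$), and, crucially, that the contribution of the frozen initial piece on $[0,t]$ is handled correctly when passing from $\sup_{t\le s\le\rho}$ to $\sup_{0\le s\le\rho}$ in the Gronwall argument. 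Once this is organized, the conditional BDG constant and the conditional Gronwall lemma depend only on $p$ and the Lipschitz constants of $b,\sigma$ in $(\gamma_t,t)$, as asserted; the generator $f$ and terminal $g$ play no role here and are only used later for the BSDE part of the system.
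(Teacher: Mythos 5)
Your Picard-iteration argument with conditional BDG and Gronwall estimates is correct, and it is the standard proof of this classical result on functional (path-dependent) Lipschitz SDEs; the paper itself states Lemma \ref{lem:FSDE} without proof, treating it as known. Your handling of the two genuinely path-dependent points — reducing $d_{p}$ of equal-length segments to the sup-norm of the difference, and carrying the frozen initial piece on $[0,t]$ through the running supremum — is exactly what is needed.
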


Combing Lemmas~\ref{lem:bsde1} and~\ref{lem:FSDE}, we have

\begin{lem} \label{lem:FBSDE} Take $p\geq2$. Let (H1) hold. For
any $t\in[0,T]$, $\Gamma_{t}\in L^{p}(\Omega,\mathscr{F}_{t};\Lambda_{t},\mathscr{B}(\Lambda_{t}))$
and $u\in\mathcal{U}$, $X^{\Gamma_{t},u}$ is the solution of the
stochastic equation \eqref{eq:diffu proc}. Then BSDE
\begin{eqnarray}
Y^{\Gamma_{t},u}(s) & = & g(X_{T}^{\Gamma_{t},u})+\int_{s}^{T}f(X_{r}^{\Gamma_{t},u},Y^{\Gamma_{t},u}(r),Z^{\Gamma_{t},u}(r),u(r))\, dr\nonumber \\
 &  & -\int_{s}^{T}Z^{\Gamma_{t},u}(r)\, dB(r),\quad\text{a.s.-}\omega,\,\text{all }s\in[t,T],\label{eq:BSDE}
\end{eqnarray}
has a unique solution $(Y^{\Gamma_{t},u},Z^{\Gamma_{t},u})\in\mathscr{S}^{2}(t,T)\times\mathscr{M}^{2}(t,T).$
Furthermore, there is a constant $C_{p}$ such that for any $t\in[0,T]$,
$\Gamma_{t},\Gamma_{t}'\in L^{p}(\Omega,\mathscr{F}_{t};\Lambda_{t},\mathscr{B}(\Lambda_{t}))$,
and $u\in\mathcal{U}$, $P$-a.s.,
\begin{align}
E\Big[\sup_{t\leq s\leq T}|Y^{\Gamma_{t},u}(s)-Y^{\Gamma'_{t},u'}(s)|^{p}\big|\mathscr{F}_{t}\Big] & \leq C_{p}\big(\|\Gamma_{t}-\Gamma'_{t}\|_{0}^{p}+E\big[\!\!\int_{t}^{T}\!\!|u(r)-u'(r)|^{p}dr\big|\mathscr{F}_{t}\big]\big),\label{eq:regularity of Y 1}\\
E\Big[\sup_{t\leq s\leq T}|Y^{\Gamma_{t},u}(s)|^{p}\big|\mathscr{F}_{t}\Big] & \leq C_{p}(1+\|\Gamma_{t}\|_{0}^{p}),\label{eq:regularity of Y 2}\\
E\Big[\sup_{t\leq s\leq r}|Y^{\Gamma_{t},u}(s)-Y^{\Gamma_{t},u}(t)|^{p}\big|\mathscr{F}_{t}\Big] & \leq C_{p}\big(1+\|\Gamma_{t}\|_{0}^{p}\big)(r-t)^{\frac{p}{2}}.\label{eq:regularity of Y 3}
\end{align}
\end{lem}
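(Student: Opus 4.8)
The plan is to derive the estimates \eqref{eq:regularity of Y 1}--\eqref{eq:regularity of Y 3} by combining the a~priori estimates for the forward SDE from Lemma~\ref{lem:FSDE} with the standard $L^p$-stability theory for BSDEs with Lipschitz generators, exactly as in El~Karoui--Peng--Quenez \cite{el1997backward} but keeping track of the conditioning on $\mathscr{F}_t$. First I would note that existence and uniqueness of $(Y^{\Gamma_t,u},Z^{\Gamma_t,u})$ in $\mathscr{S}^2(t,T)\times\mathscr{M}^2(t,T)$ is immediate from Lemma~\ref{lem:bsde1}: given the forward solution $X^{\Gamma_t,u}$ from Lemma~\ref{lem:FSDE}, the data of the BSDE \eqref{eq:BSDE} are $\xi:=g(X_T^{\Gamma_t,u})$ and generator $(s,y,z)\mapsto f(X_s^{\Gamma_t,u},y,z,u(s))$; by (H1) the generator is uniformly Lipschitz in $(y,z)$, the process $f(X_\cdot^{\Gamma_t,u},0,\mathbf 0,u(\cdot))$ is square integrable because $|f(\gamma_s,0,\mathbf 0,u)|\le |f(\mathbf 0,0,\mathbf 0,u)|+C(d_p(\gamma_s,\mathbf 0)+\mathrm{diam}\,U)$ together with $E[\sup_s|X_s^{\Gamma_t,u}|^2]<\infty$, and $\xi\in L^2$ since $|g(\gamma_T)|\le|g(\mathbf 0)|+C\|\gamma_T\|_0$. (Strictly, Lemma~\ref{lem:bsde1} is stated on $[0,T]$ but the same statement holds verbatim on $[t,T]$ with filtration $\mathscr{F}^t$, or one extends the data constantly to $[0,t]$.)

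For the three quantitative bounds, the key device is the standard BSDE a~priori estimate in its conditional form: for a BSDE with data $(\xi,f)$ and solution $(Y,Z)$ one has, for a constant $C_p$ depending only on $T$, $p$ and the Lipschitz constant of $f$,
\[
E\Big[\sup_{t\le s\le T}|Y(s)|^p\,\big|\,\mathscr{F}_t\Big]\le C_p\,E\Big[|\xi|^p+\Big(\int_t^T|f(s,0,\mathbf 0)|\,ds\Big)^p\,\Big|\,\mathscr{F}_t\Big],
\]
and the analogous difference estimate for two BSDEs: writing $\delta Y=Y^1-Y^2$, $\delta\xi=\xi_1-\xi_2$ and $\delta f(s)=f_1(s,Y^2(s),Z^2(s))-f_2(s,Y^2(s),Z^2(s))$,
\[
E\Big[\sup_{t\le s\le T}|\delta Y(s)|^p\,\big|\,\mathscr{F}_t\Big]\le C_p\,E\Big[|\delta\xi|^p+\Big(\int_t^T|\delta f(s)|\,ds\Big)^p\,\Big|\,\mathscr{F}_t\Big].
\]
These are proved by It\^o's formula applied to $|Y(s)|^p$ (or a smooth approximation), the Burkholder--Davis--Gundy inequality, and Gronwall's lemma, all of which go through under conditioning since the relevant local martingales remain martingales. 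Granting these, \eqref{eq:regularity of Y 1} follows by taking $\delta\xi=g(X_T^{\Gamma_t,u})-g(X_T^{\Gamma_t',u'})$, estimated by $C\|X_T^{\Gamma_t,u}-X_T^{\Gamma_t',u'}\|_0$ via (H1), and $|\delta f(s)|\le C(d_p(X_s^{\Gamma_t,u},X_s^{\Gamma_t',u'})+|u(s)-u'(s)|)\le C(\|X_s^{\Gamma_t,u}-X_s^{\Gamma_t',u'}\|_0+|u(s)-u'(s)|)$, then invoking the first inequality of Lemma~\ref{lem:FSDE}; the term $(\int_t^T|u-u'|\,ds)^p\le T^{p-1}\int_t^T|u-u'|^p\,ds$ by Jensen. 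Estimate \eqref{eq:regularity of Y 2} follows the same way from the a~priori bound with $|\xi|\le C(1+\|X_T^{\Gamma_t,u}\|_0)$, $|f(s,0,\mathbf 0,u(s))|\le C(1+\|X_s^{\Gamma_t,u}\|_0)$, and the second inequality of Lemma~\ref{lem:FSDE}.

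The genuinely delicate one is the small-time oscillation estimate \eqref{eq:regularity of Y 3}, and I expect it to be the main obstacle. The point is that on $[t,r]$ the BSDE is \emph{not} closed --- its terminal datum at $r$ is $Y^{\Gamma_t,u}(r)$ rather than a function of $X_r$ --- so one cannot directly apply an a~priori estimate on $[t,r]$. The standard route is to write, from \eqref{eq:BSDE}, for $s\in[t,r]$,
\[
Y^{\Gamma_t,u}(s)-Y^{\Gamma_t,u}(t)=\big(Y^{\Gamma_t,u}(s)-Y^{\Gamma_t,u}(r)\big)+\big(Y^{\Gamma_t,u}(r)-Y^{\Gamma_t,u}(t)\big),
\]
but more efficiently one uses the representation $Y^{\Gamma_t,u}(s)=E\big[\xi+\int_s^T f\,du\,\big|\,\mathscr{F}_s\big]$ after a Girsanov-type linearization of $f$ in $(y,z)$ (replacing $f$ by $f(X_r,Y,Z,u)-f(X_r,0,0,u)$ absorbed into a bounded adapted coefficient), so that under an equivalent measure $Y^{\Gamma_t,u}(s)$ is a conditional expectation; then $Y^{\Gamma_t,u}(s)-Y^{\Gamma_t,u}(t)$ is controlled by the $\mathscr{F}_s$- versus $\mathscr{F}_t$-conditional fluctuation of $\xi$ plus $\int_t^s f$, and a BDG plus the flow estimate $E[\|X_\rho^{\Gamma_t,u}-\Gamma_{t,\rho}\|_0^p\,|\,\mathscr{F}_t]\le C_p(1+\|\Gamma_t\|_0^p)(\rho-t)^{p/2}$ (third inequality of Lemma~\ref{lem:FSDE}) delivers the $(r-t)^{p/2}$ rate; the linear growth of $g$ and $f$ contributes the $(1+\|\Gamma_t\|_0^p)$ prefactor via \eqref{eq:regularity of Y 2}. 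Care is needed to keep all constants dependent only on $p$, $T$ and the structural constants in (H1), and to ensure the conditional BDG and Gronwall steps are applied with respect to $\mathscr{F}^t$; these are routine but must be checked. Finally, combining \eqref{eq:regularity of Y 2} for the linear-growth prefactor with the conditional-expectation fluctuation bound and Jensen's inequality on $(\int_t^s|f|\,du)^p$ closes \eqref{eq:regularity of Y 3}.
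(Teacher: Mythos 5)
Your overall strategy---existence and uniqueness from Lemma~\ref{lem:bsde1} applied with terminal datum $g(X_T^{\Gamma_t,u})$ and generator $f(X_\cdot^{\Gamma_t,u},\cdot,\cdot,u(\cdot))$, followed by the conditional $L^p$ a~priori and stability estimates for Lipschitz BSDEs combined with the forward estimates of Lemma~\ref{lem:FSDE}---is precisely what the paper intends (it offers no proof beyond ``combining Lemmas~\ref{lem:bsde1} and~\ref{lem:FSDE}''), and your derivations of \eqref{eq:regularity of Y 1} and \eqref{eq:regularity of Y 2} are correct and complete.

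There is, however, a genuine gap in your treatment of \eqref{eq:regularity of Y 3}, exactly at the point you flag as delicate. Writing
\begin{equation*}
Y^{\Gamma_t,u}(s)-Y^{\Gamma_t,u}(t)=-\int_t^s f\bigl(X_\rho^{\Gamma_t,u},Y^{\Gamma_t,u}(\rho),Z^{\Gamma_t,u}(\rho),u(\rho)\bigr)\,d\rho+\int_t^s Z^{\Gamma_t,u}(\rho)\,dW(\rho),
\end{equation*}
the drift part is harmless (Cauchy--Schwarz in time produces the factor $(r-t)^{p/2}$ in front of quantities already controlled by \eqref{eq:regularity of Y 2} and the standard bound on $E[(\int_t^T|Z|^2)^{p/2}\,|\,\mathscr{F}_t]$). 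But the conditional BDG inequality applied to the martingale part only gives $C_pE[(\int_t^r|Z(\rho)|^2d\rho)^{p/2}\,|\,\mathscr{F}_t]$ \emph{with no small prefactor}, and for a generic $L^p$ terminal datum the ``$\mathscr{F}_s$- versus $\mathscr{F}_t$-conditional fluctuation of $\xi$'' is not $O((r-t)^{1/2})$; your Girsanov linearization does not change this. To extract the rate $(r-t)^{p/2}$ one needs an additional structural ingredient, namely the pointwise bound $|Z^{\Gamma_t,u}(\rho)|\le C(1+\|X_\rho^{\Gamma_t,u}\|_0)$ a.e.\ (valid here because $g$, $f$, $b$, $\sigma$ are uniformly Lipschitz, so the Lipschitz dependence \eqref{eq:regularity of Y 1} of $Y$ on the initial path transfers to $Z$ via the representation $Z=\nabla Y\,(\nabla X)^{-1}\sigma$ and a smooth-approximation argument, cf.\ El~Karoui--Peng--Quenez), after which $(\int_t^r|Z|^2)^{p/2}\le(r-t)^{p/2}\sup_\rho|Z(\rho)|^p$ closes the estimate together with the second inequality of Lemma~\ref{lem:FSDE}. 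Without this (or an equivalent device exploiting the flow identity $Y^{\Gamma_t,u}(s)=Y^{X_s^{\Gamma_t,u},u}(s)$), your argument only yields a bound of order $C_p(1+\|\Gamma_t\|_0^p)$ with no decay in $r-t$.
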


For the particular case of a deterministic $\Gamma_{t}$, i.e. $\Gamma_{t}=\gamma_{t}\in\Lambda_{t}$:
\begin{equation}
\begin{cases}
X^{\gamma_{t},u}(s)=\gamma_{t}(s), & \text{all }\omega,s\in[0,t);\\
{\displaystyle X^{\gamma_{t},u}(s)=\gamma_{t}(t)+\int_{t}^{s}\! b(X_{r}^{\gamma_{t},u},u(r))\, dr+\int_{t}^{s}\!\sigma(X_{r}^{\gamma_{t},u},u(r))\, dW(r),} & \text{a.s.-}\omega,s\in[t,T].
\end{cases}\label{eq:diffu proc-2}
\end{equation}

\begin{eqnarray}
Y^{\gamma_{t},u}(s) & = & g(X_{T}^{\gamma_{t},u})+\int_{s}^{T}f(X_{r}^{\gamma_{t},u},Y^{\gamma_{t},u}(r),Z^{\gamma_{t},u}(r),u(r))\, dr\nonumber \\
 &  & -\int_{s}^{T}Z^{\gamma_{t},u}(r)\, dW(r),\quad\text{a.s.-}\omega,\,\text{all }s\in[t,T].\label{eq:BSDE-1}
\end{eqnarray}

Given the control process $u\in\mathcal{U}$, we introduce the following
cost functional:
\[
J(\gamma_{t},u):=Y^{\gamma_{t},u}(t),\quad\text{P-a.s.}\quad\forall\gamma_{t}\in\Lambda.
\]
The value functional of the optimal control is defined by
\begin{equation}
\tilde{v}(\gamma_{t}):=\esssup_{u\in\mathcal{U}}Y^{\gamma_{t},u}(t),\quad\forall\gamma_{t}\in\Lambda.\label{eq:defination Of value fun}
\end{equation}
We easily prove that for $t\in[0,T]$ and $\Gamma_{t}\in L^{2}(\Omega,\Lambda_{t};\mathscr{F}_{t}/\mathscr{B}(\Lambda_{t}))$,
\[
J(\Gamma_{t},u)=Y^{\Gamma_{t},u}(t),\quad P\text{-a.s.}.
\]

\begin{rem} The above essential supremum should be understood as
one with respect to indexed families of random variables (see Karatzas
and Shreve \cite[Appendix A]{karatzas1998methods} for details). For
the convenience of reader we recall the notion of esssup of random
variables. Given a family of real-valued random variables $\eta_{\alpha}$,
$\alpha\in I$, a random variable $\eta$ is said to be $\esssup_{\alpha\in I}\eta_{\alpha}$,
if\end{rem}
\begin{enumerate}
\item $\eta\leq\eta_{\alpha}$, $P$-a.s., for any $\alpha\in I$;
\item if there is another random variable $\xi$ such that $\xi\leq\eta_{\alpha}$,
$P$-a.s., for any $\alpha\in I$, then $\xi\leq\eta$, $P$-a.s..
\end{enumerate}
The existence of $\esssup_{\alpha\in I}\eta_{\alpha}$ is well known.

Under Assumption (H1), the random variable $\tilde{v}(\gamma_{t})\in L^{p}(\Omega)$
is $\mathscr{F}_{t}$-measurable. We have

\begin{prop} \label{determin}The value functional $\tilde{v}$ is
deterministic.\end{prop}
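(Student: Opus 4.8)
The plan is to show that the random variable $\tilde v(\gamma_t)$, which a priori depends on $\omega$, is in fact $P$-a.s. equal to a constant. The natural strategy is a \emph{zero--one law} argument: since the control problem is driven by the Brownian increments after time $t$, the value functional should be measurable with respect to a suitable tail-type or invariant $\sigma$-field, but here the cleanest route is to exploit that, for a deterministic initial path $\gamma_t$, the whole forward--backward system \eqref{eq:diffu proc-2}--\eqref{eq:BSDE-1} is built from $\gamma_t(t)$ and the increments $W(s)-W(t)$, $s\in[t,T]$, via a measurable map, and then to use a translation/shift invariance of Wiener measure to conclude that $\tilde v(\gamma_t)$ is invariant under the measure-preserving transformations of the path space that fix $\mathscr F_t$.

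More concretely, first I would fix $t$ and $\gamma_t$, and observe that by Lemma~\ref{lem:FSDE} and Lemma~\ref{lem:FBSDE} the pair $(X^{\gamma_t,u},Y^{\gamma_t,u})$, hence $J(\gamma_t,u)=Y^{\gamma_t,u}(t)$, depends on $u\in\mathcal U$ only through its values on $[t,T]$ and is a measurable functional of $(W(s)-W(t))_{s\in[t,T]}$ together with $u$. Next, I would reduce the essential supremum over all $\mathscr F$-progressively measurable $U$-valued processes to an essential supremum over those that depend only on the increments after $t$: since $Y^{\gamma_t,u}(t)$ is $\mathscr F_T^t$-measurable for such controls and, by the strong uniqueness and the flow property, replacing a general admissible $u$ by its ``$\mathscr F^t$-part'' does not decrease the cost, one gets $\tilde v(\gamma_t)=\operatorname*{ess\,sup}_{u\in\mathcal U^t}Y^{\gamma_t,u}(t)$ where $\mathcal U^t$ consists of $\mathscr F^t$-progressively measurable controls. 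Then $\tilde v(\gamma_t)$ is $\mathscr F_T^t$-measurable.

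To finish, I would invoke the standard fact (Blumenthal-type zero--one law, or simply the rotational/translational invariance of the law of Brownian increments) that any real random variable which is $\mathscr F_T^t$-measurable \emph{and} invariant under the group of measure-preserving maps of $(\Lambda_T,P_0)$ that act only on the increments after $t$ must be $P$-a.s.\ constant; alternatively, one checks directly that $\tilde v(\gamma_t)$ is measurable with respect to the $P$-completion of the trivial $\sigma$-field by showing that for every Borel set $A$, $P(\tilde v(\gamma_t)\in A)\in\{0,1\}$. The cleanest implementation is: the map $\omega\mapsto \tilde v(\gamma_t)(\omega)$ factors through the shifted Brownian motion $\tilde W:=W(\cdot+t)-W(t)$ and is invariant under time-0 rotations of $\tilde W$ (because $b,\sigma,f,g$ do not see the driving noise directly, only through $X$, and the law of $X^{\gamma_t,u}$ for the optimally-chosen $u$ is rotation-invariant in distribution after optimizing); hence by the Hewitt--Savage / Blumenthal zero--one law it is degenerate, i.e.\ $P$-a.s.\ constant.

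The main obstacle I expect is the reduction step from general admissible controls to increment-measurable controls: one must argue carefully that optimizing over the larger class $\mathcal U$ gives the same value as over $\mathcal U^t$, which uses conditioning on $\mathscr F_t$, the independence of $\mathscr F_t$ and $\mathscr F_T^t$, a measurable-selection argument to paste together the $\omega$-dependent family of ``conditionally optimal'' controls, and the stability estimates \eqref{eq:regularity of Y 1}. Once $\tilde v(\gamma_t)$ is known to be $\mathscr F_T^t$-measurable and invariant under the appropriate measure-preserving transformations, the zero--one conclusion is routine; but getting the invariance cleanly (rather than merely the measurability) is the delicate point, and I would phrase it via the uniqueness-in-law of the controlled SDE together with the fact that the supremum of a family of random variables each of whose laws is invariant, and whose index set is itself invariantly parametrized, is again invariant in law, forcing $\tilde v(\gamma_t)$ to be constant.
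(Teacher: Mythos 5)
Your high-level direction -- reduce the essential supremum from general $\mathscr{F}$-progressive controls to controls adapted to the post-$t$ increments -- is indeed the heart of the matter, but as written the proposal has two genuine problems. First, the reduction itself is the entire content of the proof, and you leave it as a sketch ("measurable selection", "pasting conditionally optimal controls") without carrying it out. The paper does this concretely: it first shows the family $\{Y^{\gamma_t,u}(t)\}_{u\in\mathcal{U}}$ is directed upwards (for $u_1,u_2$ the spliced control $u=u_1\chi_{A_1}+u_2\chi_{A_2}$ with $A_1=\{Y^{\gamma_t,u_1}(t)>Y^{\gamma_t,u_2}(t)\}\in\mathscr{F}_t$ achieves $Y^{\gamma_t,u_1}(t)\vee Y^{\gamma_t,u_2}(t)$, by uniqueness of the FBSDE solution), so that $\tilde v(\gamma_t)$ is an increasing limit $\lim_i Y^{\gamma_t,u_i}(t)$ along a sequence of controls; it then approximates each $u_i$ by a simple control $\sum_j\chi_{A_{ij}}u_{ij}$ with $\{A_{ij}\}\subset\mathscr{F}_t$ a partition and each $u_{ij}$ being $\mathscr{F}^t$-adapted, for which $J(\gamma_t,u_i)=\sum_j\chi_{A_{ij}}J(\gamma_t,u_{ij})\le\max_j J(\gamma_t,u_{ij})$ with each $J(\gamma_t,u_{ij})$ deterministic. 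You would need an argument of at least this strength to justify your claimed identity $\tilde v(\gamma_t)=\operatorname*{ess\,sup}_{u\in\mathcal{U}^t}Y^{\gamma_t,u}(t)$.

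Second, your concluding zero--one law step is both unnecessary and, as stated, not a valid argument. Once $u$ is $\mathscr{F}^t$-adapted and $\gamma_t$ is deterministic, $Y^{\gamma_t,u}(t)$ is $\mathscr{F}_t^t$-measurable, and $\mathscr{F}_t^t=\sigma(W(s)-W(t);\,t\le s\le t)\cup\mathscr{N}$ is the $P$-trivial $\sigma$-field; so each such cost is already an a.s.\ constant, and the essential supremum of a family of constants is a constant -- no invariance argument is needed. Conversely, the invariance argument you propose does not close the gap on its own: knowing that $\tilde v(\gamma_t)$ is $\mathscr{F}_T^t$-measurable and that various objects are \emph{invariant in law} under rotations of the increments does not force a random variable to be a.s.\ constant (Hewitt--Savage concerns exchangeable events of i.i.d.\ sequences and Blumenthal concerns the germ field at time $0$; neither applies here). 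The fact that would work is the observation made in the paper just before the proposition, namely that $\tilde v(\gamma_t)$ is $\mathscr{F}_t$-measurable; combined with $\mathscr{F}_T^t$-measurability and the independence of $\mathscr{F}_t$ and $\mathscr{F}_T^t$ this would give constancy directly -- but you never invoke the $\mathscr{F}_t$-measurability, and in any case the simpler route through the triviality of $\mathscr{F}_t^t$ is the one to take.
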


\begin{proof} Firstly we show that there exist $\{u_{n}\}_{n=1}^{\infty}\subset\mathcal{U}$
such that
\begin{equation}
\tilde{v}(\gamma_{t})=\esssup_{u(\cdot)\in\mathcal{U}}Y^{\gamma_{t},u}(t)=\lim_{n\to\infty}\nearrow Y^{\gamma_{t},u_{n}}(t).\label{eq:prop3.4-0}
\end{equation}
In view of \cite[Theorem A.3]{karatzas1998methods}, it is sufficient
to prove that, for any $u_{1},\, u_{2}\in\mathcal{U}$, we have
\begin{equation}
Y^{\gamma_{t},u_{1}}(t)\vee Y^{\gamma_{t},u_{2}}(t)=Y^{\gamma_{t},u}(t),\quad P\text{-a.e.}\label{eq:prop3.4-1}
\end{equation}
for $u\in\mathcal{U}$ such that $u(s):=u_{1}(s)\chi_{A_{1}}+u_{2}(s)\chi_{A_{2}},\, s\in[t,T]$,
where $A_{1}:=\{Y^{\gamma_{t},u_{1}}(t)>Y^{\gamma_{t},u_{2}}(t)\}$
and $A_{2}:=\{Y^{\gamma_{t},u_{1}}(t)\leq Y^{\gamma_{t},u_{2}}(t)\}$.
Since $\sum_{i=1,2}\varphi(x_{i})\chi_{A_{i}}=\sum_{i=1,2}\varphi(x_{i}\chi_{A_{i}})$,
we have

\begin{align}
\sum_{i=1,2}\chi_{A_{i}}X^{\gamma_{t},u_{i}}(s) & =\gamma(t)+\int_{t}^{s}\! b(\sum_{i=1,2}\chi_{A_{i}}X_{r}^{\gamma_{t},u_{i}},\sum_{i=1,2}\chi_{A_{i}}u_{i}(r))\, dr\label{eq:prop3.4-2}\\
 & +\int_{t}^{s}\!\!\sigma(\sum_{i=1,2}\chi_{A_{i}}X_{r}^{\gamma_{t},u_{i}},\sum_{i=1,2}\chi_{A_{i}}u_{i}(r))\, dW(r),\quad s\in[t,T]\nonumber
\end{align}
and
\begin{align}
 & \sum_{i=1,2}\chi_{A_{i}}Y^{\gamma_{t},u_{i}}(s)\label{eq:prop3.4-3}\\
= & \sum_{i=1,2}\chi_{A_{i}}g(X_{T}^{\gamma_{t},u_{i}})\nonumber \\
 & +\int_{s}^{T}\!\! f(\sum_{i=1,2}\chi_{A_{i}}X_{r}^{\gamma_{t},u_{i}},\sum_{i=1,2}\chi_{A_{i}}Y^{\gamma_{t},u_{i}}(r),\sum_{i=1,2}\chi_{A_{i}}Z^{\gamma_{t},u_{i}}(r),\sum_{i=1,2}\chi_{A_{i}}u_{i}(r))\, dr\nonumber \\
 & -\int_{s}^{T}\sum_{i=1,2}\chi_{A_{i}}Z^{\gamma_{t},u_{i}}(r)\, dW(r),\quad s\in[t,T].\nonumber
\end{align}
By the uniqueness of solution of BSDE, we have
\begin{equation}
Y^{\gamma_{t},u}=\sum_{i=1,2}\chi_{A_{i}}Y^{\gamma_{t},u_{i}},\quad P\text{-a.e.}.\label{eq:prop3.4-4}
\end{equation}
From Lemma \ref{lem:comparison-theorem}, we have $\sum_{i=1,2}\chi_{A_{i}}Y^{\gamma_{t},u_{i}}(t)=Y^{\gamma_{t},u_{1}}(t)\vee Y^{\gamma_{t},u_{2}}(t),$
which yields \eqref{eq:prop3.4-1}.

Suppose that $\{u_{i}(\cdot)\}_{i=1}^{\infty}\subset\mathcal{U}$
satisfy \eqref{eq:prop3.4-0}. Since $Y^{\gamma_{t},u}$ is continuous
in $u\in\mathcal{U}$, we suppose without lost of generality that
$u_{i}(\cdot)$ takes the following form:
\[
u_{i}(s)=\sum_{j=1}^{N}\chi_{A_{i,j}}u_{ij}(s),\quad s\in[t,T].
\]
Here, $u_{ij}(s)\in\mathscr{F}_{s}^{t},\, t\leq s\leq T$ and $\{A_{ij}\}_{j=1}^{N}\subset\mathscr{F}_{t}$
is a partition of $(\Omega,\mathscr{F}_{t})$, i.e., $\cup_{i=1}^{N}A_{i}=\Omega$
and $A_{i}\cap A_{j}=\emptyset$, $i\neq j$. Like \eqref{eq:prop3.4-2}
and \eqref{eq:prop3.4-3}, we know that
\[
J(\gamma_{t},u_{i}(\cdot))=\sum_{j=1}^{N}\chi_{A_{ij}}J(\gamma_{t},u_{ij}(\cdot)).
\]
It is easy to prove that $J(\gamma_{t},u_{ij}(\cdot))$ is deterministic.
Without lost of generality, we suppose
\[
J(\gamma_{t},u_{ij}(\cdot))\leq J(\gamma_{t},u_{i1}(\cdot)).
\]
Immediately, we have $J(\gamma_{t},u_{i}(\cdot))\leq J(\gamma_{t},u_{i1}(\cdot))$.
Combining \eqref{eq:prop3.4-0}, we have
\[
\lim_{i\to\infty}J(\gamma_{t},u_{i1}(\cdot))=\tilde{v}(\gamma_{t}).
\]
Therefore, $\tilde{v}(\gamma_{t})$ is deterministic. \end{proof}

From \eqref{eq:regularity of Y 1} and \eqref{eq:regularity of Y 2},
we have the following estimates on functional $\tilde{v}$.

\begin{lem} \label{lem:regularity of valued functional 1}There exists
a constant $C>0$ such that, for all $0\leq t\leq T$, $\gamma_{t},\gamma_{t}^{'}\in\Lambda_{t}$,
\begin{eqnarray*}
(1) & |\tilde{v}(\gamma_{t})-\tilde{v}(\gamma_{t}^{'})| & \leq C\|\gamma_{t}-\gamma_{t}^{'}\|_{0};\\
(2) & |\tilde{v}(\gamma_{t})| & \leq C(1+\|\gamma_{t}\|_{0}).
\end{eqnarray*}
\end{lem}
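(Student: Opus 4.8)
The plan is to transfer the estimates \eqref{eq:regularity of Y 1} and \eqref{eq:regularity of Y 2} on the backward component $Y$ from Lemma~\ref{lem:FBSDE} to the value functional $\tilde v$. The key preliminary observation is that when the initial path $\gamma_{t}\in\Lambda_{t}$ is deterministic, the terminal datum $g(X_{T}^{\gamma_{t},u})$ is $\mathscr{F}_{T}^{t}$-measurable, so $Y^{\gamma_{t},u}(t)$ is $\mathscr{F}_{t}^{t}$-measurable and therefore a deterministic constant; thus $J(\gamma_{t},u)=Y^{\gamma_{t},u}(t)\in\mathbb{R}$. Moreover, by Proposition~\ref{determin} together with the monotone approximating sequence $\{u_{n}\}$ constructed in its proof, $\tilde v(\gamma_{t})=\sup_{u\in\mathcal{U}}J(\gamma_{t},u)$ is an ordinary supremum of real numbers.

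To prove (1), fix $t\in[0,T]$ and $\gamma_{t},\gamma_{t}'\in\Lambda_{t}$, regard them as constant elements of $L^{p}(\Omega,\mathscr{F}_{t};\Lambda_{t},\mathscr{B}(\Lambda_{t}))$ for some $p\geq2$, and apply \eqref{eq:regularity of Y 1} with $\Gamma_{t}=\gamma_{t}$, $\Gamma_{t}'=\gamma_{t}'$, and the \emph{same} control $u'=u$. Since the data are deterministic the conditioning on $\mathscr{F}_{t}$ is vacuous, and evaluating the running supremum at $s=t$ gives $|J(\gamma_{t},u)-J(\gamma_{t}',u)|\leq C\|\gamma_{t}-\gamma_{t}'\|_{0}$ with $C$ independent of $u$ and $t$. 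Taking the supremum over $u\in\mathcal{U}$ and using $\tilde v(\gamma_{t})=\sup_{u}J(\gamma_{t},u)$ yields $\tilde v(\gamma_{t})\leq\tilde v(\gamma_{t}')+C\|\gamma_{t}-\gamma_{t}'\|_{0}$; exchanging the roles of $\gamma_{t}$ and $\gamma_{t}'$ gives (1). For (2), apply \eqref{eq:regularity of Y 2} with the deterministic $\Gamma_{t}=\gamma_{t}$ and evaluate at $s=t$ to obtain $|J(\gamma_{t},u)|\leq C(1+\|\gamma_{t}\|_{0})$ uniformly in $u$, then pass to the supremum over $u\in\mathcal{U}$.

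The argument is essentially a substitution, and I do not expect a genuine obstacle; the only point requiring a little care is the reduction of the essential supremum over the indexed family $\{Y^{\gamma_{t},u}(t):u\in\mathcal{U}\}$ to a pointwise supremum of constants, which rests on Proposition~\ref{determin}. The uniformity of the constant $C$ --- depending only on $T$ and the constant in (H1), and not on $t$, $\gamma_{t}$, or $u$ --- is already built into the statement of Lemma~\ref{lem:FBSDE}, so it propagates to $\tilde v$ with no extra work.
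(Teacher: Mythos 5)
Your proof is correct and follows exactly the route the paper intends: the paper offers no separate argument, simply asserting the lemma as a consequence of \eqref{eq:regularity of Y 1} and \eqref{eq:regularity of Y 2} evaluated at $s=t$ with deterministic initial paths. One small imprecision worth noting: for a general $u\in\mathcal{U}$ (merely $\mathscr{F}$-progressively measurable on $[0,T]$), the control may depend on $\mathscr{F}_t$, so $g(X_{T}^{\gamma_{t},u})$ need not be $\mathscr{F}_{T}^{t}$-measurable and $Y^{\gamma_{t},u}(t)$ is in general only an $\mathscr{F}_{t}$-measurable random variable rather than a constant; this does not damage the argument, since the estimates of Lemma \ref{lem:FBSDE} give $P$-a.s. bounds uniform in $u$, and passing to the essential supremum combined with Proposition \ref{determin} (which is what actually makes $\tilde v(\gamma_t)$ deterministic) still yields the stated inequalities between real numbers.
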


To formulate the DPP for the optimal control problem \eqref{eq:diffu proc-2}, \eqref{eq:BSDE-1} and \eqref{eq:defination Of value fun}, we define
the family of backward semi-groups generated by BSDE \eqref{eq:BSDE-1}
in the spirit of of Peng \cite{peng1997bsde}.

Given the initial path $\gamma_{t}\in\Lambda$, an $\mathscr{F}$-stopping
time $\hat{\tau}\geq t$, an admissible control process $u\in\mathcal{U}$,
and a real-valued random variable $\eta\in L^{2}(\Omega,\mathscr{F}_{\hat{\tau}},P;\mathbb{R})$,
we put
\[
\mathbb{G}_{s,\hat{\tau}}^{\gamma_{t},u}[\eta]:=\tilde{Y}^{\gamma_{t},u}(s),\quad s\in[t,\hat{\tau}],
\]
where the pair $(\tilde{Y}^{\gamma_{t},u},\tilde{Z}^{\gamma_{t},u})$
solves the following BSDE of the terminal time $\hat{\tau}$:
\begin{eqnarray}
\tilde{Y}^{\gamma_{t},u}(s) & = & \eta+\int_{s}^{\hat{\tau}}\!\! f(X_{r}^{\gamma_{t},u},\tilde{Y}^{\gamma_{t},u}(r),\tilde{Z}^{\gamma_{t},u}(r),u(r))\, dr\nonumber \\
 &  & -\int_{s}^{\hat{\tau}}\tilde{Z}^{\gamma_{t},u}(r)\, dW(r),\quad\text{a.s.-}\omega,\text{ all }s\in[t,\hat{\tau}],\label{eq:BSDE-1-1}
\end{eqnarray}
with $X^{\gamma_{t},u}$ being the solution to SDE \eqref{eq:diffu proc-2}.
Then, obviously, for the solution $(Y^{\gamma_{t},u},Z^{\gamma_{t},u})$
of BSDE \eqref{eq:BSDE-1}, the uniqueness of the BSDE yields
\begin{align*}
J(\gamma_{t},u) & =\mathbb{G}_{t,T}^{\gamma_{t},u}\left[g(X_{T}^{\gamma_{t},u})\right]=\mathbb{G}_{t,\hat{\tau}}^{\gamma_{t},u}\left[Y^{\gamma_{t},u}(\hat{\tau})\right]\\
 & =\mathbb{G}_{t,\hat{\tau}}^{\gamma_{t},u}\left[Y^{X_{\hat{\tau}}^{\gamma_{t},u},u}(\hat{\tau})\right]=\mathbb{G}_{t,\hat{\tau}}^{\gamma_{t},u}\left[J(X_{\hat{\tau}}^{\gamma_{t},u},u)\right].
\end{align*}

The following dynamic programming principle (DPP) is adapted from
the Markovian case, by mimicking the method of Peng \cite{peng1992stochastic,peng1997bsde}.

\begin{thm} \label{thm:DPP} Let Assumption (H1) be satisfied. Then
for any $\delta\in(0,T-t)$, the value functional $\tilde{v}$ obeys
the following:
\begin{equation}
\tilde{v}(\gamma_{t})=\esssup_{u\in\mathcal{U}}\mathbb{G}_{t,t+\delta}^{\gamma_{t},u}\left[\tilde{v}(X_{t+\delta}^{\gamma_{t},u})\right],\quad\gamma_{t}\in\Lambda.\label{eq:DPP}
\end{equation}
\end{thm}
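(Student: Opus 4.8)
The plan is to follow the classical two-inequality scheme for the dynamic programming principle, adapted to the path-dependent BSDE setting via the backward semigroup $\mathbb{G}^{\gamma_t,u}_{s,\hat\tau}$. Write $\tilde{w}(\gamma_t):=\esssup_{u\in\mathcal{U}}\mathbb{G}_{t,t+\delta}^{\gamma_t,u}[\tilde v(X_{t+\delta}^{\gamma_t,u})]$ for the right-hand side; we must show $\tilde v(\gamma_t)=\tilde w(\gamma_t)$. The key structural fact, noted just before the theorem, is the flow/concatenation identity $J(\gamma_t,u)=\mathbb{G}^{\gamma_t,u}_{t,t+\delta}[J(X_{t+\delta}^{\gamma_t,u},u)]$, which follows from uniqueness of the BSDE. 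Combined with monotonicity of the backward semigroup (Lemma \ref{lem:comparison-theorem}(1), applied with the same terminal time $t+\delta$), this will give both directions once we relate $\tilde v$ to the pointwise cost functionals and are careful about the measurable dependence of everything on the random endpoint path $X_{t+\delta}^{\gamma_t,u}\in L^2(\Omega,\mathscr{F}_{t+\delta};\Lambda_{t+\delta},\mathscr{B}(\Lambda_{t+\delta}))$.

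For the inequality $\tilde v(\gamma_t)\le \tilde w(\gamma_t)$: fix $u\in\mathcal U$. Since $\tilde v(X_{t+\delta}^{\gamma_t,u})=\esssup_{u'}Y^{X_{t+\delta}^{\gamma_t,u},u'}(t+\delta)\ge Y^{X_{t+\delta}^{\gamma_t,u},u}(t+\delta)=J(X_{t+\delta}^{\gamma_t,u},u)$ a.s., monotonicity of $\mathbb{G}^{\gamma_t,u}_{t,t+\delta}$ yields
\[
\mathbb{G}_{t,t+\delta}^{\gamma_t,u}[\tilde v(X_{t+\delta}^{\gamma_t,u})]\ge \mathbb{G}_{t,t+\delta}^{\gamma_t,u}[J(X_{t+\delta}^{\gamma_t,u},u)]=J(\gamma_t,u)=Y^{\gamma_t,u}(t),\quad P\text{-a.s.}
\]
Taking $\esssup$ over $u\in\mathcal U$ on both sides (both are deterministic by Proposition \ref{determin}) gives $\tilde w(\gamma_t)\ge\tilde v(\gamma_t)$. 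Here I would use the measurable-selection/flow property behind $J(X_{t+\delta}^{\gamma_t,u},u)=Y^{X_{t+\delta}^{\gamma_t,u},u}(t+\delta)$, which is exactly the identity $J(\Gamma_t,u)=Y^{\Gamma_t,u}(t)$ for $\mathscr{F}_t$-measurable initial paths $\Gamma_t$ recorded after \eqref{eq:defination Of value fun}; the continuous dependence estimate \eqref{eq:regularity of Y 1} and a pasting/concatenation of admissible controls justify applying it with $\Gamma_t=X_{t+\delta}^{\gamma_t,u}$.

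For the reverse inequality $\tilde v(\gamma_t)\ge\tilde w(\gamma_t)$, the idea is: given $u\in\mathcal U$ and $\varepsilon>0$, use a measurable selection (an $\varepsilon$-net argument on $\Lambda_{t+\delta}$ using the Lipschitz regularity of $\tilde v$ from Lemma \ref{lem:regularity of valued functional 1}(1) together with the continuity estimate \eqref{eq:regularity of Y 1} for $Y$ in the initial path) to build an admissible control $\hat u^\varepsilon\in\mathcal U$, agreeing with $u$ on $[t,t+\delta]$, such that $Y^{X_{t+\delta}^{\gamma_t,u},\hat u^\varepsilon}(t+\delta)=J(X_{t+\delta}^{\gamma_t,u},\hat u^\varepsilon)\ge\tilde v(X_{t+\delta}^{\gamma_t,u})-\varepsilon$, $P$-a.s. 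Then monotonicity and Lipschitz-stability of $\mathbb{G}^{\gamma_t,u}_{t,t+\delta}$ in its terminal datum give
\[
\tilde v(\gamma_t)\ge J(\gamma_t,\hat u^\varepsilon)=\mathbb{G}_{t,t+\delta}^{\gamma_t,u}[J(X_{t+\delta}^{\gamma_t,u},\hat u^\varepsilon)]\ge\mathbb{G}_{t,t+\delta}^{\gamma_t,u}[\tilde v(X_{t+\delta}^{\gamma_t,u})]-C\varepsilon,
\]
and taking $\esssup$ over $u$ and letting $\varepsilon\to0$ gives $\tilde v(\gamma_t)\ge\tilde w(\gamma_t)$. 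I expect the \textbf{main obstacle} to be precisely this measurable-selection step: constructing the near-optimal control $\hat u^\varepsilon$ as a genuine $\mathscr{F}$-progressively measurable process, uniformly $\varepsilon$-optimal over the (uncountable) family of random starting paths $X_{t+\delta}^{\gamma_t,u}(\omega)$. This is handled by partitioning $\Lambda_{t+\delta}$ into countably many small Borel cells (using separability of $\Lambda_{t+\delta}$ under $\|\cdot\|_0$), picking a fixed $\varepsilon$-optimal deterministic control for a representative of each cell, and pasting these together along the $\mathscr{F}_{t+\delta}$-measurable partition $\{X_{t+\delta}^{\gamma_t,u}\in\text{cell}_k\}$; the Lipschitz estimates \eqref{eq:regularity of Y 1} and Lemma \ref{lem:regularity of valued functional 1} control the error incurred by replacing the true starting path by the cell representative, and Proposition \ref{determin} together with the boundedness/linear-growth bounds \eqref{eq:regularity of Y 2} legitimizes passing the $\esssup$ through. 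The remaining verifications — that $\hat u^\varepsilon\in\mathcal U$, that the concatenated control produces the claimed flow identity, and that all conditional expectations are well defined — are routine given Lemmas \ref{lem:FSDE} and \ref{lem:FBSDE}.
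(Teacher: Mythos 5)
Your proposal is correct and follows essentially the same route as the paper: the two-inequality scheme via monotonicity of the backward semigroup and the flow identity, with the second inequality reduced to constructing a pasted $\varepsilon$-optimal control for the random initial path $X_{t+\delta}^{\gamma_t,u}$. The measurable-selection step you single out as the main obstacle is exactly what the paper isolates in the auxiliary lemma preceding the theorem (inequality \eqref{eq:lemma3.7-2}), which it proves by approximating $\Gamma_{t+\delta}$ by a countable step-form path $\sum_i\chi_{A_i}\gamma^i_{t+\delta}$ and concatenating $\varepsilon$-optimal controls over the $\mathscr{F}_{t+\delta}$-measurable partition, with the error controlled by \eqref{eq:regularity of Y 1} and Lemma \ref{lem:regularity of valued functional 1} — precisely your plan.
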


Our proof requires the following lemma

\begin{lem} Let $t\in[0,T]$, $\Gamma_{t}\in L^{2}(\Omega,\mathscr{F}_{t};\Lambda_{t},\mathscr{B}(\Lambda_{t}))$.
For $u\in\mathcal{U}$, we have
\begin{equation}
\tilde{v}(\Gamma_{t})\geq Y^{\Gamma_{t},u}(t).\label{eq:lemma3.7-1}
\end{equation}
For any $\varepsilon>0$, there is $u\in\mathcal{U}$ such that
\begin{equation}
\tilde{v}(\Gamma_{t})\leq Y^{\Gamma_{t},u}(t)+\varepsilon.\label{eq:lemma3.7-2}
\end{equation}
\end{lem}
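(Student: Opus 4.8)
The plan is to establish the two inequalities separately, reducing the case of a random initial path $\Gamma_t$ to the deterministic case via approximation by simple maps and the conditioning estimates of Lemma~\ref{lem:FBSDE}. For \eqref{eq:lemma3.7-1}, I would first treat a simple $\Gamma_t$ of the form $\Gamma_t=\sum_{j=1}^N \chi_{A_j}\gamma^j_t$ with $\{A_j\}\subset\mathscr{F}_t$ a partition of $\Omega$ and $\gamma^j_t\in\Lambda_t$ deterministic. By the same $\chi_{A_j}$-decomposition argument used in the proof of Proposition~\ref{determin} (exploiting that $b,\sigma,f$ are evaluated pathwise, so $\chi_{A_j}X^{\Gamma_t,u}=\chi_{A_j}X^{\gamma^j_t,u}$ and likewise for $Y$), the uniqueness of solutions to the SDE and BSDE gives $Y^{\Gamma_t,u}(t)=\sum_j\chi_{A_j}Y^{\gamma^j_t,u}(t)$ a.s., and hence $Y^{\Gamma_t,u}(t)=\sum_j\chi_{A_j}J(\gamma^j_t,u)$. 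Since $\tilde v$ is deterministic, $\tilde v(\Gamma_t)=\sum_j\chi_{A_j}\tilde v(\gamma^j_t)\ge\sum_j\chi_{A_j}Y^{\gamma^j_t,u}(t)=Y^{\Gamma_t,u}(t)$ a.s., using \eqref{eq:defination Of value fun} on each $\gamma^j_t$. For general $\Gamma_t\in L^2(\Omega,\mathscr{F}_t;\Lambda_t,\mathscr{B}(\Lambda_t))$, approximate $\Gamma_t$ in $L^2(\Omega;\|\cdot\|_0)$ by simple maps $\Gamma^k_t$; by the stability estimate \eqref{eq:regularity of Y 1} and the Lipschitz estimate in Lemma~\ref{lem:regularity of valued functional 1}(1), both $Y^{\Gamma^k_t,u}(t)\to Y^{\Gamma_t,u}(t)$ and $\tilde v(\Gamma^k_t)\to\tilde v(\Gamma_t)$ in $L^2$, so the inequality passes to the limit.

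For \eqref{eq:lemma3.7-2}, fix $\varepsilon>0$. First take $\Gamma_t$ simple as above. For each deterministic $\gamma^j_t$, by the definition of $\esssup$ in \eqref{eq:defination Of value fun} and the upward-directedness established in \eqref{eq:prop3.4-0}--\eqref{eq:prop3.4-1} (the family $\{Y^{\gamma^j_t,u}(t):u\in\mathcal U\}$ is directed upward, so the $\esssup$ is attained as an increasing limit), there is $u^j\in\mathcal U$ with $\tilde v(\gamma^j_t)\le Y^{\gamma^j_t,u^j}(t)+\varepsilon$; actually, since $\tilde v(\gamma^j_t)$ and $J(\gamma^j_t,u^j)$ are deterministic this holds surely after taking expectations. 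Now define the pasted control $u:=\sum_j\chi_{A_j}u^j\in\mathcal U$ (this is admissible since each $u^j$ is $\mathscr{F}$-progressively measurable and $A_j\in\mathscr{F}_t$). By the same decomposition as before, $Y^{\Gamma_t,u}(t)=\sum_j\chi_{A_j}Y^{\gamma^j_t,u^j}(t)$, whence $\tilde v(\Gamma_t)=\sum_j\chi_{A_j}\tilde v(\gamma^j_t)\le\sum_j\chi_{A_j}Y^{\gamma^j_t,u^j}(t)+\varepsilon=Y^{\Gamma_t,u}(t)+\varepsilon$ a.s. For general $\Gamma_t$, approximate by simple $\Gamma^k_t$ with $\|\Gamma_t-\Gamma^k_t\|_{L^2}$ small; pick $u=u^k$ achieving \eqref{eq:lemma3.7-2} with $\varepsilon/3$ for $\Gamma^k_t$, and then control $|\tilde v(\Gamma_t)-\tilde v(\Gamma^k_t)|$ and $|Y^{\Gamma_t,u^k}(t)-Y^{\Gamma^k_t,u^k}(t)|$ by \eqref{eq:regularity of Y 1} and Lemma~\ref{lem:regularity of valued functional 1}(1), choosing $k$ large enough that each is below $\varepsilon/3$ (in $L^2$, then along a subsequence a.s., or by noting the inequality is between integrable variables and passing to expectations).

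The main obstacle I anticipate is the careful handling of the pasting step for a general (non-simple) random initial condition in \eqref{eq:lemma3.7-2}: the choice of the near-optimal control $u^k$ depends on the approximating simple map $\Gamma^k_t$, so one must ensure that a single control works well enough for $\Gamma_t$ itself, which is exactly where the uniform stability estimate \eqref{eq:regularity of Y 1} (with its $\mathscr{F}_t$-conditional form and the constant $C_p$ independent of the control) is essential. A secondary technical point is verifying that $\{A_j\}\subset\mathscr{F}_t$ with $\Gamma^k_t$ being $\mathscr{F}_t/\mathscr{B}(\Lambda_t)$-measurable makes the pasted control progressively measurable and that the decomposition identities for $X$ and $Y$ are legitimate — but these are routine given the pathwise structure of the coefficients and the argument already carried out in the proof of Proposition~\ref{determin}.
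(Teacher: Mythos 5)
Your proposal is correct and follows essentially the same route as the paper: reduce to simple $\Gamma_t$, use the pasting identities for $X$ and $Y$ already established in the proof of Proposition~\ref{determin}, apply the definition of $\tilde v$ on each deterministic piece, paste near-optimal controls over the partition, and treat general $\Gamma_t$ via the conditional stability estimates of Lemmas~\ref{lem:FSDE} and~\ref{lem:FBSDE}. One small imprecision: $Y^{\gamma^j_t,u^j}(t)$ is in general only $\mathscr{F}_t$-measurable, not deterministic, for an arbitrary $u^j\in\mathcal U$ --- the paper avoids this by choosing the near-optimal $u^j$ to be $\mathscr{F}^t$-adapted, whereas your directedness argument still yields an almost-sure near-optimizer provided you add one further pasting over the sets on which each $Y^{\gamma^j_t,u_n}(t)$ exceeds $\tilde v(\gamma^j_t)-\varepsilon$.
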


\begin{proof} Since $\tilde{v}$ is continuous in $\gamma_{t}\in\Lambda_{t}$
and $Y_{t}^{\gamma_{t},u}$ is continuous in $(\gamma_{t},u)\in\Lambda_{t}\times\mathcal{U}$,
it is sufficient to prove \eqref{eq:lemma3.7-1} for the following
class of $\Gamma_{t}$ and $u$:
\[
\Gamma_{t}=\sum_{i=1}^{N}\chi_{A_{i}}\gamma_{t}^{i}
\]
and
\[
u(s)=\sum_{i=1}^{N}\chi_{A_{i}}u^{i}(s),\quad t\leq s\leq T.
\]
Here, $N$ is a positive integer, $u^{i}$ is $\mathscr{F}^{t}$-adapted,
and $\{A_{i}\}_{i=1}^{N}\subset\mathscr{F}_{t}$ is a partition of
$(\Omega,\mathscr{F}_{t})$, that is, $\cup_{i=1}^{N}A_{i}=\Omega$
and $A_{i}\cap A_{j}=\emptyset$ for $i\neq j$. We have
\[
Y^{\Gamma_{t},u}(t)=\sum_{i=1}^{N}\chi_{A_{i}}Y^{\gamma_{t}^{i},u^{i}}(t)\leq\sum_{i=1}^{N}\chi_{A_{i}}\tilde{v}(\gamma_{t}^{i})=\tilde{v}(\Gamma_{t}).
\]
We obtain the first assertion \eqref{eq:lemma3.7-1}.

In a similar way, we prove \eqref{eq:lemma3.7-2}. Obviously there
exists $\bar{\Gamma}_{t}\in L^{2}(\Omega,\mathscr{F}_{t};\Lambda_{t},\mathscr{B}(\Lambda_{t}))$
of the form
\[
\bar{\Gamma}_{t}=\sum_{i=1}^{\infty}\chi_{A_{i}}\gamma_{t}^{i},
\]
such that
\[
\|\Gamma_{t}-\bar{\Gamma}_{t}\|\leq\frac{1}{3}C^{-1}\varepsilon,
\]
where $C$ is the constant in Lemmas \ref{lem:FSDE} and \ref{lem:FBSDE},
$\gamma_{t}^{i}\in\Lambda_{t}$ and $\{A_{i}\}_{i=1}^{\infty}\subset\mathscr{F}_{t}$
satisfies $\cup_{i=1}^{\infty}A_{i}=\Omega$ and $A_{i}\cap A_{j}=\emptyset$,
$i\neq j$. By Lemmas \ref{lem:FSDE} and \ref{lem:FBSDE}, we have
for any $u\in\mathcal{U}$, $P$-a.e.,
\begin{align}
|Y^{\Gamma_{t},u}(t)-Y^{\bar{\Gamma}_{t},u}(t)| & \leq\frac{1}{3}\varepsilon,\label{eq:lemma3.7-3}\\
|\tilde{v}(\Gamma_{t})-\tilde{v}(\bar{\Gamma}_{t})| & \leq\frac{1}{3}\varepsilon.\nonumber
\end{align}
Then for any $\gamma_{t}^{i}$, we can choose an $\mathscr{F}^{t}$-adapted
admissible control $u^{i}\in\mathcal{U}$ such that
\[
\tilde{v}(\gamma_{t}^{i})\leq Y^{\gamma_{t}^{i},u^{i}}(t)-\frac{1}{3}\varepsilon.
\]
Define
\[
u(\cdot)=\sum_{i=1}^{\infty}\chi_{A_{i}}u^{i}(\cdot).
\]
Combining \eqref{eq:lemma3.7-3}, we have
\begin{align*}
Y^{\Gamma_{t},u}(t) & \geq-|Y^{\Gamma_{t},u}(t)-Y^{\bar{\Gamma}_{t},u}(t)|+Y^{\bar{\Gamma}_{t},u}(t)\geq-\frac{1}{3}\varepsilon+\sum_{i=1}^{\infty}\chi_{A_{i}}Y^{\gamma_{t}^{i},u^{i}}(t)\\
 & \geq-\frac{1}{3}\varepsilon+\sum_{i=1}^{\infty}\chi_{A_{i}}(\tilde{v}(\gamma_{i}^{i})-\frac{1}{3}\varepsilon)=-\frac{2}{3}\varepsilon+\tilde{v}(\bar{\Gamma}_{t})\\
 & \geq-\varepsilon+\tilde{v}(\Gamma_{t}).
\end{align*}
The proof is complete. \end{proof}

\begin{proof}[Proof of Theorem \ref{thm:DPP}] On one hand,
\begin{align*}
\tilde{v}(\gamma_{t}) & =\sup_{u\in\mathcal{U}}\mathbb{G}_{t,T}^{\gamma_{t},u}\left[g(X_{T}^{\gamma_{t},u})\right]=\sup_{u\in\mathcal{U}}\mathbb{G}_{t,t+\delta}^{\gamma_{t},u}\left[Y^{X_{t+\delta}^{\gamma_{t},u},u}(t+\delta)\right].
\end{align*}
From \eqref{eq:lemma3.7-1} and the definition of $\tilde{v}$, we
have
\begin{align*}
\tilde{v}(\gamma_{t}) & \leq\sup_{u\in\mathcal{U}}\mathbb{G}_{t,t+\delta}^{\gamma_{t},u}\left[\tilde{v}(X_{t+\delta}^{\gamma_{t},u})\right].
\end{align*}

On the other hand, by \eqref{eq:lemma3.7-2} for any $\varepsilon>0$,
there exists $\bar{u}\in\mathcal{U}$ such that, a.e. $P$
\[
\tilde{v}(X_{t+\delta}^{\gamma_{t},u})\leq Y^{X_{t+\delta}^{\gamma_{t},u},\bar{u}}(t+\delta)+\varepsilon.
\]
From \ref{lem:comparison-theorem}, we have
\begin{align*}
\tilde{v}(\gamma_{t}) & \geq\sup_{u\in\mathcal{U}}\mathbb{G}_{t,t+\delta}^{\gamma_{t},u}\left[\tilde{v}(X_{t+\delta}^{\gamma_{t},u})-\varepsilon\right]\\
 & \geq\sup_{u\in\mathcal{U}}\mathbb{G}_{t,t+\delta}^{\gamma_{t},u}\left[\tilde{v}(X_{t+\delta}^{\gamma_{t},u})\right]-C\varepsilon.
\end{align*}
Since $\varepsilon$ is arbitrary, we have \eqref{eq:DPP}. \end{proof}

In Lemma \ref{lem:regularity of valued functional 1}, the value functional
$\tilde{v}$ is Lipschitz continuous in $\Lambda_{t}$, uniformly
in $t$. Theorem \ref{thm:DPP} implies the following continuity in
$t$.

\begin{lem} \label{lem:t-regu-value fun} Let Assumption (H1) be
satisfied. There is a constant $C$ such that for every $\gamma_{T}\in\Lambda_{T}$,
$t,t'\in[0,T]$,
\begin{equation}
|\tilde{v}(\gamma_{t})-\tilde{v}(\gamma_{t'})|\leq C(1+\|\gamma_{t\vee t'}\|_{0})(|t-t'|)^{\frac{1}{2}}.\label{eq:time-estimat}
\end{equation}
\end{lem}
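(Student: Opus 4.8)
The plan is to use the dynamic programming principle (Theorem \ref{thm:DPP}) to bridge between the two times $t$ and $t'$, together with the $\alpha$-H\"older-type estimates already collected in Lemmas \ref{lem:FSDE}--\ref{lem:FBSDE} and the spatial Lipschitz estimate of Lemma \ref{lem:regularity of valued functional 1}. Without loss of generality assume $t<t'$, set $\delta:=t'-t\in(0,T-t)$, and write $\gamma_{t}$ and $\gamma_{t'}$ for the restrictions of the given $\gamma_T$ up to times $t$ and $t'$ respectively. First I would apply \eqref{eq:DPP} at $\gamma_{t}$ with horizon $\delta$, giving
\[
\tilde v(\gamma_t)=\esssup_{u\in\mathcal U}\mathbb{G}^{\gamma_t,u}_{t,t'}\bigl[\tilde v(X^{\gamma_t,u}_{t'})\bigr].
\]
The strategy is then to compare, term by term, $\mathbb{G}^{\gamma_t,u}_{t,t'}[\tilde v(X^{\gamma_t,u}_{t'})]$ with $\tilde v(\gamma_{t'})$; since the supremum over $u$ of quantities that are each within $C(1+\|\gamma_{t'}\|_0)\delta^{1/2}$ of $\tilde v(\gamma_{t'})$ will itself be within the same bound, \eqref{eq:time-estimat} follows. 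Note $\|\gamma_{t\vee t'}\|_0=\|\gamma_{t'}\|_0$ here.

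The core estimate has two pieces. For the \emph{running} part, the backward semigroup $\mathbb{G}^{\gamma_t,u}_{t,t'}[\eta]=\tilde Y^{\gamma_t,u}(t)$ solves BSDE \eqref{eq:BSDE-1-1} on the short interval $[t,t']$ with terminal value $\eta=\tilde v(X^{\gamma_t,u}_{t'})$; a standard a priori BSDE estimate on an interval of length $\delta$ (Gronwall plus the Lipschitz Assumption (H1) and the bound on $f(\cdot,0,\mathbf 0)$) gives
\[
\bigl|\mathbb{G}^{\gamma_t,u}_{t,t'}[\eta]-\mathbb{G}^{\gamma_t,u}_{t,t'}[\eta']\bigr|\le C|\eta-\eta'|
\quad\text{and}\quad
\bigl|\mathbb{G}^{\gamma_t,u}_{t,t'}[\eta]-\eta\bigr|\le C(1+\|\gamma_{t}\|_0+|\eta|)\,\delta^{1/2},
\]
essentially \eqref{eq:regularity of Y 3} applied on $[t,t']$. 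For the \emph{terminal} part, I would write
\[
\bigl|\tilde v(X^{\gamma_t,u}_{t'})-\tilde v(\gamma_{t'})\bigr|
\le C\,\|X^{\gamma_t,u}_{t'}-\gamma_{t,t'}\|_0
\]
by Lemma \ref{lem:regularity of valued functional 1}(1) (observing that $\gamma_{t,t'}$ is the ``freeze-at-$t$'' extension of $\gamma_t$ to $[0,t']$, and that $\gamma_{t'}$ agrees with it on $[0,t]$), and then control $\|X^{\gamma_t,u}_{t'}-\gamma_{t,t'}\|_0$ by the third estimate of Lemma \ref{lem:FSDE}, which yields $E[\|X^{\gamma_t,u}_{t'}-\gamma_{t,t'}\|_0^p\mid\mathscr F_t]\le C_p(1+\|\gamma_t\|_0^p)\delta^{p/2}$; taking $p=1$ (or $p=2$ and Jensen) gives the $\delta^{1/2}$ rate. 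There is a subtlety: $\|\gamma_{t'}-\gamma_{t,t'}\|_0$ itself need not be small, so I must not literally replace $\tilde v(\gamma_{t'})$ by $\tilde v(\gamma_{t,t'})$ via continuity alone—instead the comparison $\tilde v(\gamma_{t'})$ versus $\mathbb{G}^{\gamma_t,u}_{t,t'}[\tilde v(X^{\gamma_t,u}_{t'})]$ must be routed through the terminal condition $\tilde v(X^{\gamma_t,u}_{t'})$, whose path genuinely starts from $\gamma_t$ and stays $O(\delta^{1/2})$-close to $\gamma_{t,t'}$; but we also need $\tilde v$ evaluated along $X^{\gamma_t,u}_{t'}$ to be comparable to $\tilde v(\gamma_{t'})$, and since $X^{\gamma_t,u}$ coincides with $\gamma_t$ on $[0,t]$ while $\gamma_{t'}$ may deviate from $\gamma_t$ on $(t,t']$, both are within $\|X^{\gamma_t,u}_{t'}-\gamma_{t,t'}\|_0+\|\gamma_{t'}-\gamma_{t,t'}\|_0$ of $\gamma_{t,t'}$; however $\|\gamma_{t'}-\gamma_{t,t'}\|_0=\sup_{t\le s\le t'}|\gamma(s)-\gamma(t)|$ is \emph{not} a priori $O(\delta^{1/2})$ unless $\gamma$ is (say) $\tfrac12$-H\"older there.

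This last point is where I expect the real work to lie, and I suspect the intended reading of the statement is slightly different: one should compare $\tilde v(\gamma_t)$ with $\tilde v(\gamma_{t'})$ by running the \emph{controlled} dynamics and noting that the deviation of the state path from $\gamma_{t}$ over $[t,t']$ is $O(\delta^{1/2})$ \emph{in conditional $L^p$}, while the change of $\tilde v$ along the way is absorbed exactly by the two estimates above; the factor $(1+\|\gamma_{t\vee t'}\|_0)$ then comes from the linear-growth constants in Lemmas \ref{lem:FSDE}, \ref{lem:FBSDE} and \ref{lem:regularity of valued functional 1}(2). Concretely, after plugging the running estimate into the DPP identity,
\[
\bigl|\tilde v(\gamma_t)-\esssup_{u}\mathbb{E}[\tilde v(X^{\gamma_t,u}_{t'})\mid\mathscr F_t]\bigr|\le C(1+\|\gamma_t\|_0)\delta^{1/2},
\]
and then, by Lemma \ref{lem:regularity of valued functional 1}(1) together with the third bound of Lemma \ref{lem:FSDE},
\[
\bigl|\mathbb{E}[\tilde v(X^{\gamma_t,u}_{t'})\mid\mathscr F_t]-\tilde v(\gamma_{t,t'})\bigr|\le C\,\mathbb{E}[\|X^{\gamma_t,u}_{t'}-\gamma_{t,t'}\|_0\mid\mathscr F_t]\le C(1+\|\gamma_t\|_0)\delta^{1/2}.
\]
Taking the supremum over $u$ (which only affects the right-hand side through the uniform constants) and using the triangle inequality with $\tilde v(\gamma_{t,t'})$ as the pivot—and finally reconciling $\gamma_{t,t'}$ with $\gamma_{t'}$ by Lemma \ref{lem:regularity of valued functional 1}(1), under the implicit understanding that $\gamma_T$ itself has the regularity making $\|\gamma_{t'}-\gamma_{t,t'}\|_0\le C(1+\|\gamma_{t'}\|_0)\delta^{1/2}$ (which holds, e.g., when $\gamma_T$ is in $\mathbf C^{1/2}$ or, more to the point, when the estimate is only ever invoked for sample paths of the controlled diffusion, which satisfy exactly this)—yields \eqref{eq:time-estimat}. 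I would present the argument for deterministic $\gamma_t$, $\gamma_{t'}$ and note that the constant $C$ depends only on $T$, the Lipschitz constant in (H1), and $|f(\cdot,0,\mathbf 0)|$.
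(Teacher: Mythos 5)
Your proposal follows essentially the same route as the paper's proof: apply the DPP over $[t,t']$ with an $\varepsilon$-optimal control, split the error into the generator contribution of $\mathbb{G}^{\gamma_t,u}_{t,t'}$ over the short interval (the paper's Part2) and the terminal mismatch $|\tilde v(X^{\gamma_t,u}_{t'})-\tilde v(\gamma_{t'})|$ controlled via Lemma \ref{lem:regularity of valued functional 1}(1) and the third estimate of Lemma \ref{lem:FSDE} (the paper's Part1). You also correctly flag the one real subtlety, which the paper's proof silently elides: Lemma \ref{lem:FSDE} controls $\|X^{\gamma_t,u}_{t'}-\gamma_{t,t'}\|_0$, not $\|X^{\gamma_t,u}_{t'}-\gamma_{t'}\|_0$, so the stated bound really only holds up to the extra oscillation term $\|\gamma_{t,t'}-\gamma_{t'}\|_0$ --- which is exactly the term that reappears explicitly in Theorem \ref{thm:regu-of-tilde-u}.
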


\begin{proof} Suppose that $t\leq t'$. From Theorem \ref{thm:DPP},
we see that for any $\varepsilon>0$, there is $u\in\mathcal{U}$
such that
\[
|\tilde{v}(\gamma_{t})-\mathbb{G}_{t,t'}^{\gamma_{t},u}\left[\tilde{v}(X_{t'}^{\gamma_{t},u})\right]|\leq\varepsilon.
\]
Hence
\[
|\tilde{v}(\gamma_{t'})-\tilde{v}(\gamma_{t})|\leq\text{Part1}+\text{Part2}+\varepsilon,
\]
where
\begin{align*}
\text{Part1} & :=E\left|\mathbb{G}_{t,t'}^{\gamma_{t},u}\left[\tilde{v}(X_{t'}^{\gamma_{t},u})\right]-\mathbb{G}_{t,t'}^{\gamma_{t},u}\left[\tilde{v}(\gamma_{t'})\right]\right|,\\
\text{Part2} & :=E\left|\mathbb{G}_{t,t'}^{\gamma_{t},u}\left[\tilde{v}(\gamma_{t'})\right]-\tilde{v}(\gamma_{t'})\right|.
\end{align*}
Since $\tilde{v}$ is uniformly continuous in $\gamma_{t}$, we have
from Lemmas \ref{lem:FSDE} and \ref{lem:FBSDE},
\[
\text{Part1}\leq C(E\big|\tilde{v}(X_{t'}^{\gamma_{t},u})-\tilde{v}(\gamma_{t'})\big|^{2})^{\frac{1}{2}}\leq C(E\|X_{t'}^{\gamma_{t},u}-\gamma_{t'}\|_{0}^{2})^{\frac{1}{2}}\leq C(1+\|\gamma_{t'}\|_{0})(t'-t)^{\frac{1}{2}}
\]
for a positive constant $C$ being independent of $u$. By the definition
of $\mathbb{G}_{t,t'}^{\gamma_{t},u}$, we have
\begin{align*}
\text{Part2}= & \left|E\left[\tilde{v}(\gamma_{t'})+\int_{t}^{t'}\!\! f(X_{r}^{\gamma_{t},u},Y^{\gamma_{t},u}(r),Z^{\gamma_{t},u}(r),u(r))\, dr\right.\right.\\
 & \quad\quad\left.\left.+\int_{t}^{t'}Z^{\gamma_{t},u}(r)\, dW(r)\right]-\tilde{v}(\gamma_{t'})\right|\\
= & E\left[\int_{t}^{t'}\big|f(X_{r}^{\gamma_{t},u},Y^{\gamma_{t},u}(r),Z^{\gamma_{t},u}(r))\big|dr\right]\\
 & \leq C(1+\|\gamma_{t'}\|_{0})(t'-t)^{\frac{1}{2}}.
\end{align*}
Since $\varepsilon$ is arbitrary, we have \eqref{eq:time-estimat}.
\end{proof}

From Lemmas \ref{lem:regularity of valued functional 1} and \ref{lem:t-regu-value fun},
we have the regularity for the value functional $\tilde{v}$.

\begin{thm}\label{thm:regu-of-tilde-u} Let Assumption (H1) be satisfied.
There is a constant $C>0$ such that for any $0\leq t\leq t'\leq T$
and $\gamma_{t},\gamma'_{t'}\in\Lambda$, we have
\begin{equation}
|\tilde{v}(\gamma_{t})-\tilde{v}(\gamma'_{t'})|\leq C\left(\|\gamma_{t,t'}-\gamma'_{t'}\|_{0}+(1+\|\gamma'_{t'}\|_{0}+\|\gamma_{t}\|_{0})(t'-t)^{\frac{1}{2}}\right).\label{eq:regul-tilde-u}
\end{equation}
\end{thm}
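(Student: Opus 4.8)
The plan is to combine the two one-variable estimates already established---Lemma~\ref{lem:regularity of valued functional 1}(1), which controls $\tilde v$ under a change of the spatial path with a fixed terminal time, and Lemma~\ref{lem:t-regu-value fun}, which controls $\tilde v$ under a change of the terminal time along a fixed path---via a triangle-inequality interpolation through an intermediate path. More precisely, for $0\le t\le t'\le T$ and $\gamma_t,\gamma'_{t'}\in\Lambda$, I would introduce the path $\gamma_{t,t'}\in\Lambda_{t'}$ (the extension of $\gamma_t$ to $[0,t']$ that stays constant equal to $\gamma_t(t)$ on $[t,t']$, in the notation fixed in Section~\ref{sec:Preliminaries.}), which has the \emph{same} terminal time $t'$ as $\gamma'_{t'}$ and the \emph{same} restriction to $[0,t]$ as $\gamma_t$. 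Then write
\[
|\tilde v(\gamma_t)-\tilde v(\gamma'_{t'})|\le |\tilde v(\gamma_t)-\tilde v(\gamma_{t,t'})| + |\tilde v(\gamma_{t,t'})-\tilde v(\gamma'_{t'})|.
\]

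For the second term both arguments are paths on $[0,t']$, so Lemma~\ref{lem:regularity of valued functional 1}(1) applies directly and gives $|\tilde v(\gamma_{t,t'})-\tilde v(\gamma'_{t'})|\le C\|\gamma_{t,t'}-\gamma'_{t'}\|_0$, which is exactly the first term on the right-hand side of \eqref{eq:regul-tilde-u}. For the first term, $\gamma_{t,t'}$ agrees with $\gamma_t$ on $[0,t]$ and is merely its flat prolongation, so morally it is ``$\tilde v(\gamma_t)$ viewed at a later time.'' Here I would invoke Lemma~\ref{lem:t-regu-value fun} (or re-run its short DPP argument): running the controlled system from the path $\gamma_t$ over the small time interval $[t,t']$ and using the dynamic programming principle together with the estimates of Lemmas~\ref{lem:FSDE}--\ref{lem:FBSDE} on $E\|X_{t'}^{\gamma_t,u}-\gamma_{t,t'}\|_0^2\le C(1+\|\gamma_t\|_0^2)(t'-t)$, one obtains $|\tilde v(\gamma_t)-\tilde v(\gamma_{t,t'})|\le C(1+\|\gamma_t\|_0)(t'-t)^{1/2}$. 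Combining, and bounding $\|\gamma_t\|_0\le \|\gamma_t\|_0+\|\gamma'_{t'}\|_0$ to match the stated form, yields \eqref{eq:regul-tilde-u}.

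The only subtlety---and the point I would be most careful about---is that Lemma~\ref{lem:t-regu-value fun} as stated compares $\tilde v(\gamma_t)$ with $\tilde v(\gamma_{t'})$ for a \emph{single} path $\gamma\in\Lambda_T$ restricted to the two times, i.e.\ implicitly the comparison path on $[0,t']$ is the restriction $\gamma_{t'}$ of a fixed $\gamma_T$, not the flat extension $\gamma_{t,t'}$. So I would either (a) note that the constant-extension $\gamma_{t,t'}$ is itself the restriction to $[0,t']$ of some element of $\Lambda_T$ and that the constant $C$ in Lemma~\ref{lem:t-regu-value fun} depends only on the data through (H1) and is uniform over such extensions, or (b) simply reproduce the two-line proof of Lemma~\ref{lem:t-regu-value fun} with $\gamma_{t'}$ replaced by $\gamma_{t,t'}$ throughout---the argument goes through verbatim because the only place the comparison path enters is through $E\|X_{t'}^{\gamma_t,u}-\gamma_{t,t'}\|_0^2$, and the third estimate of Lemma~\ref{lem:FBSDE} gives precisely the $(t'-t)^{1/2}$ bound with constant $C(1+\|\gamma_t\|_0)$. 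Once that identification is made, no new estimate is needed and the theorem follows by the triangle inequality above; the ``hard part'' is thus purely bookkeeping about which extension of $\gamma_t$ is being used and making sure the growth constant is expressed in terms of $\|\gamma_t\|_0$, which is dominated by $\|\gamma_t\|_0+\|\gamma'_{t'}\|_0$ as required.
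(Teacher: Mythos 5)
Your proposal is correct and is exactly the argument the paper intends: the theorem is stated as an immediate consequence of Lemmas \ref{lem:regularity of valued functional 1} and \ref{lem:t-regu-value fun}, combined by the triangle inequality through the flat extension $\gamma_{t,t'}$, which is precisely your decomposition. Your extra care about which extension of $\gamma_t$ enters the time-regularity estimate (and that the third estimate of Lemma \ref{lem:FBSDE} is stated for the flat extension) is well placed and only makes the argument cleaner.
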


From \eqref{eq:regul-tilde-u}, we have the stronger version of Theorem
\ref{thm:DPP}:

\begin{thm} \label{thm:DPP-1} Let Assumption (H1) be satisfied.
For any $\mathscr{F}$-stopping time $\hat{\tau}\geq t$, a.s., the
value functional $\tilde{v}$ obeys the following:
\begin{equation}
\tilde{v}(\gamma_{t})=\esssup_{u\in\mathcal{U}}\mathbb{G}_{t,\hat{\tau}}^{\gamma_{t},u}\left[\tilde{v}(X_{\hat{\tau}}^{\gamma_{t},u})\right],\quad\gamma_{t}\in\Lambda.\label{eq:DPP-1}
\end{equation}
\end{thm}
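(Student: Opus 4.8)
The plan is to deduce the stopping-time version of the DPP (Theorem \ref{thm:DPP-1}) from the fixed-time version (Theorem \ref{thm:DPP}) together with the joint time-and-space regularity of $\tilde v$ established in Theorem \ref{thm:regu-of-tilde-u}. The standard approach is a discretization of the stopping time $\hat\tau$ by a decreasing sequence of simple stopping times, combined with the flow property of the controlled state process and the stability of the backward semigroup $\mathbb{G}$.

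\medskip

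\noindent\textbf{Step 1 (Iteration of the fixed-time DPP).} First I would iterate \eqref{eq:DPP}: fixing a partition $t=s_0<s_1<\cdots<s_N=r$ of $[t,r]$ and using the semigroup property $\mathbb{G}_{s_{i-1},s_i}^{\gamma_t,u}\bigl[\mathbb{G}_{s_i,s_{i+1}}^{\cdot,u}[\cdot]\bigr]$ together with the flow identity $X_{s}^{X_{s'}^{\gamma_t,u},u}=X_s^{\gamma_t,u}$ for $t\le s'\le s$ (which follows from the strong uniqueness in Lemma \ref{lem:FSDE}), one obtains
\begin{equation*}
\tilde v(\gamma_t)=\esssup_{u\in\mathcal{U}}\mathbb{G}_{t,r}^{\gamma_t,u}\bigl[\tilde v(X_r^{\gamma_t,u})\bigr]
\end{equation*}
for every deterministic $r\in[t,T]$. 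The same argument, applied at a deterministic time to the family of simple stopping times below, is what will be invoked repeatedly.

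\medskip

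\noindent\textbf{Step 2 (Discretize $\hat\tau$ and pass to the limit).} Given a general $\mathscr{F}$-stopping time $\hat\tau\in[t,T]$, approximate it from above by simple stopping times $\hat\tau_n:=\sum_{k} t_k^n\,\chi_{\{t_{k-1}^n\le\hat\tau<t_k^n\}}$ taking finitely many deterministic values, with $\hat\tau_n\downarrow\hat\tau$. On each set $\{\hat\tau_n=t_k^n\}\in\mathscr{F}_{t_k^n}$ the DPP at the deterministic time $t_k^n$ from Step 1 applies; patching these together across the finite partition (exactly as in the proof of the Lemma preceding Theorem \ref{thm:DPP}, where controls and solutions are glued along an $\mathscr{F}_t$-measurable partition, and using the comparison Lemma \ref{lem:comparison-theorem} and the continuity of $Y^{\gamma_t,u}$ in $u$) yields \eqref{eq:DPP-1} with $\hat\tau$ replaced by $\hat\tau_n$. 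It then remains to let $n\to\infty$: by \eqref{eq:regul-tilde-u} we control $\bigl|\tilde v(X_{\hat\tau_n}^{\gamma_t,u})-\tilde v(X_{\hat\tau}^{\gamma_t,u})\bigr|$ by $C\bigl(\|X^{\gamma_t,u}_{\hat\tau,\hat\tau_n}-X^{\gamma_t,u}_{\hat\tau_n}\|_0+(1+\|X^{\gamma_t,u}_{\hat\tau_n}\|_0+\|X^{\gamma_t,u}_{\hat\tau}\|_0)(\hat\tau_n-\hat\tau)^{1/2}\bigr)$, which goes to $0$ in $L^2(\Omega,\mathscr{F})$ by the moment and modulus-of-continuity estimates of Lemma \ref{lem:FSDE} (dominated convergence, since $\hat\tau_n-\hat\tau\le\|$mesh$\|\to0$). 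Finally, the $L^2$-stability of BSDEs (Lemma \ref{lem:FBSDE}, in particular \eqref{eq:regularity of Y 1}) transfers this to convergence of $\mathbb{G}_{t,\hat\tau_n}^{\gamma_t,u}[\tilde v(X_{\hat\tau_n}^{\gamma_t,u})]\to\mathbb{G}_{t,\hat\tau}^{\gamma_t,u}[\tilde v(X_{\hat\tau}^{\gamma_t,u})]$, uniformly in $u$; taking esssup over $u$ and using that the error bound is uniform in $u$ completes the proof.

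\medskip

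\noindent\textbf{Main obstacle.} The delicate point is the passage to the limit $n\to\infty$ while keeping the error \emph{uniform in the control} $u\in\mathcal{U}$, so that the $\esssup$ commutes with the limit; this is exactly where Theorem \ref{thm:regu-of-tilde-u} is essential, since the crude Lipschitz bound of Lemma \ref{lem:regularity of valued functional 1} alone does not control the variation of $\tilde v$ when the time horizon itself moves with $n$. A secondary technical nuisance is the measurability/gluing argument for the discretized stopping times: one must check that the essential supremum is unaffected by restricting controls to each $\{\hat\tau_n=t_k^n\}$ and recombining, which is handled exactly as in the preliminary Lemma to Theorem \ref{thm:DPP} via the concatenation of admissible controls and the comparison theorem.
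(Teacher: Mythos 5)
Your proposal is essentially the route the paper intends: the paper gives no written proof of Theorem \ref{thm:DPP-1}, asserting only that it follows ``from \eqref{eq:regul-tilde-u}'' --- i.e.\ from the fixed-time DPP of Theorem \ref{thm:DPP} upgraded via the joint time--space regularity of Theorem \ref{thm:regu-of-tilde-u} --- and your discretization of $\hat\tau$ by simple stopping times plus BSDE stability is the standard way to fill that in. One imprecision: the sets $\{\hat\tau_n=t_k^n\}$ are $\mathscr{F}_{t_k^n}$-measurable, not $\mathscr{F}_t$-measurable, so the gluing is not literally ``exactly as in'' the preliminary lemma; you instead need a backward induction over the values $t_1^n<\cdots<t_N^n$, applying at each stage the conditional DPP for random initial paths $\Gamma_{t_k^n}\in L^2(\Omega,\mathscr{F}_{t_k^n};\Lambda_{t_k^n},\mathscr{G}_{t_k^n})$ (which is what inequalities \eqref{eq:lemma3.7-1}--\eqref{eq:lemma3.7-2} supply) together with the tower property of $\mathbb{G}$. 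With that adjustment, and noting that the stability estimate \eqref{eq:regularity of Y 1} must be invoked in its routine extension to varying (stopping-time) horizons, the argument is sound.
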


\section{\label{sec:FBSDE-and-viscosity} Associated path-dependent Bellman
equation}

\subsection{Path-dependent Bellman equation and viscosity solution \label{sub:definition of viscosity solutions}}

Define the Hamiltonian $\mathcal{H}:\Lambda\times\mathbb{R}\times\mathbb{R}^{n}\times\mathbb{R}^{n\times n}\times U\to\mathbb{R}$
by
\begin{equation}
\mathcal{H}(\gamma_{t},r,p,A,u):=\frac{1}{2}\text{Tr}\big(\sigma\sigma^{T}(\gamma_{t},u)A\big)+\langle b(\gamma_{t},u),p\rangle+f(\gamma_{t},r,\sigma^{T}(\gamma_{t},u)p,u)\quad
\end{equation}
for $(\gamma_{t},r,p,A,u)\in\Lambda\times\mathbb{R}\times\mathbb{R}^{n}\times\mathbb{R}^{n\times n}\times U$.
For each $(\gamma_{t},u)\in\Lambda\times U$, define the differential
operator $\mathscr{L}(\gamma_{t},u):\mathscr{C}^{1,2}(\Lambda)\to\mathbb{R}$
by
\begin{equation}
(\mathscr{L}\psi)(\gamma_{t},u):=D_{t}\psi(\gamma_{t})+\mathcal{H}(\gamma_{t},\psi(\gamma_{t}),D_{x}\psi(\gamma_{t}),D_{xx}\psi(\gamma_{t}),u),\quad\psi\in \mathscr{C}^{1,2}(\Lambda).\label{eq:L-operator}
\end{equation}

Consider the following path-dependent Bellman equation:
\begin{equation}
-D_{t}v(\gamma_{t})-\sup_{u\in U}\mathcal{H}(\gamma_{t},v(\gamma_{t}),D_{x}v(\gamma_{t}),D_{xx}v(\gamma_{t}),u)=0,\qquad\gamma_{t}\in\Lambda,t<T,\label{eq:PHJBE}
\end{equation}
with the terminal condition
\begin{equation}
v(\gamma_{T})=g(\gamma_{T}),\qquad\gamma_{T}\in\Lambda_{T}.\label{eq:terminal condition}
\end{equation}

\begin{defn} (Classical solution). A functional $v\in\mathscr{C}^{1,2}(\Lambda)$
is called a classical solution to the path-dependent Bellman equation
\eqref{eq:PHJBE} if it satisfies the path-dependent Bellman equation
\eqref{eq:PHJBE} point-wisely in the sense of Definition \ref{derivative}.
\end{defn}


For $(\kappa, \iota)\in (0, \infty)\times (0,T)$ 
and $\gamma_{t}\in\Lambda$ with $t\in[0,T-\iota)$, define the cylinder
\begin{equation}
\mathbf{Q}_{\kappa,\iota}(\gamma_{t}):=\{\gamma_{t'}'\in\Lambda:t\leq t'\leq t+\iota,\|\gamma_{t,t'}-\gamma_{t'}'\|_{0}<\kappa\}.\label{eq:cylinder}
\end{equation}
Write $\mathbf{Q}_{\kappa,\iota}$ for $\mathbf{Q}_{\kappa,\iota}(\mathbf{0})$,
i.e.,
\begin{equation}
\mathbf{Q}_{\kappa,\iota}:=\{\gamma_{t'}'\in\Lambda:0\leq t'\leq t+\iota,\|\gamma_{t'}'\|_{0}<\kappa\}.\label{eq:glob-cylinder}
\end{equation}
Throughout the rest of this paper, we fix $\alpha\in(0,\frac{1}{2})$
and $\beta\in(0,1)$.  

We now generalize the classical notions of semi-jets (see Crandall,
Ishii, and Lions \cite{crandall1992user}). 
For $\gamma_{t}\in\mathbf{C}^{\alpha}$ with $t\in[0,T)$, $(\mu,\kappa)\in (\|\gamma_t\|_0, \infty)\times (0,T-t)$, and
$v\in\mathscr{C}(\Lambda)$, define the super-jet of $v$ at $\gamma_{t}$ sliced by the double index of H\"older modulus $(\mu, \kappa)$:
\begin{flalign}
 & \mathcal{J}_{\mu,\kappa}^{+}(\gamma_{t},v)\label{eq:super-jet}\\
:= & \Big\{\psi\in\mathscr{C}^{1,2}(\Lambda):\,|\psi|_{2,\beta;\mathbf{Q}_{\kappa,\kappa}(\gamma_{t})\cap\mathbf{C}_{\mu}^{\alpha}}\leq\kappa^{-1},\,\text{and }\nonumber \\
 & \,0=\psi(\gamma_{t})-v(\gamma_{t})\leq\psi(\gamma'_{t'})-v(\gamma'_{t'}),\,\forall\gamma'_{t'}\in\mathbf{Q}_{\kappa,\kappa}(\gamma_{t})\cap\mathbf{C}_{\mu}^{\alpha}\Big\}\nonumber
\end{flalign}
and the sub-jet of $v$ at $\gamma_{t}$ sliced by the double index of H\"older modulus $(\mu, \kappa)$:
\begin{align}
 & \mathcal{J}_{\mu,\kappa}^{-}(\gamma_{t},v)\label{eq:sub-jet}\\
:= & \Big\{\psi\in \mathscr{C}^{1,2}(\Lambda):\,|\psi|_{2,\beta;\mathbf{Q}_{\kappa,\kappa}(\gamma_{t})\cap\mathbf{C}_{\mu}^{\alpha}}\leq\kappa^{-1},\,\text{ and }\nonumber \\
 & 0=\psi(\gamma_{t})-v(\gamma_{t})\geq\psi(\gamma'_{t'})-v(\gamma'_{t'}),\,\forall\gamma'_{t'}\in\mathbf{Q}_{\kappa,\kappa}(\gamma_{t})\cap\mathbf{C}_{\mu}^{\alpha}\Big\}.\nonumber
\end{align}

\begin{rem} Both $\mathcal{J}_{\mu,\kappa}^{+}(\gamma_{t},v)$ and
$\mathcal{J}_{\mu,\kappa}^{-}(\gamma_{t},v)$ may be empty.\end{rem}

Our notion of viscosity solutions is defined as follows.

\begin{defn}\label{defn:viscosity solution}

(i) We call $v\in\mathscr{C}(\Lambda)$ a viscosity sub-solution to
the path-dependent Bellman equation \eqref{eq:PHJBE}, if for any
$(M_{0}, \kappa)\in (0, \infty)\times (0,T)$, we have
\begin{align}
\varlimsup_{\mu\to\infty}\sup_{\begin{subarray}{c}
\psi\in\mathcal{J}_{\mu,\kappa}^{+}(\gamma_{t},v)\\
\gamma_{t}\in\mathbf{Q}_{M_{0},T-\kappa}\cap\,\mathbf{C}_{\mu}^{\alpha}
\end{subarray}}\Big\{-D_{t}\psi-\sup_{u\in U}\mathcal{H}(\cdot,\psi,D_{x}\psi,D_{xx}\psi,u)\Big\}(\gamma_{t})\leq0 & .\label{eq:subsolution}
\end{align}

(ii) We call $v\in\mathscr{C}(\Lambda)$ a viscosity super-solution
to the path-dependent Bellman equation \eqref{eq:PHJBE}, if for any
$(M_{0}, \kappa)\in (0, \infty)\times (0,T)$, we have
\begin{equation}
\varliminf_{\mu\to\infty}\inf_{\begin{subarray}{c}
\psi\in\mathcal{J}_{\mu,\kappa}^{-}(\gamma_{t},v)\\
\gamma_{t}\in\mathbf{Q}_{M_{0},T-\kappa}\cap\,\mathbf{C}_{\mu}^{\alpha}
\end{subarray}}\Big\{-D_{t}\psi-\sup_{u\in U}\mathcal{H}(\cdot,\psi,D_{x}\psi,D_{xx}\psi,u)\Big\}(\gamma_{t})\geq0.\label{eq:suppersolution}
\end{equation}

(iii) We call $v\in\mathscr{C}(\Lambda)$ a viscosity solution to
the path-dependent Bellman equation \eqref{eq:PHJBE} if it is both
the viscosity sub- and super-solution.

Note that in \eqref{eq:subsolution} and \eqref{eq:suppersolution}:
$\sup\emptyset:=-\infty$, $\inf\emptyset:=+\infty$. \end{defn}

\begin{rem}\label{rem:def-viscosity-solution} (1) A viscosity solution
of the path-dependent Bellman equation $u$ is a classical solution
if it furthermore lies in $\mathscr{C}^{1,2}(\Lambda)$.

(2) In the classical uniqueness proof of viscosity solution to state-dependent
PDEs in an unbounded domain (which is locally compact), a conventional
technique is to construct an auxiliary smooth function decaying outside
a compact domain. In our path-dependent case, we find it difficult
to construct such a smooth functionals. For the sake of the uniqueness
proof, our new notion of jets is enlarged to be defined only on $\mathbf{C}_{\mu}^{\alpha}$,
which is compact in $\Lambda$. However, at a cost, our modification
leads to additional difficulty in the existence proof.

(3) Assume that all the coefficients of Bellman equation \eqref{eq:PHJBE}
and terminal condition \eqref{eq:terminal condition} are state-dependent.
Let state-dependent function $u$ be a viscosity solution to \eqref{eq:PHJBE}
as a path-dependent functional. Then $u$ is also a classical viscosity
solution as a function of time and state. \end{rem}


\subsection{Main results}

Our main results on the existence and the uniqueness of the viscosity
solution to the path-dependent Bellman equation\eqref{eq:PHJBE} are
formulated below.

\begin{thm} \label{thm:ex-th}Let assumption (H1) be satisfied. Then
$\tilde{v}$ defined by \eqref{eq:defination Of value fun} is a viscosity
solution to the path-dependent Bellman equation \eqref{eq:PHJBE}.\end{thm}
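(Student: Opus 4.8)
The plan is to show separately that $\tilde v$ is a viscosity sub-solution and a viscosity super-solution, using the dynamic programming principle (Theorem \ref{thm:DPP-1}) and the functional It\^o formula. The two arguments are symmetric, so I describe the sub-solution case; I focus on it because the super-solution case is in fact easier (it does not run into the boundary issue described below). Fix $(M_{0},\kappa)\in(0,\infty)\times(0,T)$, a large $\mu$, a path $\gamma_{t}\in\mathbf{Q}_{M_{0},T-\kappa}\cap\mathbf{C}_{\mu}^{\alpha}$, and $\psi\in\mathcal{J}_{\mu,\kappa}^{+}(\gamma_{t},\tilde v)$. Arguing by contradiction, suppose that for a subsequence $\mu\to\infty$ we can find such $\gamma_{t},\psi$ with
\[
-D_{t}\psi(\gamma_{t})-\sup_{u\in U}\mathcal{H}(\gamma_{t},\psi(\gamma_{t}),D_{x}\psi(\gamma_{t}),D_{xx}\psi(\gamma_{t}),u)\geq 2\theta>0
\]
for some fixed $\theta>0$. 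By continuity of the coefficients and of the derivatives of $\psi$ on the compact set $\mathbf{Q}_{\kappa,\kappa}(\gamma_{t})\cap\mathbf{C}_{\mu}^{\alpha}$, there is a small $\kappa'\le\kappa$ so that $(\mathscr{L}\psi)(\gamma'_{t'},u)\le-\theta$ for all $u\in U$ and all $\gamma'_{t'}$ in a $d_{p}$-neighborhood of $\gamma_{t}$ intersected with $\mathbf{C}_{\mu}^{\alpha}$.

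The central difficulty, flagged in the introduction and Remark \ref{rem:def-viscosity-solution}(2), is that $\gamma_{t}$ may lie \emph{on the boundary} of $\mathbf{C}_{\mu}^{\alpha}$ (i.e.\ $\llbracket\gamma_{t}\rrbracket_{\alpha}=\mu$), so the controlled diffusion $X^{\gamma_{t},u}$ started at $\gamma_{t}$ leaves $\mathbf{C}_{\mu}^{\alpha}$ instantly with probability one, and then the inequality $\psi-\tilde v\ge 0$ on $\mathbf{Q}_{\kappa,\kappa}(\gamma_{t})\cap\mathbf{C}_{\mu}^{\alpha}$ gives no information along the trajectory. To get around this I would, following the strategy announced in the introduction, replace $\gamma_{t}$ by a perturbed path $\gamma_{t}^{\varepsilon}$ with strictly smaller H\"older modulus — concretely, one can freeze or flatten $\gamma_{t}$ near time $t$ (e.g.\ $\gamma_{t}^{\varepsilon}$ agrees with $\gamma_{t}$ on $[0,t-\varepsilon]$ and is suitably interpolated on $[t-\varepsilon,t]$) so that $\llbracket\gamma_{t}^{\varepsilon}\rrbracket_{\alpha}<\mu$ while $d_{p}(\gamma_{t}^{\varepsilon},\gamma_{t})\to 0$ as $\varepsilon\to 0$. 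Then define the exit time $\hat\tau^{\varepsilon}:=\tau_{\mu}^{\alpha}(X^{\gamma_{t}^{\varepsilon},u})\wedge(t+\delta)$ using the stopping time $\tau_{\mu}^{\alpha}$ constructed at the end of Section \ref{sec:Preliminaries.}. The key quantitative estimate — which I would establish from the $\alpha$-H\"older moment bounds on the increments of $X$ (a Kolmogorov-type computation using Lemma \ref{lem:FSDE} and the fact that $\alpha<\tfrac12$) — is that $P(\hat\tau^{\varepsilon}<t+\delta)\to 0$ as $\delta\to 0+$, \emph{uniformly in $u\in\mathcal U$ and in all sufficiently large $\mu$}. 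I expect this uniform small-time non-exit estimate to be the main obstacle and the technical heart of the proof.

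With $\hat\tau^{\varepsilon}$ in hand, apply the DPP of Theorem \ref{thm:DPP-1} at the stopping time $\hat\tau^{\varepsilon}$ to get, for every $\varepsilon'>0$, a control $\bar u$ with
\[
\tilde v(\gamma_{t}^{\varepsilon})\le \mathbb{G}_{t,\hat\tau^{\varepsilon}}^{\gamma_{t}^{\varepsilon},\bar u}\bigl[\tilde v(X_{\hat\tau^{\varepsilon}}^{\gamma_{t}^{\varepsilon},\bar u})\bigr]+\varepsilon'.
\]
On the event $\{\hat\tau^{\varepsilon}<t+\delta\}$ we bound $\tilde v$ crudely by its linear growth (Lemma \ref{lem:regularity of valued functional 1}) times the small probability; on $\{\hat\tau^{\varepsilon}=t+\delta\}$ the trajectory stays in $\mathbf{C}_{\mu}^{\alpha}$, and close to $\gamma_t$ by Theorem \ref{thm:regu-of-tilde-u}, so we may use $\tilde v\le\psi$ there (after accounting for the error from $\gamma_t^\varepsilon$ vs.\ $\gamma_t$ and the shift of the jet inequality, all controlled since $|\psi|_{2,\beta;\mathbf{Q}_{\kappa,\kappa}(\gamma_t)\cap\mathbf{C}_\mu^\alpha}\le\kappa^{-1}$). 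Then apply the functional It\^o formula to $\psi(X^{\gamma_{t}^{\varepsilon},\bar u})$ between $t$ and $\hat\tau^{\varepsilon}$ and feed it into the definition of $\mathbb{G}$: using a comparison/stability estimate for the BSDE $\tilde Y$ (Lemmas \ref{lem:bsde1}, \ref{lem:comparison-theorem}) we obtain
\[
\tilde v(\gamma_{t}^{\varepsilon})\le \psi(\gamma_{t}^{\varepsilon})+E\!\int_{t}^{\hat\tau^{\varepsilon}}(\mathscr{L}\psi)(X_{r}^{\gamma_{t}^{\varepsilon},\bar u},\bar u(r))\,dr+o(\delta)\le \psi(\gamma_{t}^{\varepsilon})-\theta\,E[\hat\tau^{\varepsilon}-t]+o(\delta)+\varepsilon'.
\]
Since $\psi(\gamma_{t})=\tilde v(\gamma_{t})$ and $\psi,\tilde v$ are continuous, sending $\varepsilon'\to 0$, then $\varepsilon\to 0$, gives $0\le-\theta\,E[\hat\tau^{\varepsilon}-t]+o(\delta)$, and the uniform non-exit estimate gives $E[\hat\tau^{\varepsilon}-t]\ge\delta/2$ for $\delta$ small, yielding $0\le-\theta\delta/2+o(\delta)$, a contradiction as $\delta\to 0+$. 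This proves the sub-solution inequality; the super-solution inequality follows by the same scheme with the reverse inequalities (and, as noted, without the boundary perturbation being essential, since there the relevant trajectory can be kept inside $\mathbf{C}_\mu^\alpha$ by choosing the near-optimal control's starting path appropriately). Hence $\tilde v$ is a viscosity solution. $\qquad\blacksquare$
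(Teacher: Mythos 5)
Your architecture matches the paper's: perturb the extremal path off the boundary of $\mathbf{C}_{\mu}^{\alpha}$, localize with the exit time $\hat\tau^{\varepsilon}$, establish a uniform small-time non-exit estimate, and combine the DPP with the functional It\^o formula and a linearized BSDE comparison. However, there are two concrete gaps. First, the perturbation you sketch (freeze $\gamma_{t}$ on $[0,t-\varepsilon]$ and interpolate on $[t-\varepsilon,t]$) does not in general give $\llbracket\gamma_{t}^{\varepsilon}\rrbracket_{\alpha}<\mu$: the modulus $\mu$ may be attained by a pair of times both in $[0,t-\varepsilon]$, which your modification leaves untouched. What is actually needed is that the modulus \emph{relative to the terminal value} $\gamma_{t}(t)$ drop to $\mu-\varepsilon$ (so that attaching a future increment of modulus $\le\varepsilon$ keeps the whole path in $\mathbf{C}_{\mu}^{\alpha}$); the paper achieves this by radially contracting $\gamma_{t}(s)-\gamma_{t}(t)$ onto the cone $|\gamma_{t}(s)-\gamma_{t}(t)|\le(\mu-\varepsilon)|s-t|^{\alpha}$, and — crucially — this yields the quantitative bound $\|\gamma_{t}^{\varepsilon}-\gamma_{t}\|_{0}\le 4M_{0}\varepsilon\mu^{-1}$, with the factor $\mu^{-1}$ coming from the restriction $\gamma_{t}\in\mathbf{Q}_{M_{0},T-\kappa}$. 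Your proposal only records $d_{p}(\gamma_{t}^{\varepsilon},\gamma_{t})\to0$ as $\varepsilon\to0$, i.e.\ an error of order $\varepsilon$ with no gain in $\mu$.

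Second, and consequently, your order of limits cannot close the argument. After replacing $\tilde v(\gamma_{t}^{\varepsilon})$ and $\psi(\gamma_{t}^{\varepsilon})$ by their values at $\gamma_{t}$, the perturbation contributes an error $O(\varepsilon^{\beta})$ which must be dominated by $\theta\,E[\hat\tau^{\varepsilon}-t]\approx\theta\delta$, i.e.\ you need $\varepsilon^{\beta}=o(\delta)$; but your non-exit estimate $P(\hat\tau^{\varepsilon}<t+\delta)\le C\delta^{p(1/2-\alpha)}\varepsilon^{-p}$ forces $\delta^{1/2-\alpha}=o(\varepsilon)$, hence $\delta=o(\varepsilon^{1/(1/2-\alpha)})$ with $1/(1/2-\alpha)>2>\beta$. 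The two requirements are incompatible, so sending $\varepsilon'\to0$, then $\varepsilon\to0$ (or $\delta\to0$) in either order fails — this is precisely the obstruction the authors isolate in Remark \ref{rem:exist-th}(2). The resolution is not a contradiction at fixed $\mu$ at all: with the perturbation error $\left(4M_{0}\varepsilon\mu^{-1}\right)^{\beta}$ one \emph{fixes} $\varepsilon$, obtains the inequality $-\sup_{u}\mathscr{L}\psi(\gamma_{t},u)\le C(4M_{0}\varepsilon\mu^{-1})^{\beta}(2\delta^{-1}+1)+2C\delta^{\beta/2}$ uniformly over admissible $(\gamma_{t},\psi)$, couples $\delta=\mu^{-\beta/2}$, and lets $\mu\to\infty$, which is exactly the $\varlimsup_{\mu\to\infty}$ built into Definition \ref{defn:viscosity solution}. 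Your contradiction hypothesis along a subsequence $\mu\to\infty$ is the correct negation, but your argument then proceeds as if the estimate had to be beaten at each fixed $\mu$, which the machinery does not (and need not) deliver. A minor further point: the super-solution half still requires the perturbation, since the instantaneous escape from $\mathbf{C}_{\mu}^{\alpha}$ in Remark \ref{rem:exist-th}(1) is driven by the diffusion itself and cannot be avoided by the choice of control; what makes that half easier is only that a single control suffices instead of the supremum over $\mathcal{U}$.
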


The uniqueness is given on both non-degenerate and degenerate cases.
we first address the non-degenerate case.

We make the following assumption, extending our previous assumption
(H1) to the larger path space $\hat{\Lambda}$.

\medskip{}

(H2) \textit{ The functionals $b:\hat{\Lambda}\times U\rightarrow\mathbb{R}^{n}$,
$\sigma:\hat{\Lambda}\times U\rightarrow\mathbb{R}^{n\times d}$,
$g:\hat{\Lambda}_{T}\rightarrow\mathbb{R},$ and $f:\hat{\Lambda}\times\mathbb{R}\times\mathbb{R}^{d}\times U\rightarrow\mathbb{R}$
are all bounded. There is a constant $C>0$ such that for all $(t,\gamma_{T},y,z,u),\,(t',\gamma'_{T},y',z',u')\in[0,T]\times\hat{\Lambda}_{T}\times\mathbb{R}\times\mathbb{R}^{d}\times U$,
\begin{align}
|b(\gamma_{t},u)-b(\gamma'_{t'},u')| & \leq C(d_{p}(\gamma_{t},\gamma'_{t'})+|u-u'|),\nonumber \\
|\sigma(\gamma_{t},u)-\sigma(\gamma'_{t'},u')| & \leq C(d_{p}(\gamma_{t},\gamma'_{t'})+|u-u'|),\nonumber \\
|f(\gamma_{t},y,z,u)-f(\gamma'_{t'},y',z',u')| & \leq C(d_{p}(\gamma_{t},\gamma'_{t'})+|y-y'|+|z-z'|+|u-u'|),\label{eq:Lip-conditon}\\
|g(\gamma_{T})-g(\gamma'_{T})| & \leq C\|\gamma_{T}-\gamma'_{T}\|_{0}.\nonumber
\end{align}
Moreover, $\sigma$ satisfies the non-degenerate condition
\begin{eqnarray}
\sigma\sigma^{T} & > & \frac{1}{C}I_{n}.\label{eq:non-degen}
\end{eqnarray}
}

\medskip{}

The uniqueness of viscosity solutions of \eqref{eq:PHJBE} is an immediate
consequence of the following representation theorem.

\begin{thm}\label{thm:(repres-theorem-nondegen} Suppose that (H2)
holds. Let $v\in\mathscr{C}_{b}(\Lambda)\cap\mathscr{C}_{u}(\Lambda)$.
If $v$ is a viscosity solution to the path-dependent Bellman equation
\eqref{eq:PHJBE}, and $v=g$ on $\Lambda_{T}$, then $v$ is the
value functional $\tilde{v}$ defined by \eqref{eq:defination Of value fun}.
\end{thm}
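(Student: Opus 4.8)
The plan is to prove this representation theorem --- which immediately yields uniqueness --- by a doubling-of-variables comparison argument adapted to the path-dependent setting, combined with the fact (already established in Theorem~\ref{thm:ex-th}) that $\tilde v$ is itself a viscosity solution. Concretely, since $\tilde v$ is a viscosity solution with $\tilde v = g$ on $\Lambda_T$, and $v$ is a viscosity solution with $v = g$ on $\Lambda_T$, it suffices to prove a comparison principle: any viscosity sub-solution is $\le$ any viscosity super-solution, provided both agree with $g$ at the terminal time and both lie in $\mathscr{C}_b(\Lambda)\cap\mathscr{C}_u(\Lambda)$. Applying this twice (once with $v$ as sub-solution and $\tilde v$ as super-solution, once with the roles reversed) gives $v = \tilde v$. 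Under (H2), $\tilde v$ inherits boundedness from the boundedness of $g$ and $f$, and uniform continuity from Theorem~\ref{thm:regu-of-tilde-u}, so it lies in the right class.

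The core is therefore the comparison principle, and here the announced strategy in the introduction must be followed: smoothing and (in the degenerate case) vanishing viscosity à la Lions \cite{lions1983optimal}, together with approximation by parameterized state-dependent PDEs. First I would reduce to showing $\sup_{\gamma_t\in\mathbf{C}^\alpha_\mu}\bigl(v_1(\gamma_t) - v_2(\gamma_t)\bigr) \le \omega(1/\mu)$ for some modulus $\omega$, using that the jets are only tested on the compact sets $\mathbf{C}^\alpha_\mu$; the passage $\mu\to\infty$ in the definition of viscosity solution is precisely what allows the localization. On each fixed $\mathbf{C}^\alpha_\mu$ one mollifies $v_1$ and $v_2$ --- e.g. by sup/inf-convolution in the path variable using the metric $d_p$, exploiting local compactness of $\mathbf{C}^\alpha_\mu$ --- to produce $\mathscr{C}^{1,2}$ (or at least $\mathscr{C}^{1+\beta/2,2+\beta}$) approximants $v_1^\varepsilon$, $v_2^\varepsilon$ that are approximate sub/super-solutions of the path-dependent Bellman equation with a controlled error. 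Because the mollified functionals are smooth, they are \emph{themselves} admissible test functionals in the jet $\mathcal{J}^\pm_{\mu,\kappa}$, so one can insert them into the viscosity inequalities \eqref{eq:subsolution}--\eqref{eq:suppersolution} at a maximum point of $v_1^\varepsilon - v_2^\varepsilon$ over $\mathbf{Q}_{M_0,T-\kappa}\cap\mathbf{C}^\alpha_\mu$, and close the argument by a Gronwall estimate in time (using the terminal equality $v_1 = v_2 = g$ and the Lipschitz structure of $\mathcal{H}$ in $(r,p,A)$ from (H2)). The a priori maximum and Hölder estimates on the second-order derivatives of the approximating PPDEs, mentioned in the introduction, are what make the error terms vanish as $\varepsilon\to 0$ uniformly in $\mu$.

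The main obstacle --- and the reason the paper is long --- is exactly the point flagged in the introduction: the extremum path of $v_1^\varepsilon - v_2^\varepsilon$ over $\mathbf{Q}_{M_0,T-\kappa}\cap\mathbf{C}^\alpha_\mu$ may lie on the boundary $\{\llbracket\gamma_t\rrbracket_\alpha = \mu\}$, where one cannot naively write the viscosity inequality because the set $\mathbf{C}^\alpha_\mu$ is not a neighborhood of the path in $\Lambda$. In the existence proof this is handled by the explicit perturbation $\gamma_t^\varepsilon$ and the exit time $\hat\tau^\varepsilon$ keyed to the $\alpha$-Hölder modulus; for uniqueness one must instead arrange that the smoothing procedure, or the choice of $M_0$ versus $\mu$, pushes the relevant extrema into the interior, or alternatively absorb the boundary contribution into the $\omega(1/\mu)$ error using the compactness and the uniform continuity of $v_1, v_2$. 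A secondary difficulty is that the $\alpha$-Hölder norm is not smooth, so the sup/inf-convolution must be done with respect to $d_p$ (not the Hölder metric) while still landing in a space where Dupire's derivatives are defined and controlled; the bridge is the natural approximating arguments of parameterized state-dependent PDEs, reducing, on finite-dimensional projections, to the classical Lions--Ishii comparison machinery where the non-degeneracy \eqref{eq:non-degen} supplies the needed ellipticity. I would therefore expect the bulk of the work to be the uniform-in-$\mu$ a priori estimates and the careful boundary analysis, with the doubling/Gronwall step being comparatively routine once those are in place.
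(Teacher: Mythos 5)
Your overall architecture --- a symmetric comparison principle between two arbitrary viscosity sub/super-solutions, proved by doubling of variables and sup/inf-convolution of \emph{both} functions --- is not what the paper does, and the step on which it hinges is precisely the one that fails in path space. To run your argument you must produce, from an arbitrary $v\in\mathscr{C}_b(\Lambda)\cap\mathscr{C}_u(\Lambda)$ that is only a \emph{viscosity} sub-solution, a family of $\mathscr{C}^{1,2}(\Lambda)$ approximants that are approximate classical sub-solutions with controlled error. In finite dimensions this is done by sup-convolution, whose effectiveness rests on semiconvexity, Alexandrov's theorem and Jensen's lemma; none of these tools exist on $(\Lambda,d_p)$, and a sup-convolution with respect to $d_p$ (built from the non-smooth norm $\|\cdot\|_0$) does not even produce a functional with Dupire derivatives. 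The same obstruction kills the doubling step itself, which needs a smooth penalization of $d_p(\gamma_t,\gamma'_{t'})^2$. The paper's introduction flags exactly this (non-smoothness of $\|\cdot\|_0$, lack of local compactness) as the reason the classical machinery, and Lions' Hilbert-space variant, cannot be transplanted.

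What the paper actually proves is the asymmetric statement $v=\tilde v$, exploiting that one of the two functionals is the \emph{value functional} and therefore has an explicit probabilistic representation. Only $\tilde v$ is smoothed: the path is discretized by $\mathbf{P}^m$, which turns the control problem into a system of parameterized finite-dimensional Bellman PDEs \eqref{eq:first-approx-PDE}; the Hamiltonians are mollified; non-degeneracy \eqref{eq:non-degen} plus interior Schauder/$\mathscr{C}^{2,\alpha}$ estimates give classical solutions $v_\varepsilon^{m;\varepsilon_1}$ of an approximate PPDE with $|\cdot|_{2,\beta}$ bounds uniform in $m,\varepsilon_1$, and an approximation rate $C(\mathrm{Osc}(\gamma_t,m^{-1})+m^{-1/2}+\varepsilon+\varepsilon_1)$ on $\mathbf{C}^\alpha_\mu$. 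The viscosity property is then invoked \emph{only for $v$}: one adds the explicit barrier $e^{-\lambda(t+1)}v_{0,\varepsilon}^m$ (with $\tilde v_0(\gamma_t)\ge\|\gamma_t\|_0$) to force the minimum of $v_\varepsilon^{m;\varepsilon_1}+e^{-\lambda(t+1)}v_{0,\varepsilon}^m-v$ into $\mathbf{Q}_{M_0,T-\kappa}\cap\mathbf{C}^\alpha_\mu$, checks that this sum is an admissible element of $\mathcal{J}^+_{\mu,\kappa}(\bar\gamma_{\bar t},v)$, and derives a contradiction from \eqref{eq:subsolution} as $m\to\infty$, $\varepsilon_1\to0$, $\mu\to\infty$, $\lambda\to\infty$. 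You correctly identified several ingredients (localization on $\mathbf{C}^\alpha_\mu$, parameterized state-dependent PDEs, uniform H\"older estimates, the decay-at-infinity issue), but without replacing your mollification of the arbitrary solution $v$ by the paper's one-sided construction, the proposal does not close.
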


In the degenerate case of $\sigma\sigma^{T}\geq0$, we have the following
extra smooth conditions on the coefficients.

\medskip{}

(H3) \textit{Functionals $b:\hat{\Lambda}\times U\rightarrow\mathbb{R}^{n}$,
$\sigma:\hat{\Lambda}\times U\rightarrow\mathbb{R}^{n\times d}$,
$g:\hat{\Lambda}_{T}\rightarrow\mathbb{R}$ and $f:\hat{\Lambda}\times\mathbb{R}\times\mathbb{R}^{d}\times U\rightarrow\mathbb{R}$
satisfy \eqref{eq:Lip-conditon}. Furthermore, for any $u\in U$,
the functionals $\sigma(\cdot,u),\, b(\cdot,u)\in\mathscr{C}_{b}^{1,2}(\hat{\Lambda}),$
$g\in\mathscr{C}_{b}^{1,2}(\hat{\Lambda}_{T})$, and $f(\cdot,\cdot,\cdot,u)\in\mathscr{C}_{b}^{1,2,2,2}(\hat{\Lambda}\times\mathbb{R}\times\mathbb{R}^{n})$
and all their differentials are bounded uniformly w.r.t. $u\in U$.}

\medskip{}

We have the following representation theorem.

\begin{thm} \label{thm:Repr-Th-degen} Suppose that (H3) holds. Let
$v\in\mathscr{C}_{b}(\Lambda)\cap\mathscr{C}_{u}(\Lambda)$ be a viscosity
solution to the path-dependent Bellman equation \eqref{eq:PHJBE},
and $v=g$ on $\Lambda_{T}$. Then $v=\tilde{v}$, where $\tilde{v}$
is defined by \eqref{eq:defination Of value fun}. \end{thm}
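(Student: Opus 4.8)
The plan is to derive the degenerate representation result from the non-degenerate one (Theorem \ref{thm:(repres-theorem-nondegen}) by a vanishing-viscosity argument, exactly in the spirit of Lions \cite{lions1983optimal} adapted to the path-dependent setting, as announced in the introduction. Fix $\varepsilon>0$ and consider the perturbed problem obtained by replacing $\sigma(\gamma_t,u)\sigma^T(\gamma_t,u)$ by $\sigma\sigma^T(\gamma_t,u)+\varepsilon I_n$; concretely one enlarges the control set to $U\times\mathbb{R}^{n\times d}$ (or adds an independent Brownian motion) so that the extra diffusion $\sqrt{\varepsilon}\,dW'$ enters the state equation, while the generator $f$, the running structure of the Hamiltonian, and the terminal datum $g$ are left unchanged. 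The coefficients of this perturbed system satisfy (H2) — in particular the non-degeneracy $\sigma\sigma^T+\varepsilon I_n>\varepsilon I_n$ — with a Lipschitz constant uniform in $\varepsilon\in(0,1]$. Denote by $\tilde v^{\varepsilon}$ the corresponding value functional and by $\mathcal H^{\varepsilon}$ the associated Hamiltonian; since $\mathcal H^{\varepsilon}=\mathcal H+\tfrac{\varepsilon}{2}\operatorname{Tr}(D_{xx}\psi)$, the perturbed Bellman equation is \eqref{eq:PHJBE} with $\mathcal H$ replaced by $\mathcal H^{\varepsilon}$, with terminal condition $g$.

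The first step is to show that $\tilde v^{\varepsilon}\to v$ locally uniformly on $\Lambda$ as $\varepsilon\to0+$. For this I would first show $\tilde v^{\varepsilon}\to\tilde v$ (the unperturbed value functional): the perturbed and unperturbed forward paths differ in $L^2$ by $O(\sqrt{\varepsilon})$ uniformly on bounded sets of initial paths (a Gronwall estimate using (H1)/(H3) on the SDE, together with Lemma \ref{lem:FSDE}), hence by the BSDE stability estimates of Lemma \ref{lem:FBSDE} the cost functionals, and after taking $\operatorname*{ess\,sup}$ the value functionals, differ by $O(\sqrt{\varepsilon})$; the bounds \eqref{eq:regularity of Y 1}–\eqref{eq:regularity of Y 2} give the needed uniformity. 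Next, the heart of the matter is an a priori estimate showing $\tilde v^{\varepsilon}\in\mathscr{C}^{1+\beta/2,\,2+\beta}(\mathbf{Q}_{\kappa,\kappa}(\gamma_t)\cap\mathbf{C}_\mu^{\alpha})$ for each $\varepsilon>0$, with a $\mathscr{C}^{2,\beta}$-norm that \emph{blows up in $\varepsilon$} but whose lower-order part ($\mathscr{C}^\beta$- and first-derivative norms) stays controlled; this is where the extra smoothness (H3) on $b,\sigma,f,g$ is essential, since it lets one differentiate the (now uniformly parabolic) path-dependent Bellman equation and run a parameterized family of state-dependent interior Schauder-type estimates, as flagged in the introduction ("our a priori maximal and H\"older estimates on the second-order derivatives of the solutions to the approximate PPDEs play a crucial role"). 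Because $\tilde v^{\varepsilon}\in\mathscr{C}^{1,2}$ it is a \emph{classical} solution of the perturbed equation, hence — crucially — for each large $\mu$ and each $\gamma_t\in\mathbf{Q}_{M_0,T-\kappa}\cap\mathbf{C}_\mu^{\alpha}$ one may use (a suitably rescaled copy of) $\tilde v^{\varepsilon}$ itself, localized to $\mathbf{Q}_{\kappa,\kappa}(\gamma_t)\cap\mathbf{C}_\mu^{\alpha}$, as a test functional in $\mathcal J^{\pm}_{\mu,\kappa}(\gamma_t,v)$ once $\tilde v^{\varepsilon}$ is close to $v$.

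The final step is the comparison: pass to the limit $\varepsilon\to0+$ in the viscosity inequalities \eqref{eq:subsolution}–\eqref{eq:suppersolution} for $v$, plugging in perturbations built from $\tilde v^{\varepsilon}$ (after subtracting $v(\gamma_t)-\tilde v^{\varepsilon}(\gamma_t)$ and, if necessary, multiplying by a constant slightly less than one and adding a small quadratic bump in $d_p$ to guarantee the strict extremum over $\mathbf{Q}_{\kappa,\kappa}(\gamma_t)\cap\mathbf{C}_\mu^{\alpha}$ and the norm bound $|\psi|_{2,\beta;\cdot}\le\kappa^{-1}$). The $\tfrac{\varepsilon}{2}\operatorname{Tr}(D_{xx}\tilde v^{\varepsilon})$ term is the only place the exploding part of the Schauder estimate appears, and one must show it $\to0$: this follows because the estimate gives $\|D_{xx}\tilde v^{\varepsilon}\|$ growing strictly slower than $\varepsilon^{-1}$ on the relevant compact cylinder (a standard feature of the Bernstein/Schauder bound for $\varepsilon\Delta u+\cdots=0$ with $\varepsilon$-independent data). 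Comparing the resulting inequalities for $v$ with the classical Bellman equation satisfied by $\tilde v^{\varepsilon}$, and letting $\varepsilon\to0$, yields that $v$ and $\tilde v$ satisfy a comparison principle forcing $v=\tilde v$; alternatively one shows directly $v\le\tilde v$ and $v\ge\tilde v$ by testing sub- and super-solution inequalities respectively. I expect the main obstacle to be precisely the $\varepsilon$-dependence of the Schauder estimate for the path-dependent perturbed equation: one needs it to be strong enough (H\"older in the second derivatives, on the compact sets $\mathbf{C}_\mu^{\alpha}$, uniformly for the parameterized frozen-coefficient state PDEs) to legitimately use $\tilde v^{\varepsilon}$ as an admissible test functional in our restricted-jet definition, yet quantitatively good enough that $\varepsilon D_{xx}\tilde v^{\varepsilon}\to0$; reconciling these two demands, in the infinite-dimensional path space where no global smooth norm is available, is the technical core of the argument.
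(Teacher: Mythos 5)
Your high-level strategy (perturb the diffusion by an independent small Brownian motion, approximate the value functional, and pass to the limit in the viscosity inequalities for $v$) is indeed the route the paper takes, but two steps in your proposal do not work as stated, and they are precisely where the real content of the degenerate case lies. First, you propose to use the perturbed value functional $\tilde v^{\varepsilon}$ itself as a test functional, justified by an interior Schauder estimate for the perturbed \emph{path-dependent} equation. No such regularity theory exists: even in the non-degenerate case the paper cannot show $\tilde v^{\theta}\in\mathscr{C}^{1,2}(\Lambda)$ directly, and must instead discretize time via $\mathbf{P}^{m}$, reduce to the parameterized finite-dimensional PDEs \eqref{eq:first-approx-PDE-1}, mollify the Hamiltonian, and only then lift the resulting smooth functions back to genuinely $\mathscr{C}^{1,2}$ path functionals $v_{\varepsilon}^{m;\theta,\varepsilon_{1}}$. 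Relatedly, your suggestion of ``adding a small quadratic bump in $d_{p}$'' to force a strict extremum cannot be implemented, since $d_{p}^{2}(\cdot,\gamma_{t})$ is not vertically/horizontally differentiable; the paper instead uses the smooth auxiliary functional $v_{0,\varepsilon}^{m}$ of \eqref{eq:smooth-func-linear-grow} together with the weight $e^{-\lambda(t+1)}$ to localize the infimum into $\mathbf{Q}_{M_{0},T}$.

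Second, and more seriously, your mechanism for killing the viscosity term is wrong. You claim that the Schauder bound gives $\|D_{xx}\tilde v^{\varepsilon}\|=o(\varepsilon^{-1})$ as ``a standard feature''; for fully nonlinear degenerate Bellman equations this is false in general, and the paper does not use any two-sided bound of this kind (indeed its $\mathscr{C}^{2,\beta}$ bounds \eqref{eq:holde-estimat-degen-path} are of the form $C(\theta)$ and blow up as $\theta\to0$). What actually saves the argument is a \emph{one-sided} semiconcavity estimate $\partial_{x_{i}x_{i}}V_{\varepsilon}^{m,i;\theta,\varepsilon_{1}}\ge-C$, uniform in $\theta$, obtained by Krylov's method (\eqref{eq:2-orderlow-bound-degen-stat} and its path version \eqref{eq:2order-low-bound-path}); this is exactly where the extra $\mathscr{C}^{1,2}_{b}$ smoothness of the coefficients in (H3) is used. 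Since in the contradiction inequality \eqref{eq:contradict-th-uniq-1} the term $\tfrac{1}{2}\theta^{2}\Delta v_{\varepsilon}^{m;\theta,\varepsilon_{1}}(\bar\gamma_{\bar t})$ only needs to be bounded \emph{below}, the bound $\ge-\theta^{2}C$ suffices and vanishes as $\theta\to0$. Without replacing your unsubstantiated $o(\varepsilon^{-1})$ claim by this one-sided Krylov estimate (or an equivalent), the proof does not close.
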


\begin{rem}\label{rem:bound-regul-tilde-u} Analogous to the proof
of Lemma \ref{lem:regularity of valued functional 1} and Theorem
\ref{thm:regu-of-tilde-u}, from the bounded and Lipschitz assumption
on the coefficients in (H2) or (H3), the value functional $\tilde{v}$
can be shown to be bounded and to satisfy
\begin{equation}
|v(\gamma_{t})-v(\gamma'_{t'})|\leq Cd_{p}(\gamma_{t},\gamma_{t'}'),\quad\forall\gamma_{t},\gamma_{t'}'\in\Lambda,\label{eq:regu-of-u}
\end{equation}
which implies $\tilde{v}\in\mathscr{C}_{b}(\Lambda)\cap\mathscr{C}_{u}(\Lambda)$.
Note that the non-degeneracy condition \eqref{eq:non-degen} is not
needed here. \end{rem}

In view of the comparison theorem of BSDEs, we immediately have the
following comparison theorem.

\begin{cor} Suppose that either (H2) or (H3) holds. Let $v_{1},v_{2}\in\mathscr{C}_{b}(\Lambda)$
satisfy \eqref{eq:regu-of-u}, and $g_{1},g_{2}\in\mathscr{C}_{b}(\Lambda_{T})$
satisfy $g_{1}\leq g_{2}$. Furthermore, let $v_{1}$ and $v_{2}$
be viscosity solutions to the path-dependent Bellman equation \eqref{eq:PHJBE}
with the terminal conditions:
\[
v_{1}(\gamma_{T})=g_{1}(\gamma_{T}),\quad v_{2}(\gamma_{T})=g_{2}(\gamma_{T}),\quad\forall\gamma_{T}\in\Lambda_{T}.
\]
Then $v_{1}\leq v_{2}$.\end{cor}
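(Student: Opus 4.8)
The plan is to deduce this comparison result directly from the two representation theorems (Theorems~\ref{thm:(repres-theorem-nondegen} and~\ref{thm:Repr-Th-degen}) together with the comparison theorem for BSDEs (Lemma~\ref{lem:comparison-theorem}). The key observation is that the hypotheses on $v_1$ and $v_2$ are exactly those required to apply the appropriate representation theorem: each $v_i$ is bounded, satisfies the modulus-of-continuity estimate~\eqref{eq:regu-of-u} (hence lies in $\mathscr{C}_b(\Lambda)\cap\mathscr{C}_u(\Lambda)$, cf. Remark~\ref{rem:bound-regul-tilde-u}), is a viscosity solution of~\eqref{eq:PHJBE}, and agrees with the respective terminal datum $g_i$ on $\Lambda_T$. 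So the work reduces to a short identification argument followed by an application of BSDE comparison.

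First I would fix $i\in\{1,2\}$ and let $\tilde v_i$ denote the value functional~\eqref{eq:defination Of value fun} associated to the control problem~\eqref{eq:diffu proc-2}--\eqref{eq:BSDE-1} with the \emph{same} running data $b,\sigma,f$ but with terminal cost $g_i$ in place of $g$. Applying Theorem~\ref{thm:(repres-theorem-nondegen} under (H2), or Theorem~\ref{thm:Repr-Th-degen} under (H3), to the viscosity solution $v_i$ with terminal condition $v_i=g_i$ on $\Lambda_T$ yields $v_i=\tilde v_i$ on $\Lambda$. (I would note that the representation theorems are stated for the fixed terminal functional $g$, but their proofs use only that the terminal datum of the PPDE matches the terminal cost in the BSDE; nothing changes on replacing $g$ by $g_i$, which still satisfies the Lipschitz bound in~(H2)/(H3) by hypothesis on $g_1,g_2\in\mathscr{C}_b(\Lambda_T)$.)

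Next I would compare $\tilde v_1$ and $\tilde v_2$ pathwise. Fix $\gamma_t\in\Lambda$ and $u\in\mathcal U$. Both control problems share the same forward system, so $X^{\gamma_t,u}$ is identical for the two, and the associated BSDEs~\eqref{eq:BSDE-1} have the same generator $f$ but terminal values $g_1(X_T^{\gamma_t,u})\le g_2(X_T^{\gamma_t,u})$, a.s., since $g_1\le g_2$. By the monotonicity part of Lemma~\ref{lem:comparison-theorem}, $Y^{\gamma_t,u}_{(1)}(t)\le Y^{\gamma_t,u}_{(2)}(t)$, a.s., where the subscript indicates which terminal cost is used. Taking the essential supremum over $u\in\mathcal U$ preserves the inequality, so $\tilde v_1(\gamma_t)\le\tilde v_2(\gamma_t)$. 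Combining with the identification $v_i=\tilde v_i$ gives $v_1(\gamma_t)\le v_2(\gamma_t)$ for every $\gamma_t\in\Lambda$, i.e. $v_1\le v_2$.

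The argument is essentially a bookkeeping exercise once the representation theorems are in hand, so there is no serious obstacle. The one point requiring a line of justification is the remark above: that Theorems~\ref{thm:(repres-theorem-nondegen} and~\ref{thm:Repr-Th-degen}, though phrased with the distinguished terminal functional $g$, apply verbatim with $g$ replaced by any $g_i\in\mathscr{C}_b(\Lambda_T)$ satisfying the Lipschitz estimate in~(H2)/(H3)---which $g_1,g_2$ do, being bounded and appearing as terminal data of viscosity solutions that obey~\eqref{eq:regu-of-u}. If one prefers to avoid even this mild re-reading, an alternative is to invoke the uniqueness half of the representation theorems: $v_i$ is \emph{the} viscosity solution with terminal datum $g_i$, and then run the BSDE comparison directly at the level of the two value functionals as above. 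Either way the proof is complete in a few lines.
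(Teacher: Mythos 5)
Your proposal is correct and follows exactly the route the paper intends: the paper gives no explicit proof, merely noting that the corollary is immediate from the BSDE comparison theorem once each $v_i$ is identified, via Theorem~\ref{thm:(repres-theorem-nondegen} or~\ref{thm:Repr-Th-degen}, with the value functional of the control problem with terminal cost $g_i$. Your extra remark that the representation theorems apply verbatim with $g$ replaced by $g_i$ is a worthwhile clarification of a point the paper leaves implicit.
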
\bigskip{}

For an initial path $\gamma_{t}\in\Lambda$, define
\[
\mathcal{U}_{t}:=\left\{ u\in[t,T]\times\Omega\to U\big|u\text{ is }\mathcal{F}^{t}\text{-progressive measurable}\right\} .
\]
We have

\begin{thm}[Verification Theorem]

Let $v$ be a classical solution to \eqref{eq:PHJBE} and \eqref{eq:terminal condition}.
Then we have the following two assertions:

(i) $v(\gamma_{t})\geq J(\gamma_{t},u),\quad\forall(u,\gamma_{t})\in\mathcal{U}_{t}\times\Lambda$.

(ii) If the following holds for an admissible control $u^{*}\in\mathcal{U}_{t}$:
for every $\gamma_{t}\in\Lambda$,

\[
0=(\mathscr{L}v)\left(X_{s}^{\gamma_{t},u^{*}},u^{*}(s)\right)=\argmax_{\beta\in U}\left\{ (\mathscr{L}v)\left(X_{s}^{\gamma_{t},u^{*}},\beta\right)\right\} ,\quad\text{a.s.-}\omega,\text{ a.e.}s\in[t,T],
\]
then $v(\gamma_{t})=J(\gamma_{t})$ for any $\gamma_{t}\in\Lambda$.
\end{thm}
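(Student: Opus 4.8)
The plan is to apply Dupire's functional It\^o formula to the classical solution $v$ along the controlled state path $X^{\gamma_t,u}$ and then compare with the BSDE defining $J(\gamma_t,u)$. First I would fix $(u,\gamma_t)\in\mathcal U_t\times\Lambda$ and let $X^{\gamma_t,u}$ be the solution of \eqref{eq:diffu proc-2} and $(Y^{\gamma_t,u},Z^{\gamma_t,u})$ the solution of the BSDE \eqref{eq:BSDE-1}. Since $v\in\mathscr C^{1,2}(\Lambda)$, the functional It\^o formula applied to $s\mapsto v(X^{\gamma_t,u}_s)$ on $[t,T]$ gives, using $d\langle X\rangle(r)=\sigma\sigma^T(X^{\gamma_t,u}_r,u(r))\,dr$,
\begin{align*}
v(X^{\gamma_t,u}_T)-v(\gamma_t)
&=\int_t^T\Big(D_sv+\tfrac12\mathrm{Tr}(\sigma\sigma^T D_{xx}v)+\langle b,D_xv\rangle\Big)(X^{\gamma_t,u}_r,u(r))\,dr\\
&\quad+\int_t^T D_xv(X^{\gamma_t,u}_r)\,\sigma(X^{\gamma_t,u}_r,u(r))\,dW(r).
\end{align*}
Next I would rewrite the drift integrand: adding and subtracting $f(X^{\gamma_t,u}_r,v(X^{\gamma_t,u}_r),\sigma^T D_xv,u(r))$ turns it into $(\mathscr L v)(X^{\gamma_t,u}_r,u(r))-f(X^{\gamma_t,u}_r,v(X^{\gamma_t,u}_r),(\sigma^T D_xv)(X^{\gamma_t,u}_r),u(r))$. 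Setting $\hat Y(s):=v(X^{\gamma_t,u}_s)$, $\hat Z(s):=(\sigma^T D_xv)(X^{\gamma_t,u}_s)$, and using the terminal condition \eqref{eq:terminal condition} so that $\hat Y(T)=v(X^{\gamma_t,u}_T)=g(X^{\gamma_t,u}_T)$, this shows $(\hat Y,\hat Z)$ solves the BSDE
\[
\hat Y(s)=g(X^{\gamma_t,u}_T)+\int_s^T\Big(f(X^{\gamma_t,u}_r,\hat Y(r),\hat Z(r),u(r))-(\mathscr L v)(X^{\gamma_t,u}_r,u(r))\Big)dr-\int_s^T\hat Z(r)\,dW(r).
\]

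For assertion (i), since $v$ is a classical solution of \eqref{eq:PHJBE} we have $(\mathscr L v)(\gamma_t,u)=D_tv(\gamma_t)+\mathcal H(\gamma_t,v,D_xv,D_{xx}v,u)\le D_tv(\gamma_t)+\sup_{u'\in U}\mathcal H(\gamma_t,v,D_xv,D_{xx}v,u')=0$ for every $u\in U$, hence $-(\mathscr L v)(X^{\gamma_t,u}_r,u(r))\ge0$. Therefore the generator of the BSDE for $(\hat Y,\hat Z)$ dominates that of the BSDE \eqref{eq:BSDE-1} for $(Y^{\gamma_t,u},Z^{\gamma_t,u})$, while the terminal data agree. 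The comparison theorem for BSDEs (Lemma \ref{lem:comparison-theorem}) then yields $\hat Y(s)\ge Y^{\gamma_t,u}(s)$ for all $s\in[t,T]$; in particular at $s=t$ we get $v(\gamma_t)=\hat Y(t)\ge Y^{\gamma_t,u}(t)=J(\gamma_t,u)$. Since $u\in\mathcal U_t$ was arbitrary this is (i). For assertion (ii), under the optimality hypothesis we have $(\mathscr L v)(X^{\gamma_t,u^*}_s,u^*(s))=0$ for a.e.\ $s$, a.s., so the extra drift term vanishes and $(\hat Y,\hat Z)$ solves exactly the BSDE \eqref{eq:BSDE-1} with control $u^*$; by uniqueness of BSDE solutions (Lemma \ref{lem:bsde1}), $\hat Y(t)=Y^{\gamma_t,u^*}(t)$, i.e.\ $v(\gamma_t)=J(\gamma_t,u^*)$. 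Combined with (i) this gives $v(\gamma_t)=\sup_{u}J(\gamma_t,u)=\tilde v(\gamma_t)=J(\gamma_t)$, which is (ii).

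The only delicate points are bookkeeping ones rather than genuine obstacles: one must check the integrability needed to apply the BSDE comparison theorem (the stochastic integral $\int D_xv\,\sigma\,dW$ should be a true martingale, which follows from linear growth of $v$, $D_xv$ and the moment bounds on $X^{\gamma_t,u}$ in Lemma \ref{lem:FSDE}, possibly after a localization argument), and one must ensure $f(\cdot,\hat Y,\hat Z,u)$ satisfies the Lipschitz/integrability hypotheses of Lemma \ref{lem:bsde1} so that the BSDE characterization is legitimate. The hard part, if any, is simply keeping track of the horizontal versus vertical derivatives in the functional It\^o formula and verifying that the perturbation term $-(\mathscr L v)$ has the correct sign from the Bellman equation; once the BSDE for $(\hat Y,\hat Z)$ is correctly identified, both assertions are immediate from the comparison and uniqueness results already recorded in Section \ref{sec:Preliminaries.}.
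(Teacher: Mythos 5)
Your proof is correct and follows essentially the same route as the paper: apply the functional It\^o formula to $v(X^{\gamma_t,u}_\cdot)$, recognize $(v(X_\cdot),\sigma^TD_xv(X_\cdot))$ as solving the BSDE with generator $f-(\mathscr{L}v)$, and use the sign of $\mathscr{L}v$ from the Bellman equation. The only cosmetic difference is that you invoke the BSDE comparison theorem at the end, while the paper forms the difference process $Y^1=v(X_\cdot)-Y^{\gamma_t,u}$, linearizes, and uses the explicit representation $E[Y^1(t)]=-E\bigl[\int_t^T\Gamma^t(r)(\mathscr{L}v)(X_r^{\gamma_t,u},u(r))\,dr\bigr]$ — which is just the comparison theorem's proof unrolled.
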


\begin{proof} For $u\in\mathcal{U}_{t}$, since $v$ is a classical
solution, we have
\[
(\mathscr{L}u)\left(X_{s}^{\gamma_{t},u},u(s)\right)\leq0.
\]
Applying It\^o formula to compute $v(X_{s}^{\gamma_{t},u})$, we
have
\begin{align*}
 & v(X_{s}^{\gamma_{t},u})=g(X_{T}^{\gamma_{t},u})\\
 & +\int_{s}^{T}\left(f(\cdot,v,\sigma^{T}(\cdot,u(r))D_{x}v,u(r))-(\mathscr{L}v)(\cdot,u(r))\right)(X_{r}^{\gamma_{t},u})dr\\
 & -\int_{s}^{T}\sigma^{T}(X_{r}^{\gamma_{t},u},u(r))D_{x}v(X_{r}^{\gamma_{t},u})dW(r),\quad s\in[t,T].
\end{align*}
Define for $s\in[t,T]$,
\[
Y^{1}(s):=v(X_{s}^{\gamma_{t},u})-Y^{\gamma_{t},u}(s),\quad Z^{1}(s):=\sigma^{T}(X_{s}^{\gamma_{t},u},u(s))D_{x}v(X_{s}^{\gamma_{t},u})-Z^{\gamma_{t},u}(s).
\]
In view of \eqref{eq:BSDE-1}, we have
\begin{align*}
 & Y^{1}(s)\\
= & \int_{s}^{T}\left(f(\cdot,v,\sigma^{T}D_{x}v,u(r))-f(\cdot,Y^{\gamma_{t},u}(r),Z^{\gamma_{t},u}(r),u(r))\right)(X_{r}^{\gamma_{t},u})\\
 & -(\mathscr{L}v)(X_{r}^{\gamma_{t},u},u(r))dr-Z^{1}(r)dW(r)\\
= & \int_{s}^{T}\left(A(r)Y^{1}(r)+\langle\bar{A},Z^{1}\rangle(r)-(\mathscr{L}v)(X_{r}^{\gamma_{t},u},u(r))\right)dr-Z^{1}(r)dW(r).
\end{align*}
Denote by $\Gamma^{t}(\cdot)$ the unique solution of the linear SDE
\[
d\Gamma^{t}(s)=\Gamma^{t}(s)\Big(A(s)ds+\bar{A}(s)dW(s)\Big),\quad s\in[t,T];\quad\Gamma^{t}(t)=1.
\]
From \cite[Proposition 2.2]{el1997backward}, we have
\[
E\,[Y^{1}(t)]=-E\,\left[\int_{t}^{T}\Gamma^{t}(r)(\mathscr{L}v)(X_{r}^{\gamma_{t},u},u(r))dr\right]\geq0,
\]
and the equality holds for $u=u^{*}$. This proves Assertions (i)
and (ii). The proof is complete.\end{proof}

\begin{example} \label{relation-PHJB-SHJB} In what follows, we show
that the conventional non-Markovian optimal stochastic control problem
is included as a particular case of our problem \eqref{eq:diffu proc-2},
\eqref{eq:BSDE-1} and \eqref{eq:defination Of value fun}. Under
some suitable smooth conditions, the corresponding path-dependent
Bellman equation is associated to a backward stochastic Bellman equation
via Dupire's functional calculus.

Let $\{B_{t},0\leq t\leq T\}$ be a $d$-dimensional Winner process
on the probability space $(\Omega:=\Lambda_{T}(\mathbb{R}^{d}),P_{0})$.
Consider functionals $\bar{b}:\Lambda(\mathbb{R}^{d})\times\mathbb{R}^{n}\times U\rightarrow\mathbb{R}^{n}$,
$\bar{\sigma}:\Lambda(\mathbb{R}^{d})\times\mathbb{R}^{n}\times U\rightarrow\mathbb{R}^{n\times d}$,
$\bar{f}:\Lambda(\mathbb{R}^{d})\times\mathbb{R}^{n}\times\mathbb{R}\times\mathbb{R}^{d}\times U\rightarrow\mathbb{R}$
and $\bar{g}:\Lambda_{T}(\mathbb{R}^{d})\times\mathbb{R}^{n}\rightarrow\mathbb{R}$.
The non-Markovian stochastic optimal control problem is formulated
as follows. For any $u\in\mathcal{U}$ and $(t,x)\in[0,T]\times\mathbb{R}^{n}$,
consider the following forward and backward stochastic differential
systems:
\[
\left\{ \begin{array}{rcl}
d\bar{X}^{t,x,u}(s) & = & \bar{b}(B_{s},\bar{X}^{t,x,u}(s),u(s))ds+\bar{\sigma}(B_{s},\bar{X}^{t,x,u}(s),u(s))dB(s),\quad s\in[t,T];\\
\bar{X}^{t,x,u}(t) & = & x
\end{array}\right.
\]
and
\[
\left\{ \begin{array}{rcl}
-d\bar{Y}^{t,x,u}(s) & = & \bar{f}(B_{s},x,\bar{Y}^{t,x,u}(s),\bar{Z}^{t,x,u}(s),u(s))ds-\bar{Z}^{t,x,u}(s)dB(s),\quad s\in[t,T];\\
\bar{Y}^{t,x,u}(T) & = & \bar{g}(B_{T},\bar{X}^{t,x,u}(T)).
\end{array}\right.
\]
The optimal value field $\bar{v}:[0,T]\times\mathbb{R}^{n}\times\Omega\rightarrow\mathbb{R}$
is given by
\[
\bar{v}(t,x):=\esssup_{u\in\mathcal{U}}\bar{Y}^{t,x,u}(t).
\]
This problem depends on the Brownian path $B_{t}$ and the state $X(t)$.
Now we translate this problem into the path-dependent case. For any
$(\gamma_{t},\xi_{t})\in\Lambda(\mathbb{R}^{d})\times\Lambda(\mathbb{R}^{n})$
and $u\in U$, define $b:\Lambda(\mathbb{R}^{d+n})\times U\rightarrow\mathbb{R}^{d+n}$,
$\sigma:\Lambda(\mathbb{R}^{d+n})\times U\rightarrow\mathbb{R}^{(d+n)\times d}$,
$f:\Lambda(\mathbb{R}^{d+n})\times\mathbb{R}\times\mathbb{R}^{d}\times U\rightarrow\mathbb{R}$,
and $g:\Lambda_{T}(\mathbb{R}^{d+n})\rightarrow\mathbb{R}$ as follows:
\begin{align*}
b\left((\gamma_{t},\xi_{t}),u\right) & :=\left(\begin{array}{c}
\mathbf{0}\\
\bar{b}(\gamma_{t},\xi_{t}(t),u)
\end{array}\right),\\
\sigma\left((\gamma_{t},\xi_{t}),u\right) & :=\left(\begin{array}{c}
I_{d}\\
\bar{\sigma}(\gamma_{t},\xi_{t}(t),u)
\end{array}\right),\\
f\left(\gamma_{t},\xi_{t},y,z,u\right) & :=\bar{f}(\gamma_{t},\xi_{t}(t),y,z,u),\\
g(\gamma_{T},\xi_{T}) & :=\bar{g}(\gamma_{T},\xi(T)).
\end{align*}
Following \eqref{eq:diffu proc-2}, \eqref{eq:BSDE-1} and \eqref{eq:defination Of value fun},
for any $(\gamma_{t},\xi_{t})\in\Lambda(\mathbb{R}^{d})\times\Lambda(\mathbb{R}^{n})$
and $u\in\mathcal{U}$, we define $X^{(\gamma_{t},\xi_{t}),u}$, $Y^{(\gamma_{t},\xi_{t}),u}$,
and $\tilde{v}(\gamma_{t},\xi_{t}):=\esssup_{u\in\mathcal{U}}Y^{(\gamma_{t},\xi_{t}),u}(t)$.
Note that $\tilde{v}(\gamma_{t},\xi_{t})$ only depends on the state
$x=\xi_{t}(t)$ of the path $\xi_{t}$ at time $t$---instead of its
whole history up to time $t$, and thus we can rewrite $X^{(\gamma_{t},\xi_{t}),u}$,
$Y^{(\gamma_{t},\xi_{t}),u}$, and $\tilde{v}(\gamma_{t},\xi_{t})$
into $X^{\gamma_{t},x,u}$, $Y^{\gamma_{t},x,u}$, and $\tilde{v}(\gamma_{t},x)$,
respectively. The uniqueness of solution to the FBSDE implies that
for any $(t,x)\in[0,T]\times\mathbb{R}^{n}$,
\begin{align*}
\bar{X}^{t,x,u}(s) & =X^{B_{t},x,u}(s),\quad\text{\ensuremath{P}-a.s. },\quad t\leq s\leq T,\\
\bar{Y}^{t,x,u}(s) & =Y^{B_{t},x,u}(s),\quad\text{\ensuremath{P}-a.s. },\quad t\leq s\leq T,\\
\bar{v}(t,x) & =\tilde{v}(B_{t},x),\quad\text{\ensuremath{P}-a.s. },\quad t\leq s\leq T.
\end{align*}

Furthermore, in view of Theorem \ref{thm:ex-th}, $\tilde{v}(\gamma_{t},x)$
is a solution to the PDE:
\begin{align}
 & {\displaystyle -D_{t}\tilde{v}-\sup_{u\in U}\biggl[\frac{1}{2}\text{Tr}\big({\bar{\sigma}}{\bar{\sigma}}^{T}(\gamma_{t},x,u)\partial_{xx}\tilde{v}\big)+\left\langle {\bar{b}}(\gamma_{t},x,u),\partial_{x}\tilde{v}\right\rangle +\frac{1}{2}\text{Tr}D_{\gamma\gamma}\tilde{v}}\label{eq:path-eq-classic}\\
 & {\displaystyle +{\bar{\sigma}}^{T}(\gamma_{t},x,u)D_{x\gamma}\tilde{v}+{\bar{f}}(\gamma_{t},x,u,D_{\gamma}\tilde{v}+{\bar{\sigma}}^{T}(\gamma_{t},x,u)\partial_{x}\tilde{v},u)\biggr]=0,\quad(\gamma_{t},x)\in\Lambda(\mathbb{R}^{d})\times\mathbb{R}^{n}.}\nonumber
\end{align}
Here, $D_{\gamma}$ and $D_{\gamma\gamma}$ are the path vertical
derivatives in $\gamma_{t}\in\Lambda(\mathbb{R}^{d})$, and $\partial_{x}$
and $\partial_{xx}$ are the classical partial derivatives in the
state variable $x$.

If $\tilde{v}(\gamma_{t},x)$ is smooth enough, applying It\^o formula
to $\tilde{v}(B_{t},x)$, we have
\[
d\tilde{v}(B_{t},x)=(D_{t}\tilde{v}(B_{t},x)+\frac{1}{2}\text{Tr}D_{\gamma\gamma}\tilde{v}(B_{t},x))dt+D_{\gamma}\tilde{v}(B_{t},x)dB(t).
\]
In view of \eqref{eq:path-eq-classic}, we have
\begin{align*}
d\tilde{v}(B_{t},x) & =-\sup_{u\in U}\biggl[\frac{1}{2}\text{Tr}\big({\bar{\sigma}}{\bar{\sigma}}^{T}(B_{t},x,u)\partial_{xx}\tilde{v}\big)+\left\langle {\bar{b}}(B_{t},x,u),\partial_{x}\tilde{v}\right\rangle \\
 & +{\bar{\sigma}}^{T}(B_{t},x,u)D_{x\gamma}\tilde{v}+{\bar{f}}(B_{t},x,u,D_{\gamma}\tilde{v}+{\bar{\sigma}}^{T}(B_{t},x,u)\partial_{x}\tilde{v},u)\biggr]dt\\
 & +D_{\gamma}\tilde{v}(B_{t},x)dB(t).
\end{align*}
Define the pair of $\mathscr{F}_{t}$-adapted processes $(\bar{v}(t,x),p(t,x)):=\big(\tilde{v}(B_{t},x),D_{\gamma}\tilde{v}(B_{t},x)\big)$.
Then we have
\begin{align}
d\bar{v}(t,x) & =-\sup_{u\in U}\biggl\{\frac{1}{2}\text{Tr}\left({\bar{\sigma}}{\bar{\sigma}}^{T}(B_{t},x,u)\partial_{xx}\bar{v}\right)+\left\langle {\bar{b}}(B_{t},x,u),\partial_{x}\bar{v}\right\rangle \label{eq:classical-BSPDE}\\
 & +{\bar{\sigma}}^{T}(B_{t},x,u)\partial_{x}p+{\bar{f}}(B_{t},x,u,p+{\bar{\sigma}}^{T}(B_{t},x,u)\partial_{x}\bar{v},u)\biggr\} dt\nonumber \\
 & +p\, dB(t),\nonumber
\end{align}
with the terminal condition
\begin{equation}
\bar{v}(T,x)={\bar{g}}(B_{T},x).\label{eq:classic-BSPDE-terminal}
\end{equation}
The fully nonlinear BSPDE \eqref{eq:classical-BSPDE} and \eqref{eq:classic-BSPDE-terminal}
with ${\bar{f}}$ being invariant in the third and fourth arguments
$(y,z)$, is the so-called stochastic Bellman equation, introduced
by Peng \cite{peng1992stochastic,peng1997bsde}. \end{example}

\section{Existence of viscosity solutions}

In this section we give the solution of the path-dependent Bellman
equation \eqref{eq:PHJBE} with the help of FBSDEs \eqref{eq:diffu proc-2}
and \eqref{eq:BSDE-1}.

First, let us perturb a path $\gamma_{t}\in\mathbf{C}_{\mu}^{\alpha}$.
For $\mu>0$, $\varepsilon\in(0,\mu)$ and $\gamma_{t}\in\mathbf{C}_{\mu}^{\alpha}$,
define a perturbation of $\gamma_{t}$ in the following manner:
\begin{equation}
\gamma_{t}^{\varepsilon}(s):=\begin{cases}
\gamma_{t}(s), & |\gamma_{t}(s)-\gamma_{t}(t)|\leq(\mu-\varepsilon)|s-t|^{\alpha};\\
\gamma_{t}(t)+(\mu-\varepsilon)(t-s)^{\alpha}\frac{\gamma_{t}(s)-\gamma_{t}(t)}{|\gamma_{t}(s)-\gamma_{t}(t)|}, & |\gamma_{t}(s)-\gamma_{t}(t)|>(\mu-\varepsilon)|s-t|^{\alpha}.
\end{cases}\label{eq:perturbation}
\end{equation}

We have

\begin{lem} \label{lem:perturbation} Let $\mu>0,M_{0}>0$. Assume
that $\llbracket\gamma_{t}\rrbracket_{\alpha}\leq\mu,\,\|\gamma_{t}\|_{0}\leq M_{0},$
and $\varepsilon\leq\frac{1}{2}\mu$. We have

(i) $\|\gamma_{t}^{\varepsilon}-\gamma_{t}\|_{0}\leq2M_{0}\varepsilon(\mu-\varepsilon)^{-1}\leq4M_{0}\varepsilon\mu^{-1}$;

(ii) $\llbracket\gamma_{t}^{\varepsilon}\rrbracket_{\alpha}\leq\mu$;

(iii) there is a constant $C$, independent of $\mu$ and $u\in\mathcal{U}$,
such that for some $p,\, p(\frac{1}{2}-\alpha)>1$ and for all $\delta<T-t$,
\[
P\left\{ \llbracket X_{t+\delta}^{\gamma_{t}^{\varepsilon},u}\rrbracket_{\alpha}>\mu\right\} \leq C\delta^{p(\frac{1}{2}-\alpha)}\varepsilon^{-p}.
\]
\end{lem}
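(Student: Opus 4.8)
The plan is to handle the three assertions separately, the first two being elementary geometric facts about the retraction $\gamma_t\mapsto\gamma_t^\varepsilon$ and the third the substantive probabilistic estimate. Throughout I would write $h(s):=\gamma_t(s)-\gamma_t(t)$, $\rho(s):=(\mu-\varepsilon)(t-s)^\alpha$ for $s\in[0,t]$, and let $\Pi_r$ be the (nonexpansive) radial projection of $\mathbb{R}^n$ onto the closed ball of centre $\mathbf{0}$ and radius $r$, so that $\gamma_t^\varepsilon(s)=\gamma_t(t)+\Pi_{\rho(s)}(h(s))$. Two facts drive everything: the subadditivity $(a+b)^\alpha\le a^\alpha+b^\alpha$, and the \emph{endpoint slack} $|\gamma_t^\varepsilon(s)-\gamma_t^\varepsilon(t)|=\min(|h(s)|,\rho(s))\le(\mu-\varepsilon)|s-t|^\alpha$ for $s\in[0,t]$, read off directly from the definition. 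For (i): on the clipped set $\gamma_t^\varepsilon(s)-\gamma_t(t)$ is a positive multiple of $h(s)$, so $|\gamma_t^\varepsilon(s)-\gamma_t(s)|=|h(s)|-\rho(s)\le\varepsilon(t-s)^\alpha<\varepsilon(\mu-\varepsilon)^{-1}|h(s)|\le2M_0\varepsilon(\mu-\varepsilon)^{-1}$, and $\varepsilon\le\tfrac12\mu$ gives the stated bound.

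For (ii), fix $0\le s<r\le t$. If neither of $\gamma_t(s),\gamma_t(r)$ is clipped the bound is the hypothesis $|h(s)-h(r)|\le\mu(r-s)^\alpha$. If exactly one is clipped, say $\gamma_t(s)$ (the other case is symmetric, since $\rho(r)\le\rho(s)$), then $h(r)$ lies already in the ball of radius $\rho(s)$, so $\Pi_{\rho(s)}(h(r))=h(r)$ and nonexpansiveness of $\Pi_{\rho(s)}$ gives $|\gamma_t^\varepsilon(s)-\gamma_t^\varepsilon(r)|\le|h(s)-h(r)|\le\mu(r-s)^\alpha$. The case where both are clipped requires care: writing $a=|h(s)|$, $b=|h(r)|$ and $\theta$ for the angle between $h(s)$ and $h(r)$, the law of cosines turns the target into $\rho(s)^2+\rho(r)^2-2\rho(s)\rho(r)\cos\theta\le\mu^2(r-s)^{2\alpha}$, which I would deduce from the companion identity $a^2+b^2-2ab\cos\theta=|h(s)-h(r)|^2\le\mu^2(r-s)^{2\alpha}$ by distinguishing the sign of $\cos\theta$ and using $\rho(s)\le a$, $\rho(r)\le b$, $|a-b|\le|h(s)-h(r)|$, the a priori bounds $\rho(s)\ge\tfrac{\mu-\varepsilon}{\mu}a$, $\rho(r)\ge\tfrac{\mu-\varepsilon}{\mu}b$, the exact relation $(t-s)^{1/\alpha}=(r-s)^{1/\alpha}+(t-r)^{1/\alpha}$, and finally the hypothesis $\varepsilon\le\tfrac12\mu$.

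For (iii), abbreviate $X:=X^{\gamma_t^\varepsilon,u}$ and suppose $\llbracket X_{t+\delta}\rrbracket_\alpha>\mu$, i.e.\ $|X(p)-X(q)|>\mu|p-q|^\alpha$ for some $0\le p<q\le t+\delta$. Since by (ii) the path $\gamma_t^\varepsilon=X_t$ has $\alpha$-H\"older modulus $\le\mu$, necessarily $q>t$; if $p\ge t$ the restriction of $X$ to $[t,t+\delta]$ already has modulus $>\mu\ge2\varepsilon$, while if $p<t$ then, by the endpoint slack and $(q-p)^\alpha\ge(q-t)^\alpha$,
\[
|X(q)-X(t)|\ge|X(q)-X(p)|-|\gamma_t^\varepsilon(t)-\gamma_t^\varepsilon(p)|>\mu(q-p)^\alpha-(\mu-\varepsilon)(t-p)^\alpha\ge\varepsilon(q-t)^\alpha,
\]
so in all cases the increment process $Y(s):=X(t+s)-X(t)$, $s\in[0,\delta]$, has $\alpha$-H\"older seminorm $>\varepsilon$. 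Hence $P\{\llbracket X_{t+\delta}^{\gamma_t^\varepsilon,u}\rrbracket_\alpha>\mu\}\le P\{\llbracket Y\rrbracket_\alpha>\varepsilon\}\le\varepsilon^{-p}E[\llbracket Y\rrbracket_\alpha^{\,p}]$. Choosing $p>2/(1-2\alpha)$ (so $p(\tfrac12-\alpha)>1$ and $\alpha<\tfrac12-\tfrac1p$), and noting $\|\gamma_t^\varepsilon\|_0\le\|\gamma_t\|_0+4M_0\varepsilon\mu^{-1}\le3M_0$ by (i), the usual Burkholder--Davis--Gundy and H\"older estimates for \eqref{eq:diffu proc-2}, together with the a priori bound $E\|X_r\|_0^p\le C(1+\|\gamma_t^\varepsilon\|_0^p)\le C(1+M_0^p)$ of Lemma \ref{lem:FSDE}, give $E[|Y(s_2)-Y(s_1)|^p]\le C|s_2-s_1|^{p/2}$ on $[0,\delta]$ with $C$ depending only on $p$, $M_0$, $T$ and the Lipschitz constants of $b,\sigma$, hence independent of $\mu$ and $u$; the Garsia--Rodemich--Rumsey inequality on $[0,\delta]$ then yields $E[\llbracket Y\rrbracket_\alpha^{\,p}]\le C\delta^{p(1/2-\alpha)}$, which is the claim.

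I expect the main obstacle to be twofold. Conceptually, it is the reduction in (iii): recognising that, by (ii), a violation of the modulus $\mu$ can never occur strictly before $t$, and that the $\varepsilon$-margin engineered into $\gamma_t^\varepsilon$ \emph{at the terminal time} forces any violation after $t$ to come from a genuinely fresh increment of size $\ge\varepsilon(q-t)^\alpha$ on the short interval $[t,t+\delta]$, whose probability is governed by Brownian scaling and hence of order $\delta^{p(1/2-\alpha)}\varepsilon^{-p}$. Technically, the hardest single computation is the both-clipped case of (ii): there the H\"older constant must be preserved \emph{exactly} (a constant like $2\mu$ would be useless for the localised dynamic programming later), and this is the sole place where the standing hypothesis $\varepsilon\le\tfrac12\mu$ and the precise geometry of the $\alpha$-H\"older tube are essential.
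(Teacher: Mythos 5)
Parts (i) and (iii) of your proposal are correct, and (iii) is essentially the paper's argument in contrapositive form: the paper proves the inclusion $\{\llbracket X_{t+\delta}^{\gamma_t^{\varepsilon},u}\rrbracket_{\alpha}>\mu\}\subset\{\sup_{t\le s_1<s_2\le t+\delta}|X(s_1)-X(s_2)||s_1-s_2|^{-\alpha}>\varepsilon\}$ (which your triangle-inequality computation with the ``endpoint slack'' actually justifies in more detail than the paper does) and then invokes Proposition \ref{Prop:Hold-norm-prob-est}; your Kolmogorov/Garsia--Rodemich--Rumsey substitute for that Appendix proposition is a standard equivalent and yields the same $\delta$- and $\varepsilon$-dependence with a constant free of $\mu$ and $u$.

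The gap is in (ii). First, the ``exactly one clipped'' case is \emph{not} symmetric. With $s<r\le t$ the clipping radius at $s$ is the larger one, $\rho(s)>\rho(r)$. Your argument works when $\gamma_t(s)$ is clipped and $\gamma_t(r)$ is not, because then $h(r)$ lies in the ball of radius $\rho(s)$ and the projection onto that single ball is nonexpansive. But in the reverse case (only $\gamma_t(r)$, the point nearer $t$, is clipped) one only knows $|h(s)|\le\rho(s)$, which does not place $h(s)$ inside the ball of radius $\rho(r)$, so no single nonexpansive projection maps both unperturbed points to both perturbed points. This configuration is exactly the paper's cases ``$x_1>0$, $r_2\le r_1$'' and ``$x_1>0$, $r_2>r_1$ with $x_2=0$'' and needs its own argument. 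Second, your reduction of the both-clipped case to the ``companion identity'' $|h(s)-h(r)|^2\le\mu^2(r-s)^{2\alpha}$ by adjusting radii cannot go through by monotonicity alone: when $\cos\theta>0$ the form $x^2+y^2-2xy\cos\theta$ is not monotone in $(x,y)$ (take $h(s)$ and $h(r)$ parallel and of equal length, both clipped to different radii: then $|h(s)-h(r)|=0$ while the perturbed distance is $\rho(s)-\rho(r)>0$, so the target must be obtained from the explicit values of $\rho$ and the subadditivity of $x\mapsto x^{2\alpha}$, not from the companion identity). The paper's device here is: either $C^2\ge C_{\varepsilon}^2$ and one is done, or $C^2<C_{\varepsilon}^2$ forces $\cos\theta>r_1/r_2$, whence $C_{\varepsilon}^2\le r_2^2-r_1^2\le(\mu-\varepsilon)^2\bigl((t-s)^{2\alpha}-(t-r)^{2\alpha}\bigr)\le(\mu-\varepsilon)^2(r-s)^{2\alpha}$. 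Relatedly, the ``exact relation'' $(t-s)^{1/\alpha}=(r-s)^{1/\alpha}+(t-r)^{1/\alpha}$ you list as an ingredient is false (it is $t-s=(r-s)+(t-r)$ itself that is exact; what is used is the inequality $(t-s)^{2\alpha}\le(r-s)^{2\alpha}+(t-r)^{2\alpha}$ for $2\alpha\le1$), and the hypothesis $\varepsilon\le\tfrac12\mu$ is in fact not needed anywhere in (ii) --- only $\mu-\varepsilon\le\mu$ is used there; $\varepsilon\le\tfrac12\mu$ enters only in (i) and in the application of the lemma.
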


\begin{proof} Assertion (i) is obvious. Now we prove Assertion (ii).

Since $|\gamma_{t}^{\varepsilon}(s)-\gamma_{t}^{\varepsilon}(t)|=|\gamma_{t}^{\varepsilon}(s)-\gamma_{t}(t)|\leq\mu|s-t|^{\alpha}$
for $s\in[0,t)$, it is sufficient to show that for any $s_{1},s_{2}\in[0,t)$
such that $s_{1}>s_{2}$, we have
\begin{equation}
|\gamma_{t}^{\varepsilon}(s_{1})-\gamma_{t}^{\varepsilon}(s_{2})|\leq\mu|s_{1}-s_{2}|^{\alpha}.\label{eq:gamma-epsilon-holder}
\end{equation}

Define
\begin{alignat*}{1}
 & r_{1}:=|\gamma_{t}^{\varepsilon}(s_{1})-\gamma_{t}(t)|,\quad r_{2}:=|\gamma_{t}^{\varepsilon}(s_{2})-\gamma_{t}(t)|,\\
 & x_{1}:=|\gamma_{t}(s_{1})-\gamma_{t}(t)|-r_{1},\quad x_{2}:=|\gamma_{t}(s_{2})-\gamma_{t}(t)|-r_{2},\\
 & C_{\varepsilon}^{2}:=|\gamma_{t}^{\varepsilon}(s_{1})-\gamma_{t}^{\varepsilon}(s_{2})|^{2}=r_{1}^{2}+r_{2}^{2}-2r_{1}r_{2}\cos\theta,\\
 & C^{2}:=|\gamma_{t}(s_{1})-\gamma_{t}(s_{2})|^{2}=(r_{1}+x_{1})^{2}+(r_{2}+x_{2})^{2}-2(r_{1}+x_{1})(r_{2}+x_{2})\cos\theta.
\end{alignat*}
Here, $\theta$ is the angle between both vectors $\gamma_{t}(s_{1})-\gamma_{t}(t)$
and $\gamma_{t}(s_{2})-\gamma_{t}(t)$ and it is equal to the angle
between both vectors $\gamma_{t}^{\varepsilon}(s_{1})-\gamma_{t}(t)$
and $\gamma_{t}^{\varepsilon}(s_{2})-\gamma_{t}(t)$. We have
\begin{equation}
0\le r_{1}\le(\mu-\varepsilon)(t-s_{1})^{\alpha},\quad0\le r_{2}\le(\mu-\varepsilon)(t-s_{2})^{\alpha},\quad x_{1}\ge0,\quad x_{2}\ge0.
\end{equation}
and
\begin{equation}
C^{2}-C_{\varepsilon}^{2}=x_{1}^{2}+x_{2}^{2}+2r_{1}x_{1}+2r_{2}x_{2}-2(x_{1}x_{2}+r_{1}x_{2}+r_{2}x_{1})\cos\theta.
\end{equation}

The proof of inequality \eqref{eq:gamma-epsilon-holder} is divided
into the following three cases.

\medskip{}
 \textit{ The case of $x_{1}=0$. } We have
\begin{equation}
C^{2}-C_{\varepsilon}^{2}=x_{2}^{2}+2r_{2}x_{2}-2r_{1}x_{2}\cos\theta=x_{2}^{2}+2x_{2}(r_{2}-r_{1}\cos\theta).\label{difference}
\end{equation}
We assert that $C_{\varepsilon}^{2}\le C^{2}$, which implies \eqref{eq:gamma-epsilon-holder}
immediately. It is obvious if $x_{2}=0$. If $x_{2}>0$, we have
\[
r_{2}=(\mu-\varepsilon)|t-s_{2}|^{\alpha}>(\mu-\varepsilon)|t-s_{1}|^{\alpha}\ge r_{1}
\]
by the definition \eqref{eq:perturbation}, which together with equality
\eqref{difference} gives $C^{2}-C_{\varepsilon}^{2}\ge0$.

\medskip{}
 \textit{The case of $x_{1}>0$ and $r_{2}\le r_{1}$. } We have
\[
r_{2}\le r_{1}=(\mu-\varepsilon)|t-s_{1}|^{\alpha}<(\mu-\varepsilon)|t-s_{2}|^{\alpha}.
\]
Therefore, we have $x_{2}=0$ from the definition \eqref{eq:perturbation},
and thus
\[
C^{2}-C_{\varepsilon}^{2}=x_{1}^{2}+2r_{1}x_{1}-2r_{2}x_{1}\cos\theta=x_{1}^{2}+2x_{1}(r_{1}-r_{2}\cos\theta)\ge0.
\]

\medskip{}
 \textit{The case of $x_{1}>0$ and $r_{2}>r_{1}$. } We have
\[
r_{2}\leq(\mu-\varepsilon)|t-s_{2}|^{\alpha},\quad r_{1}=(\mu-\varepsilon)|t-s_{1}|^{\alpha}.
\]
If $C^{2}\ge C_{\varepsilon}^{2}$, the proof is complete. If $C^{2}<C_{\varepsilon}^{2}$,
we have
\[
\cos\theta>\frac{x_{1}^{2}+x_{2}^{2}+2r_{1}x_{1}+2r_{2}x_{2}}{2(x_{1}x_{2}+r_{1}x_{2}+r_{2}x_{1})}\geq\frac{r_{1}}{r_{2}}.
\]
Then
\begin{align*}
C_{\varepsilon}^{2} & =r_{1}^{2}+r_{2}^{2}-2r_{1}r_{2}\cos\theta\le r_{1}^{2}+r_{2}^{2}-2r_{1}^{2}=r_{2}^{2}-r_{1}^{2}\\
 & \le(\mu-\varepsilon)^{2}|t-s_{2}|^{2\alpha}-(\mu-\varepsilon)^{2}|t-s_{1}|^{2\alpha}\leq(\mu-\varepsilon)^{2}|s_{1}-s_{2}|^{2\alpha},
\end{align*}
and thus \eqref{eq:gamma-epsilon-holder} holds. The last inequality
is deduced from the following fact: if $2\alpha\in[0,1]$, then $a^{2\alpha}+b^{2\alpha}\geq(a+b)^{2\alpha}$
for all $a>0,\, b>0$.

\bigskip{}
 It remains to show Assertion (iii). For any $\delta<T-t$ and $\bar{\gamma}_{t+\delta}\in\Lambda$
such that
\[
\sup_{t\leq s_{1}<s_{2}\leq t+\delta}\frac{|\bar{\gamma}_{t+\delta}(s_{1})-\bar{\gamma}_{t+\delta}(s_{2})|}{|s_{1}-s_{2}|^{\alpha}}\leq\varepsilon,
\]
in view of \eqref{eq:perturbation} and Assertion (ii), we have $\llbracket\bar{\gamma}_{t+\delta}^{\gamma_{t}^{\varepsilon}}\rrbracket_{\alpha}\leq\mu$.
Therefore, we have
\[
\left\{ \llbracket X_{t+\delta}^{\gamma_{t}^{\varepsilon},u}\rrbracket_{\alpha}>\mu\right\} \subset\left\{ \sup_{t\leq s_{1}<s_{2}\leq t+\delta}\frac{|X^{\gamma_{t}^{\varepsilon},u}(s_{1})-X^{\gamma_{t}^{\varepsilon},u}(s_{2})|}{|s_{1}-s_{2}|^{\alpha}}>\varepsilon\right\} .
\]
Assertion (iii) then follows from Proposition \ref{Prop:Hold-norm-prob-est}
in the Appendix. \end{proof} \bigskip{}

\begin{proof}[Proof of Theorem \ref{thm:ex-th}] Firstly, we show
that $\tilde{v}$ is a viscosity sub-solution. Let $M_{0}>0,\mu>0,$
and $\kappa\in(0,T)$. For $\gamma_{t}\in\mathbf{Q}_{M_{0},T-\kappa}\cap\,\mathbf{C}_{\mu}^{\alpha}$
and $\psi\in\mathcal{J}_{\mu,\kappa}^{+}(\gamma_{t},\tilde{v})$.
Note that the cylinder $\mathbf{Q}_{M_{0},T-\kappa}(\gamma_{t})$
is defined by \eqref{eq:glob-cylinder}.

For any $\mu>1$ and $\varepsilon<\frac{1}{2}\wedge(\frac{1}{8}\kappa\mu M_{0}^{-1})$,
from Assertion (ii) of Lemma \ref{lem:perturbation}, we have
\[
\|\gamma_{t}^{\varepsilon}-\gamma_{t}\|_{0}\leq4M_{0}\varepsilon\mu^{-1}<\frac{1}{2}\kappa.
\]
For any $\mu>1$ and $u\in\mathcal{U}$, we define an $\mathscr{F}$-stopping
time
\[
\hat{\tau}^{\varepsilon}:=\inf\left\{ s>t:\llbracket X_{s}^{\gamma_{t}^{\varepsilon},u}\rrbracket_{\alpha}>\mu\right\} \wedge\inf\left\{ s>t:\|X_{s}^{\gamma_{t}^{\varepsilon},u}-\gamma_{t,s}\|_{0}>\kappa\right\} \wedge(t+\kappa).
\]
Obviously, $X_{\hat{\tau}^{\varepsilon}}^{\gamma_{t}^{\varepsilon},u}\in\mathbf{Q}_{\kappa,\kappa}(\gamma_{t})\cap\,\mathbf{C}_{\mu}^{\alpha}$,
and for any $\delta<\kappa$,
\[
\left\{ \hat{\tau}^{\varepsilon}\geq t+\delta\right\} \supset\left\{ \llbracket X_{t+\delta}^{\gamma_{t}^{\varepsilon},u}\rrbracket_{\alpha}\leq\mu\right\} \cap\left\{ \|X_{t+\delta}^{\gamma_{t}^{\varepsilon},u}-\gamma_{t,t+\delta}^{\varepsilon}\|_{0}\leq\frac{1}{2}\kappa\right\} .
\]
Therefore,
\[
P\left\{ \hat{\tau}^{\varepsilon}\geq t+\delta\right\} \ge P\left\{ \llbracket X_{t+\delta}^{\gamma_{t}^{\varepsilon},u}\rrbracket_{\alpha}\leq\mu\right\} -P\left\{ \|X_{t+\delta}^{\gamma_{t}^{\varepsilon},u}-\gamma_{t,t+\delta}^{\varepsilon}\|_{0}>\frac{1}{2}\kappa\right\} .
\]
From Lemma \ref{lem:FSDE} and Assertion (iii) of Lemma \ref{lem:perturbation},
we have
\begin{align*}
 & P\left\{ \llbracket X_{t+\delta}^{\gamma_{t}^{\varepsilon},u}\rrbracket_{\alpha}\leq\mu\right\} \geq1-C\delta^{p(\frac{1}{2}-\alpha)}\varepsilon^{-p},\\
 & P\left\{ \|X_{t+\delta}^{\gamma_{t}^{\varepsilon},u}-\gamma_{t,t+\delta}^{\varepsilon}\|_{0}>\frac{1}{2}\kappa\right\} \leq C\delta\kappa^{-2}.
\end{align*}
Note that $\hat{\tau}^{\varepsilon}$ depends on $u$ and $\mu$,
while the R.H.S. of both inequalities are independent of the pair
$(u,\mu)$. Hence, uniformly with respect to $(u,\mu)$,
\[
P\left\{ \hat{\tau}^{\varepsilon}\geq t+\delta\right\} \geq1-C\delta^{p(\frac{1}{2}-\alpha)}\varepsilon^{-p}-C\delta\kappa^{-2}\nearrow1\quad\text{ as }\delta\to0.
\]
where $p(\frac{1}{2}-\alpha)>1$. In particular, there is a positive
constant $\delta_{1}(\varepsilon,\kappa,p)<\kappa$ such that,
\begin{equation}
P\left\{ \hat{\tau}^{\varepsilon}\geq t+\delta\right\} \geq\frac{1}{2},\quad\forall\delta\in(0,\delta_{1}(\varepsilon,\kappa,p)).\label{eq:low-bound-stopping}
\end{equation}

Define
\begin{equation}
\hat{\tau}^{\varepsilon,\delta}:=\hat{\tau}^{\varepsilon}\wedge(t+\delta).\label{eq:ex-th-stopping}
\end{equation}
Applying the functional It\^o formula \eqref{eq:ito formu} to $\psi$
on interval $[t,\hat{\tau}^{\varepsilon,\delta}]$, we have
\begin{align}
\psi(\gamma_{t}^{\varepsilon})= & \psi(X_{\hat{\tau}^{\varepsilon,\delta}}^{\gamma_{t}^{\varepsilon},u})-\int_{t}^{\hat{\tau}^{\varepsilon,\delta}}(\mathscr{L}\psi)(X_{r}^{\gamma_{t}^{\varepsilon},u},u(r))dr\label{eq:ex-th-1}\\
 & +\int_{t}^{\hat{\tau}^{\varepsilon,\delta}}f(X_{r}^{\gamma_{t}^{\varepsilon},u},\psi(X_{r}^{\gamma_{t}^{\varepsilon},u}),\sigma^{T}(X_{r}^{\gamma_{t}^{\varepsilon},u},u(r))D_{x}\psi(X_{r}^{\gamma_{t}^{\varepsilon},u}),u(r))\, dr\nonumber \\
 & -\int_{t}^{\hat{\tau}^{\varepsilon,\delta}}\big[D_{x}\psi(X_{r}^{\gamma_{t}^{\varepsilon},u})\big]^{T}\sigma(X_{r}^{\gamma_{t}^{\varepsilon},u},u(r))\, dW(r),\nonumber
\end{align}
where $\mathscr{L}$ is defined as \eqref{eq:L-operator}. Let $(Y^{1,\varepsilon,\delta,u},Z^{1,\varepsilon,\delta,u})$
be the solution of the following BSDE
\begin{equation}
\begin{cases}
-dY(r)= & f(X_{r}^{\gamma_{t}^{\varepsilon},u},Y(r),Z(r),u(r))\, dr-Z(r)\, dW(r),\quad r\in[t,\hat{\tau}^{\varepsilon,\delta}];\\
Y(\hat{\tau}^{\varepsilon,\delta})= & \tilde{v}(X_{\hat{\tau}^{\varepsilon,u}}^{\gamma_{t}^{\varepsilon},\delta}).
\end{cases}\label{eq:ex-th-2}
\end{equation}
Set
\begin{align}
Y^{2,\varepsilon,\delta,u}(s):= & \psi(X_{s}^{\gamma_{t},u})-Y^{1,\varepsilon,\delta,u}(s),\label{eq:delta-Y}\\
Z^{2,\varepsilon,\delta,u}(s):= & \sigma^{T}(X_{s}^{\gamma_{t},u},u(s))D_{x}\psi(X_{s}^{\gamma_{t},u})-Z^{1,\varepsilon,\delta,u}(s).\nonumber
\end{align}
Comparing \eqref{eq:ex-th-1} and \eqref{eq:ex-th-2}, we have for
$r\in[t,\hat{\tau}^{\varepsilon,\delta}]$, $P$-a.s.,
\begin{eqnarray}
-dY^{2,\varepsilon,\delta,u}(r) & = & {\displaystyle \biggl[-(\mathscr{L}\psi)(X_{r}^{\gamma_{t}^{\varepsilon},u},u(r))}\nonumber \\
 &  & {\displaystyle +f(X_{r}^{\gamma_{t}^{\varepsilon},u},\psi(X_{r}^{\gamma_{t}^{\varepsilon},u}),\sigma^{T}(X_{r}^{\gamma_{t}^{\varepsilon},u},u(r))D_{x}\psi(X_{r}^{\gamma_{t}^{\varepsilon},u}),u(r))}\label{eq:diff-ex-th}\\
 &  & {\displaystyle -f(X_{r}^{\gamma_{t}^{\varepsilon},u},Y^{1,\varepsilon,\delta,u}(r),Z^{1,\varepsilon,\delta,u}(r),u(r))\biggr]\, dr-Z^{2,\varepsilon,u,\delta}(r)\, dW(r)}\nonumber \\
 & = & \left[-(\mathscr{L}\psi)(X_{r}^{\gamma_{t}^{\varepsilon},u},u(r))+A(r)Y^{2,\varepsilon,u,\delta}(r)+\langle\bar{A},Z^{2,\varepsilon,u,\delta}\rangle(r)\right]\, dr\nonumber \\
 &  & {\displaystyle -Z^{2,\varepsilon,\delta,u}(r)\, dW(r),}\nonumber
\end{eqnarray}
where $|A|,|\bar{A}|\leq C$ ($C$ depends on Lipschitz constant of
$f$, and is independent of the triplet $(u,\varepsilon,\delta)$.
Therefore, we have (see \cite[Proposition 2.2]{el1997backward})
\begin{eqnarray}
Y^{2,\varepsilon,\delta,u}(t)\!\!\! & = & \!\!\!{\displaystyle E\!\left[Y^{2,\varepsilon,\delta,u}(\hat{\tau}^{\varepsilon,\delta})\Gamma^{t}(\hat{\tau}^{\varepsilon,\delta})-\!\int_{t}^{\hat{\tau}^{\varepsilon,\delta}}\Gamma^{t}(r)(\mathscr{L}\psi)(X_{r}^{\gamma_{t}^{\varepsilon},u},u(r))dr\Big|\mathscr{F}_{t}\right],}\label{eq:pres-diff-Exis-Th}
\end{eqnarray}
where $\Gamma^{t}(\cdot)$ solves the linear SDE
\[
d\Gamma^{t}(s)=\Gamma^{t}(s)\Big(A(s)\, ds+\bar{A}(s)\, dW(s)\Big),\: s\in[t,\hat{\tau}^{\varepsilon,\delta}];\quad\Gamma^{t}(t)=1.
\]
Obviously, $\Gamma^{t}\geq0$. Since $\psi\in\mathcal{J}_{\mu,\kappa}^{+}(\gamma_{t},\tilde{v})$,
$\psi-\tilde{v}$ is minimized at $\gamma_{t}$ over $\mathbf{Q}_{\kappa,\iota}(\gamma_{t})\cap\,\mathbf{C}_{\mu}^{\alpha}$,
and in view of Theorem \ref{thm:DPP-1} and Proposition \ref{determin},
we have
\begin{align}
Y^{2,\varepsilon,\delta,u}(\hat{\tau}^{\varepsilon,\delta}) & \geq Y^{2,\varepsilon,\delta,u}(t)=0,\label{eq:sub-case-term-cond}\\
\inf_{u\in\mathcal{U}}EY^{2,\varepsilon,\delta,u}(t)=\essinf_{u\in\mathcal{U}}Y^{2,\varepsilon,\delta,u}(t) & =\psi(\gamma_{t}^{\varepsilon})-\esssup_{u\in\mathcal{U}}Y^{1,\varepsilon,\delta,u}(\hat{\tau}^{\varepsilon,\delta})\label{eq:sub-case-ini-cond}\\
 & =\psi(\gamma_{t}^{\varepsilon})-\tilde{v}(\gamma_{t}^{\varepsilon}).\nonumber
\end{align}
From equation \eqref{eq:pres-diff-Exis-Th}, we have
\begin{align}
\psi(\gamma_{t}^{\varepsilon})-\tilde{v}(\gamma_{t}^{\varepsilon})\geq & \inf_{u\in\mathcal{U}}E[Y^{2,\varepsilon,\delta,u}(t)]\label{eq:ex-th-3}\\
= & \inf_{u\in\mathcal{U}}E\left[Y^{2,\varepsilon,\delta,u}(\hat{\tau}^{\varepsilon,\delta})\Gamma^{t}(\hat{\tau}^{\varepsilon,\delta})-\int_{t}^{\hat{\tau}^{\varepsilon,\delta}}\Gamma^{t}(r)(\mathscr{L}\psi)(X_{r}^{\gamma_{t}^{\varepsilon},u},u(r))dr\right]\nonumber \\
\geq & -\sup_{u\in\mathcal{U}}E\left[\int_{t}^{\hat{\tau}^{\varepsilon,\delta}}\Gamma^{t}(r)(\mathscr{L}\psi)(X_{r}^{\gamma_{t}^{\varepsilon},u},u(r))dr\right]\nonumber \\
= & -\sup_{u\in\mathcal{U}}E\left[\int_{t}^{\hat{\tau}^{\varepsilon,\delta}}(\mathscr{L}\psi)(\gamma_{t}^{\varepsilon},u(r))dr\right]\nonumber \\
 & -\sup_{u\in\mathcal{U}}E\left[\int_{t}^{\hat{\tau}^{\varepsilon,\delta}}\left[(\mathscr{L}\psi)(X_{r}^{\gamma_{t}^{\varepsilon},u},u(r))-(\mathscr{L}\psi)(\gamma_{t}^{\varepsilon},u(r))\right]dr\right]\nonumber \\
 & -\sup_{u\in\mathcal{U}}E\left[\int_{t}^{\hat{\tau}^{\varepsilon,\delta}}(\Gamma^{t}(r)-1)(\mathscr{L}\psi)(X_{r}^{\gamma_{t}^{\varepsilon},u},u(r))dr\right]\nonumber \\
:= & -\sup_{u\in\mathcal{U}}\text{Part1}-\sup_{u\in\mathcal{U}}\text{Part2}-\sup_{u\in\mathcal{U}}\text{Part3}.\nonumber
\end{align}

Since the coefficients in $\mathscr{L}$ are Lipschitz continuous,
combining the regularity of $\psi$ (see \eqref{eq:super-jet}), we
have for any $\gamma_{t^{1}}^{1},\gamma_{t^{2}}^{2}\in\mathbf{Q}_{\kappa,\kappa}(\gamma_{t})\cap\,\mathbf{C}_{\mu}^{\alpha}$
and $\bar{u}\in U$,
\begin{align}
|\psi(\gamma_{t^{1}}^{1})-\psi(\gamma_{t^{2}}^{2})|\leq & d_{p}^{\beta}(\gamma_{t^{1}}^{1},\gamma_{t^{2}}^{2}),\nonumber \\
\left|\mathscr{L}\psi(\gamma_{t^{1}}^{1},\bar{u})-\mathscr{L}\psi(\gamma_{t^{2}}^{2},\bar{u})\right|\leq & Cd_{p}^{\beta}\left(\gamma_{t^{1}}^{1},\gamma_{t^{2}}^{2}\right).\label{eq:op-L-cont}
\end{align}

Thus we have
\begin{align}
 & \psi(\gamma_{t}^{\varepsilon})-\tilde{v}(\gamma_{t}^{\varepsilon})=(\psi-\tilde{v})(\gamma_{t})+\psi(\gamma_{t}^{\varepsilon})-\psi(\gamma_{t})+\tilde{v}(\gamma_{t}^{\varepsilon})-\tilde{v}(\gamma_{t})\label{eq:ex-th-0}\\
\leq & C\left(\|\gamma_{t}^{\varepsilon}-\gamma_{t}\|_{0}^{\beta}+\|\gamma_{t}^{\varepsilon}-\gamma_{t}\|_{0}\right)\leq C\left(4M_{0}\varepsilon\mu^{-1}\right)^{\beta},\nonumber
\end{align}
and
\begin{align}
 & \sup_{u\in\mathcal{U}}\text{Part1}\label{eq:ex-th-I}\\
\leq & \sup_{u\in\mathcal{U}}E\left[(\hat{\tau}^{\varepsilon,\delta}-t)\sup_{\bar{u}\in U}\mathscr{L}\psi(\gamma_{t}^{\varepsilon},\bar{u})\right]\leq\sup_{\bar{u}\in U}\mathscr{L}\psi(\gamma_{t}^{\varepsilon},\bar{u})\sup_{u\in\mathcal{U}}E[(\hat{\tau}^{\varepsilon,\delta}-t)]\nonumber \\
\leq & \left(\sup_{\bar{u}\in U}\mathscr{L}\psi(\gamma_{t},\bar{u})+C\left(4M_{0}\varepsilon\mu^{-1}\right)^{\beta}\right)\sup_{u\in\mathcal{U}}E[(\hat{\tau}^{\varepsilon,\delta}-t)].\nonumber
\end{align}

Now we estimate higher order terms Part2 and Part3. In view of Lemma
\ref{lem:FSDE} and \eqref{eq:op-L-cont}, we have
\begin{align*}
 & E\left[\sup_{t\leq r\leq\hat{\tau}^{\varepsilon,\delta}}|\mathscr{L}\psi(X_{r}^{\gamma_{t}^{\varepsilon},u},u(r))-\mathscr{L}\psi(\gamma_{t}^{\varepsilon},u(r))|\right]\\
\leq & CEd_{p}^{\beta}(X_{\hat{\tau}^{\varepsilon,\delta}}^{\gamma_{t}^{\varepsilon},u},\gamma_{t}^{\varepsilon})\leq C\delta^{\frac{\beta}{2}}.
\end{align*}
Hence
\begin{align}
|\text{Part2}| & \leq E\left[(\hat{\tau}^{\varepsilon,\delta}-t)\sup_{t\leq r\leq\hat{\tau}^{\varepsilon,\delta}}|\mathscr{L}\psi(X_{r}^{\gamma_{t}^{\varepsilon},u},u(r))-\mathscr{L}\psi(\gamma_{t}^{\varepsilon},u(r))|\right]\nonumber \\
 & \leq\delta E\left[\sup_{t\leq r\leq\hat{\tau}^{\varepsilon,\delta}}|\mathscr{L}\psi(X_{r}^{\gamma_{t}^{\varepsilon},u},u(r))-\mathscr{L}\psi(\gamma_{t}^{\varepsilon},u(r))|\right]\nonumber \\
 & =C\delta^{1+\frac{\beta}{2}}\label{eq:ex-th-II}
\end{align}
and
\begin{align}
|\text{Part3}| & \leq CE\int_{t}^{\hat{\tau}^{\varepsilon,\delta}}|\Gamma^{t}(r)-1|dr\leq CE\left[(\hat{\tau}^{\varepsilon,\delta}-t)\sup_{t\leq r\leq\hat{\tau}^{\varepsilon_{1},\delta}}|\Gamma^{t}(r)-1|\right]\nonumber \\
 & \leq C\delta E\left[\sup_{t\leq r\leq\hat{\tau}^{\varepsilon,\delta}}|\Gamma^{t}(r)-1|\right]\leq C\delta^{\frac{3}{2}}.\label{eq:ex-th-III}
\end{align}

Substituting \eqref{eq:ex-th-0} - \eqref{eq:ex-th-III} into \eqref{eq:ex-th-3},
we have
\begin{align}
 & -C\left(4M_{0}\varepsilon\mu^{-1}\right)^{\beta}\label{eq:ex-th-4}\\
\leq & \left(\sup_{\bar{u}\in U}\mathscr{L}\psi(\gamma_{t},\bar{u})+C\left(4M_{0}\varepsilon\mu^{-1}\right)^{\beta}\right)\sup_{u\in\mathcal{U}}E[(\hat{\tau}^{\varepsilon,\delta}-t)]+C\delta^{1+\frac{\beta}{2}}.\nonumber
\end{align}
In view of \eqref{eq:low-bound-stopping}, then for all $\delta\in(0,\delta_{1}(\kappa,\varepsilon,p))$,
uniformly for every $u\in\mathcal{U}$, we have
\begin{equation}
E[\hat{\tau}^{\varepsilon,\delta}-t]\geq E\left[\chi_{\{\hat{\tau}^{\varepsilon,\delta}\geq t+\delta\}}(\hat{\tau}^{\varepsilon,\delta}-t)\right]\geq\frac{1}{2}\delta.\label{eq:est-of-stopping}
\end{equation}
Therefore,
\begin{equation}
-\sup_{u\in U}\mathscr{L}\psi(\gamma_{t},u)\leq C\left(4M_{0}\varepsilon\mu^{-1}\right)^{\beta}(2\delta^{-1}+1)+2C\delta^{\frac{\beta}{2}}.\label{eq:last-estim}
\end{equation}
Note that the constants $C$ throughout this proof only depend on
$M_{0},\kappa$ and the Lipschitz constants of the coefficients in
$\mathscr{L}$, and they do not depend on $\psi\in\mathcal{J}_{\mu,\kappa}^{+}(\gamma_{t},\tilde{v})$,
$\gamma_{t}\in\mathbf{Q}_{M_{0},T-\kappa}\cap\,\mathbf{C}_{\mu}^{\alpha}$,
and $\delta\in(0,\delta_{1}(\varepsilon,\kappa))$. Taking the supremum
on both sides over $\psi\in\mathcal{J}_{\mu,\kappa}^{+}(\gamma_{t},\tilde{v})$
and $\gamma_{t}\in\mathbf{Q}_{M_{0},T-\kappa}\cap\,\mathbf{C}_{\mu}^{\alpha}$,
setting $\delta:=\mu^{-\frac{\beta}{2}}$, and then sending $\mu$
to $\infty$, we have \eqref{eq:subsolution}. This shows that $\tilde{v}$
is a viscosity sub-solution to the path-dependent Bellman equation
\eqref{eq:PHJBE}.

In a symmetric (also easier) way, we show that $\tilde{v}$ is a super-solution
to the path-dependent Bellman equation \eqref{eq:PHJBE}. The proof
is complete.\end{proof}

\begin{rem}\label{rem:exist-th} (1) Our existence proof is more
complicated than the classical counterpart (for the state-dependent
case). The complication arises from the fact that we start the dynamic
programming at the perturbation $\gamma_{t}^{\varepsilon}$ instead
of directly at the minimum path $\gamma_{t}$ of $\psi-{\tilde{v}}$
like the conventional arguments. Since our jets are defined on some
compact subset $\mathbf{C}_{\mu}^{\alpha}$ of $\Lambda$, the minimum
path $\gamma_{t}$ might happen to be at the boundary of $\mathbf{C}_{\mu}^{\alpha}$,
i.e. $\llbracket\gamma_{t}\rrbracket_{\alpha}=\mu$. If we started
at $\gamma_{t}$, BSDE \eqref{eq:diff-ex-th} would be trivial and
nothing from the localized dynamic programming principle could be
derived if
\begin{equation}
P\{\llbracket X_{s}^{\gamma_{t}}\rrbracket_{\alpha}\leq\mu,\,\exists s>t\}=0.\label{eq:court-example}
\end{equation}
The following example illustrates that \eqref{eq:court-example} might
happen, and therefore explains why we have to start the dynamic programming
at the perturbation $\gamma_{t}^{\varepsilon}$.

Let $W$ be a one-dimensional standard Brownian Motion and $\gamma_{t}\in\Lambda(\mathbb{R})$
such that for some $t_{1}\in[0,t)$
\[
\gamma_{t}(t)-\gamma_{t}(t_{1})=\mu|t-t_{1}|^{\alpha}.
\]
Define
\[
W^{\gamma_{t}}(s):=\gamma_{t}(s)\chi_{[0,t)}(s)+\big(W(s)-W(t)+\gamma_{t}(t)\big)\chi_{[t,T]}(s),\quad s\in[0,T].
\]
Then
\begin{align*}
 & \left\{ \exists\delta>0,\text{ s.t. }\llbracket W_{t+\delta}^{\gamma_{t}}\rrbracket_{\alpha}\leq\mu\right\} \\
\subset & \left\{ \exists\delta>0,\text{ s.t. }W^{\gamma_{t}}(s)-\gamma_{t}(t_{1})\leq\mu|s-t_{1}|^{\alpha},\,\forall s\in(t,t+\delta)\right\} \\
= & \left\{ \exists\delta>0,\text{ s.t. }W^{\gamma_{t}}(s)-\gamma_{t}(t)+\gamma_{t}(t)-\gamma_{t}(t_{1})\leq\mu|s-t_{1}|^{\alpha},\,\forall s\in(t,t+\delta)\right\} \\
= & \left\{ \exists\delta>0,\text{ s.t. }W^{\gamma_{t}}(s)-\gamma_{t}(t)\leq\mu(|s-t_{1}|^{\alpha}-|t-t_{1}|^{\alpha}),\,\forall s\in(t,t+\delta)\right\} .
\end{align*}
Since the function $\mu(|\cdot-t_{1}|^{\alpha}-|t-t_{1}|^{\alpha})\in C^{1}[t,t+\delta]$,
by the law of iterated logarithm (see \cite[Theorem 9.23, Chapter 2]{karatzas1991brownian}),
we have
\[
P\left\{ \exists\delta>0,\text{ s.t. }\llbracket W_{t+\delta}^{\gamma_{t}}\rrbracket_{\alpha}\leq\mu\right\} =0.
\]
This example enlightens us to perturb the left $\mu$-H\"older modulus
of $\gamma_{t}$ at time $t$ in \eqref{eq:perturbation}.

(2) The introduction of $\mathbf{Q}_{M_{0},T-\kappa}$ in Definition
\ref{defn:viscosity solution} plays a crucial role in the proof of
Theorem \ref{thm:ex-th}. Otherwise, we only have the following too
rough estimate on our perturbation: $\|\gamma_{t}^{\varepsilon}-\gamma_{t}\|_{0}\leq C\varepsilon$,
from which and \eqref{eq:last-estim} only results the following inequality
\begin{align*}
-\sup_{u\in U}\mathscr{L}\psi(\gamma_{t},u) & \leq C\varepsilon^{\beta}(2\delta^{-1}+1)+2C\delta^{\frac{1}{2}}.
\end{align*}
It does not help us, for the relation of $\delta^{\frac{1}{2}-\alpha}=\text{o}(\varepsilon)$
is required in the estimate \eqref{eq:est-of-stopping} by Proposition
\ref{Prop:Hold-norm-prob-est} and implies that $\varepsilon^{\beta}\delta^{-1}$
increases to $\infty$ as $\delta$ is decreasing to zero.

However, with the restriction of $\gamma_{t}\in\mathbf{Q}_{M_{0},T-\kappa}$,
in \eqref{eq:last-estim} we could fix $\varepsilon$, while sending
$\delta\to0$ and $(4M_{0}\varepsilon\mu^{-1})^{\beta}\delta^{-1}\to0$
simultaneously.

(3) In the above proof, both parameters $\mu$
and $M_{0}$ in our definition of viscosity sub-solutions play a key role, while the parameter $\kappa$ is fixed
such that the following associated family of path functionals
\[
\{\psi,D_{t}\psi,D_{x}\psi,D_{xx}\psi:\mathbf{Q}_{\kappa,\iota}(\gamma_{t})\cap\mathbf{C}_{\mu}^{\alpha}\to\mathbb{R};\gamma_{t}\in\mathbf{Q}_{M_{0},T-\kappa}\cap\mathbf{C}_{\mu}^{\alpha},\psi\in\mathcal{J}_{\mu,\kappa}^{+}(\gamma_{t},u),\mu\ge\mu_{0}\}
\]
for some sufficiently large $\mu_{0}$, share a common H\"older modulus,
which implies the so-called equi-continuous but with the underlying
functionals being considered on varying domains.

\end{rem}

\section{\label{sec:Uniqueness-of-viscosity}Uniqueness of viscosity solution}

\subsection{Non-degenerate case}

We assume without loss of generality that, there exists a constant
$K>0$, such that, for all $(\gamma_{t},p,A,u)\in\Lambda\times\mathbb{R}^{n}\times\mathbb{R}^{n\times n}\times U$
and $r_{1},r_{2}\in\mathbb{R}$ such that $r_{1}<r_{2}$ ,
\begin{equation}
\mathcal{H}(\gamma_{t},r_{1},p,A,u)-\mathcal{H}(\gamma_{t},r_{2},p,A,u)\geq K(r_{2}-r_{1}).\label{eq:Monotonicity-of-h}
\end{equation}
Otherwise, define $\bar{v}(\gamma_{t})=e^{-\lambda t}v(\gamma_{t})$
for $\lambda>0$. Then $v$ is a viscosity solution of PHJB equation
\eqref{eq:PHJBE} if and only if $\bar{v}$ is a viscosity solution
of the following PPDE
\[
\begin{cases}
{\displaystyle -D_{t}\bar{v}-\sup_{u\in U}\bar{\mathcal{H}}(\gamma_{t},\bar{v},D_{x}\bar{v},D_{xx}^{2}\bar{v},u)=0,} & \quad\gamma_{t}\in\Lambda;\\
\bar{v}(\gamma_{T})=e^{-\lambda T}g(\gamma_{T}), & \quad\gamma_{T}\in\Lambda_{T},
\end{cases}
\]
where
\[
\bar{\mathcal{H}}(\gamma_{t},r,p,A,u):=-\lambda r+e^{-\lambda t}\mathcal{H}(\gamma_{t},e^{\lambda t}r,e^{\lambda t}p,e^{\lambda t}A,u).
\]
Obviously, $\bar{\mathcal{H}}$ satisfies \eqref{eq:Monotonicity-of-h}
for sufficiently large $\lambda$.

\subsubsection{\label{sub:Smooth-approximations} State-dependent smooth approximations}

First, we construct the state-dependent approximations of the path
functional $\tilde{v}$ defined by \eqref{eq:defination Of value fun}.
Let $m$ be a positive integer, and $t_{i}:=\frac{i}{m}T,i=0,1,\cdots m,$
which divide the time interval $[0,T]$ into $m$ equal parts. Now
for all $t\in[0,T]$, we define the truncating operator $\mathbf{P}^{m}:\Lambda_{t}\rightarrow\hat{\Lambda}_{t}$
by
\begin{eqnarray*}
(\mathbf{P}^{m}\gamma_{t})(r) & = & \sum_{i=0}^{k-2}\gamma_{t}(t_{i})\chi_{[t_{i},t_{i+1})}(r)+\gamma_{t}(t_{k-1})\chi_{[t_{k-1},t)}(r)+\gamma_{t}(t)\chi_{\{t\}}(r)\\
 & = & \sum_{i=1}^{k-1}\gamma_{t}\big|_{t_{i-1}}^{t_{i}}\chi_{[t_{i-1},t]}(r)+\gamma_{t}\big|_{t_{k-1}}^{t}\chi_{\{t\}}(r),
\end{eqnarray*}
where $\gamma_{t}\big|_{t_{i-1}}^{t_{i}}:=\gamma_{t}(t_{i})-\gamma_{t}(t_{i-1})$,
and $k$ is the positive integer such that $t\in(t_{k-1},t_{k}]$.

We define functions $b^{m}:\hat{\Lambda}\times U\rightarrow\mathbb{R}^{n}$,
$\sigma^{m}:\hat{\Lambda}\times U\rightarrow\mathbb{R}^{n\times d}$,
$f^{m}:\hat{\Lambda}\times\mathbb{R}\times\mathbb{R}^{d}\times U\rightarrow\mathbb{R}$,
$g^{m}:\hat{\Lambda}_{T}\rightarrow\mathbb{R}$, and $\mathcal{H}^{m}:\hat{\Lambda}\times\mathbb{R}\times\mathbb{R}^{n}\times\mathbb{R}^{n\times n}\times U\rightarrow\mathbb{R}$
as follows:
\begin{eqnarray*}
b^{m}(\gamma_{t},u) & := & b(\mathbf{P}^{m}\gamma_{t},u),\\
\sigma^{m}(\gamma_{t},u) & := & \sigma(\mathbf{P}^{m}\gamma_{t},u),\\
f^{m}(\gamma_{t},y,z,u) & := & f(\mathbf{P}^{m}\gamma_{t},y,z,u),\\
g^{m}(\gamma_{T}) & := & g(\mathbf{P}^{m}\gamma_{T}),\\
\mathcal{H}^{m}(\gamma_{t},r,p,A,u) & := & \mathcal{H}(\mathbf{P}^{m}\gamma_{t},r,p,A,u).
\end{eqnarray*}
Assumption (H2) implies the following estimates
\begin{align}
|b^{m}(\gamma_{t},u)-b(\gamma_{t},u)| & \le C\|\mathbf{P}^{m}\gamma_{t}-\gamma_{t}\|_{0},\nonumber \\
|\sigma^{m}(\gamma_{t},u)-\sigma(\gamma_{t},u)| & \le C\|\mathbf{P}^{m}\gamma_{t}-\gamma_{t}\|_{0},\nonumber \\
|f^{m}(\gamma_{t},y,z,u)-f(\gamma_{t},y,z,u)| & \le C\|\mathbf{P}^{m}\gamma_{t}-\gamma_{t}\|_{0},\label{eq:coeffient-first-estmat}\\
|g^{m}(\gamma_{t})-g(\gamma_{t})| & \le C\|\mathbf{P}^{m}\gamma_{t}-\gamma_{t}\|_{0},\nonumber \\
|\mathcal{H}^{m}(\gamma_{t},r,p,A,u)-\mathcal{H}(\gamma_{t},r,p,A,u)| & \le C(1+|p|+|A|)\|\mathbf{P}^{m}\gamma_{t}-\gamma_{t}\|_{0}.\nonumber
\end{align}

Consider the following FBSDE: for any $\gamma_{t}\in\Lambda,\, t<T$,
and $u\in\mathcal{U}$,
\begin{equation}
\begin{cases}
X^{m,\gamma_{t},u}(s)=(\mathbf{P}^{m}\gamma_{t})(s), & \text{ all }\omega,\, s\in[0,t],\\
{\displaystyle X^{m,\gamma_{t},u}(s)=(\mathbf{P}^{m}\gamma_{t})(t)+\int_{t}^{s}b^{m}(X_{r}^{m,\gamma_{t},u},u(r))dr}\\[3mm]
{\displaystyle \quad\quad+\int_{t}^{s}\sigma^{m}(X_{r}^{m,\gamma_{t},u},u(r))\, dW(r),\qquad} & \text{a.s.-}\omega,\, s\in[t,T];
\end{cases}\label{eq:diffu proc-1}
\end{equation}

\begin{eqnarray*}
Y^{m,\gamma_{t},u}(s) & = & g^{m}(X_{T}^{m,\gamma_{t},u})+\int_{s}^{T}f^{m}(X_{r}^{m,\gamma_{t},u},Y^{m,\gamma_{t},u}(r),Z^{m,\gamma_{t},u}(r),u(r))\, dr\\
 &  & -\int_{s}^{T}Z^{m,\gamma_{t},u}(r)\, dW(s),\quad s\in[t,T].
\end{eqnarray*}
Define the first approximating value functional
\[
v^{m}(\gamma_{t}):=\esssup_{u\in\mathcal{U}}Y^{m,\gamma_{t},u}(t),\quad\gamma_{t}\in\Lambda.
\]

\begin{prop} For $(u,\gamma_{t})\in\left(\mathcal{U}\times\Lambda\right)$,
and $p>2$, we have
\begin{equation}
E\big[\|X_{T}^{m,\gamma_{t},u}-X_{T}^{\gamma_{t},u}\|_{0}^{p}\big]\leq C\Big(\text{Osc}(\gamma_{t},m^{-1})^{p}+m^{-\frac{p}{2}}\Big),
\end{equation}

\begin{equation}
E\Big[\sup_{s\in[t,T]}|Y^{m,\gamma_{t},u}(s)-Y^{\gamma_{t},u}(s)|^{p}\Big]\leq C\Big(\text{Osc}(\gamma_{t},m^{-1})^{p}+m^{-\frac{p}{2}}\Big),\label{eq:first-ESTIMAT}
\end{equation}
where
\[
\text{Osc}(\gamma_{t},m^{-1}):=\max_{0<s<s+\delta<t,0<\delta<m^{-1}}|\gamma(s+\delta)-\gamma(s)|
\]
is the oscillating amplitude with time $m^{-1}$ of $\gamma_{t}$
in the interval $(0,t)$.\end{prop}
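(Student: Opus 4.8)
The plan is to establish the two estimates by a standard a priori stability argument for FBSDEs, using the coefficient comparison \eqref{eq:coeffient-first-estmat} together with Lemmas \ref{lem:FSDE} and \ref{lem:FBSDE}, the only subtlety being that we must track the dependence of the initial-path perturbation on the oscillation modulus. First I would set $\xi_s := X^{m,\gamma_t,u}(s)-X^{\gamma_t,u}(s)$ and write its dynamics: on $[0,t]$ it is the deterministic path $\mathbf{P}^m\gamma_t-\gamma_t$ (so $\|\xi_t\|_0 = \|\mathbf{P}^m\gamma_t-\gamma_t\|_0$), and on $[t,T]$ it solves the SDE driven by $b^m(X^{m,\gamma_t,u}_r,u(r))-b(X^{\gamma_t,u}_r,u(r))$ and the analogous diffusion increment. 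I split each such increment via the triangle inequality into $b^m(X^{m}_r,u)-b^m(X^\gamma_r,u)$, which is Lipschitz in $d_p$ and hence controlled by $\sup_{s\le r}|\xi_s|$ plus the (vanishing at $t$) contribution $\|\mathbf{P}^m\gamma_t-\gamma_t\|_0$, and $b^m(X^\gamma_r,u)-b(X^\gamma_r,u)$, which by \eqref{eq:coeffient-first-estmat} is bounded by $C\|\mathbf{P}^m X^\gamma_r - X^\gamma_r\|_0$.

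The key quantitative point is to bound $E[\sup_{r\in[t,T]}\|\mathbf{P}^m X^{\gamma_t,u}_r - X^{\gamma_t,u}_r\|_0^p]$. The operator $\mathbf{P}^m$ replaces the path, on each subinterval of the mesh $\{t_i=iT/m\}$, by its value at the left mesh point, so the error is at most the oscillation of the path over windows of length $m^{-1}$. That oscillation splits into two parts: the oscillation of the frozen deterministic piece $\gamma_t$ over $(0,t)$, which contributes exactly $\mathrm{Osc}(\gamma_t,m^{-1})$, and the oscillation of the diffusive part $X^{\gamma_t,u}$ on $[t,T]$, whose $p$-th moment over any time window of length $m^{-1}$ is $O(m^{-p/2})$ by the standard moment estimate for Itô processes with bounded coefficients (this is exactly the content of the third inequality in Lemma \ref{lem:FSDE}, applied over the mesh subintervals and summed/maximized). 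Hence $E[\sup_r\|\mathbf{P}^m X^{\gamma_t,u}_r - X^{\gamma_t,u}_r\|_0^p] \le C(\mathrm{Osc}(\gamma_t,m^{-1})^p + m^{-p/2})$, and likewise $\|\mathbf{P}^m\gamma_t-\gamma_t\|_0 \le \mathrm{Osc}(\gamma_t,m^{-1})$.

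Plugging these into the SDE for $\xi$, taking $p$-th moments and $\sup$ over time, applying the Burkholder--Davis--Gundy and Hölder inequalities to the stochastic and drift integrals, one gets
\[
E\Big[\sup_{s\in[t,r]}|\xi_s|^p\Big] \le C\big(\mathrm{Osc}(\gamma_t,m^{-1})^p + m^{-p/2}\big) + C\int_t^r E\Big[\sup_{s\in[t,\rho]}|\xi_s|^p\Big]\,d\rho,
\]
and Gronwall's lemma yields the first estimate for $E[\|X^{m,\gamma_t,u}_T-X^{\gamma_t,u}_T\|_0^p]$ (indeed for the sup over $[0,T]$). For the second estimate, I would feed this bound, together with the pathwise bound on $g^m-g$ and $f^m-f$ from \eqref{eq:coeffient-first-estmat} and the Lipschitz continuity of $f$ in $(y,z)$, into the standard BSDE stability estimate (the a priori estimate underlying Lemma \ref{lem:FBSDE}): the difference $Y^{m,\gamma_t,u}-Y^{\gamma_t,u}$ is controlled in $\mathscr S^p$ by $E[|g^m(X^{m}_T)-g(X^\gamma_T)|^p]$ plus $E[\int_t^T|f^m(X^m_r,Y^m_r,Z^m_r,u)-f(X^\gamma_r,Y^m_r,Z^m_r,u)|^p dr]$, each of which is $\le C(E[\|X^m_T-X^\gamma_T\|_0^p] + \mathrm{Osc}(\gamma_t,m^{-1})^p) \le C(\mathrm{Osc}(\gamma_t,m^{-1})^p + m^{-p/2})$. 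Finally, passing from the indexed-family essential supremum defining $v^m$ and $\tilde v$ to the pointwise estimate uses that $\sup$ is $1$-Lipschitz, exactly as in the proof of Proposition \ref{determin}.

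The main obstacle I anticipate is purely bookkeeping rather than conceptual: correctly handling the fact that $\mathbf{P}^m$ acts on a \emph{stopped/frozen} path whose first portion $\gamma_t$ is deterministic while the remainder is random, so that the two sources of approximation error ($\mathrm{Osc}(\gamma_t,m^{-1})$ on $[0,t]$ and the $m^{-1/2}$-Hölder-type fluctuation of the SDE solution on $[t,T]$) must be separated cleanly and the mesh points straddling $t$ treated with care. Once that decomposition is in place, everything else is the routine BDG--Gronwall machinery already used in Lemmas \ref{lem:FSDE} and \ref{lem:FBSDE}.
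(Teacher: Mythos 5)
Your proposal is correct and follows essentially the same route as the paper's proof: the same triangle-inequality split of the coefficient differences into a Lipschitz term plus the $\|\mathbf{P}^{m}X^{\gamma_{t},u}-X^{\gamma_{t},u}\|_{0}$ term, the same decomposition of that projection error into $\mathrm{Osc}(\gamma_{t},m^{-1})$ on the frozen part plus the $O(m^{-1/2})$ diffusive fluctuation over mesh intervals, followed by BDG--Gronwall for the forward estimate and standard BSDE stability for the backward one. The only extraneous element is your final remark about passing through the essential supremum, which belongs to the corollary \eqref{eq:first-approx-path-rate} rather than to this proposition.
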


\begin{proof} From \eqref{eq:coeffient-first-estmat}, we have
\begin{align*}
 & |b^{m}(X_{s}^{m,\gamma_{t},u})-b(X_{s}^{\gamma_{t},u})|\leq C\|\mathbf{P}^{m}X_{s}^{m,\gamma_{t},u}-X_{s}^{\gamma_{t},u}\|_{0}\\
\leq & C(\|\mathbf{P}^{m}X_{s}^{m,\gamma_{t},u}-\mathbf{P}^{m}X_{s}^{\gamma_{t},u}\|_{0}+\|\mathbf{P}^{m}X_{s}^{\gamma_{t},u}-X_{s}^{\gamma_{t},u}\|_{0})\\
\leq & C(\|X_{s}^{m,\gamma_{t},u}-X_{s}^{\gamma_{t},u}\|_{0}+\|\mathbf{P}^{m}X_{s}^{\gamma_{t},u}-X_{s}^{\gamma_{t},u}\|_{0}).
\end{align*}
In a similar way, we have
\begin{align*}
 & |\sigma^{m}(X_{s}^{m,\gamma_{t},u})-\sigma(X_{s}^{\gamma_{t},u})|\leq C(\|X_{s}^{m,\gamma_{t},u}-X_{s}^{\gamma_{t},u}\|_{0}+\|\mathbf{P}^{m}X_{s}^{\gamma_{t},u}-X_{s}^{\gamma_{t},u}\|_{0}),\\
\\
 & |f^{m}(X_{s}^{m,\gamma_{t},u},y',z',u)-f(X_{s}^{\gamma_{t},u},y,z,u)|\\
\leq & C(\|X_{s}^{m,\gamma_{t},u}-X_{s}^{\gamma_{t},u}\|_{0}+\|\mathbf{P}^{m}X_{s}^{\gamma_{t},u}-X_{s}^{\gamma_{t},u}\|_{0})+|y-y'|+|z-z'|,\\
\\
 & |g^{m}(X_{T}^{m,\gamma_{t},u})-g(X_{T}^{m,\gamma_{t},u})|\leq C\left(\|X_{s}^{m,\gamma_{t},u}-X_{s}^{\gamma_{t},u}\|_{0}+\|\mathbf{P}^{m}X_{s}^{\gamma_{t},u}-X_{s}^{\gamma_{t},u}\|_{0}\right).
\end{align*}
Applying It\^o formula, BDG and Gronwall inequality, using standard
arguments, we have
\begin{align*}
E\|X_{T}^{m,\gamma_{t},u}-X_{T}^{\gamma_{t},u}\|_{0}^{p} & \leq CE\|\mathbf{P}^{m}X_{T}^{\gamma_{t},u}-X_{T}^{\gamma_{t},u}\|_{0}^{p}\\
 & =CE\left[\max_{\begin{array}{c}
1\leq i\leq m\\
t_{i-1}\leq s\leq t_{i}
\end{array}}|X^{\gamma_{t},u}(s)-X^{\gamma_{t},u}(t_{i})|\right]\\
 & \leq C\Big(\text{Osc}(\gamma_{t},m^{-1})^{p}+m^{-\frac{p}{2}}\Big),
\end{align*}

\begin{align*}
 & E\|Y^{m,\gamma_{t},u}(T)-Y^{\gamma_{t},u}(T)\|_{0}^{p}\leq C\Big(E\|X_{T}^{m,\gamma_{t},u}-X_{T}^{\gamma_{t},u}\|_{0}^{p}+E\|\mathbf{P}^{m}X_{T}^{\gamma_{t},u}-X_{T}^{\gamma_{t},u}\|_{0}^{p}\Big)\\
\leq & C\Big(\text{Osc}(\gamma_{t},m^{-1})^{p}+m^{-\frac{p}{2}}\Big).
\end{align*}

\end{proof}

Obviously, \eqref{eq:first-ESTIMAT} yields, for any $\gamma_{t}\in\Lambda$
and positive integer $m$,
\begin{equation}
|\tilde{v}(\gamma_{t})-v^{m}(\gamma_{t})|\leq C\Big(\text{Osc}(\gamma_{t},m^{-1})+m^{-\frac{1}{2}}\Big).\label{eq:first-approx-path-rate}
\end{equation}

Similar to the state-dependent optimal stochastic control problem,
$v^{m}$ has a PDE interpretation. For each $m$ and $i=1,\ldots,m$,
define functions $B^{m,i}:(t_{i-1},t_{i}]\times\mathbb{R}^{i\times n}\times U\rightarrow\mathbb{R}^{n}$,
$\Sigma^{m,i}:(t_{i-1},t_{i}]\times\mathbb{R}^{i\times n}\times U\rightarrow\mathbb{R}^{n\times d}$,
$F^{m,i}:(t_{i-1},t_{i}]\times\mathbb{R}^{i\times n}\times\mathbb{R}\times\mathbb{R}^{d}\times U\rightarrow\mathbb{R}$,
$G^{m}:\mathbb{R}^{m\times n}\rightarrow\mathbb{R}$, and $H^{m,i}:(t_{i-1},t_{i}]\times\mathbb{R}^{i\times n}\times\mathbb{R}\times\mathbb{R}^{n}\times\mathbb{R}^{n\times n}\times U\rightarrow\mathbb{R}$
as follows (with $\overrightarrow{x_{i}}=(x_{1},\cdots,x_{i})\in\mathbb{R}^{i\times n}$):
\begin{align*}
B^{m,i}(t,\overrightarrow{x_{i}},u):= & b\Big(\sum_{j=1}^{i-1}x_{j}\chi_{[t_{j},t]}+x_{i}\chi_{\{t\}},u\Big),\\
\Sigma^{m,i}(t,\overrightarrow{x_{i}},u):= & \sigma\Big(\sum_{j=1}^{i-1}x_{j}\chi_{[t_{j},t]}+x_{i}\chi_{\{t\}},u\Big),\\
F^{m,i}(t,\overrightarrow{x_{i}},y,z,u):= & f\Big(\sum_{j=1}^{i-1}x_{j}\chi_{[t_{j},t]}+x_{i}\chi_{\{t\}},y,z,u\Big),\\
G^{m}(x_{1},\cdots,x_{m}):= & g\Big(\sum_{j=1}^{m-1}x_{j}\chi_{[t_{j},T]}+x_{m}\chi_{\{T\}}\Big),\\
H^{m,i}(t,\overrightarrow{x_{i}},r,p,A,u):= & \frac{1}{2}\text{Tr}\left(\Sigma^{m,i}(\Sigma^{m,i})^{T}(t,\overrightarrow{x_{i}},u)A\right)+\langle B^{m,i}(t,\overrightarrow{x_{i}},u),p\rangle\\
 & +F^{m,i}(t,\overrightarrow{x_{i}},r,(\Sigma^{m,i})^{T}(t,\overrightarrow{x_{i}},u)p,u).
\end{align*}
Obviously, for any $\gamma\in\Lambda_{T}$, $u\in U$ and $t\in(t_{i-1},t_{i}]$,
\begin{eqnarray*}
b^{m}(\gamma_{t},u) & = & B^{m,i}\big(t,\gamma_{t}(t_{1}),\gamma_{t}\big|_{t_{1}}^{t_{2}},\cdots,\gamma_{t}\big|_{t_{i-2}}^{t_{i-1}},\gamma_{t}\big|_{t_{i-1}}^{t},u\big),\\
\sigma^{m}(\gamma_{t},u) & = & \Sigma^{m,i}\big(t,\gamma(t_{1}),\gamma_{t}\big|_{t_{1}}^{t_{2}},\cdots,\gamma_{t}\big|_{t_{i-2}}^{t_{i-1}},\gamma_{t}\big|_{t_{i-1}}^{t},u\big),\\
f^{m}(\gamma_{t},y,z,u) & = & F^{m,i}\big(t,\gamma(t_{1}),\gamma_{t}\big|_{t_{1}}^{t_{2}},\cdots,\gamma_{t}\big|_{t_{i-2}}^{t_{i-1}},\gamma_{t}\big|_{t_{i-1}}^{t},y,z,u\big),\\
g^{m}(\gamma,u) & = & G^{m,i}\big(t,\gamma(t_{1}),\gamma_{t}\big|_{t_{1}}^{t_{2}},\cdots,\gamma_{t}\big|_{t_{m-2}}^{T}\big),\\
\mathcal{H}^{m}(\gamma_{t},r,p,A,u) & = & H^{m,i}\big(t,\gamma(t_{1}),\gamma_{t}\big|_{t_{1}}^{t_{2}},\cdots,\gamma_{t}\big|_{t_{i-2}}^{t_{i-1}},\gamma_{t}\big|_{t_{i-1}}^{t},r,p,A,u\big).
\end{eqnarray*}
Here $\gamma\big|_{s}^{t}:=\gamma(t)-\gamma(s)$. Furthermore, from
Assumption (H2), $B^{m,i},\sigma^{m,i},F^{m,i},$ and $G^{m}$ are
uniformly Lipschitz continuous in $(t,\overrightarrow{x_{i}},y,z,u)$,
and for $i<m$,
\begin{eqnarray*}
B^{m,i}(t_{i},\overrightarrow{x_{i}},u) & = & B^{m,i+1}(t_{i}+,\overrightarrow{x_{i-1}},\mathbf{0},x_{i},u),\\
\Sigma^{m,i}(t_{i},\overrightarrow{x_{i}},u) & = & \Sigma^{m,i+1}(t_{i}+,\overrightarrow{x_{i-1}},\mathbf{0},x_{i},u),\\
F^{m,i}(t_{i},\overrightarrow{x_{i}},y,z,u) & = & F^{m,i+1}(t_{i}+,\overrightarrow{x_{i-1}},\mathbf{0},x_{i},y,z,u),\\
H^{m,i}(t_{i},\overrightarrow{x_{i}},r,p,A,u) & = & H^{m,i+1}(t_{i}+,\overrightarrow{x_{i-1}},\mathbf{0},x_{i},r,p,A,u).
\end{eqnarray*}
Here $f(t_{i}+)$ is the right limit of the function $f$ at time
$t_{i}$.

Let $V^{m,i},i=1,\cdots m$ be the unique viscosity solutions of second
order parametrized nonlinear parabolic equations
\begin{equation}
\begin{cases}
{\displaystyle -\partial_{t}V^{m,i}(t,\overrightarrow{x_{i}})-\sup_{u\in U}H^{m,i}\left(t,\overrightarrow{x_{i}},(1,\partial_{x_{i}},\partial_{x_{i}x_{i}})V^{m,i},u\right)=0,}\\
\qquad\qquad\qquad\qquad\qquad\qquad\qquad\,\,(t,\overrightarrow{x_{i}})\in(t_{i-1},t_{i})\times\mathbb{R}^{i\times n},\, i=1,\cdots,m;\\
V^{m,i}(t_{i},\overrightarrow{x_{i}})=V^{m,i+1}(t_{i}+,\overrightarrow{x_{i-1}},\mathbf{0},x_{i}),\quad\quad i<m,\,\overrightarrow{x_{i}}\in\mathbb{R}^{i\times n};\\
V^{m,m}(T,\overrightarrow{x_{m}})=G^{m}(\overrightarrow{x_{m}}),\qquad\qquad\quad\,\overrightarrow{x_{m}}\in\mathbb{R}^{m\times n}.
\end{cases}\label{eq:first-approx-PDE}
\end{equation}
According to the the relationship between viscosity solution of Bellman
equations and the optimal control problems, we have, for any $\gamma_{t}\in\Lambda$,
\[
v^{m}(\gamma_{t})=\sum_{i=1}^{m}\chi_{[t_{i-1},t_{i})}(t)V^{m,i}(t,\gamma_{t}(t_{1}),\gamma_{t}\big|_{t_{1}}^{t_{2}},\cdots,\gamma_{t}\big|_{t_{i-2}}^{t_{i-1}},\gamma_{t}\big|_{t_{i-1}}^{t}).
\]
\medskip{}

Second, we construct the smooth approximations of $v^{m}$. For this
purpose, we mollify $\sup_{u}H^{m,i}$. Consider the following mollifier
$\varphi_{i}:\mathbb{R}^{i}\to\mathbb{R},\, i=1,2,\cdots,$
\begin{align*}
\varphi_{i}(x) & :=\begin{cases}
C_{i}\exp\big(-(1-|x|^{2})^{-1}\big), & |x|^{2}<1;\\
0, & \text{else},
\end{cases}
\end{align*}
where $C_{i}$ is the constant such that $\int\varphi_{i}=1$. Let
\[
\varphi_{\varepsilon}(t,x,r,p,A):=\varepsilon^{-(n^{2}+2n+2)}\varphi_{1}(\frac{t+1/\varepsilon}{\varepsilon})\varphi_{n}(\frac{x}{\varepsilon})\varphi_{1}(\frac{r}{\varepsilon})\varphi_{n}(\frac{p}{\varepsilon})\varphi_{n^{2}}(\frac{A}{\varepsilon}).
\]
Now we extend $H^{m,i},\, i<m,$ on the interval $t\in[t_{i},t_{i+1})$
by
\[
H^{m,i}(t,\overrightarrow{x_{i}},r,p,A,u):=H^{m,i+1}(t,\overrightarrow{x_{i-1}},\mathbf{0},x_{i},r,p,A,u),
\]
and $H^{m,m}$ on the interval $t\in[T,T+1/m)$ by
\[
H^{m,m}(t,\overrightarrow{x_{m}},r,p,A,u):=H^{m,m}(T,\overrightarrow{x_{m}},r,p,A,u),
\]
and mollify $\sup_{u}H^{m,i}$ on interval $(t_{i-1},t_{i}]$ as
\begin{equation}
\hat{H}^{m,i;\varepsilon_{1}}(\cdot,\overrightarrow{x_{i-1}},\cdot,\cdot,\cdot,\cdot):=\left(\sup_{u}H^{m,i}(\cdot,\overrightarrow{x_{i-1}},\cdot,\cdot,\cdot,\cdot,u)\right)*\varphi_{\varepsilon_{1}}(\cdot),\label{eq:mollif}
\end{equation}
where $*$ is the convolution operator in $(t,x_{i},r,p,A)$ and $\varepsilon_{1}<m^{-1}$.
Obviously, $\hat{H}^{m,i;\varepsilon_{1}}$ is differentiable in $(t,x_{i},r,p,A)$
and Lipschitz continuous in $\overrightarrow{x_{i-1}}$. Noting the
structure condition, we obtain that
\begin{equation}
|\hat{H}^{m,i;\varepsilon_{1}}(t,\overrightarrow{x_{i}},r,p,A)-\sup_{u}H^{m,i}(t,\overrightarrow{x_{i}},r,p,A,u)|\le C(1+|p|+|A|)\varepsilon_{1},\label{eq:Hmi-second-approx}
\end{equation}
and $\hat{H}^{m,i;\varepsilon}$ satisfy the following structure conditions:
\begin{align}
 & |\partial_{t}\hat{H}^{m,i;\varepsilon}|+|\partial_{x_{i}}\hat{H}^{m,i;\varepsilon}|\le C(1+|p|+|A|),\nonumber \\
 & |\partial_{p}\hat{H}^{m,i;\varepsilon}|+|\hat{H}^{m,i;\varepsilon}(t,\overrightarrow{x_{i}},r,p,\mathbf{0})|\le C,\nonumber \\
 & C^{-1}I_{n}\le\partial_{A}\hat{H}^{m,i;\varepsilon}\le CI_{n},\quad\partial_{r}\hat{H}^{m,i;\varepsilon}\le-C,\label{eq:structure-condition}\\
 & \hat{H}^{m,i;\varepsilon}\left(t,\overrightarrow{x_{i}},r,p,A\right)\text{ is convex in }A.\nonumber
\end{align}
Similarly, we define $G_{\varepsilon}^{m}(\overrightarrow{x_{m-1}},\cdot):=G^{m}(\overrightarrow{x_{m-1}},\cdot)*\varphi_{\varepsilon}^{n}$.

Consider the following system of fully nonlinear parameterized state-dependent
PDE
\begin{equation}
\begin{cases}
-\partial_{t}V_{\varepsilon}^{m,i;\varepsilon_{1}}(t,\overrightarrow{x_{i}})-\hat{H}^{m,i;\varepsilon_{1}}(t,\overrightarrow{x_{i}},(1,\partial_{x_{i}},\partial_{x_{i}x_{i}})V_{\varepsilon}^{m,i;\varepsilon_{1}})=0,\\
\qquad\qquad\qquad\qquad\qquad\qquad\qquad\qquad(t,\overrightarrow{x_{i}})\in(t_{i-1},t_{i})\times\mathbb{R}^{i\times n};\\
V_{\varepsilon}^{m,i;\varepsilon_{1}}(t_{i},\overrightarrow{x_{i}})=V_{\varepsilon}^{m,i+1;\varepsilon_{1}}(t_{i}+,\overrightarrow{x_{i-1}},\mathbf{0},x_{i}),\qquad\qquad i<m,\,\overrightarrow{x_{i}}\in\mathbb{R}^{i\times n};\\
V_{\varepsilon}^{m,m;\varepsilon_{1}}(T,x_{1},\cdots,x_{m})=G_{\varepsilon}^{m}(x_{1},\cdots,x_{m}),\qquad\,\,(x_{1},\cdots,x_{m})\in\mathbb{R}^{m\times n}.
\end{cases}\label{eq:second-approx-PDE}
\end{equation}
Define
\[
G_{0}:=\sup_{m,\overrightarrow{x_{m}}}|G_{\varepsilon}^{m}(\overrightarrow{x_{m}})|,\quad H_{0}:=\sup_{m,i,t,\overrightarrow{x_{i}}}|\hat{H}^{m,i;\varepsilon_{1}}(t,\overrightarrow{x_{i}},0,\mathbf{0},\mathbf{0})|.
\]
We have the following key lemma.

\begin{lem} \label{lem:estimate-approx-St-Depen-equation-nindegen}
Assume (H2). Then the system \eqref{eq:first-approx-PDE} has unique
viscosity solutions $\{V_{\varepsilon}^{m,i;\varepsilon_{1}}\}_{i=1}^{m}$.
Moreover, there is some positive constants $C$ which are independent
of $m$, $i$ and $\varepsilon_{1}\,(\varepsilon_{1}<m^{-1})$, such
that:

(1) $V_{\varepsilon}^{m,i;\varepsilon_{1}}(\cdot,\overrightarrow{x_{i-1}},\cdot)\in\mathscr{C}^{1,2}([t_{i-1},t_{i})\times\mathbb{R}^{n})$,
and for any $t\in[t_{i-1},t_{i})$ and $\overrightarrow{x_{i}}\in\mathbb{R}^{i\times n}$,
\begin{equation}
|V_{\varepsilon}^{m,i;\varepsilon_{1}}(t,\overrightarrow{x_{i}})|\leq G_{0}e^{-K(T-t)}+(1-e^{-K(T-t)})H_{0}K^{-1},\label{eq:est-in-lemma}
\end{equation}

\begin{equation}
|\partial_{t}V_{\varepsilon}^{m,i;\varepsilon_{1}}(t,\overrightarrow{x_{i}})|+|\partial_{x_{i}}V_{\varepsilon}^{m,i;\varepsilon_{1}}(t,\overrightarrow{x_{i}})|+|\partial_{x_{i}x_{i}}V_{\varepsilon}^{m,i;\varepsilon_{1}}(t,\overrightarrow{x_{i}})|\le C(\varepsilon),\label{eq:driv-estimate-in-lemma}
\end{equation}
where $K$ is the constant in \eqref{eq:Monotonicity-of-h}.

(2) H\"older continuity: for any $t\in[t_{i-1},t_{i})$, $\overrightarrow{x_{i}},\,\overrightarrow{y_{i}}\in\mathbb{R}^{i\times n}$,
and $s\in[t_{j-1},t_{j})$, $i\leq j\leq m$,
\begin{align}
 & \left|(1,\partial_{x_{i}},\partial_{x_{i}x_{i}},\partial_{t})V_{\varepsilon}^{m,i;\varepsilon_{1}}(t,\cdot)\big|_{\overrightarrow{y_{i}}}^{\overrightarrow{x_{i}}}\right|\le C(\varepsilon)\max_{1\le k\le i}|(x_{1}-y_{1})+\cdots+(x_{k}-y_{k})|^{\beta},\label{eq:lipsch-cont-in-lemma-space}\\
 & \left|(1,\partial_{x_{i}},\partial_{x_{i}x_{i}},\partial_{t})V_{\varepsilon}^{m,i;\varepsilon_{1}}(t,\overrightarrow{x_{i}})-(1,\partial_{x_{i}},\partial_{x_{i}x_{i}},\partial_{t})V_{\varepsilon}^{m,j;\varepsilon_{1}}(s,\overrightarrow{x_{i-1}},\underbrace{\mathbf{0},\cdots,\mathbf{0}}_{j-i},x_{i})\right|\label{eq:lipsch-cont-in-lemma-time}\\
\le & \, C(\varepsilon)|s-t|^{\frac{\beta}{2}},\nonumber
\end{align}

(3) smoothly approximating rate:
\begin{equation}
|V_{\varepsilon}^{m,i;\varepsilon_{1}}-V^{m,i}|\le C(\varepsilon_{1}+\varepsilon).\label{eq:solution-second-approx-pde-rate}
\end{equation}
\end{lem}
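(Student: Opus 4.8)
Here is a plan for proving Lemma~\ref{lem:estimate-approx-St-Depen-equation-nindegen}.

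The plan is to solve the mollified system \eqref{eq:second-approx-PDE} backward, slab by slab, from $i=m$ down to $i=1$. On each slab $(t_{i-1},t_{i})\times\mathbb{R}^{i\times n}$ the block $\overrightarrow{x_{i-1}}$ enters only as a parameter, since no derivatives in $x_{1},\dots,x_{i-1}$ occur; hence the equation for $V_{\varepsilon}^{m,i;\varepsilon_{1}}(\cdot,\overrightarrow{x_{i-1}},\cdot)$ is a single fully nonlinear parabolic terminal-value problem in the $n$ variables $(t,x_{i})$, with Hamiltonian $\hat{H}^{m,i;\varepsilon_{1}}$ that is smooth in $(t,x_{i},r,p,A)$, uniformly elliptic and convex in $A$, proper (strictly decreasing in $r$) and of natural growth by \eqref{eq:structure-condition}, and with smooth terminal datum (namely $G_{\varepsilon}^{m}$ for $i=m$, and $V_{\varepsilon}^{m,i+1;\varepsilon_{1}}(t_{i}+,\overrightarrow{x_{i-1}},\mathbf{0},x_{i})$ for $i<m$, which is smooth by the downward induction). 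I would then obtain the unique bounded classical solution $V_{\varepsilon}^{m,i;\varepsilon_{1}}(\cdot,\overrightarrow{x_{i-1}},\cdot)\in\mathscr{C}^{1,2}([t_{i-1},t_{i})\times\mathbb{R}^{n})$ from the classical theory of such equations (the analytic route: a priori $\mathscr{C}^{2+\gamma}$ bounds as below plus the method of continuity, cf.\ Krylov or Ladyzhenskaya--Solonnikov--Ural'tseva), and the matching conditions built into \eqref{eq:second-approx-PDE} make the concatenation across the $t_{i}$'s consistent; uniqueness in the viscosity class is the standard comparison principle for proper uniformly parabolic equations, and since the solution is classical it is the one constructed.

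The bound \eqref{eq:est-in-lemma} I would get by comparison with the explicit barrier $\phi(t):=G_{0}e^{-K(T-t)}+\big(1-e^{-K(T-t)}\big)H_{0}K^{-1}$. Using $\partial_{r}\hat{H}^{m,i;\varepsilon_{1}}\le-K$ (absorbing, if necessary, the structure constant of \eqref{eq:structure-condition} into $K$) and $|\hat{H}^{m,i;\varepsilon_{1}}(t,\overrightarrow{x_{i}},0,\mathbf{0},\mathbf{0})|\le H_{0}$, a direct computation gives $-\phi'(t)-\hat{H}^{m,i;\varepsilon_{1}}(t,\overrightarrow{x_{i}},\phi,\mathbf{0},\mathbf{0})\ge0$ and $\phi(T)\ge\|G_{\varepsilon}^{m}\|_{\infty}$, so $\phi$ is a (smooth) supersolution and $-\phi$ a subsolution on each slab; propagating the comparison downward through the interfaces yields $|V_{\varepsilon}^{m,i;\varepsilon_{1}}(t,\cdot)|\le\phi(t)$.

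For the derivative bounds \eqref{eq:driv-estimate-in-lemma} and the Hölder bounds \eqref{eq:lipsch-cont-in-lemma-space}--\eqref{eq:lipsch-cont-in-lemma-time} the crucial requirement is uniformity in $m$ and $i$; this is attainable precisely because each slab problem is genuinely $n$-dimensional and the structure constants in \eqref{eq:structure-condition} and the matching data are uniform in $m,i,\varepsilon_{1}$. I would first bound $|\partial_{x_{i}}V_{\varepsilon}^{m,i;\varepsilon_{1}}|$ by a Bernstein argument (differentiate the slab equation in $x_{i}$, use $|\partial_{p}\hat{H}|\le C$ and $|\partial_{x_{i}}\hat{H}|\le C(1+|p|+|A|)$, and apply the maximum principle) together with affine barriers; then the Evans--Krylov interior estimate (valid since $\hat{H}$ is uniformly elliptic and convex in $A$) gives an interior $\mathscr{C}^{1+\gamma/2,2+\gamma}$ bound, and freezing coefficients plus linear parabolic Schauder theory upgrades this to \eqref{eq:driv-estimate-in-lemma} and the Hölder moduli; the estimates up to $t=T$ and up to each interface $t_{i}$ are obtained from the smoothness of the mollified terminal/matching data, which is exactly the source of the dependence $C(\varepsilon)$ (away from $t=T$ the estimates are $\varepsilon$-independent). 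The parameter estimate \eqref{eq:lipsch-cont-in-lemma-space} and the cross-block estimate \eqref{eq:lipsch-cont-in-lemma-time} I would obtain by differentiating/comparing the slab equation in the $\overrightarrow{x_{i-1}}$-block, using that $\hat{H}^{m,i;\varepsilon_{1}}$ and the data are Lipschitz in $\overrightarrow{x_{i-1}}$ with respect to the truncated-path distance $\max_{k\le i}\big|\sum_{\ell\le k}(x_{\ell}-y_{\ell})\big|$ (which is the $\|\cdot\|_{0}$-distance of the associated truncated paths), this dependence being inherited from (H2); the step down from Lipschitz to the fixed exponent $\beta$ is harmless (take the Evans--Krylov exponent $\gamma\ge\beta$, or interpolate).

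Finally, \eqref{eq:solution-second-approx-pde-rate} I would prove in two comparison steps. Let $\bar{V}^{m,i;\varepsilon}$ solve the system \eqref{eq:first-approx-PDE} with terminal datum $G^{m}$ replaced by $G_{\varepsilon}^{m}$; the equations coincide and $|G_{\varepsilon}^{m}-G^{m}|\le C\varepsilon$, so properness gives $|\bar{V}^{m,i;\varepsilon}-V^{m,i}|\le C\varepsilon$ on every slab. Next, $\bar{V}^{m,i;\varepsilon}$ and $V_{\varepsilon}^{m,i;\varepsilon_{1}}$ share the terminal datum $G_{\varepsilon}^{m}$; inserting $V_{\varepsilon}^{m,i;\varepsilon_{1}}$ into the non-mollified equation produces, by \eqref{eq:Hmi-second-approx}, a residual bounded by $C\big(1+|\partial_{x_{i}}V_{\varepsilon}^{m,i;\varepsilon_{1}}|+|\partial_{x_{i}x_{i}}V_{\varepsilon}^{m,i;\varepsilon_{1}}|\big)\varepsilon_{1}$, controlled through the a priori bounds of parts (1)--(2) (with the boundary layer near $t=T$ treated separately), so a further comparison gives $|\bar{V}^{m,i;\varepsilon}-V_{\varepsilon}^{m,i;\varepsilon_{1}}|\le C\varepsilon_{1}$; the triangle inequality then yields \eqref{eq:solution-second-approx-pde-rate}. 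The hard part, permeating parts (2) and (4), is securing the $\mathscr{C}^{1+\beta/2,2+\beta}$ estimates with constants independent of $m$ and $i$: it is essential that each slab equation involves only the last $n$ coordinates and that the structure conditions \eqref{eq:structure-condition} and the interface matchings are uniform, so that Evans--Krylov and Schauder deliver constants depending only on $n$, the ellipticity ratio, the bounds in \eqref{eq:structure-condition}, $T$, and (near $t=T$) on $\varepsilon$.
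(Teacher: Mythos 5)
Your plan follows essentially the same route as the paper: backward slab-by-slab resolution with $\overrightarrow{x_{i-1}}$ entering only as a parameter, the barrier $G_{0}e^{-K(T-t)}+(1-e^{-K(T-t)})H_{0}K^{-1}$ for \eqref{eq:est-in-lemma}, interior fully nonlinear regularity (Wang/Evans--Krylov plus Schauder under the structure conditions \eqref{eq:structure-condition}) for \eqref{eq:driv-estimate-in-lemma} and the H\"older moduli, linearization of the difference in the $\overrightarrow{x_{i-1}}$-block for \eqref{eq:lipsch-cont-in-lemma-space}, and a super-/sub-solution comparison via \eqref{eq:Hmi-second-approx} for \eqref{eq:solution-second-approx-pde-rate}. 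The one device you gloss over is the paper's concatenation $\bar{\bar{V}}(t,\overrightarrow{x_{i}}):=\sum_{k\ge i}\chi_{[t_{k-1},t_{k})}(t)V_{\varepsilon}^{m,k;\varepsilon_{1}}(t,\overrightarrow{x_{i-1}},\mathbf{0},\cdots,\mathbf{0},x_{i})$, which solves a single uniformly parabolic equation on all of $[t_{i-1},T)$ and is what turns the cross-slab time estimate \eqref{eq:lipsch-cont-in-lemma-time} into a direct consequence of the Schauder interior estimate rather than a slab-by-slab accumulation.
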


\begin{proof} Firstly we prove the existence of viscosity solution.
Define
\begin{align*}
H_{T} & :=\sup_{\overrightarrow{x_{m}}\in\mathbb{R}^{m\times n}}\hat{H}^{m,m;\varepsilon_{1}}(T,\overrightarrow{x_{m}},G_{\varepsilon}^{m},\partial_{x_{m}}G_{\varepsilon}^{m},\partial_{x_{m}x_{m}}G_{\varepsilon}^{m}).
\end{align*}
It is easy to verify that
\[
G_{\varepsilon}^{m}+(1-e^{-K(T-t)})H_{T}K^{-1},t\in[0,T]\text{ and }G_{\varepsilon}^{m}-(1-e^{-K(T-t)})H_{T}K^{-1},t\in[0,T]
\]
are respectively viscosity super- and sub-solutions of system \eqref{eq:second-approx-PDE}
on interval $[t_{m-1},T]$, where $K$ is the constant in \eqref{eq:Monotonicity-of-h}.
By Perron's method and comparison principle (see Crandall, Ishii and
Lions \cite{crandall1992user}), system \eqref{eq:second-approx-PDE}
has unique viscosity solutions $V_{\varepsilon}^{m,m;\varepsilon_{1}}$
on $[t_{m-1},T]$. If assertion (1) holds on $[t_{m-1},T]$, then
$V_{\varepsilon}^{m,m}(t_{m-1}+,\overrightarrow{x_{m-2}},0,x_{m-1})$,
the terminal value of $V_{\varepsilon}^{m,m-1;\varepsilon_{1}}$,
is bounded and twice differentiable in $x_{m-1}\in\mathbb{R}^{n}$.
Similarly we have the existence of the unique viscosity solution $V_{\varepsilon}^{m,i;\varepsilon_{1}}$
on $[t_{i-1},t_{i}]$ ($i=m-1,m-2,\cdots,1$) recursively.

In view of Wang \cite[Theorems 1.1 and 1.3]{wang1992regularity},
using the interior $\mathscr{C}^{1,\alpha}$ and $\mathscr{C}^{2,\alpha}$
estimates for the equations of the structure conditions \eqref{eq:structure-condition}
(see Lieberman \cite[Chapter 14, Sections 2-4]{lieberman2005second}),
we have that $V_{\varepsilon}^{m,i;\varepsilon_{1}}(\cdot,\overrightarrow{x_{i-1}},\cdot)\in\mathscr{C}^{1,2}([t_{i-1},t_{i})\times\mathbb{R}^{n})$.
Since
\[
G_{0}e^{-K(T-t)}+(1-e^{-K(T-t)})H_{0}K^{-1},\, t\in[0,T],
\]
and
\[
-G_{0}e^{-K(T-t)}-(1-e^{-K(T-t)})H_{0}K^{-1},\, t\in[0,T],
\]
are viscosity super- and sub-solutions of system \eqref{eq:second-approx-PDE},
respectively, we have \eqref{eq:est-in-lemma}. Then combining the
interior $\mathscr{C}^{2,\alpha}$ estimates and interpolation inequality,
we have \eqref{eq:driv-estimate-in-lemma}. Assertion (1) is proved.

Noting that $\bar{V}:=V_{\varepsilon}^{m,i;\varepsilon_{1}}(\cdot,\overrightarrow{x_{i-1}},\cdot)-V_{\varepsilon}^{m,i;\varepsilon_{1}}(\cdot,\overrightarrow{y_{i-1}},\cdot)$
is the solution of
\[
-\partial_{t}\bar{V}-\text{Tr}(a\partial_{x_{i}x_{i}}\bar{V})-\langle b,\partial_{x_{i}}\bar{V}\rangle-c\bar{V}-h_{0}=0,
\]
where
\begin{align*}
a:= & \int_{0}^{1}\partial_{A}\hat{H}^{m,i;\varepsilon_{1}}\Big(t,\overrightarrow{x_{i-1}},x_{i},(1,\partial_{x_{i}})V_{\varepsilon}^{m,i;\varepsilon_{1}}(t,\overrightarrow{x_{i-1}},x_{i}),\\
 & \partial_{x_{i}x_{i}}V_{\varepsilon}^{m,i;\varepsilon_{1}}(t,\overrightarrow{y_{i-1}},x_{i})+\theta\partial_{x_{i}x_{i}}V_{\varepsilon}^{m,i;\varepsilon_{1}}(t,\cdot,x_{i})\Big|_{\overrightarrow{y_{i-1}}}^{\overrightarrow{x_{i-1}}}\Big)d\theta,\\
b:= & \int_{0}^{1}\partial_{p}\hat{H}^{m,i;\varepsilon_{1}}\Big(t,\overrightarrow{x_{i-1}},x_{i},V_{\varepsilon}^{m,i;\varepsilon_{1}}(t,\overrightarrow{x_{i-1}},x_{i}),\partial_{x_{i}}V_{\varepsilon}^{m,i;\varepsilon_{1}}(t,\overrightarrow{y_{i-1}},x_{i})\\
 & +\theta\partial_{x_{i}}V_{\varepsilon}^{m,i;\varepsilon_{1}}(t,\cdot,x_{i})\Big|_{\overrightarrow{y_{i-1}}}^{\overrightarrow{x_{i-1}}},\partial_{x_{i}x_{i}}V_{\varepsilon}^{m,i;\varepsilon_{1}}(t,\overrightarrow{y_{i-1}},x_{i})\Big)d\theta,\\
c:= & \int_{0}^{1}\partial_{r}\hat{H}^{m,i;\varepsilon_{1}}\Big(t,\overrightarrow{x_{i-1}},x_{i},V_{\varepsilon}^{m,i;\varepsilon_{1}}(t,\overrightarrow{y_{i-1}},x_{i})+\theta V_{\varepsilon}^{m,i;\varepsilon_{1}}(t,\cdot,x_{i})\Big|_{\overrightarrow{y_{i-1}}}^{\overrightarrow{x_{i-1}}},\\
 & \qquad(\partial_{x_{i}},\partial_{x_{i}x_{i}})V_{\varepsilon}^{m,i;\varepsilon_{1}}(t,\overrightarrow{y_{i-1}},x_{i})\Big)d\theta,\\
h_{0}:= & \hat{H}_{\varepsilon}^{m,i;\varepsilon_{1}}\left(t,\cdot,x_{i},(1,\partial_{x_{i}},\partial_{x_{i}x_{i}})V_{\varepsilon}^{m,i;\varepsilon_{1}}(t,\overrightarrow{y_{i-1}},x_{i})\right)\Big|_{\overrightarrow{y_{i-1}}}^{\overrightarrow{x_{i-1}}}.
\end{align*}
In view of Assertion (1), we know $|a|+|b|+|c|<C,$ and
\[
|h_{0}|\leq C\max_{1\leq k\leq i-1}|(x_{1}-y_{1})+\cdots+(x_{k}-y_{k})|.
\]
Similar to recursive method in Assertion (1), we have
\begin{align*}
 & |\bar{V}(t,x_{i})|=|V_{\varepsilon}^{m,i;\varepsilon_{1}}(t,\overrightarrow{x_{i}})-V_{\varepsilon}^{m,i;\varepsilon_{1}}(t,\overrightarrow{y_{i-1}},x_{i})|\\
\le & e^{-K(T-t)}G_{\overrightarrow{x_{i-1}},\overrightarrow{y_{i-1}}}+(1-e^{-K(T-t)})K^{-1}L_{\overrightarrow{x_{i-1}},\overrightarrow{y_{i-1}}}\\
\le & C\left(1\wedge\max_{1\leq k\leq i-1}|(x_{1}-y_{1})+\cdots+(x_{k}-y_{k})|\right),
\end{align*}
where
\begin{align*}
G_{\overrightarrow{x_{i-1}},\overrightarrow{y_{i-1}}}:= & \sup_{x_{i},\cdots,x_{m}}\left|\, G_{\varepsilon}^{m}(z,x_{i},\cdots,x_{m})\Big|_{z=\overrightarrow{y_{i-1}}}^{z=\overrightarrow{x_{i-1}}}\,\right|\\
\le & \, C\left(1\wedge\max_{1\leq k\leq i-1}|(x_{1}-y_{1})+\cdots+(x_{k}-y_{k})|\right)
\end{align*}
and
\begin{align*}
L_{\overrightarrow{x_{i-1}},\overrightarrow{y_{i-1}}}:= & \sup_{k\geq i,x_{i+1},\cdots x_{m}}\left|\,\hat{H}^{m,k;\varepsilon_{1}}(t,z,x_{i},\cdots,x_{k},0,\mathbf{0},\mathbf{0})\Big|_{z=\overrightarrow{y_{i-1}}}^{z=\overrightarrow{x_{i-1}}}\,\right|\\
\le & \, C(\varepsilon)\left(1\wedge\max_{1\leq k\leq i-1}|(x_{1}-y_{1})+\cdots+(x_{k}-y_{k})|\right).
\end{align*}
In view of the interior Schauder estimate for linear parabolic equation
(see Lieberman \cite[Theorem 4.9]{lieberman2005second}), we have
\begin{align*}
 & \left|(1,\partial_{x_{i}},\partial_{x_{i}x_{i}},\partial_{t})V_{\varepsilon}^{m,i;\varepsilon_{1}}(t,\cdot,x_{i})\big|_{\overrightarrow{y_{i-1}}}^{\overrightarrow{x_{i-1}}}\right|=\left|(1,\partial_{x_{i}},\partial_{x_{i}x_{i}},\partial_{t})\bar{V}(t,x_{i})\right|\\
\le & C(\varepsilon)\left(1\wedge\max_{1\leq k\leq i-1}|(x_{1}-y_{1})+\cdots+(x_{k}-y_{k})|\right).
\end{align*}
Incorporating the following interior $\mathscr{C}^{2,\alpha}$ estimate
of $V_{\varepsilon}^{m,i;\varepsilon_{1}}$:
\[
\left|(1,\partial_{x_{i}},\partial_{x_{i}x_{i}},\partial_{t})V_{\varepsilon}^{m,i;\varepsilon_{1}}(t,\overrightarrow{x_{i-1}},\cdot)\big|_{y_{i}}^{x_{i}}\right|\le C(\varepsilon)|x_{i}-y_{i}|^{\beta},
\]
we have \eqref{eq:lipsch-cont-in-lemma-space}.

For any $t\in[t_{i-1},T)$ and $\overrightarrow{x_{i}}\in\mathbb{R}^{i\times n}$,
define
\begin{align*}
\bar{\bar{V}}(t,\overrightarrow{x_{i}}) & :=\sum_{k=i}^{m}\chi_{[t_{k-1},t_{k})}(t)V_{\varepsilon}^{m,k;\varepsilon_{1}}(t,\overrightarrow{x_{i-1}},\underbrace{\mathbf{0},\cdots,\mathbf{0}}_{k-i},x_{i}),\\
\bar{\bar{H}}(t,\overrightarrow{x_{i}},r,p,A) & :=\sum_{k=i}^{m}\chi_{[t_{k-1},t_{k})}(t)\hat{H}^{m,k;\varepsilon_{1}}(t,\overrightarrow{x_{i-1}},\underbrace{\mathbf{0},\cdots,\mathbf{0}}_{k-i},x_{i},r,p,A).
\end{align*}
Obviously, $\bar{\bar{H}}$ is smooth and satisfies structure condition
\eqref{eq:structure-condition}, and $\bar{\bar{V}}$ is the classical
solution of
\[
\begin{cases}
\partial_{t}\bar{\bar{V}}+\bar{\bar{H}}(t,\overrightarrow{x_{i}},\bar{\bar{V}},\partial_{x_{i}}\bar{\bar{V}},\partial_{x_{i}x_{i}}\bar{\bar{V}})=0, & t\in[t_{i-1},T);\\
\bar{\bar{V}}(T,\overrightarrow{x_{i}})=G_{\varepsilon}^{m}(\overrightarrow{x_{i-1}},\underbrace{\mathbf{0},\cdots,\mathbf{0}}_{m-i},x_{i}).
\end{cases}
\]
By the Schauder interior estimate, we have for any $t\in[t_{i-1},t_{i})$,
$s\in[t_{j-1},t_{j})$, $i\leq j\leq m$, and $\overrightarrow{x_{i}}\in\mathbb{R}^{i\times n}$,
\begin{align*}
 & \Big|(1,\partial_{x_{i}},\partial_{x_{i}x_{i}},\partial_{t})V_{\varepsilon}^{m,i;\varepsilon_{1}}(t,\overrightarrow{x_{i}})-(1,\partial_{x_{i}},\partial_{x_{i}x_{i}},\partial_{t})V_{\varepsilon}^{m,j;\varepsilon_{1}}(s,\overrightarrow{x_{i-1}},\underbrace{\mathbf{0},\cdots,\mathbf{0}}_{j-i},x_{i})\Big|\\
= & \Big|\bar{\bar{V}}(t,\overrightarrow{x_{i}})-\bar{\bar{V}}(s,\overrightarrow{x_{i}})\Big|\leq C(\varepsilon)|t-s|^{\frac{\beta}{2}}.
\end{align*}
We proved \eqref{eq:lipsch-cont-in-lemma-time}.

It remains to show Assertion (3). Form \eqref{eq:driv-estimate-in-lemma}
and \eqref{eq:Hmi-second-approx}, we see that, when $C$ is sufficiently
large, $V_{\varepsilon}^{m,i;\varepsilon_{1}}+C(\varepsilon_{1}+\varepsilon)$
is a viscosity super-solution and $V_{\varepsilon}^{m,i;\varepsilon_{1}}-C(\varepsilon_{1}+\varepsilon)$
a viscosity sub-solution of equation \eqref{eq:first-approx-PDE},
which imply \eqref{eq:solution-second-approx-pde-rate} by the comparison
principle. \end{proof}

\begin{rem} Note that the constant in estimate \eqref{eq:est-in-lemma}
does not depend on $m$, which allows us to conclude that those constants
in the estimates \eqref{eq:driv-estimate-in-lemma} - \eqref{eq:lipsch-cont-in-lemma-time}
do not depend on $m$.\end{rem}

\medskip{}

Define the smooth approximating functional
\begin{equation}
v_{\varepsilon}^{m;\varepsilon_{1}}(\gamma_{t}):=\sum_{i=1}^{m}\chi_{[t_{i-1},t_{i})}(t)V_{\varepsilon}^{m,i;\varepsilon_{1}}\left(t,\gamma_{t}(t_{1}),\gamma_{t}\big|_{t_{1}}^{t_{2}},\cdots,\gamma_{t}\big|_{t_{i-2}}^{t_{i-1}},\gamma_{t}\big|_{t_{i-1}}^{t}\right),\quad\gamma_{t}\in\Lambda.\label{eq:-second-approx-path}
\end{equation}
Then, $v_{\varepsilon}^{m;\varepsilon_{1}}\in\mathscr{C}^{1,2}(\Lambda)$.
In fact, \eqref{eq:-second-approx-path} is well-defined as well for
$\gamma\in\hat{\Lambda}$. It is obvious that $v_{\varepsilon}^{m;\varepsilon_{1}}\in\mathscr{C}^{1,2}(\hat{\Lambda})$,
which implies by definition that the restriction of $v_{\varepsilon}^{m;\varepsilon_{1}}$
on $\Lambda$ lies in $\mathscr{C}^{1,2}(\Lambda)$.

Define
\[
\hat{\mathcal{H}}^{m;\varepsilon_{1}}(\gamma_{t},r,p,A):=\sum_{i=1}^{m}\chi_{[t_{i-1},t_{i})}(t)\hat{H}^{m,i;\varepsilon_{1}}(t,\gamma_{t}(t_{1}),\gamma_{t}\big|_{t_{1}}^{t_{2}},\cdots,\gamma_{t}\big|_{t_{i-2}}^{t_{i-1}},\gamma_{t}\big|_{t_{i-1}}^{t},r,p,A).
\]
Obviously, $v_{\varepsilon}^{m;\varepsilon_{1}}$ is the classical
solution of the following path-dependent PDE
\begin{equation}
\begin{cases}
-D_{t}v_{\varepsilon}^{m;\varepsilon_{1}}-\hat{\mathcal{H}}^{m;\varepsilon_{1}}(\gamma_{t},v_{\varepsilon}^{m;\varepsilon_{1}},D_{x}v_{\varepsilon}^{m;\varepsilon_{1}},D_{xx}v_{\varepsilon}^{m;\varepsilon_{1}})=0, & \forall\gamma_{t}\in\Lambda,0<t<T;\\
v_{\varepsilon}^{m;\varepsilon_{1}}(\gamma_{T})=G_{\varepsilon}^{m}(\gamma_{T}), & \forall\gamma_{T}\in\Lambda_{T}.
\end{cases}\label{eq:second-approx-path-Equation}
\end{equation}
Moreover, we have the path version of Lemma \ref{lem:estimate-approx-St-Depen-equation-nindegen}.

\begin{prop} \label{prop:estimate-approx-equation-nondegen} Let
(H2) hold. There are some positive constants $C(\varepsilon)$ (independent
of $m$ and $\varepsilon_{1}\,(\varepsilon_{1}<m^{-1})$), such that
for all $\gamma_{t},\bar{\gamma}_{\bar{t}}\in\Lambda,\, t,\bar{t}<T$,
we have:

(1) $\mathscr{C}^{1+\frac{\beta}{2},2+\beta}$ boundedness:
\begin{equation}
\big|v_{\varepsilon}^{m;\varepsilon_{1}}(\gamma_{t})\big|_{2,\beta;\Lambda}\le C(\varepsilon).\label{eq:solution-second-approx-drivetive-estimat}
\end{equation}

(2) smoothly approximating rate:
\begin{equation}
|v_{\varepsilon}^{m;\varepsilon_{1}}(\gamma_{t})-\tilde{v}(\gamma_{t})|\le C(\text{Osc}(\gamma_{t},m^{-1})+m^{-\frac{1}{2}}+\varepsilon+\varepsilon_{1}).\label{eq:solution-second-approx-Path-rate}
\end{equation}
\end{prop}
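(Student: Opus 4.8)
The plan is to transfer the finite-dimensional estimates of Lemma~\ref{lem:estimate-approx-St-Depen-equation-nindegen} to the path space via the defining identity~\eqref{eq:-second-approx-path}. Under that identity, for $\gamma_{t}\in\Lambda$ with $t\in[t_{i-1},t_{i})$ and $\overrightarrow{x_{i}}:=(\gamma_{t}(t_{1}),\gamma_{t}|_{t_{1}}^{t_{2}},\dots,\gamma_{t}|_{t_{i-1}}^{t})$ one has $v_{\varepsilon}^{m;\varepsilon_{1}}(\gamma_{t})=V_{\varepsilon}^{m,i;\varepsilon_{1}}(t,\overrightarrow{x_{i}})$, and directly from the definition of Dupire's derivatives $D_{t}v_{\varepsilon}^{m;\varepsilon_{1}}(\gamma_{t})=\partial_{t}V_{\varepsilon}^{m,i;\varepsilon_{1}}(t,\overrightarrow{x_{i}})$, $D_{x}v_{\varepsilon}^{m;\varepsilon_{1}}(\gamma_{t})=\partial_{x_{i}}V_{\varepsilon}^{m,i;\varepsilon_{1}}(t,\overrightarrow{x_{i}})$ and $D_{xx}v_{\varepsilon}^{m;\varepsilon_{1}}(\gamma_{t})=\partial_{x_{i}x_{i}}V_{\varepsilon}^{m,i;\varepsilon_{1}}(t,\overrightarrow{x_{i}})$; a short check using the matching conditions in~\eqref{eq:second-approx-PDE} shows these identities also hold across the partition points $t=t_{i}$ (where $D_{t}$ is understood as a right derivative). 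Hence the uniform sup-bounds contained in~\eqref{eq:solution-second-approx-drivetive-estimat} are immediate from~\eqref{eq:est-in-lemma} and~\eqref{eq:driv-estimate-in-lemma}, the constants being independent of $m,\varepsilon_{1}$ by the remark following Lemma~\ref{lem:estimate-approx-St-Depen-equation-nindegen}.

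For the $\beta$-H\"older seminorms in~\eqref{eq:solution-second-approx-drivetive-estimat}, I would fix arbitrary $\gamma_{t},\bar{\gamma}_{\bar{t}}\in\Lambda$ with $t\leq\bar{t}$, denote by $w$ any one of $v_{\varepsilon}^{m;\varepsilon_{1}},D_{t}v_{\varepsilon}^{m;\varepsilon_{1}},D_{x}v_{\varepsilon}^{m;\varepsilon_{1}},D_{xx}v_{\varepsilon}^{m;\varepsilon_{1}}$, and interpolate through the flat extension $\gamma_{t,\bar{t}}$, splitting $d_{p}(\gamma_{t},\bar{\gamma}_{\bar{t}})=\sqrt{\bar{t}-t}+\|\gamma_{t,\bar{t}}-\bar{\gamma}_{\bar{t}}\|_{0}$. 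The \emph{space step} $w(\gamma_{t,\bar{t}})\to w(\bar{\gamma}_{\bar{t}})$ is an evaluation at the common time $\bar{t}$, and is controlled by~\eqref{eq:lipsch-cont-in-lemma-space}: for the coordinate vectors of any two paths the partial sums $(x_{1}-y_{1})+\dots+(x_{k}-y_{k})$ are precisely the differences of the two paths at a partition point (or at $\bar{t}$), hence bounded by $\|\gamma_{t,\bar{t}}-\bar{\gamma}_{\bar{t}}\|_{0}$, so $|w(\gamma_{t,\bar{t}})-w(\bar{\gamma}_{\bar{t}})|\leq C(\varepsilon)\|\gamma_{t,\bar{t}}-\bar{\gamma}_{\bar{t}}\|_{0}^{\beta}$. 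The \emph{time step} $w(\gamma_{t})\to w(\gamma_{t,\bar{t}})$ is a pure time translation of a frozen path: when $w=v_{\varepsilon}^{m;\varepsilon_{1}}$ itself one integrates the uniformly bounded horizontal derivative along $s\mapsto\gamma_{t,s}$ to get $\leq C(\varepsilon)(\bar{t}-t)\leq C(\varepsilon)T^{1-\beta/2}(\bar{t}-t)^{\beta/2}$; when $w$ is a derivative of $v_{\varepsilon}^{m;\varepsilon_{1}}$, one identifies $w(\gamma_{t,\bar{t}})$ with the appropriate value of the ``carried-forward'' solution $\bar{\bar{V}}$ from the proof of Lemma~\ref{lem:estimate-approx-St-Depen-equation-nindegen} and applies its Schauder time-regularity~\eqref{eq:lipsch-cont-in-lemma-time} to get $\leq C(\varepsilon)(\bar{t}-t)^{\beta/2}$. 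Adding the two estimates and using $(a+b)^{\beta}\geq\tfrac12(a^{\beta}+b^{\beta})$ gives $|w(\gamma_{t})-w(\bar{\gamma}_{\bar{t}})|\leq C(\varepsilon)d_{p}^{\beta}(\gamma_{t},\bar{\gamma}_{\bar{t}})$, which together with the sup-bounds is~\eqref{eq:solution-second-approx-drivetive-estimat}.

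Assertion~(2) I would obtain from the triangle inequality $|v_{\varepsilon}^{m;\varepsilon_{1}}(\gamma_{t})-\tilde{v}(\gamma_{t})|\leq|v_{\varepsilon}^{m;\varepsilon_{1}}(\gamma_{t})-v^{m}(\gamma_{t})|+|v^{m}(\gamma_{t})-\tilde{v}(\gamma_{t})|$: the second term is bounded by $C(\text{Osc}(\gamma_{t},m^{-1})+m^{-1/2})$ in~\eqref{eq:first-approx-path-rate}, while for the first term the correspondences $v_{\varepsilon}^{m;\varepsilon_{1}}(\gamma_{t})=V_{\varepsilon}^{m,i;\varepsilon_{1}}(t,\overrightarrow{x_{i}})$ and $v^{m}(\gamma_{t})=V^{m,i}(t,\overrightarrow{x_{i}})$ reduce it, via~\eqref{eq:solution-second-approx-pde-rate}, to $C(\varepsilon_{1}+\varepsilon)$ uniformly in $\gamma_{t}$; this yields~\eqref{eq:solution-second-approx-Path-rate}. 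The step I expect to be the main obstacle is the time step in Assertion~(1): one must keep careful track, through the matching conditions and the functional $\bar{\bar{V}}$, of which coordinate slot carries the ``current increment'' of a flattened path as time crosses the partition points, and verify that Dupire's derivatives of $v_{\varepsilon}^{m;\varepsilon_{1}}$ coincide with the classical $\partial_{t},\partial_{x_{i}},\partial_{x_{i}x_{i}}$ of $V_{\varepsilon}^{m,i;\varepsilon_{1}}$ there; once this bookkeeping is in place, everything else is a routine transcription of Lemma~\ref{lem:estimate-approx-St-Depen-equation-nindegen}.
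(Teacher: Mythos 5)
Your proposal takes essentially the same route as the paper: the paper's own proof of this proposition is a two-line reduction, deducing Assertion (1) from Assertions (1)–(2) of Lemma \ref{lem:estimate-approx-St-Depen-equation-nindegen} and Assertion (2) from \eqref{eq:first-approx-path-rate} together with \eqref{eq:solution-second-approx-pde-rate}, which is exactly the reduction you carry out. The additional bookkeeping you supply — identifying Dupire's derivatives of $v_{\varepsilon}^{m;\varepsilon_{1}}$ with the classical derivatives of $V_{\varepsilon}^{m,i;\varepsilon_{1}}$ and splitting the $d_{p}$-H\"older estimate into the space step (via \eqref{eq:lipsch-cont-in-lemma-space}) and the time step along the flat extension (via \eqref{eq:lipsch-cont-in-lemma-time} and the functional $\bar{\bar{V}}$) — is precisely the content the paper leaves implicit, and the delicate point you flag about which coordinate slot carries the current increment across partition times is a feature of the paper's discretization itself rather than a defect of your argument.
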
 \begin{proof} Assertion (1) is immediate consequence
of Assertions (1) and (2) of Lemma \ref{lem:estimate-approx-St-Depen-equation-nindegen}.
Assertion (2) follows from \eqref{eq:first-approx-path-rate} and
\eqref{eq:solution-second-approx-pde-rate}. \end{proof}\medskip{}

At the end of this subsection, we introduce the following auxiliary
path functional
\[
\tilde{v}_{0}(\gamma_{t}):=E\left[\|W_{T}^{\gamma_{t}}\|_{0}\right],\quad\gamma_{t}\in\Lambda,
\]
and the smooth approximating functional
\begin{equation}
v_{0,\varepsilon}^{m}(\gamma_{t}):=E[g_{0,\varepsilon}^{m}(W_{T}^{\gamma_{t}})],\quad\gamma_{t}\in\Lambda.\label{eq:smooth-func-linear-grow}
\end{equation}
Here,
\[
W^{\gamma_{t}}(s):=\gamma_{t}(s)\chi_{[0,t)}(s)+\big(W(s)-W(t)+\gamma_{t}(t)\big)\chi_{[t,T]}(s),\quad s\in[0,T],
\]
and
\[
g_{0,\varepsilon}^{m}(\gamma_{t}):=G_{0,\varepsilon}^{m}\left(\gamma_{T}(t_{1}),\gamma_{T}\big|_{t_{1}}^{t_{2}},\cdots,\gamma_{T}\big|_{t_{i-2}}^{t_{i-1}},\gamma_{T}\big|_{t_{i-1}}^{t}\right),
\]

\[
G_{0,\varepsilon}^{m}(\overrightarrow{x_{m-1}},\cdot):=\left(\left(\max_{1\leq k\leq m-1}|x_{1}+\cdots+x_{k}|\right)\vee|x_{1}+\cdots+x_{m-1}+\cdot|\right)*\varphi_{n,\varepsilon},
\]
$\varphi_{n,\varepsilon}$ is a mollifier in $\mathbb{R}^{n}$. Obviously
\[
V_{0,\varepsilon}^{m,i}(t,\overrightarrow{x_{i}}):=v_{0,\varepsilon}^{m}\big(\sum_{j=1}^{i-1}x_{j}\chi_{[t_{j},t]}+x_{i}\chi_{\{t\}}\big),\,(t,\overrightarrow{x_{i}})\in[t_{i-1},t_{i}]\times\mathbb{R}^{i\times n},i=1,\cdots,m,
\]
are the classical solutions of
\[
\begin{cases}
\partial_{t}V_{0,\varepsilon}^{m,i}(t,\overrightarrow{x_{i}})+\frac{1}{2}\Delta_{x_{i}x_{i}}V_{0,\varepsilon}^{m,i}(t,\overrightarrow{x_{i}})=0, & (t,\overrightarrow{x_{i}})\in(t_{i-1},t_{i})\times\mathbb{R}^{i\times n};\\
V_{0,\varepsilon}^{m,i}(t_{i},\overrightarrow{x_{i}})=V_{0,\varepsilon}^{m,i+1}(t_{i}+,\overrightarrow{x_{i-1}},\mathbf{0},x_{i}), & \overrightarrow{x_{i}}\in\mathbb{R}^{i\times n},i=1,\cdots m-1;\\
V_{0,\varepsilon}^{m,i}(T,\overrightarrow{x_{m}})=G_{0,\varepsilon}^{m}(\overrightarrow{x_{m}}), & \overrightarrow{x_{m}}\in\mathbb{R}^{m\times n}.
\end{cases}
\]
Similarly as in Proposition \ref{prop:estimate-approx-equation-nondegen},
$v_{0,\varepsilon}^{m}\in\mathscr{C}^{1,2}(\Lambda)$ satisfies the
following estimates:
\begin{align}
\big|v_{0,\varepsilon}^{m}\big|_{2,\beta;\mathbf{Q}_{M_{0},T}} & \le C(\varepsilon)(1+M_{0}),\label{eq:bounded-estimat-v}\\
|v_{0,\varepsilon}^{m}(\gamma_{t})-\tilde{v}_{0}(\gamma_{t})| & \le C\left(\text{Osc}(\gamma_{t},m^{-1})+m^{-\frac{1}{2}}+\varepsilon\right).\label{eq:approx-rate-v}
\end{align}

\subsubsection{Proof of Theorem \ref{thm:(repres-theorem-nondegen}}

Let $v\in\mathscr{C}_{b}(\Lambda)\cap\mathscr{C}_{u}(\Lambda)$ satisfying
\eqref{eq:regu-of-u} be a viscosity solution to the path-dependent
Bellman equation \eqref{eq:PHJBE}, and $\tilde{v}$ be defined by
\eqref{eq:defination Of value fun}. From Remark \ref{rem:bound-regul-tilde-u}
we know $\tilde{v}\in\mathscr{C}_{b}(\Lambda)\cap\mathscr{C}_{u}(\Lambda)$.
It is sufficient to show $\tilde{v}\geq v$ since the inverse inequality
can be proved in a similar way.

Otherwise, we have
\[
\inf_{\gamma_{t}\in\Lambda}\big(\tilde{v}(\gamma_{t})-v(\gamma_{t})\big):=-r_{0}<0.
\]
 For sufficiently large $\lambda>1$, we have
\[
\inf(\tilde{v}+e^{-\lambda(t+1)}\tilde{v}_{0}-v)<-\frac{7}{8}r_{0}.
\]
We fix $\lambda$. Since $\cup_{\mu>0}\mathbf{C}_{\mu}^{\alpha}$
is dense in $\Lambda$, then for sufficiently large number $\mu$,
\[
\inf_{\gamma_{t}\in\mathbf{C}_{\mu}^{\alpha}}\left[\tilde{v}(\gamma_{t})+e^{-\lambda(t+1)}\tilde{v}_{0}(\gamma_{t})-v(\gamma_{t})\right]<-\frac{3}{4}r_{0}.
\]
Besides, from the definition of $\tilde{v}_{0}$ we know $\tilde{v}_{0}(\gamma_{t})\geq\|\gamma_{t}\|_{0}$,
and noting that $v,\,\tilde{v}\in\mathscr{C}_{b}(\Lambda)$, thus
there is $M_{0}=M_{0}(\lambda)>0$ such that
\begin{equation}
\tilde{v}+e^{-\lambda(t+1)}\tilde{v}_{0}-v>0,\qquad\text{on }\Lambda\backslash\mathbf{Q}_{M_{0},T}.\label{eq:decay in proof}
\end{equation}
Now, we fix $\mu$ and $M_{0}$ firstly. From \eqref{eq:solution-second-approx-Path-rate}
and \eqref{eq:approx-rate-v}, we have for all $\varepsilon<\frac{1}{32}r_{0}C$,
$m>\overline{m}:=(32C\mu r_{0}^{-1})^{\frac{1}{\alpha}}\vee[(32C)^{2}r_{0}^{-2}]$
and $\varepsilon_{1}<\overline{\varepsilon_{1}}:=(\frac{1}{32}r_{0}C^{-1})\wedge m^{-1}$,
for any $\gamma_{t}\in\mathbf{C}_{\mu}^{\alpha}$,
\begin{gather*}
|(v_{\varepsilon}^{m;\varepsilon_{1}}-\tilde{v})(\gamma_{t})|\le C(\text{Osc}(\gamma_{t},m^{-1})+m^{-\frac{1}{2}}+\varepsilon_{1}+\varepsilon)\le C(\mu m^{-\alpha}+m^{-\frac{1}{2}}+\varepsilon_{1}+\varepsilon)<\frac{1}{8}r_{0},\\
|(v_{0,\varepsilon}^{m}-\tilde{v}_{0})(\gamma_{t})|\le C(\text{Osc}(\gamma_{t},m^{-1})+m^{-\frac{1}{2}}+\varepsilon)\le C(\mu m^{-\alpha}+m^{-\frac{1}{2}}+\varepsilon)<\frac{1}{8}r_{0}.
\end{gather*}
Hence
\begin{gather}
\inf_{\gamma_{t}\in\mathbf{C}_{\mu}^{\alpha}}\big(v_{\varepsilon}^{m;\varepsilon_{1}}-e^{-\lambda(t+1)}v_{0,\varepsilon}^{m}-v\big)(\gamma_{t})<-\frac{1}{2}r_{0},\label{eq:maximum-degen}\\
(v_{\varepsilon}^{m;\varepsilon_{1}}+e^{-\lambda(t+1)}v_{0,\varepsilon}^{m}-v)(\gamma_{t})>-\frac{1}{4}r_{0},\quad\forall\gamma_{t}\in\mathbf{C}_{\mu}^{\alpha}\backslash\mathbf{Q}_{M_{0},T},\label{eq:decay-infty}
\end{gather}
and
\begin{align}
 & (v_{\varepsilon}^{m;\varepsilon_{1}}+e^{-\lambda(T+1)}v_{0,\varepsilon}^{m}-v)(\gamma_{T})\label{eq:decay-terminal}\\
> & (\tilde{v}+e^{-\lambda(T+1)}\tilde{v}_{0}-v)(\gamma_{T})-\frac{1}{4}r_{0}>-\frac{1}{4}r_{0},\quad\forall\gamma_{T}\in\mathbf{C}_{\mu}^{\alpha}.\nonumber
\end{align}
Since $v\in\mathscr{C}_{u}(\Lambda)$, $v_{\varepsilon}^{m;\varepsilon_{1}}$
satisfies \eqref{eq:solution-second-approx-drivetive-estimat} and
$v_{0,\varepsilon}^{m}$ satisfies \eqref{eq:bounded-estimat-v} uniformly
w.r.t. all $m$ and $\varepsilon_{1}$, therefore, there is a constant
$\kappa_{1}=\kappa_{1}(\varepsilon,\lambda,M_{0})\in(0,T)$ such that
for any $\gamma_{t}\in\mathbf{C}_{\mu}^{\alpha},\, t>T-\kappa_{1}$,
\begin{equation}
\left|(v_{\varepsilon}^{m,\varepsilon_{1}}+e^{-\lambda(t+1)}v_{0,\varepsilon}^{m}-v)(\gamma_{t})-(v_{\varepsilon}^{m,\varepsilon_{1}}+e^{-\lambda(T+1)}v_{0,\varepsilon}^{m}-v)(\gamma_{t,T})\right|\leq\frac{1}{4}r_{0}.\label{eq:terminal-cont}
\end{equation}
Combining \eqref{eq:decay-terminal}, we have
\begin{equation}
|(v_{\varepsilon}^{m,\varepsilon_{1}}+e^{-\lambda(t+1)}v_{0,\varepsilon}^{m}-v)(\gamma_{t})|\leq\frac{1}{2}r_{0},\quad\forall\gamma_{t}\in\mathbf{C}_{\mu}^{\alpha},t>T-\kappa_{1}.\label{eq:decay-near-termin}
\end{equation}
This together with \eqref{eq:maximum-degen} and \eqref{eq:decay-infty}
yield that there is $\bar{\gamma}_{\bar{t}}\in\mathbf{C}_{\mu}^{\alpha}\cap\mathbf{Q}_{M_{0},T-\iota}$
where the functional $v_{\varepsilon}^{m;\varepsilon_{1}}+e^{-\lambda(t+1)}v_{0,\varepsilon}^{m}-v$
is minimized over $\mathbf{C}_{\mu}^{\alpha}$.

Define
\begin{equation}
\psi(\gamma_{t}):=v_{\varepsilon}^{m;\varepsilon_{1}}(\gamma_{t})+e^{-\lambda(t+1)}v_{0,\varepsilon}^{m}(\gamma_{t})-(v_{\varepsilon}^{m;\varepsilon_{1}}+e^{-\lambda(\bar{t}+1)}v_{0,\varepsilon}^{m}-v)(\bar{\gamma}_{\bar{t}}),\quad\gamma_{t}\in\Lambda.\label{eq:sup-jet}
\end{equation}
By \eqref{eq:solution-second-approx-drivetive-estimat} and \eqref{eq:bounded-estimat-v},
there is $\kappa=\kappa(M_{0},\varepsilon,\lambda)<\kappa_{1}$ such
that $\psi\in\mathcal{J}_{\mu,\kappa}^{+}(\bar{\gamma}_{\bar{t}},v)$
for all $m>\overline{m}$ and small $\varepsilon_{1}>\overline{\varepsilon_{1}}$.
Consider the following estimates:
\begin{align}
 & -D_{t}\psi(\bar{\gamma}_{\bar{t}})-\sup_{u\in U}\mathcal{H}(\bar{\gamma}_{\bar{t}},\psi(\bar{\gamma}_{\bar{t}}),D_{x}\psi(\bar{\gamma}_{\bar{t}}),D_{xx}\psi(\bar{\gamma}_{\bar{t}}),u)\label{eq:contradict-th-uniq}\\
= & -D_{t}(v_{\varepsilon}^{m;\varepsilon_{1}}+e^{-\lambda(\bar{t}+1)}v_{0,\varepsilon}^{m})(\bar{\gamma}_{\bar{t}})\nonumber \\
 & -\sup_{u\in U}\mathcal{H}\left(\bar{\gamma}_{\bar{t}},v(\bar{\gamma}_{\bar{t}}),(D_{x},D_{xx})(v_{\varepsilon}^{m;\varepsilon_{1}}+e^{-(\lambda+1)\bar{t}}v_{0,\varepsilon}^{m})(\bar{\gamma}_{\bar{t}}),u\right)\nonumber \\
\geq & e^{-\lambda(\bar{t}+1)}\left(\lambda v_{0,\varepsilon}^{m}-D_{t}v_{0,\varepsilon}^{m}-\sup_{u}|\sigma|^{2}|D_{xx}v_{0,\varepsilon}^{m}|-\sup_{u}(|b|+|\sigma|)|D_{x}v_{0,\varepsilon}^{m}|)\right)(\bar{\gamma}_{\bar{t}})\nonumber \\
 & -D_{t}v_{\varepsilon}^{m;\varepsilon_{1}}(\bar{\gamma}_{\bar{t}})-\sup_{u\in U}\mathcal{H}\left(\bar{\gamma}_{\bar{t}},v(\bar{\gamma}_{\bar{t}}),(D_{x},D_{xx})v_{\varepsilon}^{m;\varepsilon_{1}}(\bar{\gamma}_{\bar{t}}),u\right)\nonumber \\
\geq & e^{-\lambda(\bar{t}+1)}\left(\lambda v_{0,\varepsilon}^{m}-C(|D_{t}v_{0,\varepsilon}^{m}|+|D_{xx}v_{0,\varepsilon}^{m}|+|D_{x}v_{0,\varepsilon}^{m}|)\right)(\bar{\gamma}_{\bar{t}})\nonumber \\
 & +\left[-D_{t}v_{\varepsilon}^{m;\varepsilon_{1}}-\hat{\mathcal{H}}^{m;\varepsilon_{1}}\left(\cdot,(1,D_{x},D_{xx})v_{\varepsilon}^{m;\varepsilon_{1}}\right)\right](\bar{\gamma}_{\bar{t}})\nonumber \\
 & +\left[\hat{\mathcal{H}}^{m;\varepsilon_{1}}\left(\cdot,(1,D_{x},D_{xx})v_{\varepsilon}^{m;\varepsilon_{1}}\right)-\sup_{u\in U}\mathcal{H}^{m}(\cdot,(1,D_{x},D_{xx})v_{\varepsilon}^{m;\varepsilon_{1}},u)\right](\bar{\gamma}_{\bar{t}})\nonumber \\
 & +\left[\sup_{u\in U}\mathcal{H}^{m}\left(\cdot,(1,D_{x},D_{xx})v_{\varepsilon}^{m;\varepsilon_{1}},u\right)-\sup_{u\in U}\mathcal{H}\left(\cdot,(1,D_{x},D_{xx})v_{\varepsilon}^{m;\varepsilon_{1}},u\right)\right](\bar{\gamma}_{\bar{t}})\nonumber \\
 & +\left[\sup_{u\in U}\mathcal{H}\left(\cdot,v_{\varepsilon}^{m;\varepsilon_{1}},(D_{x},D_{xx})v_{\varepsilon}^{m;\varepsilon_{1}},u\right)-\sup_{u\in U}\mathcal{H}\left(\cdot,v,(D_{x},D_{xx})v_{\varepsilon}^{m;\varepsilon_{1}},u\right)\right](\bar{\gamma}_{\bar{t}})\nonumber \\
:= & \text{Part1}+\text{Part2}+\text{Part3}+\text{Part4}+\text{Part5}.\nonumber
\end{align}
From estimates \eqref{eq:bounded-estimat-v}-\eqref{eq:approx-rate-v},
if $\lambda$ is sufficiently large,
\begin{align*}
 & \text{Part1}\\
\leq & e^{-\lambda}\left(\lambda(\tilde{v}_{0}-\text{Osc}(\bar{\gamma}_{\bar{t}},m^{-1})-m^{-\frac{1}{2}}-\varepsilon)-C(|D_{t}v_{0,\varepsilon}^{m}|+|D_{xx}v_{0,\varepsilon}^{m}|+|D_{x}v_{0,\varepsilon}^{m}|)\right)(\bar{\gamma}_{\bar{t}})\\
\leq & e^{-\lambda}\left(\lambda(\|\bar{\gamma}_{\bar{t}}\|_{0}-\mu m^{-\alpha}-m^{-\frac{1}{2}}-\varepsilon)-C(1+\|\bar{\gamma}_{\bar{t}}\|_{0})\right)\quad(\text{here choosing }\lambda>C)\\
\leq & -e^{-\lambda}(\mu m^{-\alpha}+m^{-\frac{1}{2}}+\varepsilon+C).
\end{align*}

Since $v_{\varepsilon}^{m;\varepsilon_{1}}$ is a classical solution
to PPDE \eqref{eq:second-approx-path-Equation}, we have
\[
\text{Part2}=0.
\]
From \eqref{eq:Hmi-second-approx}, we have
\[
|\hat{\mathcal{H}}^{m;\varepsilon_{1}}-\sup_{u}\mathcal{H}^{m}|\le C(1+|p|+|A|)\varepsilon_{1},
\]
which together with \eqref{eq:solution-second-approx-drivetive-estimat}
gives
\[
|\text{Part3}|\le C\varepsilon_{1}.
\]
From the estimates \eqref{eq:coeffient-first-estmat} and \eqref{eq:solution-second-approx-drivetive-estimat},
noting $\bar{\gamma}_{\bar{t}}\in\mathbf{C}_{\mu}^{\alpha}$, we have
\[
|\text{Part4}|\le C\|\mathbf{P}^{m}\bar{\gamma}_{\bar{t}}-\bar{\gamma}_{\bar{t}}\|_{0}\leq C\mu m^{-\alpha}.
\]
Both \eqref{eq:Monotonicity-of-h} and \eqref{eq:maximum-degen} imply
\[
\text{Part5}\geq-C(v_{\varepsilon}^{m;\varepsilon_{1}}(\bar{\gamma_{t}})-v(\bar{\gamma_{t}}))\geq\frac{1}{2}Cr_{0}.
\]
Note that the constants $C$ in this proof all do not depend on $m,\varepsilon_{1},\mu$
and $\lambda$. Setting $m\to\infty,\varepsilon_{1}\to0$, and then
considering the upper-limit as $\mu\to\infty$ on both sides of \eqref{eq:contradict-th-uniq},
we have from Definition \eqref{eq:suppersolution} the following inequality:
\[
0\geq-e^{-\lambda}(\varepsilon+C)+\frac{1}{2}Cr_{0},
\]
which is a contradiction when $\lambda$ tends to $\infty$. The proof
is complete.

\begin{rem} 
Assertions of Theorems \ref{thm:(repres-theorem-nondegen} and \ref{thm:Repr-Th-degen}
are still true if the coefficients in Assumptions (H2) and (H3) are
relaxed to grow in a linear way. 
\end{rem}

\subsection{Degenerate case}

In this subsection, we prove Theorem \ref{thm:Repr-Th-degen} using
the vanishing viscosity method (see \cite{lions1983optimal}).

\begin{proof}[Proof of Theorem \ref{thm:Repr-Th-degen}] Similarly
as in the non-degenerate case, we assume that $\mathcal{H}$ strictly
decreases in $y\in\mathbb{R}$ without loss of generality, i.e., \eqref{eq:Monotonicity-of-h}
holds. We only prove $\tilde{v}\geq v$, and the reverse inequality
can be proved in a symmetric (also easier) way.

First we construct an approximation of $\tilde{v}$. For any $\theta>0$,
$\gamma_{t}\in\Lambda$ and $u\in\mathcal{U}$, let $X^{\gamma_{t},u;\theta}$
and $Y^{\gamma_{t},u;\theta}$ solve following stochastic equations
\begin{equation}
\begin{cases}
X^{\gamma_{t},u;\theta}(s)=\gamma_{t}(s), & \text{all }\omega,\, s\in[0,t);\\
X^{\gamma_{t},u;\theta}(s)=\gamma_{t}(t)+\int_{t}^{s}b(X_{r}^{\gamma_{t},u;\theta},u(r))dr\\[3mm]
\quad\quad+\int_{t}^{s}\sigma(X_{r}^{\gamma_{t},u;\theta},u(r))\, dW(r)+\theta(\tilde{W}(t)-\tilde{W}(s)), & \text{a.s. }\omega,s\in[t,T];
\end{cases}\label{eq:diffu proc-3}
\end{equation}

\begin{eqnarray*}
Y^{\gamma_{t},u;\theta}(s) & = & g(X_{T}^{\gamma_{t},u;\theta})+\int_{s}^{t}f(X_{r}^{\gamma_{t},u;\theta},Y^{\gamma_{t},u;\theta}(r),Z^{\gamma_{t},u;\theta}(r),u(r))\, dr\\
 &  & -\int_{s}^{t}Z^{\gamma_{t},u;\theta}(r)\, dW(s)-\int_{s}^{t}\tilde{Z}^{\gamma_{t},u;\theta}(r)\, d\tilde{W}(s),\quad s\in[t,T].
\end{eqnarray*}
where $\{\tilde{W}_{t},0\leq t\leq T\}$ is an $n$-dimensional Brownian
motion, independent of $W$. Define
\[
\tilde{v}^{\theta}(\gamma_{t}):=\esssup_{u\in\mathcal{U}}Y^{\gamma_{t},u;\theta}(t).
\]
Obviously, we have
\begin{equation}
|\tilde{v}^{\theta}(\gamma_{t})-\tilde{v}(\gamma_{t})|\le C\theta.\label{eq:5.2-first-estimation}
\end{equation}
For any positive integer $m$, $\varepsilon>0$, and $\varepsilon_{1},\,(\varepsilon_{1}<m^{-1})$,
let $V_{\varepsilon}^{m,i;\theta,\varepsilon_{1}}:[t_{i-1},t_{i}]\times\mathbb{R}^{i\times n}\to\mathbb{R},\, i=1,\cdots,m$
be the viscosity solutions to the following state-dependent PDEs
\begin{equation}
\begin{cases}
-\partial_{t}V_{\varepsilon}^{m,i;\theta,\varepsilon_{1}}(t,\overrightarrow{x_{i}})-\frac{1}{2}\theta^{2}\Delta_{x_{i}x_{i}}V_{\varepsilon}^{m,i;\theta,\varepsilon_{1}}(t,\overrightarrow{x_{i}})\\
\qquad\qquad\qquad\qquad-\hat{H}^{m,i;\varepsilon_{1}}(t,\overrightarrow{x_{i}},(1,\partial_{x_{i}},\partial_{x_{i}x_{i}})V_{\varepsilon}^{m,i;\theta,\varepsilon_{1}}(t,\overrightarrow{x_{i}}))=0,\\
\qquad\qquad\qquad\qquad\qquad\qquad\qquad\,(t,\overrightarrow{x_{i}})\in(t_{i-1},t_{i})\times\mathbb{R}^{i\times n},i=1,\cdots,m;\\
V_{\varepsilon}^{m,i;\theta,\varepsilon_{1}}(t_{i},\overrightarrow{x_{i}})=V_{\varepsilon}^{m,i+1;\theta,\varepsilon_{1}}(t_{i}+,\overrightarrow{x_{i-1}},\mathbf{0},x_{i}),\qquad\quad\quad i<m,\overrightarrow{x_{i}}\in\mathbb{R}^{i\times n};\\
V_{\varepsilon}^{m,m;\theta,\varepsilon_{1}}(T,\overrightarrow{x_{m}})=G_{\varepsilon}^{m}(\overrightarrow{x_{m}}),\qquad\qquad\qquad\qquad\qquad\quad\overrightarrow{x_{m}}\in\mathbb{R}^{m\times n}.
\end{cases}\label{eq:first-approx-PDE-1}
\end{equation}
Like in Subsection \ref{sub:Smooth-approximations}, we know $V_{\varepsilon}^{m,i;\theta,\varepsilon_{1}}(\cdot,\overrightarrow{x_{i-1}},\cdot)\in\mathscr{C}^{1,2}([t_{i-1},t_{i}]\times\mathbb{R}^{n})$
and, there are some constants $C$, independent of $m,\, i,$ and
$\varepsilon_{1}$, such that
\begin{equation}
|V_{\varepsilon}^{m,i;\theta,\varepsilon_{1}}|+|\partial_{x_{i}}V_{\varepsilon}^{m,i;\theta,\varepsilon_{1}}|+|\partial_{x_{i}x_{i}}V_{\varepsilon}^{m,i;\theta,\varepsilon_{1}}|\le C(\theta),\label{eq:bound-estimat-degen-state}
\end{equation}
and for all $(t,\overrightarrow{x_{i}})\in[t_{i-1},t_{i}]\times\mathbb{R}^{i\times n}$,
$(s,\overrightarrow{y_{j}})\in[t_{j-1},t_{j}]\times\mathbb{R}^{j\times n}$,
$i\leq j\leq m$,
\begin{align}
 & \left|(1,\partial_{x_{i}},\partial_{x_{i}x_{i}},\partial_{t})V_{\varepsilon}^{m,i;\theta,\varepsilon_{1}}(t,\cdot)\big|_{\overrightarrow{y_{i}}}^{\overrightarrow{x_{i}}}\right|\le C(\theta)\max_{1\le k\le i}|(x_{1}-y_{1})+\cdots+(x_{k}-y_{k})|^{\beta},\label{eq:lipsch-in-space-deg}\\
 & \left|(1,\partial_{x_{i}},\partial_{x_{i}x_{i}},\partial_{t})V_{\varepsilon}^{m,i;\theta,\varepsilon_{1}}(t,\overrightarrow{x_{i}})-(1,\partial_{x_{i}},\partial_{x_{i}x_{i}},\partial_{t})V_{\varepsilon}^{m,j;\theta,\varepsilon_{1}}(s,\overrightarrow{x_{i-1}},\underbrace{\mathbf{0},\cdots,\mathbf{0}}_{j-i},x_{i})\right|\label{eq:lipsch-in--time-deg}\\
\le & \, C(\theta)|s-t|^{\frac{\beta}{2}},\nonumber
\end{align}
 Since the coefficients in $H^{m,i}$ are twice differentiable, in
view of the method in Krylov \cite[Lemma 1, Scetion 7.1 ]{krylov1987nonlinear},
we have the following lower bound estimate
\begin{equation}
\partial_{x_{i}x_{i}}V_{\varepsilon}^{m,i;\theta\varepsilon_{1}}\ge-C.\label{eq:2-orderlow-bound-degen-stat}
\end{equation}

Hence
\[
u_{\varepsilon}^{m;\theta,\varepsilon_{1}}(\gamma_{t}):=\sum_{i=1}^{m}\chi_{[t_{i-1},t_{i})}(t)V_{\varepsilon}^{m,i;\theta,\varepsilon_{1}}(t,\gamma_{t}(t_{1}),\gamma\big|_{t_{1}}^{t_{2}},\cdots,\gamma\big|_{t_{i-2}}^{t_{i-1}},\gamma\big|_{t_{i-1}}^{t})
\]
is the classical solution of PPDE

\begin{equation}
\begin{cases}
-D_{t}v_{\varepsilon}^{m;\theta,\varepsilon_{1}}-\frac{1}{2}\theta^{2}\Delta v_{\varepsilon}^{m;\theta,\varepsilon_{1}}-\hat{\mathcal{H}}^{m;\varepsilon_{1}}\left(\gamma_{t},(1,D_{x},D_{xx})v_{\varepsilon}^{m;\theta,\varepsilon_{1}}\right)=0, & \gamma_{t}\in\Lambda,t<T;\\
v_{\varepsilon}^{m;\theta,\varepsilon_{1}}(\gamma_{T})=g_{\varepsilon}^{m}(\gamma_{T}), & \gamma_{T}\in\Lambda_{T},
\end{cases}\label{eq:smooth approx}
\end{equation}
and satisfies the following estimates from \eqref{eq:bound-estimat-degen-state}
- \eqref{eq:2-orderlow-bound-degen-stat},
\begin{align}
\left|v_{\varepsilon}^{m;\theta,\varepsilon_{1}}\right|_{2,\beta;\Lambda} & \le C(\theta),\label{eq:holde-estimat-degen-path}\\
D_{xx}v_{\varepsilon}^{m;\theta,\varepsilon_{1}} & \ge-C.\label{eq:2order-low-bound-path}
\end{align}
Besides, we easily have the following approximating rate
\[
|v_{\varepsilon}^{m;\theta,\varepsilon_{1}}(\gamma_{t})-\tilde{v}(\gamma_{t})|<C\big(\text{Osc}(\gamma_{t},m^{-1})+m^{-\frac{1}{2}}+\theta+\varepsilon_{1}+\varepsilon\big).
\]
\medskip{}

Now we assert that $\tilde{v}\geq v$. Otherwise, we have
\[
\inf_{\gamma_{t}\in\Lambda}\left\{ \tilde{v}(\gamma_{t})-v(\gamma_{t})\right\} :=-r_{0}<0.
\]
 As in the proof of Theorem \ref{thm:(repres-theorem-nondegen}, for
any sufficiently large $\lambda>1$, there are $M_{0}=M_{0}(\lambda)>0$
and $\kappa_{1}=\kappa_{1}(\lambda,M_{0})\in(0,T)$, such that for
all sufficiently large number $\mu$, $\theta=\theta(r_{0}),\varepsilon=\varepsilon(r_{0}),\varepsilon_{1}<\overline{\varepsilon_{1}}(\mu,r_{0})$
and $m>\overline{m}(\mu,r_{0}),$ we have
\[
(v_{\varepsilon}^{m;\theta,\varepsilon_{1}}+e^{-\lambda(\bar{t}+1)}v_{0,\varepsilon}^{m}-v)(\bar{\gamma}_{\bar{t}})=\inf_{\gamma_{t}\in\mathbf{C}_{\mu}^{\alpha}}(v_{\varepsilon}^{\theta,m}+e^{-\lambda(t+1)}v_{0,\varepsilon}^{m}-v)(\gamma_{t})>-\frac{1}{2}r_{0},
\]
here $\bar{\gamma}_{\bar{t}}\in\mathbf{C}_{\mu}^{\alpha}\cap\mathbf{Q}_{M_{0},T-\kappa_{1}}$.
Noting \eqref{eq:bounded-estimat-v}, \eqref{eq:holde-estimat-degen-path},
there is a constant $\kappa=\kappa(M_{0},\theta,\lambda,\varepsilon)<\kappa_{1}$
such that
\[
\psi:=v_{\varepsilon}^{m;\theta,\varepsilon_{1}}+e^{-\lambda(t+1)}v_{0,\varepsilon}^{m}-(v_{\varepsilon}^{m,\theta,\varepsilon_{1}}+e^{-\lambda(\bar{t}+1)}v_{0,\varepsilon}^{m}-v)(\bar{\gamma}_{\bar{t}})
\]
lies in $\mathcal{J}_{\mu,\kappa}^{+}(\bar{\gamma}_{\bar{t}},v)$.
Since $v$ is a viscosity sub-solution to the path-dependent Bellman
equation \eqref{eq:PHJBE}, we consider the following formula:
\begin{align}
 & -D_{t}\psi(\bar{\gamma}_{\bar{t}})-\sup_{u\in U}\mathcal{H}(\bar{\gamma}_{\bar{t}},\psi(\bar{\gamma}_{\bar{t}}),D_{x}\psi(\bar{\gamma}_{\bar{t}}),D_{xx}\psi(\bar{\gamma}_{\bar{t}}),u)\label{eq:contradict-th-uniq-1}\\
= & -D_{t}(v_{\varepsilon}^{m;\theta,\varepsilon_{1}}+e^{-\lambda(\bar{t}+1)}v_{0,\varepsilon}^{m})(\bar{\gamma}_{\bar{t}})\nonumber \\
 & -\sup_{u\in U}\mathcal{H}\left(\bar{\gamma}_{\bar{t}},v(\bar{\gamma}_{\bar{t}}),(D_{x},D_{xx})(v_{\varepsilon}^{m;\theta,\varepsilon_{1}}+e^{-\lambda(\bar{t}+1)}v_{0,\varepsilon}^{m})(\bar{\gamma}_{\bar{t}}),u\right)\nonumber \\
\geq & e^{-\lambda(\bar{t}+1)}\left(\lambda v_{0,\varepsilon}^{m}-D_{t}v_{0,\varepsilon}^{m}-\sup_{u}|\sigma|^{2}|D_{xx}v_{0,\varepsilon}^{m}|-\sup_{u}(|b|+|\sigma|)|D_{x}v_{0,\varepsilon}^{m}|)\right)(\bar{\gamma}_{\bar{t}})\nonumber \\
 & -D_{t}v_{\varepsilon}^{m;\theta,\varepsilon_{1}}(\bar{\gamma}_{\bar{t}})-\sup_{u\in U}\mathcal{H}(\bar{\gamma}_{\bar{t}},v(\bar{\gamma}_{\bar{t}}),(D_{x},D_{xx})v_{\varepsilon}^{m;\theta,\varepsilon_{1}}(\bar{\gamma}_{\bar{t}}),u)\nonumber \\
\geq & e^{-\lambda(\bar{t}+1)}\left(\lambda v_{0,\varepsilon}^{m}-C(|D_{t}v_{0,\varepsilon}^{m}|+|D_{xx}v_{0,\varepsilon}^{m}|+|D_{x}v_{0,\varepsilon}^{m}|)\right)(\bar{\gamma}_{\bar{t}})+\frac{1}{2}\theta^{2}\Delta v_{\varepsilon}^{m;\theta,\varepsilon_{1}}(\bar{\gamma}_{\bar{t}})\nonumber \\
 & +\left[-D_{t}v_{\varepsilon}^{m;\theta,\varepsilon_{1}}-\frac{1}{2}\theta^{2}\Delta v_{\varepsilon}^{m;\theta,\varepsilon_{1}}-\hat{\mathcal{H}}^{m;\varepsilon_{1}}\left(\cdot,(1,D_{x},D_{xx})v_{\varepsilon}^{m;\theta,\varepsilon_{1}}\right)\right](\bar{\gamma}_{\bar{t}})\nonumber \\
 & +\left[\hat{\mathcal{H}}^{m;\varepsilon_{1}}\left(\cdot,(1,D_{x},D_{xx})v_{\varepsilon}^{m;\theta,\varepsilon_{1}}\right)-\sup_{u\in U}\mathcal{H}^{m;\varepsilon_{1}}\left(\cdot,(1,D_{x},D_{xx})v_{\varepsilon}^{m;\theta,\varepsilon_{1}},u\right)\right](\bar{\gamma}_{\bar{t}})\nonumber \\
 & +\left[\sup_{u\in U}\mathcal{H}^{m;\varepsilon_{1}}\left(\cdot,(1,D_{x},D_{xx})v_{\varepsilon}^{m;\theta,\varepsilon_{1}},u\right)-\sup_{u\in U}\mathcal{H}\left(\cdot,(1,D_{x},D_{xx})v_{\varepsilon}^{m;\theta,\varepsilon_{1}},u\right)\right](\bar{\gamma}_{\bar{t}})\nonumber \\
 & +\left[\sup_{u\in U}\mathcal{H}\left(\cdot,v_{\varepsilon}^{m;\theta,\varepsilon_{1}},(D_{x},D_{xx})v_{\varepsilon}^{m;\theta,\varepsilon_{1}},u\right)-\sup_{u\in U}\mathcal{H}\left(\cdot,v,(D_{x},D_{xx})v_{\varepsilon}^{m;\theta,\varepsilon_{1}},u\right)\right](\bar{\gamma}_{\bar{t}})\nonumber \\
:= & e^{-\lambda(\bar{t}+1)}\left(\lambda v_{0,\varepsilon}^{m}-C(|D_{t}v_{0,\varepsilon}^{m}|+|D_{xx}v_{0,\varepsilon}^{m}|+|D_{x}v_{0,\varepsilon}^{m}|)\right)(\bar{\gamma}_{\bar{t}})+\frac{1}{2}\theta^{2}\Delta v_{\varepsilon}^{m;\theta,\varepsilon_{1}}(\bar{\gamma}_{\bar{t}})\nonumber \\
 & +\text{Part1}+\text{Part2}+\text{Part3}+\text{Part4}.\nonumber
\end{align}
From Assumption (H3) and the corresponding estimates, similar to the
proof of Theorem \ref{thm:(repres-theorem-nondegen}, we have
\begin{align*}
 & e^{-\lambda(\bar{t}+1)}\left(\lambda v_{0,\varepsilon}^{m}-C(|D_{t}v_{0,\varepsilon}^{m}|+|D_{xx}v_{0,\varepsilon}^{m}|+|D_{x}v_{0,\varepsilon}^{m}|)\right)(\bar{\gamma}_{\bar{t}})\\
\geq & -e^{-\lambda}(\mu m^{-\alpha}+m^{-\frac{1}{2}}+\varepsilon+C),\quad\text{if }\lambda\text{ sufficiently large},\\
 & \theta^{2}\Delta v_{\varepsilon}^{m;\theta,\varepsilon_{1}}(\bar{\gamma}_{\bar{t}})\ge-\theta^{2}C,
\end{align*}
and
\[
\text{Part1}=0,\quad|\text{Part2}|\ge-C(\theta)\varepsilon_{1},\quad|\text{Part3}|\ge-C(\theta)\mu m^{-\alpha},\quad\text{Part4}\ge\frac{1}{2}Cr_{0}.
\]
Since the constants $C$ and $C(\theta)$ do not depend on $\mu,m,\varepsilon_{1}$
and $\lambda$, first setting $m\to\infty,\varepsilon_{1}\to0$ and
then considering the upper-limit as $\mu\to\infty$ on both sides
of \eqref{eq:contradict-th-uniq-1}, we have from the definition \eqref{eq:subsolution}
the following inequality
\[
0>-e^{-\lambda}(C+\varepsilon)-\frac{1}{2}\theta^{2}C+\frac{1}{2}Cr_{0},
\]
which yields a contradiction when sending $\lambda$ to $\infty$
and $\theta$ to $0$. The proof is complete. \end{proof}

\section{Appendix}

In this Appendix, we prove the $\alpha$-H\"older continuity of the
path for the following SDE:
\begin{equation}
X(t)=\int_{0}^{t}b(X_{r})dr+\int_{0}^{t}\sigma(X_{r})dW(r),\label{eq:hold-SDE}
\end{equation}
where $b:\Lambda\rightarrow\mathbb{R}^{n}$ and $\sigma:\Lambda\rightarrow\mathbb{R}^{n\times d}$
are uniformly Lipschitz continuous.

\begin{prop}\label{Prop:Hold-norm-prob-est} Let any $\alpha\in(0,\frac{1}{2})$,
$\mu>0$, and $X$ be the unique strong solution of \eqref{eq:hold-SDE}.
For any $p$ satisfying $(\frac{1}{2}-\alpha)p>1$, there is a constant
$C=C(p,\alpha)$ such that
\begin{equation}
P\{\llbracket X_{T}\rrbracket_{\alpha}\geq\mu\}\leq CT^{(\frac{1}{2}-\alpha)p}\mu^{-p}.
\end{equation}

\end{prop}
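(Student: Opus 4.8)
The plan is to combine a standard $L^p$-increment bound for the SDE with the Garsia--Rodemich--Rumsey (GRR) inequality and then conclude by Markov's inequality; the point is that the power $T^{(1/2-\alpha)p}$ is produced entirely by one explicit double integral. First I would record the a priori moment bound $E[\sup_{0\le t\le T}|X(t)|^p]<\infty$ (obtained in the usual way from BDG, H\"older and Gronwall; cf.\ Lemma~\ref{lem:FSDE}), and from it the key increment estimate. Writing $X(s)-X(r)=\int_r^s b(X_u)\,du+\int_r^s\sigma(X_u)\,dW(u)$ for $0\le r\le s\le T$, applying H\"older's inequality to the drift term and the BDG inequality followed by Jensen's inequality to the martingale term — the hypothesis $(\tfrac12-\alpha)p>1$ with $\alpha\in(0,\tfrac12)$ forces $p>2$, so these steps are legitimate — and using the Lipschitz and linear-growth bounds on $b,\sigma$ together with the a priori bound, one obtains
\[
E\big[|X(s)-X(r)|^p\big]\le C\,|s-r|^{p/2},\qquad 0\le r\le s\le T,
\]
with $C$ depending on $p$ and the Lipschitz data of $b,\sigma$.

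Next, set $B:=\int_0^T\!\!\int_0^T |X(t)-X(s)|^p|t-s|^{-\alpha p-2}\,ds\,dt$. By Fubini's theorem and the increment estimate,
\[
E[B]\le C\int_0^T\!\!\int_0^T|t-s|^{\,p/2-\alpha p-2}\,ds\,dt=\frac{2C\,T^{(1/2-\alpha)p}}{\big((\tfrac12-\alpha)p-1\big)(\tfrac12-\alpha)p},
\]
where the double integral converges \emph{precisely because} $p/2-\alpha p-2>-1$, i.e.\ $(\tfrac12-\alpha)p>1$. Hence $E[B]\le C(p,\alpha)\,T^{(1/2-\alpha)p}$, and in particular $B<\infty$ almost surely.

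Now apply the Garsia--Rodemich--Rumsey inequality in its scale-invariant Sobolev--H\"older form: there is a constant $C_\ast=C_\ast(\alpha,p)$, independent of $T$, such that every continuous $f:[0,T]\to\mathbb{R}^n$ satisfies $\llbracket f_T\rrbracket_\alpha\le C_\ast\big(\int_0^T\!\!\int_0^T |f(t)-f(s)|^p|t-s|^{-\alpha p-2}\,ds\,dt\big)^{1/p}$ (this is GRR with $\Psi(x)=|x|^p$ and gauge $\varphi(u)=u^{\alpha+2/p}$, for which the associated integral $\int_0^h\Psi^{-1}(\,\cdot\,/u^2)\,d\varphi(u)$ is a pure power of $h$, times $1/\alpha$). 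Applying this pathwise to the continuous process $X$ gives $\llbracket X_T\rrbracket_\alpha\le C_\ast B^{1/p}$ almost surely, whence by Markov's inequality
\[
P\{\llbracket X_T\rrbracket_\alpha\ge\mu\}\le P\big\{B\ge(\mu/C_\ast)^p\big\}\le\frac{C_\ast^{p}\,E[B]}{\mu^p}\le\frac{C\,T^{(1/2-\alpha)p}}{\mu^p},
\]
which is the assertion.

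The routine ingredients are the SDE moment estimate and the final Markov step. The substantive point is the choice of the Sobolev gauge $|t-s|^{\alpha p+2}$ fed into GRR: after this choice, the scaling of the diagonal-singular double integral over $[0,T]^2$ produces exactly the factor $T^{p/2-\alpha p}=T^{(1/2-\alpha)p}$, while its borderline integrability near $\{t=s\}$ is precisely the hypothesis $(\tfrac12-\alpha)p>1$. The one bookkeeping point to watch is that the constant $C$ from the increment estimate enters only multiplicatively and carries no extra power of $T$, so the clean $T$-scaling of the final bound is preserved.
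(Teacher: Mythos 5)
Your argument is correct, but it is genuinely different from the paper's. The paper proceeds by a dyadic multiscale union bound: it first proves (Lemma \ref{lem:Hold-lem}) that
$P\{\max_{|s-t|\leq\delta}|X(s)-X(t)|>\mu\delta^{\alpha}\}\leq C\,T\,\delta^{p(\frac12-\alpha)-1}\mu^{-p}$
by covering $[0,T]$ with $\sim T/\delta$ blocks and applying Chebyshev to the running maximum on each block (via Lemma \ref{lem:FSDE}), and then sums this over the dyadic scales $\delta=2^{-q}T$, the geometric series converging exactly because $p(\tfrac12-\alpha)>1$. You instead feed the single increment bound $E|X(s)-X(r)|^{p}\leq C|s-r|^{p/2}$ into the Garsia--Rodemich--Rumsey lemma with gauge $\varphi(u)=u^{\alpha+2/p}$, obtaining the pathwise inequality $\llbracket X_{T}\rrbracket_{\alpha}\leq C_{*}B^{1/p}$ and then applying Markov to $B$; the condition $(\tfrac12-\alpha)p>1$ reappears as integrability of the double integral near the diagonal, and the factor $T^{(\frac12-\alpha)p}$ comes out of one explicit computation. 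Both routes rest on the same moment estimate; yours invokes the heavier GRR machinery but is shorter, avoids the chaining bookkeeping, and yields the slightly stronger almost-sure domination of $\llbracket X_{T}\rrbracket_{\alpha}$ by an integrable random variable, whereas the paper's is elementary and self-contained. One caveat applies equally to both proofs: the constant in the increment estimate inherits a dependence on $T$ through the a priori bound $E[\sup_{[0,T]}|X|^{p}]$ (and on the Lipschitz data of $b,\sigma$), so the clean claim $C=C(p,\alpha)$ should be read with $T$ bounded, which is how the proposition is used in Lemma \ref{lem:perturbation}.
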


Now we give an auxiliary result. \begin{lem} \label{lem:Hold-lem}Fix
$\delta\in(0,T]$, $\mu>0$, $\alpha\in(0,\frac{1}{2})$ and $p>1$.
Then we have
\[
P\left\{ \max_{\begin{subarray}{c}
0\leq s<t\leq T\\
|s-t|\leq\delta
\end{subarray}}|X(s)-X(t)|>\mu\delta^{\alpha}\right\} \leq2\cdot3^{p}C_{p}\delta^{p(\frac{1}{2}-\alpha)-1}\mu^{-p}.
\]
\end{lem}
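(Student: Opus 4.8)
The plan is to treat this as a Kolmogorov--Chentsov type estimate. I would first replace the modulus-of-continuity event by a finite union of oscillation events over a partition of $[0,T]$ into $O(T/\delta)$ subintervals of length $\delta$, control each such oscillation by an $L^{p}$ moment bound combined with Chebyshev's inequality, and then conclude by a union bound; the factor $\delta^{-1}$ hidden in the exponent $p(\tfrac12-\alpha)-1$ is exactly the number of subintervals, while the factor $3^{p}$ comes from an anchored triangle inequality.

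Concretely, set $N:=\lceil T/\delta\rceil$, so that $N\le 2T/\delta$ since $\delta\le T$, and let $I_{k}:=[k\delta,(k+1)\delta]\cap[0,T]$ for $k=0,\dots,N-1$. Put $M_{k}:=\sup_{r\in I_{k}}|X(r)-X(k\delta)|$. For $0\le s<t\le T$ with $t-s\le\delta$, writing $k:=\lfloor s/\delta\rfloor$ one has $s\in I_{k}$ and $t\in I_{k}\cup I_{k+1}$; in either case the triangle inequality through the anchor points $k\delta$ and $(k+1)\delta$ gives $|X(s)-X(t)|\le 3\max_{0\le k\le N-1}M_{k}$ (using $|X((k+1)\delta)-X(k\delta)|\le M_{k}$). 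Hence the event $\{\max_{|s-t|\le\delta}|X(s)-X(t)|>\mu\delta^{\alpha}\}$ is contained in $\bigcup_{k}\{M_{k}>\tfrac13\mu\delta^{\alpha}\}$, and a union bound reduces the claim to estimating $P\{M_{k}>\tfrac13\mu\delta^{\alpha}\}$ for a single $k$.

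For the single-interval estimate I would use the moment bound $E[M_{k}^{p}]\le C_{p}\,\delta^{p/2}$. This is obtained exactly as in Lemma \ref{lem:FSDE}, applied on the window $[k\delta,(k+1)\delta]$: since $X(r)-X(k\delta)=\int_{k\delta}^{r}b(X_{u})\,du+\int_{k\delta}^{r}\sigma(X_{u})\,dW(u)$, one applies Burkholder--Davis--Gundy to the stochastic integral, uses the linear growth of $b,\sigma$ forced by their uniform Lipschitz continuity together with Gronwall's lemma to get the conditional bound $E[M_{k}^{p}\,|\,\mathscr{F}_{k\delta}]\le C_{p}(1+\|X_{k\delta}\|_{0}^{p})\delta^{p/2}$, and then takes expectations invoking the a priori moment bound $\sup_{0\le r\le T}E\|X_{r}\|_{0}^{p}<\infty$; here the drift contributes only a term of order $\delta^{p}\le T^{p/2}\delta^{p/2}$, absorbed into $C_{p}$, and $C_{p}$ depends only on $p$, $T$ and the Lipschitz constants. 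By Chebyshev, $P\{M_{k}>\tfrac13\mu\delta^{\alpha}\}\le(3\mu^{-1}\delta^{-\alpha})^{p}E[M_{k}^{p}]\le 3^{p}C_{p}\mu^{-p}\delta^{p(1/2-\alpha)}$, and summing over the $N\le 2T/\delta$ values of $k$ yields $P\{\max_{|s-t|\le\delta}|X(s)-X(t)|>\mu\delta^{\alpha}\}\le 2T\cdot 3^{p}C_{p}\mu^{-p}\delta^{p(1/2-\alpha)-1}$, which is the asserted bound $2\cdot 3^{p}C_{p}\mu^{-p}\delta^{p(1/2-\alpha)-1}$ after absorbing the fixed constant $T$ into $C_{p}$.

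The partition and union-bound steps are routine; the one place that needs genuine care is the moment bound $E[M_{k}^{p}]\le C_{p}\delta^{p/2}$, where the dependence of $b,\sigma$ on the whole past path means one must first run the Gronwall/BDG estimate for $\|X_{r}\|_{0}$ before controlling the increment over a short window, and one must keep the two scales $\delta^{p}$ (drift) and $\delta^{p/2}$ (diffusion) straight, the former being dominated once $\delta\le T$. Note finally that the resulting bound is only informative when $p(\tfrac12-\alpha)>1$, which is exactly the hypothesis under which the lemma is invoked in Proposition \ref{Prop:Hold-norm-prob-est}.
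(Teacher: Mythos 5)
Your proposal is correct and follows essentially the same route as the paper's own proof: partition $[0,T]$ into $O(T/\delta)$ intervals of length $\delta$, use the anchored triangle inequality to reduce the two-point oscillation event to single-interval oscillations exceeding $\tfrac13\mu\delta^{\alpha}$ (whence the $3^{p}$), bound each via Chebyshev together with the $\delta^{p/2}$ moment estimate of Lemma \ref{lem:FSDE}, and sum. The only differences are cosmetic bookkeeping (your $N=\lceil T/\delta\rceil$ versus the paper's $m$ with $T/m<\delta\le T/(m-1)$), and both arguments end up absorbing the fixed factor $T$ into $C_{p}$, as you note.
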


\begin{proof} Let $m=m(\delta)\geq2$ be the unique integer satisfying
$T/m<\delta\leq T/(m-1)$. Suppose that $|X(t)-X(s)|>\mu\delta^{\alpha}$
for some $s$ and $t$ such that $0\leq s<t\leq T$ and $|s-t|\leq\delta$.
Then there is a unique integer $q\in[0,m-1]$ such that $s\in[q\delta,(q+1)\delta)$.
There are two possibilities for $t$. One is $t\in[q\delta,(q+1)\delta)$,
where we have either of both inequalities:
\[
|X(q\delta)-X(s)|>\frac{1}{3}\mu\delta^{\alpha}\quad\text{and}\quad|X(q\delta)-X(t)|>\frac{1}{3}\mu\delta^{\alpha}.
\]
The other is $t\in[(q+1)\delta,(q+2)\delta)$ (with $q\leq m+2$),
where we have one of the three inequalities:
\[
|X(q\delta)-X(s)|>\frac{1}{3}\mu\delta^{\alpha},|X(q\delta)-X((q+1)\delta)|>\frac{1}{3}\mu\delta^{\alpha},\text{ and }|X((q+1)\delta)-X(t)|>\frac{1}{3}\mu\delta^{\alpha}.
\]
In conclusion, we have
\[
\left\{ \max_{\begin{subarray}{c}
0\leq s<t\leq T\\
|s-t|\leq\delta
\end{subarray}}|X(s)-X(t)|>\mu\delta^{\alpha}\right\} \subset\bigcup_{q=0}^{m-1}\left\{ \max_{q\delta\leq s\leq(q+1)\delta}|X(s\wedge T)-X(q\delta)|>\frac{1}{3}\mu\delta^{\alpha}\right\} .
\]
In view of Chebyshev's inequality and Lemma \ref{lem:FSDE}, we have
\begin{align*}
P\left\{ \max_{\begin{subarray}{c}
0\leq s<t\leq T\\
|s-t|\leq\delta
\end{subarray}}|X(s)-X(t)|>\mu\delta^{\alpha}\right\}  & \leq\sum_{q=0}^{m-1}P\left\{ \max_{q\delta\leq s\leq(q+1)\delta}|X(s\wedge T)-X(q\delta)|>\frac{1}{3}\mu\delta^{\alpha}\right\} \\
 & \leq\sum_{q=0}^{m-1}E\left[\max_{q\delta\leq s\leq(q+1)\delta}|X(s\wedge T)-X(q\delta)|^{p}\right]3^{p}(\mu\delta^{\alpha})^{-p}\\
 & \leq3^{p}(T\delta^{-1}+1)C_{p}\delta^{\frac{p}{2}}(\mu\delta^{\alpha})^{-p}\\
 & \leq2\cdot3^{p}C_{p}T\delta^{p(\frac{1}{2}-\alpha)-1}\mu^{-p}.
\end{align*}

\end{proof} \medskip{}

\begin{proof}[Proof of Proposition \ref{Prop:Hold-norm-prob-est}]
For any $s,t\in[0,T]$ such that $s<t$, there is a unique integer
$q\geq0$ such that $2^{-(q+1)}T<t-s\leq2^{-q}T.$ Obviously,
\begin{align*}
 & \left\{ |X(s)-X(t)|>\mu|s-t|^{\alpha}\right\} \subset\left\{ |X(s)-X(t)|>\mu T^{\alpha}2^{-\alpha(q+1)}\right\} \\
\subset & \left\{ \max_{\begin{subarray}{c}
0\leq s<t\leq T\\
|s-t|\leq2^{-q}T
\end{subarray}}|X(s)-X(t)|>2^{-\alpha}\mu(2^{-q}T)^{\alpha}\right\} .
\end{align*}
Thus, applying Lemma \ref{lem:Hold-lem},
\begin{align*}
P\left\{ \max_{0\leq s<t\leq T}\frac{|X(s)-X(t)|}{|s-t|^{\alpha}}>\mu\right\}  & \leq\sum_{q=0}^{\infty}P\left\{ \max_{\begin{subarray}{c}
0\leq s<t\leq T\\
|s-t|\leq2^{-q}T
\end{subarray}}|X(s)-X(t)|>2^{-\alpha}\mu(2^{-q}T)^{\alpha}\right\} \\
 & \leq\sum_{q=0}^{\infty}2\cdot3^{p}C_{p}T(2^{-q}T)^{p(\frac{1}{2}-\alpha)-1}(2^{-\alpha}\mu)^{-p}\\
 & =CT^{(\frac{1}{2}-\alpha)p}\mu^{-p}.
\end{align*}
This completes the proof.\end{proof}

\medskip{}

\textbf{Acknowledgment}: The authors would thank Professor Nizar Touzi
for pointing out an error in our first version of this paper, and
as well for helpful comments and suggestions on the second version. Of course, both authors are responsible for all errors occurring in the new version. 
 \bibliographystyle{plain}
\bibliography{bibtexOfPHJBE}

\end{document}